\theoremstyle{plain}
\newtheorem{thrm}{Theorem}[section]
\newtheorem{lemma}[thrm]{Lemma}
\newtheorem{prop}[thrm]{Proposition}
\newtheorem{cor}[thrm]{Corollary}
\newtheorem{rmrk}[thrm]{Remark}
\newtheorem{dfn}[thrm]{Definition}
\numberwithin{equation}{section}
\begin{document}
\newcommand{\SL}{\mathcal L^{1,p}( D)}
\newcommand{\Lp}{L^p( Dega)}
\newcommand{\CO}{C^\infty_0( \Omega)}
\newcommand{\Rn}{\mathbb R^n}
\newcommand{\Rm}{\mathbb R^m}
\newcommand{\R}{\mathbb R}
\newcommand{\Om}{\Omega}
\newcommand{\Hn}{\mathbb H^n}
\newcommand{\N}{\mathbb N}
\newcommand{\aB}{\alpha B}
\newcommand{\eps}{\epsilon}
\newcommand{\BVX}{BV_X(\Omega)}
\newcommand{\p}{\partial}
\newcommand{\IO}{\int_\Omega}
\newcommand{\bG}{\mathbb{G}}
\newcommand{\bg}{\mathfrak g}
\newcommand{\Bux}{\mbox{Box}}
\newcommand{\al}{\alpha}
\newcommand{\til}{\tilde}
\newcommand{\nuX}{\boldsymbol{\nu}^X}
\newcommand{\bN}{\boldsymbol{N}}
\newcommand{\nh}{\nabla^H}
\newcommand{\Gp}{G_{D,p}}
\newcommand{\n}{\boldsymbol \nu}


\title[Mutual absolute continuity of harmonic and surface measures, etc.]{Mutual absolute continuity of harmonic and surface measures for H\"ormander type operators}

\dedicatory{Dedicated to Professor Maz'ya, on his 70th birthday}

\author{Luca Capogna}
\address{Department of Mathematics\\University of Arkansas\\Fayetteville, AR 72701}
\email[Luca Capogna]{lcapogna@uark.edu}
\thanks{First author supported in part by NSF Career grant DMS-0134318}
\author{Nicola Garofalo}
\address{Department of Mathematics\\
Purdue University \\
West Lafayette IN 47907-1968}
\email[Nicola Garofalo]{garofalo@math.purdue.edu}
\thanks{Second author supported in part by NSF Grant DMS-07010001}
\author{Duy-Minh Nhieu}
\address{Department of Mathematics \\
San Diego Christian College \\
2100 Greenfield dr\\
El Cajon CA 92019} \email[Duy-Minh Nhieu]{dnhieu@sdcc.edu}

\date{\today}

%
%
\keywords{}
\subjclass{}

\maketitle


\section{\textbf{Introduction}}

\vskip 0.2in

In this paper we study  the Dirichlet problem for the sub-Laplacian
associated with a system $X=\{X_1,...,X_m\}$ of $C^\infty$ real
vector fields in $\Rn$ satisfying H\"ormander's finite rank
condition
\begin{equation}\label{frc}
rank\ Lie[X_1,...,X_m]\ \equiv\ n .
\end{equation}

Throughout this paper $n\geq 3$, and $X_j^*$ denotes the formal
adjoint of $X_j$. The sub-Laplacian associated with $X$ is defined
by
\begin{equation}\label{sublap}
\mathcal{L} u\ =\ \sum_{j=1}^m\ X_j^*X_j u\ .
\end{equation}

A distributional solution of $\mathcal{L}u=0$  is called
$\mathcal{L}$-\emph{harmonic}. H\"ormander's hypoellipticity theorem
\cite{H} guarantees that every $\mathcal{L}$-harmonic function is
$C^\infty$, hence it is a classical solution of $\mathcal{L}u=0$. We
consider a bounded open set $D\subset \Rn$, and study the Dirichlet
problem
\begin{equation}\label{DP}
\begin{cases}
\mathcal{L} u = 0 \quad\text{in} \ D\ ,\\
u=\phi\quad\text{on }\partial{D}\ .
\end{cases}
\end{equation}

Using Bony's maximum principle \cite{B} one can show that for any
$\phi\in C(\partial D)$ there exists a unique Perron-Wiener-Brelot
solution $H^D_\phi$ to \eqref{DP}. We focus on the boundary
regularity of the solution. In particular, we identify a class of
domains, which are referred to as $ADP_X$ domains (\emph{admissible
for the Dirichlet problem}), for which we prove the mutual absolute
continuity of the $\mathcal L$-harmonic measure $d\omega^x$ and of
the so-called horizontal perimeter measure $d\sigma_X =
P_X(D;\cdot)$ on $\partial D$. The latter constitutes the
appropriate replacement for the standard surface measure on $\p D$
and plays a central role in sub-Riemannian geometry. Moreover, we
show that a reverse H\"older inequality holds for a suitable Poisson
kernel which is naturally associated with the system $X$. As a
consequence of such reverse H\"older inequality we then derive the
solvability of \eqref{DP} for boundary data $\phi\in L^p(\partial D,
d\sigma_X)$, for $1<p\leq \infty$. If instead the domain $D$ belongs
to the smaller class $\sigma-ADP_X$ introduced in Definition
\ref{D:sADPX} below, we prove that $\mathcal L$-harmonic measure is
mutually absolutely continuous with respect to the standard surface
measure, and we are able to solve the Dirichlet problem for
\eqref{sublap} for boundary data $\phi\in L^p(\partial D, d\sigma)$,
for $1<p\leq \infty$.

The connection between harmonic and surface measure is a central
question in the study of boundary value problems for second order
partial differential equations. As it is well-known a basic result
of Brelot allows to solve the Dirichlet problem for the standard
Laplacian when the boundary datum is in $L^1$ with respect to the
harmonic measure. However, since the latter is difficult to pin
down, it becomes important to know for what domains one can solve
the Dirichlet problem when the boundary data are in some $L^p$ space
with respect to the ordinary surface measure $d\sigma$. In his
ground-breaking 1977 paper \cite{Da1} Dahlberg was able to settle
the long standing conjecture that in a Lipschitz domain in $\Rn$
harmonic measure for the Laplacian and Hausdorff measure $H^{n-1}$
restricted to the boundary are mutually absolutely continuous. One
should also see the sequel paper \cite{Da2} where the mutual
absolute continuity was obtained as a consequence of the reverse
H\"older inequality for the kernel function $k = d\omega/d\sigma$.
For $C^1$ domains Dahlberg's result was also independently proved by
Fabes, Jodeit and Rivi\`ere \cite{FJR} by the method of layer
potentials.

The results in this paper should be considered as a subelliptic
counterpart of Dahlberg's results in \cite{Da1}, \cite{Da2}. There
are however four aspects which substantially differ from the
analysis of the ordinary Laplacian, and they are all connected with
the presence of the so-called characteristic points on the boundary.
In order to describe these aspects we recall that given a $C^1$
domain $D\subset \Rn$, a point $x_o\in \p D$ is called
\emph{characteristic} for the system $X=\{X_1,...,X_m\}$ if
indicating with $\bN(x_o)$ a normal vector to $\p D$ in $x_o$ one
has
\[ <\bN(x_o),X_1(x_o)>\ =\ ... \ =\ <\bN(x_o),X_m(x_o)>\ =\ 0\ . \]

The characteristic set of $D$, hereafter denoted by $\Sigma =
\Sigma_{D,X}$, is the collection of all characteristic points of $\p
D$. It is a closed subset of $\p D$, and it is compact if $D$ is
bounded. We next introduce the most important prototype of a
sub-Riemannian space: the Heisenberg group $\Hn$. This is the
stratified nilpotent Lie group of step two whose underlying manifold
is $\mathbb C^n \times \mathbb R$ with group law $(z,t)\circ (z't')
= (z+z', t+t' - \frac{1}{2}\ Im( z\cdot \overline{z'}))$. If $x =
(x_1,...,x_n), y = (y_1,...,y_n)$, and we identify $z = x + i y\in
\mathbb C^n$ with the vector $(x,y) \in \mathbb R^{2n}$, then in the
real coordinates $(x,y,t)\in \R^{2n+1}$ a basis for the Lie algebra
of left-invariant vector fields on $\Hn$ is given by the vector
fields
\begin{equation}\label{vf}
X_j\ =\ \frac{\partial}{\partial x_j}\ -\ \frac{y_j}{2}\
\frac{\partial}{\partial t},\quad\quad  X_{n+j}\ =\
\frac{\partial}{\partial y_{j}}\ +\ \frac{x_j}{2}\
\frac{\partial}{\partial t},\quad j=1,...,n,\ \ \ \ \frac{\p }{\p
t}\ .
\end{equation}

In view of the commutation relations \[ [X_j, X_{n+k}]\ =\
\delta_{jk}\ \frac{\partial}{\partial t}\ ,\ \ \  j, k = 1,...,n\ ,
\]
the system $X = \{X_1,...,X_{2n}\}$ generates the Lie algebra of
$\Hn$. The real part of the Kohn-Spencer sub-Laplacian on $\Hn$ is
given by
\begin{equation}\label{kohn}
\mathcal L_o\ =\ \sum_{j=1}^{2n} X_j^2\ =\ \Delta_z +
\frac{|z|^2}{4} D_{tt} + D_t (\sum_{j=1}^n x_j D_{y_j} - y_j
D_{x_j})\ . \end{equation}

This remarkable operator plays an ubiquitous role in several
branches of mathematics and of the applied sciences. We stress that
$\mathcal L_o$ fails to be elliptic at every point. Concerning the
distinctions mentioned above we note:

\medskip

1) Differently from the classical case, in the subelliptic Dirichlet
problem \eqref{DP} the Euclidean smoothness of the ground domain is
of no significance from the standpoint of the intrinsic geometry
near the characteristic set $\Sigma$. In this geometry even a domain
with real analytic boundary looks like a cuspidal domain near one of
its characteristic points. Since bounded domains typically have
non-empty characteristic set it follows that the notion of
``Lipschitz domain" is not as important as in the Euclidean setting,
and one has to abandon it in favor of a more general one based on
purely metrical properties, see \cite{CG}. With these comments in
mind, in this paper we will assume that the domain $D$ in \eqref{DP}
be $NTA_X$ (\emph{non-tangentially accessible} with respect to the
Carnot-Carath\'eodory distance associated with the system $X$, see
Definition \ref{D:NTA} below) and $C^\infty$. The former property
allows us to use some fundamental results developed in \cite{CG},
whereas the smoothness assumption permits to use tools from calculus
away from the characteristic set. In this connection we mention that
the $C^\infty$ hypothesis guarantees, in view of the results in
\cite{KN1}, that the Green function for \eqref{DP} and singularity
at a given point in $D$ is smooth up to the boundary away from
$\Sigma$, see Theorem \ref{T:KN} below.

2) Another striking phenomenon is that in the subelliptic Dirichlet
problem nonnegative $\mathcal L$-harmonic functions which vanish on
a portion of the boundary can do so at very different rates. The
dual aspect of this phenomenon is that nonnegative $\mathcal
L$-harmonic functions which blow up at the boundary (such as for
instance the Poisson kernel) have very different rates of blow-up
depending on whether the limit point is characteristic or not, see
\cite{GV}. This is in sharp contrast with the classical setting. It
is well-known \cite{G} that in a $C^{1,1}$ domain all nonnegative
harmonic functions (or solutions to more general elliptic and
parabolic equations) vanishing on a portion of the boundary must
vanish exactly like the distance to the boundary itself. This fails
miserably in the subelliptic setting because of characteristic
points on the boundary. For instance, in $\Hn$ the so-called gauge
ball \[ B\ =\ \{(z,t)\in \Hn\mid |z|^4 + 16 t^2 < 1\}\ \] is a real
analytic domain with two isolated characteristic points $P^\pm =
(0,\pm \frac{1}{4})$. With $\mathcal L_o$ defined by \eqref{kohn},
the function $u(z,t) = t+\frac{1}{4}$ is a nonnegative $\mathcal
L_o$-harmonic function in $B$ which along the $t$-axis vanishes at
the (characteristic) boundary point $P^- = (0,-\frac{1}{4})$ as the
square of the Carnot-Carath\'eodory distance to $P^-$. On the other
hand, the function $u(z,t) = x_1 +1$ is a nonnegative $\mathcal
L_o$-harmonic function in $B$ which along the $x_1$-axis vanishes at
the (non-characteristic) boundary point $P_1 = (-e_1,0)$, where $e_1
= (1,0,...,0)\in \R^{2n}$, like the distance to $P_1$. Thus, there
is not one single rate of vanishing for nonnegative $\mathcal
L_o$-harmonic functions in smooth domains in $\Hn$! Despite this
negative phenomenon in \cite{CG} two of us proved that in a $NTA_X$
domain all nonnegative $\mathcal L$-harmonic functions vanishing on
a portion of the boundary (characteristic or not) must do so at the
same rate. This result, known as the \emph{comparison theorem},
plays a fundamental role in the present paper. Returning to the
above example of the gauge ball $B\subset \Hn$, the comparison
theorem implies in particular that all nonnegative solutions of
$\mathcal L_o u = 0$ which vanish in a boundary neighborhood of the
point $P^- = (0,-\frac{1}{4})$, must vanish non-tangentially like
the square of the distance to the boundary (and not linearly like in
the classical case)!

3) The third aspect which we want to emphasize is closely connected
with the discussion in 1) and leads us to introduce the third main
assumption in the present paper paper. In \cite{J1}, \cite{J2} D.
Jerison studied the Dirichlet problem \eqref{DP} near characteristic
points for $\mathcal L_o$. He proved in \cite{J1} that for a
$C^\infty$ domain $D \subset \Hn$ if the datum $\phi$ belongs to a
Folland-Stein H\"older class $\Gamma^\beta$, then $H^D_\phi$ is in
$\Gamma^\alpha(\overline D)$, for some $\alpha$ depending on $\beta$
and on the domain $D$. It was also shown in \cite{J1} that, given
any $\alpha\in (0,1)$ there exists $M = M(\alpha)>0$ for which the
real analytic domain
\[
\Om_M\ =\ \{(z,t)\in \Hn\mid t > - M |z|^2\}\ , \] admits a
$\mathcal L_o$-harmonic function $u$ such that $u = 0$ on $\p \Om_M$
and which belongs exactly to the H\"older class $\Gamma^{\alpha}$
(in the sense that it is not any smoother) in any neighborhood of
the (characteristic) boundary point $e = (0,0)$. Once again, this
example shows that, despite the (Euclidean) smoothness of the domain
and of the boundary datum, near a characteristic point the domain
appears quite non-smooth with respect to the intrinsic geometry of
the vector fields $X_1,...,X_{2n}$. In fact, since the paraboloid
$\Om$ is a scale invariant region with respect to the non-isotropic
group dilations $(z,t)\to (\lambda z, \lambda^2 t)$, the smooth
domain $\Om_M$ should be thought of as a non-convex cone from the
point of view of the intrinsic geometry of $\mathcal L_o$ (for a
discussion of Jerison's example see section \ref{S:Jerison}). This
suggests that by imposing a condition similar to the classical
Poincar\'e tangent outer sphere \cite{P} one should be able to rule
out Jerison's negative example and possibly control the intrinsic
gradient $XG$ of the Green function near the characteristic set.
This intuition was proved successful in the papers \cite{LU},
\cite{CGN1}, which were respectively concerned with the Heisenberg
group and with Carnot groups of Heisenberg type. In this paper we
generalize this idea and prove the boundedness of the Poisson kernel
in a neighborhood of the boundary under the hypothesis that the
domain $D$ in \eqref{DP} satisfy what we call a \emph{tangent outer
$X$-ball condition}. It is worth emphasizing that the $X$-balls in
our definition are not metric balls, but instead they are the
(smooth) level sets of the fundamental solution of the sub-Laplacian
$\mathcal L$. The metric balls are not smooth (see \cite{CG}) and
therefore it would not be possible to have a notion of tangency
based on these sets.

4) In Dahlberg's mentioned theorem on the mutual absolute continuity
between harmonic and surface measure in a Lipschitz domain $D\subset
\Rn$ there is one important property which, although confined to the
background, plays a central role. If we denote by $\sigma =
H^{n-1}|_{\p D}$ the surface measure on the boundary, then there
exists constants $\alpha, \beta>0$ depending on $n$ and on the
Lipschitz character of $D$ such that
\[
\alpha\ r^{n-1}\ \leq\ \sigma(\p D \cap B(x,r))\ \leq\ \beta\
r^{n-1}\ ,
\]
for any $x\in \p D$ and any $r>0$. A property like this is referred
to as the $1$-Ahlfors regularity of $\sigma$, and thanks to it
surface measure is the natural measure on $\p D$. Things are quite
different in the subelliptic Dirichlet problem. Consider in fact the
gauge ball $B$ as in 2), with its two (isolated) characteristic
points $P^\pm = (0, \pm \frac{1}{4})$ of $\p B$. Simple calculations
show that denoting by $B(P^\pm,r)$ a gauge ball centered at one of
the points $P^\pm$ with radius $r$, then one has for small $r>0$
\begin{equation}\label{hp}
\sigma(\p B \cap B(P^\pm,r))\ \cong\ r^{Q-2}\ ,
\end{equation}
where $Q = 2n + 2$ is the so-called homogeneous dimension of $\Hn$
relative to the non-isotropic dilations $(z,t) \to (\lambda z ,
\lambda^2 t)$ associated with the grading of the Lie algebra of
$\Hn$. The latter equation shows that at the characteristic points
$P^\pm$ surface measure becomes quite singular and it does not scale
correctly with respect to the non-isotropic group dilations. The
appropriate ``surface measure" in sub-Riemannian geometry is instead
the so-called horizontal perimeter $P_X(D;\cdot)$ introduced in
\cite{CDG2} which on surface metric balls is defined in the
following way
\[ \sigma_X(\p D \cap B_d(x,r))\ \overset{def}{=}\ P_X(D;B_d(x,r))\ .
\]

To motivate such appropriateness we recall that it was proved in
\cite{DGN}, \cite{DGN2} that for every $C^2$ bounded domain
$D\subset \Hn$ one has for every $x\in \p D$ and every $0<r<R_o(D)$
\[
\alpha\ r^{Q-1}\ \leq\ \sigma_X(\p D \cap B_d(x,r))\ \leq\ \beta\
r^{Q-1}\ .
\]

Now it was also shown in these papers that the inequality in the
right-hand side alone suffices to establish the existence of the
traces of Sobolev functions on the boundary. Remarkably, as we prove
in Theorem \ref{T:RHGeneral} below, such a one-sided Ahlfors
property also suffices to establish the mutual absolute continuity
of $\mathcal L$-harmonic and horizontal perimeter measure. Such
property will constitute the last basic assumption of our results,
to which we finally turn. We need to introduce the relevant class of
domains.

\begin{dfn}\label{D:ADPX}
Given a system $X = \{X_1,...,X_m\}$ of smooth vector fields
satisfying \eqref{frc}, we say that a connected bounded open set $
D\subset \Rn$ is \emph{admissible for the Dirichlet problem
\eqref{DP}} with respect to the system $X$, or simply $ADP_X$, if:

i) $D$ is of class $C^\infty$;

ii) $D$ is non-tangentially accessible ($NTA_X$) with respect to the
Carnot-Caratheodory metric associated to the system
$\{X_1,...,X_m\}$ (see Definition \ref{D:NTA});

iii) $D$ satisfies a uniform tangent outer $X$-ball condition (see
Definition \ref{D:OB});

iv) The horizontal perimeter measure is upper $1$-Ahlfors regular.
This means that there exist $A, R_o>0$ depending on $X$ and $D$ such
that for every $x\in \p D$ and $0<r<R_o$ one has
\[
\sigma_X(\p D \cap B_d(x,r))\ \leq\ A\ \frac{|B_d(x,r)|}{r}\ .
\]
\end{dfn}

The constants appearing in $iv)$ and in Definitions \ref{D:OB} and
\ref{D:NTA} will be referred to as the $ADP_X$-parameters of $D$. We
introduce next a central character in this play, the subelliptic
Poisson kernel of $D$. In fact, we define two such functions, each
one playing a different role. Let $G(x,y) = G_D(x,y) = G(y,x)$
indicate the Green function for the sub-Laplacian \eqref{sublap} and
for an $ADP_X$ domain $D$ \footnote{In \cite{B} it was proved that
any bounded open set admits a Green function}. By H\"ormander's
theorem \cite{H} and the results in \cite{KN1}, see Theorem
\ref{T:KN} below, for any fixed $x\in D$ the function $y\to G(x,y)$
is $C^\infty$ up to the boundary in a suitably small neighborhood of
any non-characteristic point $y_o\in \p D$. Let $\n(y)$ indicate the
outer unit normal in $y\in \p D$. At every point $y\in \p D$ we
denote by $\bN^X(y)$ the vector defined by
\[
\bN^X(y)\ =\ \left(<\n(y),X_1(y)>,...,<\n(y),X_m(y)>\right)\ . \]

We also set
\[ W(y)\ =\ |\bN^X(y)|\ =\ \sqrt{<\n(y),X_1(y)>^2 + ... + <\n(y),X_m(y)>^2} \ .
\]

We note explicitly that it was proved in \cite{CDG2} that on $\p D$
\[
d\sigma_X\ =\ W\ d\sigma\ . \]

Denoting with $\Sigma$ the characteristic set of $D$, we remark that
the vector $\bN^X(y) = 0$ if and only if $y\in \Sigma$. For $y\in \p
D \setminus \Sigma$ we define the \emph{horizontal Gauss map} at $y$
by letting
\[
\nuX(y)\ =\ \frac{\bN^X(y)}{|\bN^X(y)|}\ .
\]

\begin{dfn}\label{D:poisson}
Given a $C^\infty$ bounded open set $D\subset \Rn$, for every
$(x,y)\in D\times (\p D\setminus \Sigma)$ we define \emph{the
subelliptic Poisson kernels} as follows
\[
P(x,y)\ =\ <XG(x,y),\bN^X(y)>\ ,\ \ K(x,y)\ =\ \frac{P(x,y)}{W(y)}\
=\ <XG(x,y),\n^X(y)>\ .
\]
\end{dfn}

We emphasize here that the reason for which in the definition of
$P(x,y)$ and $K(x,y)$ we restrict $y$ to $\p D\setminus \Sigma$ is
that, as we have explained in 3) above (see also section
\ref{S:Jerison}), the horizontal gradient $XG(x,y)$ may not be
defined at points of $\Sigma$. Since as we have observed the
function $W$ vanishes on $\Sigma$, it should be clear that the
function $K(x,y)$ is more singular then $P(x,y)$ at the
characteristic points. However, such additional singularity is
balanced by the fact that the density $W$ of the measure $\sigma_X$
with respect to surface measure vanishes at the characteristic
points. As a consequence, $K(x,y)$ is the appropriate subelliptic
Poisson kernel with respect to the intrinsic measure $\sigma_X$,
whereas $P(x,y)$ is more naturally attached to the ``wrong measure"
$\sigma$.

Hereafter, for $x\in \p D$ it will be convenient to indicate with
$\Delta(x,r) = \p D \cap B_d(x,r)$, the boundary metric ball
centered at $x$ with radius $r>0$. The first main result in this
paper is contained the following theorem.

\begin{thrm}\label{T:RHGeneral}
Let $D \subset \Rn$ be a $ADP_X$ domain. For every $p>1$ and any
fixed $x_1\in D$ there exist positive constants $C, R_1$, depending
on $p, M, R_o, x_1$, and on the $ADP_X$ parameters, such that for
$x_o\in \partial{D}$ and $0<r<R_1$ one has
\begin{equation*}
\left(\frac{1}{\sigma_X(\Delta(x_o,r))} \ \int_{\Delta(x_o,r)}
K(x_1,y)^p d\sigma_X(y) \right)^{\frac{1}{p}} \ \leq C\
\frac{1}{\sigma_X(\Delta(x_o,r))} \ \int_{\Delta(x_o,r)} K(x_1,y)
d\sigma_X(y)\ .
\end{equation*}
Moreover, the measures $d\omega^{x_1}$ and $d\sigma_X$ are mutually
absolutely continuous.
\end{thrm}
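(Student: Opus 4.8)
The plan is to follow the classical scheme that Dahlberg used in \cite{Da2}, adapted to the subelliptic setting, in which the reverse H\"older inequality for the Poisson kernel is the engine that drives the mutual absolute continuity. First I would establish the reverse H\"older inequality. The key geometric input is the comparison theorem from \cite{CG}: if $u,v$ are two nonnegative $\mathcal L$-harmonic functions in $D\cap B_d(x_o,Cr)$ vanishing continuously on $\Delta(x_o,Cr)$, then $u/v$ is comparable to $u(A_r)/v(A_r)$ in $D\cap B_d(x_o,r)$, where $A_r$ is a non-tangential corkscrew point. Applying this with $u(\cdot)=G(x_1,\cdot)$ and $v(\cdot)=G(x_2,\cdot)$ for a second pole, and combining with the representation of $\mathcal L$-harmonic measure in terms of the Green function (so that $d\omega^{x}(\Delta)\approx r^2 G(x,A_r)/|B_d(x_o,r)|$ by the boundary Harnack principle and the standard Green-function estimate), gives the usual doubling property of $\omega^{x_1}$ and the comparability
\[
\omega^{x_1}(\Delta(x_o,s)) \ \approx\ \frac{s^2}{|B_d(x_o,s)|}\, G(x_1,A_s)
\]
for $0<s<r$.

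Next I would connect $\omega^{x_1}$ with $\int K(x_1,\cdot)\,d\sigma_X$. Away from $\Sigma$ the domain is $C^\infty$ and $G(x_1,\cdot)$ is smooth up to the boundary by Theorem \ref{T:KN}, so the divergence theorem (integration by parts against $\mathcal L$) yields, for test functions supported off $\Sigma$, the identity $d\omega^{x_1} = K(x_1,y)\,d\sigma_X(y)$ on $\p D\setminus\Sigma$; the point is then to show that $\omega^{x_1}$ puts no mass on $\Sigma$, which follows because $\Sigma$ is a compact set of $\sigma_X$-measure zero together with an a priori bound that forces $\omega^{x_1}(\Sigma)=0$ — here the tangent outer $X$-ball condition and the resulting boundedness of the Poisson kernel near $\Sigma$ (the third listed novelty) are exactly what is needed. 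Granting $d\omega^{x_1}=K(x_1,\cdot)\,d\sigma_X$ globally, the reverse H\"older inequality for $K(x_1,\cdot)$ becomes the statement that the density $k=d\omega^{x_1}/d\sigma_X$ satisfies a reverse H\"older inequality with respect to $\sigma_X$, i.e.\ $k\in B_q(d\sigma_X)$ for all $q>1$. To prove the $L^p$ bound I would use the standard argument: the comparison theorem gives pointwise control $K(x_1,y)\le C\,K(x_1,A_r)\cdot\big(\text{something}\big)$, more precisely it yields that on $\Delta(x_o,r)$ the density $k$ is comparable to its average up to the oscillation controlled by harmonic measure of smaller balls, and then one integrates using the doubling of $\sigma_X$ (which holds because $\sigma_X$ is upper $1$-Ahlfors regular by hypothesis $(iv)$ and lower regular at non-characteristic scales, together with the doubling of $|B_d|$) plus the dyadic decomposition; summing a geometric series gives the $L^p$ average bounded by a constant times the $L^1$ average. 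The one-sided Ahlfors bound $(iv)$ enters to control $\sigma_X(\Delta(x_o,r))$ from above by $C|B_d(x_o,r)|/r$, which together with the Green estimate converts the harmonic-measure doubling into the desired normalization.

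Finally, mutual absolute continuity is deduced from the reverse H\"older inequality by the usual measure-theoretic argument: a reverse H\"older inequality (even for a single exponent $p>1$) self-improves to an $A_\infty$ condition of $\omega^{x_1}$ with respect to $\sigma_X$ via the Gehring-type lemma, and $A_\infty$ is equivalent to the two-sided implication ``$\sigma_X(E)=0\iff \omega^{x_1}(E)=0$'' for Borel $E\subset\p D$. Concretely, $\sigma_X(E)=0$ forces $\int_E K(x_1,\cdot)\,d\sigma_X=0$, hence $\omega^{x_1}(E)=0$; conversely the reverse H\"older inequality gives a lower bound $\omega^{x_1}(F)\ge c\,(\sigma_X(F)/\sigma_X(\Delta))^{p'}\,\omega^{x_1}(\Delta)$ for $F\subset\Delta$, so $\omega^{x_1}(E)=0$ forces $\sigma_X(E)=0$; patching over a finite cover of $\p D$ by metric balls of radius $<R_1$ completes the proof.

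I expect the main obstacle to be the handling of the characteristic set $\Sigma$. Everything away from $\Sigma$ is governed by the $NTA_X$ structure and the comparison theorem of \cite{CG} essentially as in the Euclidean Lipschitz case, but controlling the behavior of $G(x_1,\cdot)$, of $K(x_1,\cdot)$, and of $\omega^{x_1}$ as one approaches $\Sigma$ is genuinely new. This is where the tangent outer $X$-ball condition $(iii)$ does the decisive work — it must be used to produce a barrier at characteristic points and thereby a uniform bound on the Poisson kernel in a full boundary neighborhood (including near $\Sigma$), which in turn guarantees $\omega^{x_1}\ll\sigma_X$ with a bounded density near $\Sigma$ and legitimizes all the integration-by-parts identities globally. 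Establishing that uniform Poisson-kernel bound near $\Sigma$, and checking that the comparison-theorem-based reverse H\"older argument survives the degeneracy of $W$ on $\Sigma$ (using that the extra singularity of $K$ relative to $P$ is exactly compensated by the vanishing of $W=d\sigma_X/d\sigma$), is the technical heart of the matter.
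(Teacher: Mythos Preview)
Your overall strategy matches the paper's: establish $d\omega^{x_1}=K(x_1,\cdot)\,d\sigma_X$ globally via the representation formula (Theorem~\ref{T:HM}, which indeed uses the outer $X$-ball condition to control $XG$ near $\Sigma$), prove the reverse H\"older inequality using the comparison theorem, and then invoke the Coifman--Fefferman/$A_\infty$ machinery for mutual absolute continuity. The handling of $\Sigma$ and the final measure-theoretic step are exactly as in the paper.

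However, your core reverse-H\"older step is confused and unnecessarily complicated. The expression ``$K(x_1,A_r)$'' is ill-defined: $K$ lives on $D\times\partial D$ and $A_r=A_r(x_o)$ is an interior corkscrew point. What the comparison theorem actually gives (this is Theorem~\ref{T:Komega}) is a swap of the \emph{pole}, not the boundary point:
\[
K(x_1,y)\ \leq\ C\,K(A_r(x_o),y)\,\omega^{x_1}(\Delta(x_o,r))\quad\text{for }\sigma_X\text{-a.e.\ }y\in\Delta(x_o,r),
\]
obtained by applying the comparison theorem to $x\mapsto\omega^x(\Delta(y,s))$ and then differentiating with respect to $\sigma_X$. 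Now the decisive point --- which you underplay --- is that the outer $X$-ball condition yields not merely a qualitative bound near $\Sigma$, but the uniform quantitative estimate $|XG(A_r,y)|\leq C\,d(A_r,y)/|B_d(A_r,d(A_r,y))|\leq C\,r/|B_d(x_o,r)|$ on the whole surface ball (Theorem~\ref{T:XG} and the corkscrew geometry). Since $K(A_r,y)\leq|XG(A_r,y)|$, the upper $1$-Ahlfors condition (iv) converts this into $K(A_r,y)\leq C/\sigma_X(\Delta(x_o,r))$. Plugging back gives the pointwise domination
\[
K(x_1,y)\ \leq\ C\,\frac{\omega^{x_1}(\Delta(x_o,r))}{\sigma_X(\Delta(x_o,r))}
\]
for $\sigma_X$-a.e.\ $y\in\Delta(x_o,r)$ --- the density is bounded by its own average. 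From this the reverse H\"older inequality for every $p>1$ is immediate; no dyadic decomposition, geometric series, or Green-function comparability $\omega^{x_1}(\Delta)\approx r^2G(x_1,A_r)/|B_d|$ is needed. Your proposed route via dyadic scales could perhaps be made to work, but it is strictly more involved than the paper's direct chain of pointwise bounds.
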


By combining Theorem \ref{T:RHGeneral} with the results if \cite{CG}
we can solve the Dirichlet problem for boundary data in $L^p$ with
respect to the perimeter measure $d\sigma_X$. To state the relevant
results we need to introduce a definition. Given $D$ as in Theorem
\ref{T:RHGeneral}, for any $y\in\partial D$ and $\alpha > 0$ a
nontangential region at $y$ is defined by
\[
\Gamma_\alpha(y)\ =\ \{x\in D\,|\, d(x,y) \leq
(1+\alpha)d(x,\partial D)\}\ .
\]

Given a function $u\in C(D)$, the $\alpha$-nontangential maximal
function of $u$ at $y$ is defined by
\[
N_\alpha(u)(y)\ =\ \underset{x\in\Gamma_\alpha(y)}{sup}\,|u(x)|\ .
\]

\begin{thrm}\label{T:Dirichlet}
Let $D\subset\Rn$ be a $ADP_X$ domain. For every $p>1$ there exists
a constant $C>0$ depending on $D, X$ and $p$ such that if $f\in
L^p(\partial D,d\sigma_X)$, then
\[
H^{D}_{f}(x)\ =\ \int_{\partial{D}}\ f(y)\ K(x,y)\ d\sigma_X(y)\ ,
\]
and
\[
\|N_\alpha(H^D_f)\|_{L^p(\partial D,d\sigma_X)}\ \leq\ C\
\|f\|_{L^p(\partial D,d\sigma_X)}\ .
\]
Furthermore, $H^D_f$ converges nontangentially $\sigma_X$-a.e. to
$f$ on $\partial D$.

\end{thrm}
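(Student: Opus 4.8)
The plan is to deduce Theorem \ref{T:Dirichlet} from the reverse H\"older inequality of Theorem \ref{T:RHGeneral} via the standard machinery connecting $A_\infty$-type conditions, nontangential maximal function bounds, and solvability of the Dirichlet problem, adapting to the sub-Riemannian setting the arguments originating in \cite{Da2} and developed in the $NTA_X$ framework in \cite{CG}. First I would record the consequences of Theorem \ref{T:RHGeneral}: the kernel $K(x_1,\cdot)$ satisfies a reverse H\"older inequality with respect to $\sigma_X$ on boundary metric balls $\Delta(x_o,r)$, and $d\omega^{x_1}$ and $d\sigma_X$ are mutually absolutely continuous with $K(x_1,\cdot) = d\omega^{x_1}/d\sigma_X$. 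By the Harnack chain property of $NTA_X$ domains (Definition \ref{D:NTA}) and the comparison theorem from \cite{CG}, the kernel functions $K(x,\cdot)$ for different interior poles $x$ are comparable on compact subsets away from a fixed boundary portion, so it suffices to prove the estimate for the fixed pole $x_1$ and then transfer. The reverse H\"older inequality, together with the doubling property of $\sigma_X$ (which follows from upper $1$-Ahlfors regularity combined with the lower bound coming from the $NTA_X$ structure and \cite{CDG2}, or directly from the doubling of $|B_d(x,r)|$) and a Gehring-type self-improvement, yields that $\omega^{x_1}\in B_q(d\sigma_X)$ for some $q>1$, hence also the $A_\infty$ relation between $\omega^{x_1}$ and $\sigma_X$ in the space of homogeneous type $(\p D, d, d\sigma_X)$.

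Next I would set up the solution operator. For $f\in L^p(\p D, d\sigma_X)$ with $1<p\le\infty$, define $H^D_f(x) = \int_{\p D} f(y) K(x,y)\, d\sigma_X(y)$; one checks this is $\mathcal L$-harmonic in $D$ by differentiating under the integral sign (legitimate since $K(x,\cdot)$ is, for $x$ in a compact subset of $D$, bounded on $\p D$ away from $\Sigma$ and controlled near $\Sigma$ by the tangent outer $X$-ball condition as in the proof of Theorem \ref{T:RHGeneral}). The core estimate
\[
\|N_\alpha(H^D_f)\|_{L^p(\p D, d\sigma_X)}\ \le\ C\, \|f\|_{L^p(\p D, d\sigma_X)}
\]
is obtained by the Hardy--Littlewood type argument: one shows a pointwise bound $N_\alpha(H^D_f)(y)\le C\, M_{\sigma_X}\!\big(f\,\tfrac{d\omega^{x_o}}{d\sigma_X}\big)(y)\big/\big(\tfrac{d\omega^{x_o}}{d\sigma_X}\big)$ type inequality, or more cleanly, using the $A_\infty$ relation one reduces the $\sigma_X$-norm of $N_\alpha(H^D_f)$ to the $\omega^{x_1}$-norm of the (ordinary Hardy--Littlewood--type) maximal function of $f$ with respect to $\omega^{x_1}$, which is bounded on $L^p(d\omega^{x_1})$ for $p>1$. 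Here the key geometric input is the Carleson-measure / change-of-pole estimate for harmonic measure in $NTA_X$ domains from \cite{CG}, which gives $\omega^x(\Delta(x_o,r))\approx r\, K(x,\cdot)$-averages and lets one dominate the nontangential supremum by maximal averages over boundary balls; the doubling of $\sigma_X$ then makes the Hardy--Littlewood maximal operator bounded. The self-improvement $B_q$ with $q$ possibly larger than $p'$ is what allows the argument to close for \emph{all} $p>1$ rather than only for $p$ large.

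Finally, for the nontangential convergence $H^D_f\to f$ $\sigma_X$-a.e.\ on $\p D$, I would argue by density: the statement is immediate for $f\in C(\p D)$ since then $H^D_f = H^D_f$ is the Perron--Wiener--Brelot solution and converges to $f$ at every regular boundary point (all boundary points of an $NTA_X$ domain are regular by \cite{CG}), in particular nontangentially. For general $f\in L^p(d\sigma_X)$, pick $g_k\in C(\p D)$ with $g_k\to f$ in $L^p(d\sigma_X)$; then $H^D_{g_k}$ converges nontangentially to $g_k$, and the maximal function estimate controls $N_\alpha(H^D_f - H^D_{g_k}) = N_\alpha(H^D_{f-g_k})$ in $L^p(d\sigma_X)$, so a standard Banach-space-valued / weak-type interpolation argument (the usual ``maximal inequality plus dense class implies a.e.\ convergence'' lemma) gives nontangential convergence $\sigma_X$-a.e.\ for $f$. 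I expect the main obstacle to be the rigorous justification of the change-of-pole and Carleson estimates near the characteristic set $\Sigma$: away from $\Sigma$ these are the standard $NTA$ facts from \cite{CG}, but one must check that the tangent outer $X$-ball condition and the upper $1$-Ahlfors regularity assumed in the $ADP_X$ definition are exactly what is needed to absorb the extra blow-up of $K$ at characteristic points (the same mechanism exploited in Theorem \ref{T:RHGeneral}), so that the maximal function bound is uniform up to $\Sigma$. Once that uniformity is in hand, the rest is the classical Dahlberg--type scheme transported to the space of homogeneous type $(\p D, d, d\sigma_X)$.
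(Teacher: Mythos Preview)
Your proposal is correct in outline and lands on the same broad strategy as the paper, but you introduce machinery the paper avoids, and you worry about an obstacle that is not actually present.

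The paper's argument is more direct on two points. First, no Gehring self-improvement or abstract $A_\infty$ theory is invoked: since Theorem \ref{T:RHGeneral} already gives the reverse H\"older inequality for \emph{every} exponent $p>1$, one simply picks $0<\beta<p$, applies H\"older and the reverse H\"older inequality with exponent $\beta'$ to obtain the pointwise bound
\[
M_{\omega^{x_1}}(f)(y)\ \le\ C\, M_{\sigma_X}(|f|^\beta)(y)^{1/\beta},
\]
and then uses $L^{p/\beta}$-boundedness of $M_{\sigma_X}$ on the homogeneous space $(\partial D, d, d\sigma_X)$. Combined with the black-box estimate $N_\alpha(H^D_f)\le C\,M_{\omega^{x_1}}(f)$ from Theorem \ref{T:Representation}, this closes the $L^p(d\sigma_X)$ bound immediately. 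Second, the nontangential convergence is not obtained by density: Theorem \ref{T:Representation} already gives convergence $d\omega^{x_1}$-a.e., and the mutual absolute continuity from Theorem \ref{T:RHGeneral} upgrades this to $\sigma_X$-a.e.\ for free. The paper also identifies $H^D_f$ with the Perron--Wiener--Brelot solution via Theorem \ref{T:Brelot} (showing $L^p(d\sigma_X)\subset L^1(d\omega^{x_1})$ using $K(x_1,\cdot)\in L^{p'}(d\sigma_X)$), rather than by differentiating under the integral sign.

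Finally, your anticipated obstacle near $\Sigma$ is not an issue here: the $NTA_X$ results from \cite{CG} (in particular Theorems \ref{T:BdryE}--\ref{T:Representation}) are purely metric and hold uniformly at all boundary points, characteristic or not. The outer $X$-ball condition and the upper $1$-Ahlfors bound were already fully exploited upstream in the proofs of Theorems \ref{T:XG}, \ref{T:HM} and \ref{T:RHGeneral}; nothing further about $\Sigma$ enters the proof of Theorem \ref{T:Dirichlet}.
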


Theorems \ref{T:RHGeneral} and \ref{T:Dirichlet} constitute
appropriate sub-elliptic versions of Dahlberg's mentioned results in
\cite{Da1}, \cite{Da2}. These theorems generalize those in
\cite{CGN2} relative to Carnot groups of Heisenberg type. We mention
at this point that, as we prove in Theorem \ref{T:ndeg} below, for
any $C^{1,1}$ domain $D\subset \Rn$ which is $NTA_X$ the horizontal
perimeter measure is lower $1$-Ahlfors (this is a basic consequence
of the isoperimetric inequality in \cite{GN1}). Combining this
result with the assumption $iv)$ in Definition \ref{D:ADPX}, we
conclude that for any $ADP_X$ domain the measure $\sigma_X$ is
$1$-Ahlfors. In particular, $\sigma_X$ is also doubling, see
Corollary \ref{C:ahlfors}. This information plays a crucial role in
the proof of Theorem \ref{T:Dirichlet}.

On the other hand, even if the ordinary surface measure $\sigma$ is
the ``wrong one" in the subelliptic Dirichlet problem, it would
still be highly desirable to know if there exist situations in which
\eqref{DP} can be solved for boundary data in some $L^p$ with
respect to $d\sigma$. To address this question in Definition
\ref{D:sADPX} we introduce the class of $\sigma-ADP_X$ domains. The
latter differs from that of $ADP_X$ domains for the fact that the
assumption $iv)$ is replaced by the following
\emph{balanced-degeneracy} assumption on $\sigma$: there exist $B,
R_o>0$ depending on $X$ and $D$ such that for every $x_o\in \p D$
and $0<r<R_o$ one has
\[
\left(\underset{y\in\Delta(x_o,r)}{max}\;W(y)\right)\,\sigma(\Delta(x_o,r))\
\leq\ B\ \frac{|B_d(x_o,r)|}{r}\ .
\]

As we have previously observed surface measure becomes singular near
a characteristic point. On the other hand, the angle function $W$
vanishes, thus balancing the singularities of $\sigma$. For
$\sigma-ADP_X$-domains we obtain the following two results which
respectively establish the mutual absolute continuity of $\mathcal
L$-harmonic and surface measure $d\sigma$, and the solvability of
the Dirichlet problem with data in $L^p(\p D,d\sigma)$.

\begin{thrm}\label{T:RHGeneral-P}
Let $D \subset \Rn$ be a  $\sigma-ADP_X$ domain. Fix $x_1 \in D$.
For every $p>1$ there exist positive constants $C, R_1$, depending
on $p, M, R_o, x_1$, and on the $\sigma-ADP_X$ parameters, such that
for every $y\in
\partial{D}$ and $0<r<R_1$ one has
\begin{equation*}
\left(\frac{1}{\sigma(\Delta(x_o,r))} \ \int_{\Delta(x_o,r)}
P(x_1,y)^p d\sigma(y) \right)^{\frac{1}{p}} \ \leq C\
\frac{1}{\sigma(\Delta(x_o,r))} \ \int_{\Delta(x_o,r)} P(x_1,y)
d\sigma(y)\ .
\end{equation*}
Moreover, the measures $d\omega^x$ and $d\sigma$ are mutually
absolutely continuous.
\end{thrm}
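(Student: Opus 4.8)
The plan is to deduce Theorem \ref{T:RHGeneral-P} from Theorem \ref{T:RHGeneral} by a change-of-measure argument, exploiting the elementary relation $d\sigma_X = W\,d\sigma$ and the definition $K(x,y) = P(x,y)/W(y)$. Indeed, for $y \in \p D \setminus \Sigma$ one has $K(x_1,y)\,d\sigma_X(y) = P(x_1,y)\,d\sigma(y)$, so the horizontal Poisson integral against $d\sigma_X$ and the ``Euclidean'' Poisson integral against $d\sigma$ agree as measures on $\p D \setminus \Sigma$; since $\sigma(\Sigma) = 0$ in any reasonable situation (and this must be checked, see below), the $\mathcal L$-harmonic measure $d\omega^{x_1}$ is represented both ways. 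Thus the only thing that genuinely changes is the normalization of the averages appearing in the reverse Hölder inequality, and the passage from a $K$-statement weighted by $\sigma_X$ to a $P$-statement weighted by $\sigma$.

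Concretely, I would first record that the \emph{balanced-degeneracy} hypothesis replacing $iv)$ is exactly what one needs so that $\sigma_X = W\,d\sigma$ is still upper $1$-Ahlfors regular: for $0 < r < R_o$ and $x_o \in \p D$,
\[
\sigma_X(\Delta(x_o,r)) \ =\ \int_{\Delta(x_o,r)} W\, d\sigma\ \leq\ \Big(\max_{y \in \Delta(x_o,r)} W(y)\Big)\,\sigma(\Delta(x_o,r))\ \leq\ B\,\frac{|B_d(x_o,r)|}{r}\ .
\]
Hence a $\sigma-ADP_X$ domain is in particular an $ADP_X$ domain, and Theorem \ref{T:RHGeneral} applies: for $p > 1$ there are $C, R_1$ with
\[
\Big(\frac{1}{\sigma_X(\Delta(x_o,r))}\int_{\Delta(x_o,r)} K(x_1,y)^p\,d\sigma_X(y)\Big)^{1/p}\ \leq\ \frac{C}{\sigma_X(\Delta(x_o,r))}\int_{\Delta(x_o,r)} K(x_1,y)\,d\sigma_X(y)\ .
\]
Now I substitute $K = P/W$ and $d\sigma_X = W\,d\sigma$. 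The right-hand side becomes $\big(C/\sigma_X(\Delta)\big)\int_\Delta P\,d\sigma$. On the left, $K^p\,d\sigma_X = P^p\,W^{1-p}\,d\sigma$; since $p > 1$, $W^{1-p} \geq \big(\max_{\Delta(x_o,r)} W\big)^{1-p}$, so
\[
\int_{\Delta(x_o,r)} K^p\,d\sigma_X\ \geq\ \Big(\max_{\Delta(x_o,r)} W\Big)^{1-p} \int_{\Delta(x_o,r)} P^p\,d\sigma\ .
\]
Combining and raising to the power $1/p$, the factor $\big(\max W\big)^{(1-p)/p}$ comes out; to reabsorb it I multiply and divide by $\big(\max_\Delta W\big)\,\sigma(\Delta)$ and invoke the balanced-degeneracy bound together with the fact (from the $ADP_X$ structure and the lower $1$-Ahlfors regularity coming from Theorem \ref{T:ndeg}) that $\sigma_X(\Delta(x_o,r)) \geq \alpha\,|B_d(x_o,r)|/r$; the two Ahlfors-type estimates pinch $\big(\max_\Delta W\big)\sigma(\Delta)$ and $\sigma_X(\Delta)$ to within a constant of each other, which is precisely what converts the $\sigma_X$-normalization into the $\sigma$-normalization and yields the claimed inequality with a new constant $C$.

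For the mutual absolute continuity of $d\omega^x$ and $d\sigma$: by the representation formula $d\omega^{x_1} = K(x_1,\cdot)\,d\sigma_X = P(x_1,\cdot)\,d\sigma$ on $\p D \setminus \Sigma$, together with the fact from Theorem \ref{T:RHGeneral} that $d\omega^{x_1}$ and $d\sigma_X$ are mutually absolutely continuous (so $\omega^{x_1}(\Sigma) = 0$ and $\sigma_X(\Sigma) = 0$), it suffices to show $\sigma(\Sigma) = 0$; then $P(x_1,\cdot) = d\omega^{x_1}/d\sigma$ is finite and positive $\sigma$-a.e., giving the two-sided absolute continuity, and the Harnack chain argument of \cite{CG} removes the dependence on the base point $x_1$. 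The main obstacle is exactly this last point: one must know that on a $\sigma-ADP_X$ domain the characteristic set is $\sigma$-null. I expect this to follow from the balanced-degeneracy condition itself — letting $r \to 0$ in that inequality forces $\big(\max_{\Delta(x_o,r)} W\big)\,\sigma(\Delta(x_o,r)) \to 0$, and on the non-characteristic part $W$ is bounded below locally, so the only way the product can collapse near a characteristic point is if $\sigma$ does not charge $\Sigma$ — but making this rigorous (a Vitali covering argument near $\Sigma$, using that $|B_d(x_o,r)|/r = o(\text{bound on }\max W \cdot \sigma)$ in the appropriate sense) is where the real work lies, and it is the step I would write out most carefully.
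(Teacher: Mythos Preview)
Your reduction argument for the reverse H\"older inequality is correct and genuinely different from the paper's approach. The paper does not reduce to Theorem \ref{T:RHGeneral}; instead it reruns the chain of inequalities in the proof of Theorem \ref{T:RHGeneral} directly for $P$ and $\sigma$, starting from the second identity $d\omega^{x_1}=P(x_1,\cdot)\,d\sigma$ in \eqref{kernels}, using the first estimate $P(x,y)\leq W(y)\,|XG(x,y)|$ in \eqref{i6} in place of the second, and invoking the balanced-degeneracy bound $iv)$ of Definition \ref{D:sADPX} exactly where the proof of Theorem \ref{T:RHGeneral} used the upper $1$-Ahlfors bound. Your route is arguably cleaner: once you observe $\sigma\text{-}ADP_X\subset ADP_X$ (which the paper also notes), Theorem \ref{T:RHGeneral} is available, and the conversion you outline goes through because the two Ahlfors-type inequalities (balanced degeneracy above, Theorem \ref{T:ndeg} below) pin $(\max_{\Delta}W)\,\sigma(\Delta)$ and $\sigma_X(\Delta)$ within a fixed ratio of each other. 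The paper's direct approach has the minor advantage of being self-contained and of making transparent that only the balanced-degeneracy hypothesis is used, not the full $ADP_X$ machinery a second time.

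Your one misstep is the ``main obstacle'': $\sigma(\Sigma)=0$ is not something you need to extract from the balanced-degeneracy condition. It is Derridj's theorem, recorded in the paper as Theorem \ref{T:FW}, valid for any $C^\infty$ domain and used throughout Section \ref{S:PK}. With that in hand, your mutual-absolute-continuity argument is complete and in fact shorter than the paper's implicit route (reverse H\"older plus doubling of $\sigma$ from Theorem \ref{T:Doubling-s} plus the Coifman--Fefferman lemma): since $W>0$ $\sigma$-a.e.\ and $d\sigma_X=W\,d\sigma$, the measures $\sigma$ and $\sigma_X$ are mutually absolutely continuous, and you inherit $d\omega^{x}\sim d\sigma$ from $d\omega^{x}\sim d\sigma_X$ in Theorem \ref{T:RHGeneral}. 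Drop the proposed Vitali covering argument near $\Sigma$; it is unnecessary.
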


We mention explicitly that a basic consequence of Theorem
\ref{T:RHGeneral-P} is that the standard surface measure on the
boundary of a $\sigma-ADP_X$ domain is doubling.

\begin{thrm}\label{T:Dirichletsigma}
Let $D\subset\Rn$ be a $\sigma-ADP_X$ domain. For every $p>1$ there
exists a constant $C>0$ depending on $D, X$ and $p$ such that if
$f\in L^p(\partial D,d\sigma)$, then
\[
H^{D}_{f}(x)\ =\ \int_{\partial{D}}\ f(y)\ P(x,y)\ d\sigma(y)\ ,
\]
and
\[
\|N_\alpha(H^D_f)\|_{L^p(\partial D,d\sigma)}\ \leq\ C\
\|f\|_{L^p(\partial D,d\sigma)}.
\]
Furthermore, $H^D_f$ converges nontangentially $\sigma$-a.e. to $f$
on $\partial D$.
\end{thrm}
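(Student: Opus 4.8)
The plan is to deduce Theorem \ref{T:Dirichletsigma} from Theorem \ref{T:RHGeneral-P} in exactly the same way that Theorem \ref{T:Dirichlet} is deduced from Theorem \ref{T:RHGeneral}, the only difference being that one replaces the intrinsic pair $(\sigma_X, K)$ by the Euclidean pair $(\sigma, P)$. First I would record the structural facts that make this transfer legitimate. Since $d\sigma_X = W\,d\sigma$ on $\p D$ and $P(x,y) = W(y) K(x,y)$, one has the pointwise identity $P(x,y)\,d\sigma(y) = K(x,y)\,d\sigma_X(y)$, so the representation formula $H^D_f(x) = \int_{\p D} f(y) P(x,y)\,d\sigma(y)$ is literally the same as $\int_{\p D} f(y) K(x,y)\,d\sigma_X(y)$ against the substitution $f \mapsto f$; more importantly, the $\mathcal L$-harmonic measure satisfies $d\omega^x = K(x,\cdot)\,d\sigma_X = P(x,\cdot)\,d\sigma$ by the definition of the Poisson kernels together with the mutual absolute continuity asserted in Theorem \ref{T:RHGeneral-P}. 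Thus the Radon--Nikodym derivative $k_x = d\omega^x/d\sigma$ equals $P(x,\cdot)$, and the reverse H\"older inequality in Theorem \ref{T:RHGeneral-P} is precisely an $(RH_p)$ bound for $k_x$ with respect to the measure $\sigma$ on the system of boundary balls $\Delta(x_o,r)$.

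Next I would assemble the harmonic-analysis machinery. Because $D$ is $\sigma-ADP_X$, in particular $NTA_X$ and $C^{1,1}$, the boundary $\p D$ equipped with the Carnot--Carath\'eodory distance and the measure $\sigma$ is a space of homogeneous type: the doubling property of $\sigma$ follows from Theorem \ref{T:RHGeneral-P} itself (as remarked in the text just after that theorem's statement), and the surface metric balls $\Delta(x_o,r)$ satisfy the usual engulfing properties inherited from the $NTA_X$ structure via \cite{CG}. On this space the standard theory applies: (i) the reverse H\"older inequality $k_x \in RH_p(d\sigma)$ self-improves to $k_x \in RH_{p+\eps}(d\sigma)$ for some $\eps>0$, equivalently $k_x$ is an $A_\infty$ weight relative to $\sigma$; (ii) a weak-type $(1,1)$ and strong-type $(p',p')$ bound for the maximal operator
\[
H^D_*f(y)\ =\ \sup_{0<r<R_1}\ \frac{1}{\omega^x(\Delta(y,r))}\int_{\Delta(y,r)} |f|\,d\omega^x
\]
holds on $L^{p'}(d\omega^x)$; and (iii) the $A_\infty$ relation between $\omega^x$ and $\sigma$ lets one transfer these bounds back to $L^p(d\sigma)$. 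The nontangential maximal function $N_\alpha(H^D_f)$ is then controlled pointwise $\sigma$-a.e. on $\p D$ by a constant multiple of $H^D_*f$ evaluated in terms of $\sigma$-averages, using the comparison theorem of \cite{CG} (Carleson-type estimate) which bounds $H^D_f$ on a nontangential region at $y$ by averages of the boundary datum over a surface ball at $y$ — this is where the $NTA_X$ hypothesis and assumption iii) (the tangent outer $X$-ball condition, controlling the Poisson kernel near $\Sigma$) enter decisively.

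Having the bound $\|N_\alpha(H^D_f)\|_{L^p(d\sigma)} \le C\|f\|_{L^p(d\sigma)}$, the nontangential convergence $H^D_f \to f$ $\sigma$-a.e. follows by the classical density argument: the estimate holds, $H^D_\phi \to \phi$ nontangentially for $\phi \in C(\p D)$ by the $NTA_X$ solvability of \eqref{DP} (regularity of every boundary point, again from \cite{CG}), and $C(\p D)$ is dense in $L^p(\p D, d\sigma)$ because $\sigma$ is doubling hence $(\p D, d, \sigma)$ is a reasonable measure space; a standard $\limsup$-of-the-difference argument with the maximal function then upgrades a.e. convergence from the dense class to all of $L^p$. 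I expect the main obstacle to be step (iii) above — carefully transferring the $L^{p'}(d\omega^x)$ maximal bound to an $L^p(d\sigma)$ bound for $N_\alpha$. This requires the sharp form of the $A_\infty$ dichotomy on a space of homogeneous type (the equivalence of $RH_p(d\sigma)$ with a quantitative $A_\infty$ condition, and the resulting change-of-measure inequality), together with the delicate comparison near characteristic points: one must check that the presence of $\Sigma$, where $W$ vanishes and $\sigma$ fails to be $1$-Ahlfors, does not destroy the doubling/engulfing hypotheses needed to run the Calder\'on--Zygmund argument. The balanced-degeneracy hypothesis defining $\sigma-ADP_X$ is exactly what compensates for this, and verifying that it feeds correctly into the maximal-function estimate is the technical heart of the proof.
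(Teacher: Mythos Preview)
Your proposal is correct and follows the same approach as the paper: the paper's own proof simply says to mimic the proof of Theorem \ref{T:Dirichlet}, replacing $(\sigma_X,K)$ by $(\sigma,P)$, Theorem \ref{T:RHGeneral} by Theorem \ref{T:RHGeneral-P}, and using the doubling of $\sigma$ (Theorem \ref{T:Doubling-s}) in place of Corollary \ref{C:ahlfors}. One minor remark: where you invoke general $A_\infty$ machinery to transfer the maximal bound from $L^{p'}(d\omega^{x_1})$ to $L^p(d\sigma)$, the paper's template argument is more hands-on---it uses the reverse H\"older inequality directly to obtain the pointwise bound $M_{\omega^{x_1}}(f)(y)\le C\,M_\sigma(|f|^\beta)(y)^{1/\beta}$ for some $0<\beta<p$, and then appeals to the $L^{p/\beta}$-boundedness of the Hardy--Littlewood maximal operator on the homogeneous space $(\partial D,d,\sigma)$; likewise, the nontangential a.e.\ convergence comes immediately from Theorem \ref{T:Representation}(ii) together with the mutual absolute continuity $\omega^{x_1}\ll\sigma$, rather than from a separate density argument.
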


Concerning Theorems \ref{T:RHGeneral}, \ref{T:Dirichlet},
\ref{T:RHGeneral-P} and \ref{T:Dirichletsigma} we mention that large
classes of domains to which they apply were found in \cite{CGN2},
but one should also see \cite{LU} for domains satisfying assumption
$iii)$ in Definition \ref{D:ADPX}. The discussion of these examples
is taken up in section \ref{S:ex}.

In closing we briefly describe the organization of the paper. In
section \ref{S:Prelim} we collect some known results on
Carnot-Carath\'eodory metrics which are needed in the paper. In
section \ref{S:DP} we discuss some known results on the subelliptic
Dirichlet problem which constitute the potential theoretic backbone
of the paper. In section \ref{S:Jerison} we discuss Jerison's
mentioned example.

Section \ref{S:SE} is devoted to proving some new interior a priori
estimates of Cauchy-Schauder type. Such estimates are obtained by
means of a family of subelliptic mollifiers which were introduced by
Danielli and two of us in \cite{CDG1}, see also \cite{CDG2}. The
main results are Theorems \ref{T:Harmonic}, \ref{T:AprioriE}, and
Corollary \ref{C:Harnack}. We feel that, besides being instrumental
to the present paper, these results will prove quite useful in
future research on the subject.

In section \ref{S:bdry} we use the interior estimates in Theorem
\ref{T:Harmonic}  to prove that if a domain satisfies a uniform
outer tangent $X$-ball condition, then the horizontal gradient of
the Green function $G$ is bounded up to the boundary, hence, in
particular, near $\Sigma$, see Theorem \ref{T:XG}. The proof of such
result rests in an essential way on the linear growth estimate
provided by Theorem \ref{T:Growth}. Another crucial ingredient is
Lemma \ref{L:Gamma} which allows a delicate control of some ad-hoc
subelliptic barriers. In the final part of the section we show that,
by requesting the uniform outer $X$-ball condition only in a
neighborhood of the characteristic set $\Sigma$, we are still able
to obtain the boundedness of the horizontal gradient of $G$ up to
the characteristic set, although we now loose the uniformity in the
estimates, see Theorem \ref{T:GreenLocal}, \ref{T:XGLocal} and
Corollary \ref{C:LipschitzSigma}.

In section \ref{S:PK} we establish a Poisson type representation
formula for domains which satisfy the uniform outer $X$-ball
condition in a neighborhood of the characteristic set. This result
generalizes a similar Poisson type formula in the Heisenberg group
$\Hn$ obtained by Lanconelli and Uguzzoni in \cite{LU}, and extended
in \cite{CGN2} to Carnot groups of Heisenberg type. If generically
the Green function of a smooth domain had bounded horizontal
gradient \emph{up to the characteristic set}, then such Poisson
formula would follow in an elementary way from integration by parts.
As we previously stressed, however, things are not so simple and the
boundedness of $XG$ fails in general near the characteristic set.
However, when $D\subset \Rn$ satisfies the uniform outer $X$-ball
condition in a neighborhood of the characteristic set, then
combining Theorem \ref{T:XG} with the estimate \[ K(x,y)\ \leq\
|XG(x,y)|\ , \ \ x \in D, y\in \p D\ ,
\] see \eqref{i6}, we prove the boundedness of the Poisson kernel
$y\to K(x,y)$ on $\p D$. The main result in section \ref{S:PK} is
Theorem \ref{T:HM}. This representation formula with the estimates
of the Green function in sections \ref{S:SE} and \ref{S:bdry} lead
to a priori estimates in $L^p$ for the solution to \eqref{DP} when
the datum $\phi\in C(\partial D)$. Solvability of \eqref{DP} with
data in Lebesgue classes requires, however, a much deeper analysis.

The first observation is that the outer ball condition alone does
not guarantee the development of a rich potential theory. For
instance, it may not be possible to find: a) Good nontangential
regions of approach to the boundary from within the domain; b)
Appropriate interior Harnack chains of nontangential balls. This is
where the basic results on $NTA_X$ domains from \cite{CG} enter the
picture. In the opening of section \ref{S:RHI} we recall the
definition of $NTA_X$-domain along with those results from \cite{CG}
which constitute the foundations of the present study. Using these
results we establish Theorem \ref{T:Komega}. The remaining part of
the section is devoted to proving Theorems \ref{T:RHGeneral},
\ref{T:Dirichlet}, \ref{T:RHGeneral-P} and \ref{T:Dirichletsigma}.

Finally, section \ref{S:ex} is devoted to the discussion of examples
of $ADP_X$ and $\sigma-ADP_X$ domains and of some open problems.

\vskip 0.6in

\section{\textbf{Preliminaries}}\label{S:Prelim}

\vskip 0.2in

In $\Rn$, with $n\geq 3$, we consider a system  $X =
\{X_1,...,X_m\}$ of $C^\infty$ vector fields satisfying
H\"ormander's finite rank condition \eqref{frc}. A piecewise $C^1$
curve $\gamma:[0,T]\to \Rn$ is called \emph{sub-unitary} \cite{FP}
if whenever $\gamma'(t)$ exists one has for every $\xi\in\Rn$
\[
<\gamma'(t),\xi>^2\ \leq\ \sum_{j=1}^m <X_j(\gamma(t)),\xi>^2 .
\]

We note explicitly that the above inequality forces $\gamma '(t)$ to
belong to the span of $\{X_1(\gamma (t)),...,$ $ X_m(\gamma (t))\}$.
The sub-unit length of $\gamma$ is by definition $l_s(\gamma)=T$.
Given $x, y\in \Rn$, denote by $\mathcal S_\Om(x,y)$ the collection
of all sub-unitary $\gamma:[0,T]\to \Om$ which join $x$ to $y$. The
accessibility theorem of Chow and Rashevsky, \cite{Ra}, \cite{Chow},
states that, given a connected open set $\Om\subset \Rn$, for every
$x,y\in \Om$ there exists $\gamma \in \mathcal S_\Om(x,y)$. As a
consequence, if we pose
\[
d_{\Om}(x,y)\ =\ \text{inf}\ \{l_s(\gamma)\mid \gamma \in \mathcal S_\Om(x,y)\} ,
\]
we obtain a distance on $\Om$, called the \emph{Carnot-Carath\'eodory distance on $\Om$}, associated with the system $X$. When $\Om = \Rn$, we write $d(x,y)$ instead of $d_{\Rn}(x,y)$. It is clear that $d(x,y) \leq d_\Om(x,y)$, $x, y\in \Om$, for every connected open set $\Om \subset \Rn$. In \cite{NSW} it was proved that for every connected $\Om \subset \subset \Rn$ there exist $C, \epsilon >0$ such that
\begin{equation}\label{CCeucl}
C\ |x - y|\ \leq d_\Om(x,y)\ \leq C^{-1}\ |x - y|^\epsilon , \quad\quad\quad x, y \in \Om .
\end{equation}

This gives $d(x,y)\ \leq C^{-1} |x - y|^\epsilon$, $x, y\in \Om$, and therefore
\[
i: (\Rn, |\cdot|)\to (\Rn, d) \quad\quad\quad is\,\ continuous .
\]

It is easy to see that also the continuity of the opposite inclusion holds \cite{GN1}, hence the metric and the Euclidean topology are compatible.

For $x\in \Rn$ and $r>0$, we let $B_d(x,r)\ =\ \{y\in \Rn\mid d(x,y) < r \}$.
The basic properties of these balls were established by Nagel, Stein and Wainger in their seminal paper \cite{NSW}. Denote by $Y_1,...,Y_l$ the collection of the $X_j$'s and of those commutators which are needed to generate $\Rn$. A formal ``degree" is assigned to each $Y_i$, namely the corresponding order of the commutator. If $I = (i_1,...,i_n), 1\leq i_j\leq l$ is a $n$-tuple of integers, following \cite{NSW} we let $d(I) = \sum_{j=1}^n deg(Y_{i_j})$, and $a_I(x) = \text{det}\ (Y_{i_1},...,Y_{i_n})$. The \emph{Nagel-Stein-Wainger polynomial} is defined by
\begin{equation}\label{pol}
\Lambda(x,r)\ =\ \sum_I\ |a_I(x)|\ r^{d(I)}, \quad\quad\quad\quad r > 0.
\end{equation}

For a given bounded open set $U\subset \Rn$, we let
\begin{equation}\label{Q}
Q\ =\ \text{sup}\ \{d(I)\mid \ |a_I(x)| \ne 0, x\in U\},\quad\quad Q(x)\ =\ \text{inf}\ \{d(I)\mid |a_I(x)| \ne 0\} ,
\end{equation}
and notice that $n\leq Q(x)\leq Q$. The numbers $Q$ and $Q(x)$ are respectively called the \emph{local homogeneous dimension} of $U$ and the homogeneous dimension at $x$ with respect to the system $X$.

\medskip
\begin{thrm}[\textbf{\cite{NSW}}]\label{T:db}
For every bounded set $U\subset\Rn$, there exist
constants $C, R_o>0$ such that, for any $x\in U$, and $0 < r \leq R_o$,
\begin{equation}\label{nsw2}
C\ \Lambda(x,r)\ \leq\ |B_d(x,r)|\ \leq\  C^{-1}\ \Lambda(x,r).
\end{equation}
As a consequence, one has with $C_1 = 2^Q$
\begin{equation}\label{dc}
|B_d(x,2r)|\ \leq\ C_1\ |B_d(x,r)| \qquad\text{for every}\quad x\in U\quad\text{and}\quad 0< r \leq R_o.
\end{equation}
\end{thrm}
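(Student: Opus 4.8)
The plan is to establish the volume comparison \eqref{nsw2} first, since the doubling inequality \eqref{dc} drops out of it by an elementary estimate on the polynomial $\Lambda$. Indeed, by the definition \eqref{Q} of the local homogeneous dimension $Q$, every $n$-tuple $I$ with $a_I\not\equiv 0$ on $U$ satisfies $d(I)\le Q$; hence from \eqref{pol}, $\Lambda(x,2r)=\sum_I|a_I(x)|\,2^{d(I)}r^{d(I)}\le 2^{Q}\,\Lambda(x,r)$ for every $x\in U$ and $r>0$. Combining this with the two inequalities in \eqref{nsw2} gives $|B_d(x,2r)|\le C^{-1}\Lambda(x,2r)\le C^{-1}2^{Q}\Lambda(x,r)\le C^{-2}2^{Q}|B_d(x,r)|$, i.e.\ \eqref{dc} with doubling constant a fixed multiple of $2^{Q}$. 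Thus everything reduces to \eqref{nsw2}, which I would prove by the Nagel--Stein--Wainger method of exponential coordinates adapted to a \emph{dominant} $n$-tuple.

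Fix $x\in U$ and $0<r\le R_o$. Among the finitely many $n$-tuples $I=(i_1,\dots,i_n)$, $1\le i_j\le l$, pick one, $I_o=I_o(x,r)$, maximizing $|a_I(x)|\,r^{d(I)}$; by H\"ormander's condition \eqref{frc} some $a_I(x)\ne 0$, and since there are only finitely many, say $N$, such tuples, $|a_{I_o}(x)|\,r^{d(I_o)}\le\Lambda(x,r)\le N\,|a_{I_o}(x)|\,r^{d(I_o)}$. With $Y_{i_1},\dots,Y_{i_n}$ the corresponding vector fields and $\exp(sY)$ the time-$s$ flow of $Y$, I would analyze
\[
\Phi_x(u)\ =\ \exp(u_n Y_{i_n})\circ\cdots\circ\exp(u_1 Y_{i_1})(x),\qquad u=(u_1,\dots,u_n),
\]
on the twisted box $\Bux(r)=\{u:\ |u_j|<\delta\, r^{deg(Y_{i_j})}\}$, where $\delta>0$ is a small structural constant. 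The goal is to prove: (a) $\Phi_x$ is, on $\Bux(r)$, a diffeomorphism onto its image with controlled distortion; (b) the ball-box inclusions $B_d(x,cr)\subseteq\Phi_x(\Bux(r))\subseteq B_d(x,Cr)$; and (c) $|\det D\Phi_x(u)|$ is comparable to $|a_{I_o}(x)|$ uniformly for $u\in\Bux(r)$. Given these, the change of variables formula yields $|\Phi_x(\Bux(r))|\simeq|a_{I_o}(x)|\prod_j r^{deg(Y_{i_j})}=|a_{I_o}(x)|\,r^{d(I_o)}\simeq\Lambda(x,r)$, and (b) then traps $|B_d(x,r)|$ between fixed multiples of $\Lambda(x,r)$, which is \eqref{nsw2}.

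The inclusion $\Phi_x(\Bux(r))\subseteq B_d(x,Cr)$ is the soft half: using the Baker--Campbell--Hausdorff formula one realizes the flow of a degree-$k$ commutator $Y$ as a limit of concatenated flows of $X_1,\dots,X_m$, so $\exp(sY)(x)$ is reachable from $x$ along a sub-unit curve of length comparable to $|s|^{1/k}$; chaining the $n$ flows in $\Phi_x$ then joins every point of $\Bux(r)$ to $x$ by a sub-unit path of total length comparable to $r$. The two genuinely delicate points — where I expect essentially all the effort to go — are the reverse inclusion $B_d(x,cr)\subseteq\Phi_x(\Bux(r))$ and the Jacobian estimate (c). The reverse inclusion has to hold \emph{uniformly} in $x$ and $r\le R_o$, and this is precisely where the maximality defining $I_o$ is exploited: roughly, if a sub-unit curve issuing from $x$ escaped $\Phi_x(\Bux(r))$ before time of order $r$, one could produce a competing $n$-tuple $I$ with $|a_I|\,r^{d(I)}$ strictly larger than $|a_{I_o}(x)|\,r^{d(I_o)}$, contradicting the choice of $I_o$. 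For (c), one first computes $D\Phi_x(0)=(Y_{i_1}(x),\dots,Y_{i_n}(x))$, so $\det D\Phi_x(0)=a_{I_o}(x)$, and then must show the determinant varies by at most a bounded factor as $u$ runs over the whole twisted box; this rests on Gronwall-type estimates along the defining flows and is exactly where the boundedness of $U$ and the smallness of $r\le R_o$ are used, since the coefficients of the $X_j$, of the commutators $Y_i$, and the functions $a_I$ are all uniformly bounded together with their derivatives on a fixed neighborhood of $U$, so moving a point along a flow for time of order $r^{deg(Y_{i_j})}$ distorts the frame $\{Y_{i_k}\}$ by a factor that is made close to $1$ by taking $\delta$ small. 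Assembling (a), (b), (c) with the change of variables gives \eqref{nsw2}, and \eqref{dc} follows as above.
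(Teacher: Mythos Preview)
The paper does not give its own proof of this theorem: it is stated with attribution to \cite{NSW} and used as a black box. Your outline is essentially the Nagel--Stein--Wainger argument (dominant $n$-tuple, exponential coordinates, ball-box inclusions, Jacobian control), so there is nothing to compare against here; your reduction of \eqref{dc} to \eqref{nsw2} via $\Lambda(x,2r)\le 2^{Q}\Lambda(x,r)$ is correct, though note it yields a doubling constant $C^{-2}2^{Q}$ rather than the bare $2^{Q}$ the paper writes.
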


\medskip

The numbers $C_1, R_o$ in \eqref{dc} will be referred to as the \emph{characteristic local parameters} of $U$. Because of (2.2), if we let
\begin{equation}\label{E}
E(x,r)\ =\ \frac{\Lambda(x,r)}{r^{2}} ,
\end{equation}
then the function $r \to E(x,r)$ is strictly increasing. We denote
by $F(x,\cdot)$ the  inverse function of $E(x,\cdot)$, so that
$F(x,E(x,r)) = r$. Let $\Gamma(x,y)=\Gamma(y,x)$ be the positive
fundamental solution of the sub-Laplacian
\[
\mathcal L\ =\ \sum_{j=1}^m X_j^*X_j ,
\]
and consider its level sets
\[
\Om(x,r)\ =\ \left\{y\in \Rn \mid \Gamma(x,y) > \frac{1}{r}\right\} .
\]

The following definition plays a key role in this paper.

\medskip

\begin{dfn}\label{D:lballs}
For every $x\in \Rn$, and $r>0$, the set
\[
B(x,r)\ =\ \left\{y\in \Rn \mid \Gamma(x,y) > \frac{1}{E(x,r)}\right\}
\]
will be called the $X$-\emph{ball}, centered at $x$ with radius $r$.
\end{dfn}

\medskip

We note explicitly that
\[
B(x,r)\ =\ \Om(x,E(x,r)),\quad \quad \text{and that} \quad \quad \Om(x,r)\ =\ B(x,F(x,r)).
\]

One of the main geometric properties of the $X$-balls, is that they
are equivalent to the Carnot-Carath\'eodory balls. To see this, we
recall the following  important result, established independently in
\cite{NSW}, \cite{SC}. Hereafter, the notation $Xu=(X_1u,...,X_mu)$
indicates the sub-gradient of a function $u$, whereas
$|Xu|=(\sum_{j=1}^m(X_ju)^2)^\frac{1}{2}$ will denote its length.

\medskip

\begin{thrm}\label{T:NSW}
 Given a bounded set $U\subset \Rn$, there exists $R_o$, depending on $U$ and on $X$, such that for $x\in U,\ 0<d(x,y)\leq R_o$, one has for $s\in \mathbb{N}\cup\{0\}$, and for some constant $C=C(U, X, s) >0$
\begin{align}\label{gradgamma}
& |X_{j_1}X_{j_2}...X_{j_s}\Gamma(x,y)|\ \leq\ C^{-1}\ \frac{d(x,y)^{2-s}}{|B_d(x,d(x,y))|},
\\
& \Gamma(x,y)\ \geq \ C\ \frac{d(x,y)^2}{|B_d(x,d(x,y))|} .
\notag
\end{align}
In the first inequality in \eqref{gradgamma}, one has $j_i\in \{1,...,m\}$ for $i=1,...,s$, and $X_{j_i}$ is allowed to act on either $x$ or $y$.
\end{thrm}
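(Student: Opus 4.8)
The plan is to obtain both estimates from two-sided Gaussian bounds for the heat kernel of $\mathcal L$, together with an elementary integration in time. Fix a bounded open set $U'$ with $U\subset\subset U'$, a small constant $T_o>0$, and write $\rho=d(x,y)$; throughout, $x\in U$ and $0<\rho\le R_o$ with $R_o$ small. Let $h(t,x,y)=h(t,y,x)>0$ be the kernel of $e^{-t\mathcal L}$, so that $\Gamma(x,y)=\int_0^\infty h(t,x,y)\,dt$, up to a remainder that is smooth and bounded on the region of interest (and that can equally well be produced by a local parametrix). The analytic core is the family of bounds, valid for $x\in U$ and $0<t\le T_o$,
\[
\frac{c_1}{|B_d(x,\sqrt t)|}\,e^{-c_2\rho^2/t}\ \le\ h(t,x,y)\ \le\ \frac{C_1}{|B_d(x,\sqrt t)|}\,e^{-\rho^2/(C_2 t)}\ ,\qquad |X_{j_1}\!\cdots X_{j_s}h(t,x,y)|\ \le\ \frac{C}{t^{s/2}|B_d(x,\sqrt t)|}\,e^{-\rho^2/(C_2 t)}\ ,
\]
where each $X_{j_i}$ may act on $x$ or on $y$. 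I would quote these from the parametrix construction of \cite{SC} (Rothschild--Stein lifting and the non-isotropic dilations on the approximating free nilpotent groups), or derive them from the doubling property of Theorem \ref{T:db} together with a sub-elliptic Poincar\'e inequality. This is the step I expect to be the real obstacle, the \emph{derivative} estimates in particular being the delicate point; everything else is a deduction from it.

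For the upper bound I would differentiate under the integral and estimate $\int_0^\infty t^{-s/2}|B_d(x,\sqrt t)|^{-1}e^{-\rho^2/(C_2 t)}\,dt$, splitting it at $t=\rho^2$ and at $t=T_o$. The two volume-growth facts needed follow from Theorem \ref{T:db} and the explicit polynomial \eqref{pol}: since every exponent $d(I)$ occurring in $\Lambda(x,\cdot)$ satisfies $n\le Q(x)\le d(I)\le Q$, one has $|B_d(x,\lambda r)|\ge c\lambda^{Q}|B_d(x,r)|$ for $0<\lambda\le1$ and $|B_d(x,\lambda r)|\ge c\lambda^{n}|B_d(x,r)|$ for $\lambda\ge1$. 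On $0<t\le\rho^2$ I use the first inequality with $r=\rho$ and substitute $u=\rho^2/t$; the Gaussian factor turns the integral into $\int_1^\infty u^{(s+Q)/2-2}e^{-u/C_2}\,du<\infty$, so the net contribution is $\le c\,\rho^{2-s}/|B_d(x,\rho)|$. On $\rho^2\le t\le T_o$ I drop the exponential, apply the second inequality, and use that $\int_{\rho^2}^{T_o}t^{-(s+n)/2}\,dt$ converges at its lower endpoint precisely because $n\ge3$ makes $(s+n)/2>1$; again the contribution is $\le c\,\rho^{2-s}/|B_d(x,\rho)|$. The tail $\int_{T_o}^\infty h\,dt$ is bounded and smooth on the relevant region, and since $\rho\le R_o$ and $|B_d(x,\rho)|\le C\rho^{n}$ (each monomial $r^{d(I)}\le r^{n}$ for $r\le1$) the quantity $\rho^{2-s}/|B_d(x,\rho)|$ stays bounded below there, so the tail is absorbed. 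This yields the first inequality in \eqref{gradgamma}.

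For the lower bound on $\Gamma$ I would keep only the slice $t\in[\rho^2,2\rho^2]$ of the time integral and invoke the lower Gaussian bound:
\[
\Gamma(x,y)\ \ge\ \int_{\rho^2}^{2\rho^2}h(t,x,y)\,dt\ \ge\ c_1\int_{\rho^2}^{2\rho^2}\frac{e^{-c_2\rho^2/t}}{|B_d(x,\sqrt t)|}\,dt\ \ge\ \frac{c\,\rho^2}{|B_d(x,2\rho)|}\ \ge\ \frac{c'\,\rho^2}{|B_d(x,\rho)|}\ ,
\]
using $|B_d(x,\sqrt t)|\le|B_d(x,2\rho)|$ for $t\le2\rho^2$ and the doubling inequality \eqref{dc} in the last two steps. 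This is the second inequality in \eqref{gradgamma}. In summary, all the content sits in the Gaussian heat kernel estimates quoted above — this is precisely what \cite{NSW} and \cite{SC} establish, by lifting and scaling — while the passage to $\Gamma$ is a routine Laplace-transform computation whose only subtleties are the two monomial volume inequalities and the use of $n\ge3$.
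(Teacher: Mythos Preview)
The paper does not prove this theorem; it is quoted verbatim from \cite{NSW} and \cite{SC} as a known result (``We next recall the following important result, established independently in \cite{NSW}, \cite{SC}''). So there is no proof in the paper to compare against.

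Your approach via time-integration of Gaussian heat-kernel bounds is a legitimate and well-known alternative to the original parametrix constructions in \cite{NSW}, \cite{SC}, and the computations you sketch are correct: the split at $t=\rho^2$ together with the two volume inequalities $|B_d(x,\lambda r)|\ge c\lambda^Q|B_d(x,r)|$ for $\lambda\le1$ and $|B_d(x,\lambda r)|\ge c\lambda^n|B_d(x,r)|$ for $\lambda\ge1$ does exactly what you say, and the lower bound from the slice $t\in[\rho^2,2\rho^2]$ is the standard trick. Two caveats. First, the Gaussian bounds you invoke (especially the derivative estimates on $h$) are not in \cite{SC}; that paper treats the elliptic fundamental solution directly. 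The heat-kernel estimates you need are due to Jerison--S\'anchez-Calle and Kusuoka--Stroock, and they are themselves at least as deep as the original proofs of Theorem \ref{T:NSW}; so your argument reduces one hard theorem to another of comparable difficulty rather than giving an independent proof. Second, the identity $\Gamma(x,y)=\int_0^\infty h(t,x,y)\,dt$ and the treatment of the tail $t\ge T_o$ require care for general H\"ormander systems on $\Rn$ (large-time behavior of $h$ is not automatic); your parenthetical about a local parametrix is the right fix, but it should be stated as the actual route rather than an aside.
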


\medskip

In view of \eqref{dc}, \eqref{gradgamma}, it is now easy to recognize that, given a bounded set $U\subset \Rn$, there exists $a>1$, depending on $U$ and $X$, such that
\begin{equation}\label{equivi}
B_d(x,a^{-1}r)\ \subset\  B(x,r)\ \subset\ B_d(x,ar),
\end{equation}
for $x\in U, 0<r\leq R_o$. We observe that, as a consequence of \eqref{nsw2}, and of \eqref{gradgamma}, one has
\begin{equation}\label{F}
C\ d(x,y)\ \leq\  F\left(x,\frac{1}{\Gamma(x,y)}\right)\ \leq\  C^{-1}\ d(x,y),
\end{equation}
for all $x\in U, 0<d(x,y)\leq R_o$.

We observe that for a Carnot group $\bG$ of step $k$, if $\bg=V_1\oplus ...\oplus V_k$ is a stratification of the Lie algebra of $\bG$, then one has $\Lambda(x,r)=const\ r^Q$, for every $x\in \bG$ and every $r>0$, with $Q=\sum_{j=1}^k\;j\;dimV_j$, the homogeneous dimension of the group $\bG$.
In this case  $Q(x) \equiv Q$.

In the sequel the following properties of a Carnot-Carath\'eodory space will be useful.
\medskip

\begin{prop}\label{P:compact}
$(\Rn,d)$ is locally compact.  Furthermore, for any
bounded set $U\subset\Rn$ there exists $R_o=R_o(U)>0$ such that the closed
balls $\bar B(x_o,R)$, with $x_o\in U$ and $0<R<R_o$, are compact.
\end{prop}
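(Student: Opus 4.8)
The plan is to deduce the proposition from the local compactness of Euclidean space, using the fact recalled above that the metric topology of $(\Rn,d)$ coincides with the Euclidean one, supplemented by a quantitative trapping of small $d$-balls inside small Euclidean balls. Throughout, $B^{\mathrm{eucl}}(p,\rho)$ denotes the Euclidean ball centered at $p$ of radius $\rho$.

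The decisive preliminary step is the following claim. Given a bounded set $U\subset\Rn$, let $K$ be the closed Euclidean $1$-neighborhood of $\overline U$ and set $M_0=\sup_{z\in K}\big(\sum_{j=1}^m|X_j(z)|^2\big)^{1/2}<\infty$, which is finite since the $X_j$ are smooth; with $R_o:=1/(1+M_0)$ I claim that for every $x_o\in U$ and $0<R<R_o$,
\[
\overline{B_d(x_o,R)}\ \subset\ \{\,z\in\Rn:\,|z-x_o|\le M_0R\,\}\ \subset\ \mathrm{int}\,K .
\]
To prove it, fix $y$ with $d(x_o,y)<R$ and a sub-unitary curve $\gamma:[0,T]\to\Rn$ joining $x_o$ to $y$ with $l_s(\gamma)=T<R$; choosing $\xi=\gamma'(t)$ in the defining inequality of sub-unitarity and applying Cauchy--Schwarz gives $|\gamma'(t)|\le\big(\sum_{j=1}^m|X_j(\gamma(t))|^2\big)^{1/2}$ at each point of differentiability. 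A standard continuation argument now does the job: as long as $\gamma$ remains in $B^{\mathrm{eucl}}(x_o,1)\subset K$ one has $|\gamma(t)-x_o|\le\int_0^t|\gamma'(s)|\,ds\le M_0 t\le M_0 T<M_0R_o<1$, and this very bound prevents $\gamma$ from ever reaching the boundary of that ball; hence $\gamma([0,T])\subset B^{\mathrm{eucl}}(x_o,M_0R)$ and in particular $|y-x_o|<M_0R$. Passing to $d$-closures (which coincide with Euclidean closures, the topologies being the same) yields the displayed inclusion. If $\overline{B(x_o,R)}$ is instead read as the closure of the $X$-ball of Definition \ref{D:lballs}, one first invokes \eqref{equivi} to obtain $B(x_o,R)\subset B_d(x_o,aR)$ and then shrinks $R_o$ by the factor $a$.

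Granted the claim, both assertions follow quickly. For local compactness, fix $p\in\Rn$: the set $\overline{B^{\mathrm{eucl}}(p,1)}$ is compact for the Euclidean topology, hence also for $d$, since the two topologies agree and compactness is a topological invariant, and it is a $d$-neighborhood of $p$ because it contains the $d$-open set $B^{\mathrm{eucl}}(p,1)\ni p$; thus every point of $(\Rn,d)$ has a compact neighborhood. For the closed balls, keep $U,K,M_0,R_o$ as above; for $x_o\in U$ and $0<R<R_o$ the set $\overline{B(x_o,R)}$ is $d$-closed and, by the claim, contained in the Euclidean-compact (hence $d$-compact) set $\overline{B^{\mathrm{eucl}}(x_o,M_0R)}$, and a $d$-closed subset of a $d$-compact set is $d$-compact.

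The one step requiring genuine care is the continuation argument inside the claim, i.e.\ verifying that a sub-unitary curve of sub-unit length less than $1/(1+M_0)$ issuing from a point of $U$ cannot escape the compact set $K$ on which the speed bound $|\gamma'|\le M_0$ is available; this is exactly what makes the argument non-circular. Everything else is a formal consequence of the compatibility of the two topologies together with the local compactness of $(\Rn,|\cdot|)$.
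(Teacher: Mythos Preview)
Your proof is correct. The paper itself does not supply a proof of this proposition: it is stated as a known fact, with the subsequent remark pointing to \cite{GN1} for the failure of the global version. So there is no ``paper's own proof'' to compare against.

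That said, your argument is the standard one, and in fact the key step---the inclusion $B_d(x_o,R)\subset B^{\mathrm{eucl}}(x_o,M_0R)$---is essentially the left-hand inequality in \eqref{CCeucl}, which the paper records from \cite{NSW}. One could simply quote \eqref{CCeucl} together with the compatibility of topologies and be done; your choice to reprove the speed bound $|\gamma'(t)|\le\big(\sum_j|X_j(\gamma(t))|^2\big)^{1/2}$ from the definition of sub-unitarity makes the argument self-contained. The continuation step you flag as delicate is handled correctly: the a priori bound $M_0T<M_0R_o<1$ traps the curve strictly inside the region where the bound on the coefficients is valid, so no circularity arises. Your remark on the two possible readings of $\bar B(x_o,R)$ (metric ball versus $X$-ball) is also appropriate given the paper's notational conventions, and the reduction via \eqref{equivi} is the right fix.
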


\medskip

\begin{rmrk}
Compactness of balls of large radii may fail in general, see
\cite{GN1}.  However, there are important cases in which
Proposition~\ref{P:compact} holds globally, in the sense that one
can take $U$ to coincide with the whole ambient space and
$R_o=\infty$. One example is that of Carnot groups. Another
interesting case is that when the vector fields $X_j$ have
coefficients which are globally Lipschitz, see \cite{GN1},
\cite{GN2}. Henceforth, for any given bounded set $U\subset\Rn$ we
will always assume that the local parameter $R_o$ has been chosen so
to accommodate Proposition \ref{P:compact}.
\end{rmrk}

\section{\textbf{The Dirichlet problem}}\label{S:DP}

In what follows, given a system $X = \{X_1,...,X_m\}$ of $C^\infty$
vector fields in $\Rn$ satisfying \eqref{frc}, and an open set
$D\subset \Rn$, for $1\leq p\leq \infty$ we denote by $\mathcal
L^{1,p}(D)$ the Banach space $\{f\in L^p(D)\mid X_jf\in L^p(D),
j=1,...,m\}$ endowed with its natural norm
\[
||f||_{\mathcal L^{1,p}(D)}\ =\ ||f||_{L^p(D)}\ +\ \sum_{j=1}^m\ ||X_jf||_{L^p(D)}\ .
\]

The local space $\mathcal L^{1,p}_{loc}(D)$ has the usual meaning,
whereas for $1\leq p < \infty$ the space $\mathcal L^{1,p}_0(D)$ is
defined as the closure of $C^\infty_0(D)$ in the norm of $\mathcal
L^{1,p}(D)$. A function $u\in \mathcal L^{1,2}_{loc}(D)$ is called
\emph{harmonic} in $D$ if for any $\phi\in C^\infty_0(D)$ one has
\[
\int_D\ \sum_{j=1}^m X_j u X_j\phi\ dx\ =\ 0\ ,
\]
i.e., a harmonic function is a weak solution to the equation
$\mathcal L u = \sum_{j=1}^m X^*_jX_j u = 0$. By H\"ormander's
hypoellipticity theorem \cite{H}, if $u$ is harmonic in $D$, then
$u\in C^\infty(D)$. Given a bounded open set $D\subset \Rn$, and a
function $\phi\in \mathcal L^{1,2}(D)$, the Dirichlet problem
consists in finding  $u\in \mathcal L^{1,2}_{loc}(D)$ such that
\begin{equation}\label{DP2}
\begin{cases}
\mathcal Lu\ =\ 0 \quad\quad\quad\text{in}\quad D \ ,
\\
u\ -\ \phi\ \in \mathcal L^{1,2}_0(D)\ .
\end{cases}
\end{equation}

By adapting classical arguments, see for instance \cite{GT}, one can
show that there exists a unique solution $u\in \mathcal L^{1,2}(D)$
to \eqref{DP2}. If we assume, in addition, that $\phi\in C(D)$, in
general we cannot say that the function $u$ takes up the boundary
value $\phi$ with continuity. A Wiener type criterion for
sub-Laplacians was proved in \cite{NS}. Subsequently, using the
Wiener series in \cite{NS}, Citti obtained in \cite{Citti} an
estimate of the modulus of continuity at the boundary of the
solution of \eqref{DP2}. In \cite{D} an integral Wiener type
estimate at the boundary was established for a general class of
quasilinear equations having $p-$growth in the sub-gradient. Since
such estimate is particularly convenient for the applications, we
next state it for the special case $p=2$ of linear equations.

\begin{thrm}\label{T:Wiener}
Let $\phi\in \mathcal L^{1,2}(D)\cap C(\overline D)$. Consider the solution $u$ to \eqref{DP2}. There exist $C = C(X)>0$, and $R_o =R_o(D,X)>0$, such that given $x_o\in \partial D$, and $0<r<R<R_o/3$, one has
\begin{align*}
& osc\ \{u, D\cap B_d(x_o,r)\}\ \leq\ osc\ \{\phi, \partial D \cap \overline B_d(x_o,2R)\}
\\
& +\ osc\ (\phi, \partial D)\ \exp\ \left\{-\ C\
\int_r^R\left[\frac{cap_X\ (D^c\cap \overline B_d(x_o,t),
B_d(x_o,2t))}{cap_X\ (\overline B_d(x_o,t), B_d(x_o,2t))}\right]\
\frac{dt}{t}\right\} .
\end{align*}

\end{thrm}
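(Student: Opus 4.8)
The plan is to obtain Theorem \ref{T:Wiener} as the linear case $p=2$ of the integral Wiener-type oscillation estimate of \cite{D} (itself built on the Wiener criterion of \cite{NS} and the modulus-of-continuity estimate of \cite{Citti}); what follows is a sketch of how the pieces assemble. First I would normalize: replacing $u$ by $\pm(u-\phi(x_o))$ and $\phi$ by $\pm(\phi-\phi(x_o))$ reduces everything to bounding $\sup$ and $\inf$ of a solution on $D\cap B_d(x_o,r)$ against the boundary oscillation on $\p D\cap\overline B_d(x_o,2R)$ and a capacitary factor. Throughout, the boundary condition ``$u=\phi$ on $\p D$'' is read in the trace sense $u-\phi\in\mathcal L^{1,2}_0(D)$; this is exactly what legitimizes the zero-extension step below.

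The engine is a single-scale decay estimate. Fix $0<t<R_o/3$ and a nonnegative weak solution $v$ of $\mathcal Lv=0$ in $D\cap B_d(x_o,2t)$ which, in the trace sense, vanishes on $\p D\cap B_d(x_o,2t)$; I would extend $v$ by $0$ on $B_d(x_o,2t)\setminus D$, so that the extension is a nonnegative weak subsolution on $B_d(x_o,2t)$. Then I would combine two ingredients: (a) the subelliptic local boundedness (Moser) estimate $\sup_{B_d(x_o,t)}v\le C\,(|B_d(x_o,2t)|^{-1}\int_{B_d(x_o,2t)}v^2)^{1/2}$, available from the De Giorgi--Nash--Moser theory for $\mathcal L$ underlying \cite{Citti} and the potential theory of \cite{B}, \cite{CDG1}; and (b) a capacitary Poincar\'e inequality in the Carnot--Carath\'eodory setting --- a consequence of the subelliptic Poincar\'e/Sobolev inequalities and of the fundamental-solution bounds of Theorem \ref{T:NSW} --- applied to $w=(\sup_{B_d(x_o,2t)}v)-v$, which vanishes on $\overline B_d(x_o,t)\setminus D$. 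Using also $cap_X(\overline B_d(x_o,t),B_d(x_o,2t))\approx|B_d(x_o,t)|/t^2$, so the normalizing capacity is a genuine comparable quantity, (a) and (b) produce a constant $\theta=\theta(X)\in(0,1)$ with
\[
osc\,\{u,D\cap B_d(x_o,t)\}\ \le\ \big(1-\theta\,\gamma(t)\big)\,osc\,\{u,D\cap B_d(x_o,2t)\}\ +\ C\,osc\,\{\phi,\p D\cap\overline B_d(x_o,2t)\},
\]
where $\gamma(t)=cap_X(\overline B_d(x_o,t)\setminus D,B_d(x_o,2t))/cap_X(\overline B_d(x_o,t),B_d(x_o,2t))$; here one applies the decay separately to $v=M_t-u$ and $v=u-m_t$, with $M_t,m_t$ the extremes of $u$ on the smaller ball after subtracting off the oscillation of $\phi$ on the boundary piece.

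I would then iterate along $t_j=2^{-j}R$, $j=0,\dots,N$, with $2^{-N}R\simeq r$. Telescoping the multiplicative factors and using $\prod_j(1-\theta\gamma(t_j))\le\exp(-\theta\sum_j\gamma(t_j))$, while the accumulated boundary terms are dominated --- after the first one, $osc\,\{\phi,\p D\cap\overline B_d(x_o,2R)\}$ --- by a geometric-type series times $osc\,(\phi,\p D)$, yields a bound of the asserted form but with $\exp(-\theta\sum_j\gamma(t_j))$ in place of the integral. Finally, the doubling property \eqref{dc} of $r\mapsto|B_d(x_o,r)|$ together with the monotonicity and subadditivity of $cap_X$ show that $\gamma$ has controlled variation over each dyadic block $[t_{j+1},t_j]$, so that $\sum_{j=0}^N\gamma(t_j)\ge c\int_r^R\gamma(t)\,dt/t$; this converts the discrete exponent into the integral in the statement and fixes the dependence of $C,R_o$.

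The step I expect to be the main obstacle --- and the reason one should really invoke \cite{NS}, \cite{D} rather than reprove it from scratch --- is the capacitary Poincar\'e inequality in (b), together with the two-sided bound $cap_X(\overline B_d(x_o,t),B_d(x_o,2t))\approx|B_d(x_o,t)|/t^2$ in the general H\"ormander framework; once these are available, the De Giorgi--Nash--Moser iteration and the dyadic summation are routine. A secondary point to handle with care is the trace interpretation of the boundary data, needed to justify extending solutions by zero across $\p D$, and the insistence that all balls and capacities be taken with respect to the intrinsic metric so that Theorems \ref{T:db} and \ref{T:NSW} apply.
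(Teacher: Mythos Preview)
Your proposal is correct and matches the paper's treatment: the paper states Theorem \ref{T:Wiener} without proof, presenting it explicitly as the special case $p=2$ of the integral Wiener-type estimate established in \cite{D}. Your additional sketch of the underlying argument (single-scale decay via Moser plus capacitary Poincar\'e, dyadic iteration, conversion of the sum to an integral) goes beyond what the paper actually provides but is consistent with the standard route taken in \cite{D}, \cite{NS}, \cite{Citti}.
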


In Theorem \ref{T:Wiener}, given a condenser $(K,\Om)$, we have
denoted by $cap_X(K,\Om)$ its Dirichlet capacity with respect to the
subelliptic energy $\mathcal E_X(u) = \int_\Om |Xu|^2 dx$ associated
with the system $X=\{X_1,...,X_m\}$. For the relevant properties of
such capacity we refer the reader to \cite{D}, \cite{CDG4}. A point
$x_o\in
\partial D$ is called regular if, for any $\phi\in \mathcal
L^{1,2}(D)\cap C(\overline D)$, one has
\begin{equation}\label{cont}
\lim_{x\to x_o}\ u(x)\ =\ \phi(x_o)\ .
\end{equation}

If every $x_o\in \partial D$ is regular, we say that $D$ is regular.
Similarly to the classical case, in the study of the Dirichlet
problem an important notion is that of generalized, or
Perron-Wiener-Brelot (PWB) solution to \eqref{DP2}. For operators of
H\"ormander type the construction of a PWB solution was carried in
the pioneering work of Bony \cite{B}, where the author also proved
that sub-Laplacians satisfy an elliptic type strong maximum
principle. We state next one of the main results in \cite{B} in a
form which is suitable for our purposes.

\begin{thrm}\label{T:MP}
Let $ D\subset \Rn$ be a connected, bounded open set, and $\phi\in
C(\partial D)$. There exists a unique harmonic function $H_{\phi}^{
D}$ which solves \eqref{DP} in the sense of Perron-Wiener-Brelot.
Moreover, $H_{\phi}^{ D}$ satisfies
\begin{equation}\label{pwb}
\underset{ D}{sup}\ |H_{\phi}^{ D}|\ \leq\ \underset{\partial{
D}}{sup}\ |\phi|\ .
\end{equation}
\end{thrm}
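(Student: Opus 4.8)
The plan is to prove Theorem \ref{T:MP} by the classical Perron method, transplanted to the potential theory of the sub-Laplacian $\mathcal{L}$ established by Bony \cite{B}. Fix $\phi\in C(\partial D)$ and set $M=\sup_{\partial D}|\phi|<\infty$. Call a lower semicontinuous function $v$ on $D$, bounded below, \emph{$\mathcal{L}$-superharmonic} if it satisfies the sub-mean value inequality on $X$-balls $B(x,\rho)\Subset D$ (equivalently, $\mathcal{L}v\le 0$ in the distributional sense after regularization), and define $\mathcal{L}$-subharmonic functions dually. Introduce the upper Perron class $\mathcal{U}_\phi$ of $\mathcal{L}$-superharmonic $v$ with $\liminf_{x\to y}v(x)\ge\phi(y)$ for every $y\in\partial D$, the lower class $\mathcal{W}_\phi$ of $\mathcal{L}$-subharmonic $u$ with $\limsup_{x\to y}u(x)\le\phi(y)$, and put $\overline H^D_\phi=\inf\{v:v\in\mathcal{U}_\phi\}$, $\underline H^D_\phi=\sup\{u:u\in\mathcal{W}_\phi\}$. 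Since the constants $\pm M$ are $\mathcal{L}$-harmonic, hence respectively super- and subharmonic, and clearly belong to $\mathcal{U}_\phi$ and $\mathcal{W}_\phi$, both classes are nonempty. Moreover, for $u\in\mathcal{W}_\phi$ and $v\in\mathcal{U}_\phi$ the function $u-v$ is $\mathcal{L}$-subharmonic with $\limsup_{x\to y}(u-v)(x)\le\phi(y)-\phi(y)=0$ at every $y\in\partial D$, so Bony's strong maximum principle \cite{B} gives $u\le v$ on $D$; taking infimum and supremum yields $-M\le\underline H^D_\phi\le\overline H^D_\phi\le M$. This is already the content of \eqref{pwb}, provided we show $H^D_\phi:=\overline H^D_\phi=\underline H^D_\phi$.

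Next I would verify that $\overline H^D_\phi$ (and symmetrically $\underline H^D_\phi$) is $\mathcal{L}$-harmonic. This is the standard Perron lemma: given $v\in\mathcal{U}_\phi$ and an $X$-ball $B=B(x_0,\rho)\Subset D$ that is a regular set for $\mathcal{L}$ (such sets form a basis of the topology, since by Bony's theory \cite{B} the sheaf of $\mathcal{L}$-harmonic functions turns $\Rn$ into a Brelot harmonic space with a basis of regular sets — alternatively regularity of small $X$-balls follows from the Wiener-type criterion in Theorem \ref{T:Wiener}), replace $v$ on $B$ by the $\mathcal{L}$-harmonic solution in $B$ with boundary data $v|_{\partial B}$. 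The modified function lies again in $\mathcal{U}_\phi$ and does not exceed $v$; a standard diagonal argument, using the Harnack inequality for nonnegative $\mathcal{L}$-harmonic functions (available for H\"ormander operators from \cite{NSW}, \cite{SC}) and the attendant convergence theorem, then exhibits $\overline H^D_\phi$ locally as a monotone limit of $\mathcal{L}$-harmonic functions, hence as an $\mathcal{L}$-harmonic function on $D$; by hypoellipticity \cite{H} it is $C^\infty(D)$.

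Finally, I would establish resolutivity, i.e. $\overline H^D_\phi=\underline H^D_\phi$. The operator $\phi\mapsto\overline H^D_\phi$ is monotone and, since $v\mapsto v+\sup_{\partial D}(\phi-\psi)$ carries $\mathcal{U}_\psi$ into $\mathcal{U}_\phi$, it satisfies $\|\overline H^D_\phi-\overline H^D_\psi\|_{L^\infty(D)}\le\|\phi-\psi\|_{C(\partial D)}$, and likewise for $\underline H^D$; hence the set of resolutive $\phi$ is closed in $C(\partial D)$. It therefore suffices to check resolutivity on a dense subclass, for which I take restrictions $\phi=\Phi|_{\partial D}$ with $\Phi\in C^\infty(\Rn)$: these form an algebra separating points and containing constants, hence a dense subalgebra of $C(\partial D)$ by Stone--Weierstrass. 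For such $\Phi$, let $u\in\mathcal{L}^{1,2}(D)$ be the variational solution of \eqref{DP2}; by Theorem \ref{T:Wiener} the set of irregular boundary points is $\mathcal{L}$-polar, so $u$ is $\mathcal{L}$-harmonic in $D$, continuous up to $\partial D$ with value $\phi$ outside a polar set, and since polar sets carry no $\mathcal{L}$-harmonic measure one obtains $\underline H^D_\phi\ge u\ge\overline H^D_\phi$ by inserting $u\pm\varepsilon$-barriers into the Perron classes. Combined with $\underline H^D_\phi\le\overline H^D_\phi$ this forces $H^D_\phi:=\overline H^D_\phi=\underline H^D_\phi=u$, and uniqueness of the PWB solution is then immediate from the construction, while \eqref{pwb} follows from the two-sided bound of the first step. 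The main obstacle is precisely this last step — showing irregular points are negligible and propagating that fact to the equality of the upper and lower solutions — which is where the sub-elliptic Wiener criterion (Theorem \ref{T:Wiener}) and Bony's balayage theory do the essential work; away from it, once the Harnack inequality, the convergence theorem, and a basis of regular sets are in hand, the argument is the verbatim classical Perron machinery.
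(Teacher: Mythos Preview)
The paper does not supply its own proof of this theorem; it is stated there as a quotation of Bony's result \cite{B}, so there is no argument in the paper to compare against. Your proposal is a reconstruction of the Perron method in the sub-elliptic setting, and its first two paragraphs (nonemptiness and ordering of the Perron classes, harmonicity of the envelopes via Perron modification on a basis of regular sets together with Harnack convergence) are sound, modulo the fact that one really does need Bony's result that the $\mathcal{L}$-harmonic sheaf admits a basis of regular open sets.

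The resolutivity step, however, contains a genuine gap. You invoke Theorem \ref{T:Wiener} to conclude that the irregular boundary points form an $\mathcal{L}$-polar set, but Theorem \ref{T:Wiener} is an oscillation estimate giving regularity at a point when the Wiener integral diverges; it is not a Kellogg-type theorem and says nothing about the size of the set where the integral fails to diverge. Worse, your sentence ``polar sets carry no $\mathcal{L}$-harmonic measure'' presupposes that harmonic measure exists, and harmonic measure is \emph{defined} via the PWB solution whose existence you are proving --- so the argument is circular. Finally, identifying the variational solution of \eqref{DP2} with the PWB solution is itself nontrivial and cannot be used as an intermediate step here.

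The right route is the one you mention in a parenthesis and then abandon: Bony shows in \cite{B} that the sheaf of $\mathcal{L}$-harmonic functions satisfies the Brelot axioms (basis of regular sets, Harnack convergence, existence of a strictly positive superharmonic function --- here the constant $1$), and in any Brelot harmonic space Wiener's resolutivity theorem from axiomatic potential theory yields $\overline H^D_\phi=\underline H^D_\phi$ for every $\phi\in C(\partial D)$ directly. The approximation by restrictions of $C^\infty(\Rn)$ functions and the variational/polar-set detour are neither needed nor, as written, justified.
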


Theorem \ref{T:MP} allows to define the harmonic measure $d\omega^x$
for  $ D$ evaluated at $x\in  D$ as the unique probability measure
on $\partial{ D}$ such that for every $\phi\in C(\partial{ D})$
\[
H^{D}_{\phi}(x)\ =\ \int_{\partial{ D}}\ \phi(y)\ d\omega^x(y), \quad \quad x\in  D.
\]

A uniform Harnack inequality was established, independently, by
several authors, see \cite{X}, \cite{CGL}, \cite{L}: If $u$ is $\mathcal L-$harmonic in $D\subset \R^n$ and non-negative then there exists
$C,a>0$ such that for each ball $B(x,ar)\subset D$ one has
\begin{equation}\label{harnack-inv}
\sup_{B(x,r)} u \le C \inf_{B(x,r)} u .
\end{equation}
 Using such
Harnack principle one sees that for any $x, y\in D$, the measures
$d\omega^x$ and $d\omega^y$ are mutually absolutely continuous. For
the basic properties of the harmonic measure we refer the reader to
the paper \cite{CG}. Here, it is important to recall that, thanks to
the results in \cite{B}, \cite{CG}, the following result of Brelot
type holds.

\begin{thrm}\label{T:Brelot}
A function $\phi$ is resolutive if and only if $\phi\in
L^1(\partial{ D}, d\omega^x)$, for one (and therefore for all) $x\in
D$.
\end{thrm}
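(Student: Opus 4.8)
The plan is to derive this as the subelliptic version of Brelot's classical resolutivity theorem, from Theorem~\ref{T:MP} (which settles continuous data and produces $\omega^x$), the uniform Harnack inequality \eqref{harnack-inv}, and the Perron--Wiener--Brelot apparatus for sub-Laplacians of \cite{B} together with its boundary refinements in \cite{CG}. Recall that for $\phi:\partial D\to[-\infty,+\infty]$ one sets $\overline H_\phi=\inf v$, over all $v$ superharmonic and bounded below in $D$ with $\liminf_{D\ni z\to y}v(z)\ge\phi(y)$ for every $y\in\partial D$, and $\underline H_\phi=-\overline H_{-\phi}$; one always has $\underline H_\phi\le\overline H_\phi$, each of the two is, on the connected set $D$, either $\mathcal L$-harmonic or identically $\pm\infty$, and $\phi$ is \emph{resolutive} precisely when they coincide and the common function is $\mathcal L$-harmonic (all of this from \cite{B}, \cite{CG}). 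Since $\partial D$ is compact metric (Proposition~\ref{P:compact}), $\omega^x$ is a Radon probability measure on it, so every lower semicontinuous bounded-below function on $\partial D$ is the increasing pointwise limit of a sequence in $C(\partial D)$, and for any $\phi$ one has $\overline{\int}_{\partial D}\phi\,d\omega^x=\inf\{\int h\,d\omega^x:\ h\ \text{lower semicontinuous, bounded below},\ h\ge\phi\}$, and dually for $\underline{\int}$ with upper semicontinuous minorants.

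The key point is the identity, valid for \emph{every} $\phi$ and every $x\in D$, that $\overline H_\phi(x)=\overline{\int}_{\partial D}\phi\,d\omega^x$ and, dually, $\underline H_\phi(x)=\underline{\int}_{\partial D}\phi\,d\omega^x$. For ``$\le$'', fix a lower semicontinuous $h\ge\phi$, bounded below, with $\int h\,d\omega^x<\infty$; writing $h=\sup_k\psi_k$ with $\psi_k\in C(\partial D)$ increasing, the functions $H^D_{\psi_k}$ increase and $H^D_{\psi_k}(x)=\int\psi_k\,d\omega^x\uparrow\int h\,d\omega^x$ by Theorem~\ref{T:MP}, so by the Harnack convergence theorem (a consequence of \eqref{harnack-inv}) $\lim_k H^D_{\psi_k}$ is $\mathcal L$-harmonic, and the standard PWB argument (the irregular boundary points being $\omega^x$-negligible, as in \cite{B}, \cite{CG}) yields $\overline H_h(x)\le\int h\,d\omega^x$; since $\overline H_\phi\le\overline H_h$ by monotonicity, an infimum over $h$ gives ``$\le$''. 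For ``$\ge$'', let $v$ belong to the upper class of $\phi$ and put $\phi_v(y):=\liminf_{D\ni z\to y}v(z)$: this is lower semicontinuous with $\phi_v\ge\phi$, and combining the ``$\le$'' just proved for $\phi_v$ with the reverse $\overline H_{\phi_v}(x)\ge\overline H_{\psi'_k}(x)=\int\psi'_k\,d\omega^x$ for continuous $\psi'_k\uparrow\phi_v$ gives $\overline H_{\phi_v}(x)=\int\phi_v\,d\omega^x$; since $v$ also lies in the upper class of $\phi_v$, we get $v(x)\ge\overline H_{\phi_v}(x)=\int\phi_v\,d\omega^x\ge\overline{\int}_{\partial D}\phi\,d\omega^x$, and an infimum over $v$ gives ``$\ge$''. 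The identity for $\underline H_\phi$ is the one for $\overline H$ applied to $-\phi$.

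Granting the identity, the theorem follows at once. If $\phi$ is resolutive, then $\overline{\int}\phi\,d\omega^x=\overline H_\phi(x)=\underline H_\phi(x)=\underline{\int}\phi\,d\omega^x$ and this value is finite; since a finite measure whose upper and lower integrals of $\phi$ agree and are finite must have $\phi$ measurable and integrable, $\phi\in L^1(\partial D,d\omega^x)$. Conversely, if $\phi\in L^1(\partial D,d\omega^x)$ then $\phi$ is $\omega^x$-measurable, so $\overline{\int}\phi\,d\omega^x=\underline{\int}\phi\,d\omega^x=\int\phi\,d\omega^x$, whence $\overline H_\phi(x)=\underline H_\phi(x)=\int\phi\,d\omega^x$ is finite at the interior point $x\in D$; by the Perron dichotomy $\overline H_\phi$ and $\underline H_\phi$ are then $\mathcal L$-harmonic throughout the connected set $D$, their difference is a nonnegative $\mathcal L$-harmonic function vanishing at $x$, hence $\equiv0$ by \eqref{harnack-inv}, and so $\phi$ is resolutive. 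Finally, the clause ``for one (and therefore for all) $x$'' is immediate from the mutual absolute continuity of $\{d\omega^x\}_{x\in D}$ noted earlier in this section (itself a consequence of \eqref{harnack-inv}).

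The main obstacle is the identity $\overline H_\phi(x)=\overline{\int}_{\partial D}\phi\,d\omega^x$, and within it the inequality $\overline H_h(x)\le\int h\,d\omega^x$ for lower semicontinuous $h$, which is where the full Perron--Wiener--Brelot machinery of \cite{B}, \cite{CG}---notably the $\omega^x$-negligibility of the set of irregular boundary points---is needed, beyond Theorem~\ref{T:MP}. The remaining steps (regularity of $\omega^x$, the boundary-liminf trick for ``$\ge$'', and the globalization of pointwise identities via the Perron dichotomy and \eqref{harnack-inv}) are routine and carry over verbatim from the classical case.
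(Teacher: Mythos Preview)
The paper does not actually prove Theorem~\ref{T:Brelot}; it is simply stated as a consequence of the results in \cite{B} and \cite{CG}, with no argument given. Your proposal is therefore not to be compared against a proof in the paper, but rather supplies what the paper omits: a sketch of the classical Brelot resolutivity argument transported to the subelliptic setting via Bony's axiomatic potential theory. The outline you give is correct and is essentially what one finds in those references (or in the abstract theory of Bauer/Brelot harmonic spaces, of which \cite{B} establishes that sub-Laplacians are an instance): the identification $\overline H_\phi(x)=\overline{\int}\phi\,d\omega^x$ via lsc approximation, the use of the Harnack convergence theorem for the monotone limit, and the handling of irregular boundary points through their $\omega^x$-negligibility. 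You are right to flag that last step as the one place where a genuinely nontrivial input from \cite{B}, \cite{CG} is needed beyond Theorem~\ref{T:MP}; in the abstract theory this is where one invokes the existence of a strict potential (an Evans-type superharmonic function tending to $+\infty$ at each irregular point while remaining finite at the fixed interior point), which is available in Bony's framework. With that granted, the remainder of your argument---the Perron dichotomy, the vanishing of the nonnegative difference $\overline H_\phi-\underline H_\phi$ by the strong maximum principle or Harnack, and the equivalence ``for one, hence all, $x$'' via mutual absolute continuity of the $\omega^x$---is routine and correct.
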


The following definition is particularly important for its
potential-theoretic implications. In the sequel, given a condenser
$(K,\Om)$, we denote by $cap(K,\Om)$ the sub-elliptic capacity of
$K$ with respect to $\Om$, see \cite{D}.

\begin{dfn}\label{D:thin}
An open set $D\subset \Rn$ is called \emph{thin} at $x_o\in \partial D$, if
\begin{equation}\label{positive}
\liminf_{r\to 0}\ \frac{cap_X(D^c \cap \overline B_d(x_o,r),B_d(x_o,
2r) )}{cap_X(\overline B_d(x_o,r),B_d(x_o,2r))}\
>\ 0 .
\end{equation}
\end{dfn}

\begin{thrm}\label{T:thin}
If a bounded open set $D\subset \Rn$ is thin at $x_o\in \partial D$, then $x_o$ is regular for the Dirichlet problem.
\end{thrm}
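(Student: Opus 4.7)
The plan is to obtain Theorem~\ref{T:thin} as a direct consequence of the quantitative Wiener-type oscillation estimate in Theorem~\ref{T:Wiener}. The hypothesis \eqref{positive} furnishes some $\delta > 0$ and some $R_1 \in (0, R_o/3)$ such that
\[
\frac{cap_X(D^c \cap \overline B_d(x_o,t),\, B_d(x_o, 2t))}{cap_X(\overline B_d(x_o,t),\, B_d(x_o,2t))}\ \geq\ \delta
\]
for every $0 < t \leq R_1$. This uniform lower bound is the only way in which the thinness assumption will enter the argument.

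Next, I would fix an arbitrary $\phi \in \mathcal{L}^{1,2}(D) \cap C(\overline D)$ and let $u$ denote the corresponding solution of \eqref{DP2}. For $0 < r < R < R_1$, Theorem~\ref{T:Wiener} combined with the preceding lower bound yields
\[
osc\{u, D \cap B_d(x_o,r)\}\ \leq\ osc\{\phi,\, \partial D \cap \overline B_d(x_o, 2R)\}\ +\ osc(\phi, \partial D)\ \left(\frac{r}{R}\right)^{C\delta},
\]
since the integrand is bounded below by $\delta$ and therefore
\[
\exp\bigl\{-C \int_r^R (\cdots)\, \tfrac{dt}{t}\bigr\}\ \leq\ \exp\bigl(-C\delta \log(R/r)\bigr)\ =\ (r/R)^{C\delta}.
\]
In this way the capacitary hypothesis is converted into a clean power-law decay in $r$.

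Given $\epsilon > 0$, I would first invoke the continuity of $\phi$ at $x_o$ to choose $R \in (0, R_1)$ small enough that the first summand on the right is less than $\epsilon/2$, and then, with $R$ now fixed, take $r$ small enough that $osc(\phi, \partial D)\,(r/R)^{C\delta} < \epsilon/2$. Letting $r \to 0$ yields $\limsup_{x \to x_o} |u(x) - \phi(x_o)| \leq \epsilon$, and by arbitrariness of $\epsilon$ this is precisely the regularity condition \eqref{cont}. The argument is essentially bookkeeping, since all of the genuine analytic work is packaged inside Theorem~\ref{T:Wiener}; the only point requiring attention is the compatibility of the various radii, specifically the constraint $R < R_o/3$ needed to invoke that theorem, which is why $R_1$ was shrunk at the outset.
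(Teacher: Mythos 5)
Your proof is correct and follows essentially the same route as the paper's: both obtain the result by feeding the thinness hypothesis into Theorem~\ref{T:Wiener} and sending the radii to zero. The only difference is cosmetic: the paper merely observes that the Wiener integral diverges so the exponential factor tends to zero, whereas you exploit the positive $\liminf$ to extract the explicit power-law modulus $(r/R)^{C\delta}$, a slightly sharper (and arguably cleaner) quantitative version of the same estimate.
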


\begin{proof}[\textbf{Proof}]
 If $D$ is thin at $x_o\in \partial D$, then
\[
\int_0^R\left[\frac{cap_X(D^c\cap \overline B_d(x_o,t),
B_d(x_o,2t))}{cap_X(\overline B_d(x_o,t), B_d(x_o,2t))}\right]\
\frac{dt}{t}\ =\ \infty .
\]

Thanks to Theorem \ref{T:Wiener}, the divergence of the above integral implies for $0<r<R/3$
\[
osc\ \{u, D\cap B_d(x_o,r)\}\ \leq\ osc\ \{\phi, \partial D \cap \overline B_d(x_o,2R)\}\ .
\]

Letting $R\to 0$ we infer the regularity of $x_o$.

\end{proof}

A useful, and frequently used, sufficient condition for regularity is provided by the following definition.

\begin{dfn}\label{D:posden}
An open set $\Om\subset \Rn$ is said to have \emph{positive density} at $x_o\in \partial \Om$, if one has
\[
 \liminf_{r\to 0}\ \frac{|\Om \cap B_d(x_o,r)|}{|B_d(x_o,r)|}\ >\ 0 .
\]
\end{dfn}

\begin{prop}\label{P:posden}
If $D^c$ has positive density at $x_o$, then $D$ is thin at $x_o$.
\end{prop}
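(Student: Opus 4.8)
The plan is to deduce thinness of $D$ at $x_o$ from the positive density of the complement by comparing the capacities appearing in Definition \ref{D:thin} with volumes. First I would fix notation: positive density of $D^c$ at $x_o$ means there is $\delta>0$ and a sequence $r_k\to 0$ (in fact a $\liminf$, so for all small $r$) with $|D^c\cap B_d(x_o,r)|\geq \delta\,|B_d(x_o,r)|$. I want to show the ratio
\[
\frac{cap_X(D^c\cap \overline B_d(x_o,r),B_d(x_o,2r))}{cap_X(\overline B_d(x_o,r),B_d(x_o,2r))}
\]
stays bounded below along such a sequence (indeed for all small $r$), which is exactly \eqref{positive}.

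The key step is a lower bound for the numerator in terms of the measure of the set $D^c\cap \overline B_d(x_o,r)$. The natural tool is the relative isoperimetric/Sobolev inequality for the Carnot--Carath\'eodory metric: on a metric ball $B_d(x_o,2r)$ one has a Poincar\'e--Sobolev inequality, which after the usual capacitary reformulation (truncation, testing the energy against the equilibrium potential) yields a bound of the form
\[
cap_X(E,B_d(x_o,2r))\ \geq\ c\ \frac{|E|}{r^2}
\]
for any compact $E\subset \overline B_d(x_o,r)$ with, say, $|E|\leq \tfrac12 |B_d(x_o,2r)|$, where $c$ depends only on $X$ and the local parameters; here I use the doubling property \eqref{dc} of the balls. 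Applying this with $E=D^c\cap \overline B_d(x_o,r)$ and using the density hypothesis plus doubling gives a lower bound of order $|B_d(x_o,r)|/r^2$ for the numerator. For the denominator one has the matching upper bound $cap_X(\overline B_d(x_o,r),B_d(x_o,2r))\leq C\,|B_d(x_o,2r)|/r^2\leq C'\,|B_d(x_o,r)|/r^2$, obtained by testing with an explicit cutoff (a Lipschitz function that is $1$ on $B_d(x_o,r)$, vanishes outside $B_d(x_o,2r)$, with $|X\eta|\lesssim 1/r$), again invoking \eqref{dc}. Combining the two estimates, the ratio is bounded below by a positive constant depending only on $\delta$, $X$ and the local parameters, uniformly for all sufficiently small $r$; in particular the $\liminf$ in \eqref{positive} is positive, so $D$ is thin at $x_o$.

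The main obstacle is the capacitary lower bound $cap_X(E,B_d(x_o,2r))\gtrsim |E|/r^2$, i.e.\ having available the correct scale-invariant relative Sobolev inequality on metric balls with constants controlled by the local doubling/Poincar\'e structure; this is standard in this setting (it follows from the Nagel--Stein--Wainger ball estimates together with the subelliptic Poincar\'e inequality, cf.\ the capacity references \cite{D}, \cite{CDG4}), but it is the one non-cosmetic input. Once that is in hand the rest is bookkeeping with the doubling constant. An alternative, if one prefers to avoid the Sobolev inequality, is to quote directly a known capacity--measure estimate from \cite{D} or \cite{CDG4} for condensers $(\overline B_d(x_o,r),B_d(x_o,2r))$, comparing $cap_X$ of a subset to its relative measure; with such an estimate the proof collapses to: numerator $\gtrsim \delta$, denominator $\lesssim 1$ after normalizing, hence the ratio is bounded below, hence $D$ is thin at $x_o$.
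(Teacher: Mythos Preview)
Your proposal is correct and follows essentially the same route as the paper: lower-bound the numerator capacity by $|D^c\cap \overline B_d(x_o,r)|/r^2$ via a Poincar\'e-type inequality, control the denominator capacity by $|B_d(x_o,r)|/r^2$, and combine with the positive-density hypothesis. The only cosmetic difference is that the paper uses the bare $L^2$ Poincar\'e (Friedrichs) inequality $\int_\Om|\phi|^2\,dx\le C\,(\mathrm{diam}\,\Om)^2\int_\Om|X\phi|^2\,dx$ for $\phi\in C^1_0(\Om)$, which yields $cap_X(E,B_d(x_o,2r))\ge C^{-1}|E|/r^2$ directly, with no need for a Sobolev exponent or the auxiliary constraint $|E|\le\tfrac12|B_d(x_o,2r)|$; and for the denominator it quotes the known two-sided estimate $cap_X(\overline B_d(x_o,r),B_d(x_o,2r))\sim r^{Q-2}$ from \cite{D}, \cite{CDG3} (your explicit cutoff is precisely how the upper half of that estimate is proved).
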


\begin{proof}[\textbf{Proof}]
We recall the Poincar\'e inequality
\[
\int_{\Om} |\phi|^2\ dx\ \leq\ C\ (diam(\Om))^2\ \int_\Om |X\phi|^2\ dx\ ,
\]
valid for any bounded open set $\Om\subset \Rn$, and any $\phi\in
C^1_o(\Om)$, where $diam(\Om)$ represents the diameter of $\Om$ with
respect to the distance $d(x,y)$, and $C = C(\Om,X)>0$. From the
latter, we obtain
\begin{equation}\label{1}
\frac{cap_X (D^c\cap \overline B_d(x_o,r), B_d(x_o,2r))}{cap_X
(\overline B_d(x_o,r), B_d(x_o,2r))}\ \geq\ \frac{C}{r^2}\
\frac{|D^c\cap \overline B_d(x_o,r)|}{cap_X (\overline B_d(x_o,r),
B_d(x_o,2r))} .
\end{equation}

Now the capacitary estimates in \cite{D}, \cite{CDG3} give
\[
C\ r^{Q-2}\ \leq\ cap_X (\overline B_d(x_o,r), B_d(x_o,2r))\ \leq\
C^{-1}\ r^{Q-2}\ ,
\]
for some constant $C = C(\Om,X)>0$. Using these estimates in \eqref{1} we find
\[
\frac{cap_X(D^c\cap \overline B_d(x_o,r), B_d(x_o,2r))}{cap_X
(\overline B_d(x_o,r), B_d(x_o,2r))}\ \geq\ C^*\ \frac{|D^c\cap
\overline B_d(x_o,r)|}{|B_d(x_o,r)|} ,
\]
where $C^* = C^*(\Om,X)>0$. The latter inequality proves that if $D^c$ has positive density at $x_o$, then $D$ is thin at the same point.

\end{proof}

A basic example of a class of regular domains for the Dirichlet
problem is provided by the (Euclidean) $C^{1,1}$ domains in a Carnot
group of step $r=2$. It was proved in \cite{CG} that such domains
possess a scale invariant region of non-tangential approach at every
boundary point, hence they satisfy the positive density condition in
Proposition \ref{P:posden}. Thus, in particular, every such domain
is regular for the Dirichlet problem for any fixed sub-Laplacian on
the group. Another important example is provided by the
non-tangentially accessible domains (NTA domains, henceforth)
studied in \cite{CG}. Such domains constitute a generalization of
those introduced by Jerison and Kenig in the Euclidean setting
\cite{JK}, see Section \ref{S:RHI}.

\begin{dfn}\label{D:FS}
Let $D\subset \Rn$ be a bounded open set. For $0<\alpha\leq 1$, the
class $\Gamma^{0,\alpha}_d(D)$ is defined as the collection of all
$f\in C(D)\cap L^\infty(D)$, such that
\[
\underset{x,y\in D, x\ne y}{sup}\ \frac{|f(x)-
f(y)|}{d(x,y)^{\alpha}} \ <\ \infty.
\]
We endow $\Gamma^{0,\alpha}_d(D)$ with the norm
\[
||f||_{\Gamma^{0,\alpha}_d(D)}\ =\ ||f||_{L^\infty(D)}\ +\
\underset{x,y\in D, x\ne y}{sup}\ \frac{|f(x)-
f(y)|}{d(x,y)^{\alpha}}\ .
\]
\end{dfn}

The meaning of the symbol $\Gamma^{0,\alpha}_{loc}(D)$ is the
obvious one, that is, $f\in \Gamma^{0,\alpha}_{loc}(D)$ if, for
every $\omega \subset\subset D$, one has $f\in
\Gamma_d^{0,\alpha}(\omega)$. If $F\subset \Rn$ denotes a bounded
closed set, by $f\in \Gamma_d^{0,\alpha}(F)$ we mean that $f$
coincides on the set $F$ with a function $g\in
\Gamma_d^{0,\alpha}(D)$, where $D$ is a bounded open set containing
$F$. The Lipschitz class $\Gamma^{0,1}_d(D)$ has a special interest,
due to its connection with the Sobolev space $\mathcal
L^{1,\infty}(D)$. In fact, we have the following theorem of
Rademacher-Stepanov type, established in \cite{GN1}, which will be
needed in the proof of Lemma \ref{L:Gamma}.

\begin{thrm}\label{T:meanvalue}
\textbf{(i)}\ Given a bounded open set $U\subset \Rn$, there exist
$R_o = R_o(U,X)>0$, and $C = C(U,X)>0$, such that if $f\in\mathcal
L^{1,\infty}(B_d(x_o,3R))$, with $x_o\in U$ and $0<R<R_o$, then $f$
can be modified on a set of $dx$-measure zero in $\bar B_d = \bar
B_d(x_o,R)$, so as to satisfy for every $x, y\in\bar B_d(x_o,R)$
\[
|f(x)\ -\ f(y)|\ \leq\  C\ d(x,y)\ \|f\|_{\mathcal L^{1,\infty}(B_d(x_o,3R))} .
\]
If, furthermore, $f\in C^{\infty}(B_d(x_o,3R))$, then in the right-hand side of the previous inequality one can replace the term $||f||_{\mathcal L^{1,\infty}(B_d(x_o,3R))}$ with $||Xf||_{L^{\infty}(B_d(x_o,3R))}$.

\noindent \textbf{(ii)}\ Vice-versa, let $D \subset \Rn$ be an open
set such that $\sup_{x,y\in D}\ d(x,y) < \infty$. If $f\in
\Gamma^{0,1}_d(D)$, then $f\in \mathcal L^{1,\infty}(D)$.
\end{thrm}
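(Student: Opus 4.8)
The plan is to prove \textbf{(i)} first for $C^\infty$ functions by the Fundamental Theorem of Calculus along sub-unit curves, to deduce the general $\mathcal L^{1,\infty}$ case by Euclidean mollification, and to prove \textbf{(ii)} by a weak-$*$ compactness argument on difference quotients taken along the flows of the $X_j$. \emph{Smooth case of (i).} Fix $x_o\in U$ and $0<R<R_o$, with $R_o$ chosen (via Proposition~\ref{P:compact}) so that $\overline B_d(x_o,3R)$ is compact. Given $x,y\in\overline B_d(x_o,R)$, a short bookkeeping with near-minimizing sub-unit curves in $\Rn$ shows that $x$ and $y$ are joined by a sub-unit curve $\gamma:[0,T]\to\Rn$ lying in $\overline B_d(x_o,2R)\subset B_d(x_o,3R)$ with $T\le 4\,d(x,y)$: if $d(x,y)<R$ take a near-minimizer, whose points stay within $2d(x,y)<2R$ of $x$; if $d(x,y)\ge R$ concatenate near-minimizers from $x$ to $x_o$ and from $x_o$ to $y$. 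The sub-unit condition forces $\gamma'(t)=\sum_j a_j(t)X_j(\gamma(t))$ with $\sum_j a_j(t)^2\le 1$ for a.e.\ $t$, so for $f\in C^\infty(B_d(x_o,3R))$
\[
|f(x)-f(y)|=\Big|\int_0^T\sum_{j=1}^m a_j(t)\,X_jf(\gamma(t))\,dt\Big|\le T\,\|Xf\|_{L^\infty(B_d(x_o,3R))}\le 4\,d(x,y)\,\|Xf\|_{L^\infty(B_d(x_o,3R))},
\]
which is the last assertion of \textbf{(i)}, with $C=4$.

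\emph{General case of (i).} Let $f\in\mathcal L^{1,\infty}(B_d(x_o,3R))$ and set $f_\epsilon=f*\rho_\epsilon$ with a standard (Euclidean) mollifier. Writing $X_j=\sum_k b_{jk}\partial_k$, using $\partial_{x_k}\rho_\epsilon(x-y)=-\partial_{y_k}\rho_\epsilon(x-y)$ and the identity $-\sum_k b_{jk}(y)\partial_{y_k}\rho_\epsilon(x-y)=X_j^*[\rho_\epsilon(x-\cdot)](y)+\sum_k(\partial_kb_{jk})(y)\rho_\epsilon(x-y)$, one obtains $X_jf_\epsilon=(X_jf)*\rho_\epsilon+A^j_\epsilon+B^j_\epsilon$, where $A^j_\epsilon(x)=\sum_k\int f(y)(\partial_kb_{jk})(y)\rho_\epsilon(x-y)\,dy$ and $B^j_\epsilon(x)=-\sum_k\int f(y)\big(b_{jk}(x)-b_{jk}(y)\big)\partial_{y_k}\rho_\epsilon(x-y)\,dy$. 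Since $|b_{jk}(x)-b_{jk}(y)|=O(\epsilon)$ on the support of $\rho_\epsilon(x-\cdot)$ while $\int|z|\,|\partial_k\rho_\epsilon(z)|\,dz=O(1)$, the terms $A^j_\epsilon$ and $B^j_\epsilon$ are bounded in $L^\infty$ by $C\,\|f\|_{L^\infty(B_d(x_o,3R))}$ uniformly in small $\epsilon$; hence $\|Xf_\epsilon\|_{L^\infty}\le\|Xf\|_{L^\infty(B_d(x_o,3R))}+C\,\|f\|_{L^\infty(B_d(x_o,3R))}\le C\,\|f\|_{\mathcal L^{1,\infty}(B_d(x_o,3R))}$ on a neighborhood of $\overline B_d(x_o,2R)$, once $\epsilon$ is small enough that the Euclidean $\epsilon$-neighborhood of $\overline B_d(x_o,2R)$ lies in $B_d(x_o,3R)$ (possible since the Euclidean and Carnot--Carath\'eodory topologies agree and $\overline B_d(x_o,2R)$ is compact). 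Applying the smooth estimate to $f_\epsilon$ on $\overline B_d(x_o,R)$ and letting $\epsilon\to 0$ along a sequence with $f_\epsilon\to f$ a.e.\ yields $|f(x)-f(y)|\le C\,d(x,y)\,\|f\|_{\mathcal L^{1,\infty}(B_d(x_o,3R))}$ for a.e.\ $x,y\in\overline B_d(x_o,R)$; since the right-hand side is uniformly continuous in $(x,y)$, redefining $f$ on a $dx$-null set makes the inequality hold everywhere.

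\emph{Proof of (ii).} Let $f\in\Gamma^{0,1}_d(D)$ with Lipschitz constant $L$ and fix $\Omega\subset\subset D$. For each $j$ let $\Phi^j_t$ be the flow of $X_j$; for $|t|<\delta_0(\Omega,D,X)$ the map $\Phi^j_t$ sends $\Omega$ into $D$ and sends $\operatorname{supp}\phi$ into $\Omega$ for $\phi\in C^\infty_0(\Omega)$, and since $s\mapsto\Phi^j_s(x)$ is an integral curve of $X_j$ it is sub-unit, so $d(\Phi^j_t(x),x)\le|t|$ and the difference quotients $Q^j_t:=t^{-1}(f\circ\Phi^j_t-f)$ satisfy $\|Q^j_t\|_{L^\infty(\Omega)}\le L$. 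By Banach--Alaoglu there is a sequence $t\to 0$ along which $Q^j_t\to g_j$ weak-$*$ in $L^\infty(\Omega)$, with $\|g_j\|_{L^\infty(\Omega)}\le L$. On the other hand, for $\phi\in C^\infty_0(\Omega)$ the change of variables $y=\Phi^j_t(x)$, together with $\tfrac{d}{dt}\big|_0(\phi\circ\Phi^j_{-t})=-X_j\phi$ and $\tfrac{d}{dt}\big|_0\det D\Phi^j_{-t}=-\operatorname{div}X_j$, gives (by dominated convergence, $f$ being bounded) $\int_\Omega Q^j_t\,\phi\,dx\to\int_\Omega f\,\big(-X_j\phi-(\operatorname{div}X_j)\phi\big)\,dx=\int_\Omega f\,X_j^*\phi\,dx$. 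Comparing the two limits, $\int_\Omega f\,X_j^*\phi=\int_\Omega g_j\phi$ for every $\phi\in C^\infty_0(\Omega)$, i.e.\ $X_jf=g_j\in L^\infty(\Omega)$ with norm at most $L$; letting $\Omega\uparrow D$ gives $X_jf\in L^\infty(D)$ for $j=1,\dots,m$, and since $f\in L^\infty(D)$ this means $f\in\mathcal L^{1,\infty}(D)$.

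\emph{Main obstacle.} The heart of the matter is the mollification step in \textbf{(i)}: one must keep $\|Xf_\epsilon\|_\infty$ controlled by the \emph{full} $\mathcal L^{1,\infty}$-norm rather than merely $\|Xf\|_\infty$ (the $L^\infty$ bound on $f$ genuinely enters, because $\mathcal L^{1,\infty}$ does not control Euclidean derivatives), while simultaneously keeping the mollified function and the connecting sub-unit curves inside the enlarged ball $B_d(x_o,3R)$; the accompanying connectivity bookkeeping in the smooth case is elementary but must be arranged so as to produce the factor $d(x,y)$ on the right-hand side rather than merely $R$. In \textbf{(ii)} the only delicate point is justifying the passage to the limit in the change-of-variables identity, which is routine once one uses that $f$ is bounded and continuous and that the Jacobians $\det D\Phi^j_{-t}$ converge locally uniformly to $1$.
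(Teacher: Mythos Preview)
The paper does not supply its own proof of this theorem; it is stated with a citation to \cite{GN1} (with part (ii) also attributed independently to \cite{FSS}), so there is nothing to compare against line by line. Your argument is essentially the standard one and is correct: in (i) you integrate along near-minimizing sub-unit curves for smooth $f$, then pass to general $f\in\mathcal L^{1,\infty}$ by Euclidean mollification together with a Friedrichs-type commutator estimate showing $\|Xf_\epsilon\|_{L^\infty}\le C\|f\|_{\mathcal L^{1,\infty}}$; in (ii) you use uniform boundedness of difference quotients along the flows of the $X_j$ and weak-$*$ compactness. Two small points worth tightening: in the smooth case your near-minimizer of length $<d(x,y)+\epsilon'$ stays within $d(x,y)+\epsilon'$ of $x$, hence within $2R+\epsilon'$ of $x_o$, not literally in $\overline{B_d}(x_o,2R)$; this is harmless since $\overline{B_d}(x_o,2R+\epsilon')\subset B_d(x_o,3R)$ for small $\epsilon'$, which is all you need. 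Second, the representation $\gamma'(t)=\sum_j a_j(t)X_j(\gamma(t))$ with $\sum_j a_j(t)^2\le1$ and $a_j$ measurable deserves one sentence of justification (e.g.\ via the Moore--Penrose pseudoinverse of the matrix with columns $X_j(\gamma(t))$), but this is routine.
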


We note explicitly that part (i) of Theorem \ref{T:meanvalue}
asserts that every function $f\in\mathcal L^{1,\infty}(B_d(x_o,3R))$ has a representative which is Lipschitz continuous in $B_d(x_o,R)$ with respect to the metric $d$, i.e., continuing to denote with $f$ such representative, one has $f\in \Gamma^{0,1}(B_d(x_o,R))$. Part (ii) was also obtained independently in \cite{FSS}. The following result was established in \cite{D}.

\begin{thrm}\label{T:holder}
Let $D \subset \Rn$ be a bounded open set which is thin at every
$x_o\in \partial D$. If $\phi\in \Gamma^{0,\beta}(\overline D)$, for
some $\beta \in (0,1)$, then there exists $\alpha \in (0,1)$, with
$\alpha = \alpha(D,X,\beta)$, such that
\[
\underset{x,y\in \overline{D}, x\ne y}{sup}\ \frac{|H^{D}_{\phi}(x) - H^{D}_{\phi}(y)|}{d(x,y)^{\alpha}} \ <\ \infty\ .
\]
\end{thrm}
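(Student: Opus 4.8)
The plan is a two-scale oscillation-decay argument: a boundary modulus-of-continuity estimate extracted from the Wiener bound of Theorem \ref{T:Wiener}, patched with an interior subelliptic H\"older estimate. Set $u=H^D_\phi$. Since $D$ is thin at every boundary point, Theorem \ref{T:thin} shows every $x_o\in\p D$ is regular, hence $u$ extends continuously to $\overline D$ with $u=\phi$ on $\p D$, while Theorem \ref{T:MP} gives $\sup_{\overline D}|u|\leq\|\phi\|_{L^\infty(\p D)}$. The principal task, and the main obstacle, is to promote the pointwise thinness \eqref{positive} to a \emph{uniform} statement: there are $\gamma>0$ and $R_1\in(0,R_o/3)$, depending only on $D$ and $X$, with
\[
\int_r^{R_1}\left[\frac{cap_X(D^c\cap \overline B_d(x_o,t),B_d(x_o,2t))}{cap_X(\overline B_d(x_o,t),B_d(x_o,2t))}\right]\frac{dt}{t}\ \geq\ \gamma\,\log\frac{R_1}{r}
\]
for all $x_o\in\p D$ and $0<r<R_1$. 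I would derive this by combining the compactness of $\p D$ with the stability of the subelliptic capacity under small perturbations of the pole; in the cases relevant to this paper it follows directly from the uniform positive density of $D^c$, via the computation in the proof of Proposition \ref{P:posden} (which gives $cap_X(D^c\cap\overline B_d(x_o,t),B_d(x_o,2t))/cap_X(\overline B_d(x_o,t),B_d(x_o,2t))\geq C^*|D^c\cap\overline B_d(x_o,t)|/|B_d(x_o,t)|$). Without such uniformity one only gets a modulus of continuity at each single boundary point, with no common H\"older exponent.

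\emph{Boundary estimate.} Inserting this uniform Wiener bound together with $osc\ \{\phi,\p D\cap\overline B_d(x_o,2R)\}\leq C_\phi R^\beta$ (valid since $\phi\in\Gamma^{0,\beta}(\overline D)$) into Theorem \ref{T:Wiener} yields, for $0<r<R<R_1$,
\[
osc\ \{u,D\cap B_d(x_o,r)\}\ \leq\ C_\phi\,R^\beta\ +\ C_\phi\,(r/R)^{C\gamma}\ .
\]
Choosing $R=r^{s}$ with $s=\tfrac{C\gamma}{\beta+C\gamma}$ balances the two terms and gives $osc\ \{u,D\cap B_d(x_o,r)\}\leq C\,r^{\alpha_1}$ with $\alpha_1=\tfrac{\beta\,C\gamma}{\beta+C\gamma}\in(0,\beta)$, uniformly in $x_o\in\p D$ and for all small $r$. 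Letting the free point tend to $x_o$ and using $u=\phi$ there produces $|u(x)-\phi(x_o)|\leq C\,d(x,x_o)^{\alpha_1}$ for $x\in D$ near $\p D$.

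\emph{Interior estimate and patching.} From the invariant Harnack inequality \eqref{harnack-inv} the standard Moser oscillation argument yields an interior H\"older estimate: there exist $\alpha_2\in(0,1)$ and $C_2>0$ depending only on $X$ (and the local structural constants) with $osc\ \{v,B_d(x,\rho)\}\leq C_2(\rho/\delta)^{\alpha_2}\,osc\ \{v,B_d(x,\delta)\}$ whenever $v$ is $\mathcal L$-harmonic in $B_d(x,\delta)$ and $0<\rho<\delta$. Put $\alpha=\min(\alpha_1,\alpha_2)$. To bound $|u(x)-u(y)|$ for $x,y\in\overline D$ with $d(x,y)$ small: if $x,y\in\p D$ use $u=\phi\in\Gamma^{0,\beta}$ and $\alpha\leq\beta$; otherwise let $\delta=d(x,\p D)\leq d(y,\p D)$ and pick $x_o\in\p D$ with $d(x,x_o)=\delta$ (such $x_o$ exists by Proposition \ref{P:compact}). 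If $\delta\leq 4\,d(x,y)$, then $d(x,x_o),d(y,x_o)\leq C\,d(x,y)$ and the boundary estimate applied to $x$ and $y$ (replaced by the $\Gamma^{0,\beta}$ bound on $\phi$ when $y\in\p D$), with the triangle inequality, gives $|u(x)-u(y)|\leq C\,d(x,y)^{\alpha_1}$. If $d(x,y)<\delta/4$, then $B_d(x,\delta)\subset D\cap B_d(x_o,2\delta)$, so combining the interior estimate with $osc\ \{u,B_d(x,\delta)\}\leq osc\ \{u,D\cap B_d(x_o,2\delta)\}\leq C\,\delta^{\alpha_1}$ from the boundary estimate gives
\[
|u(x)-u(y)|\ \leq\ C\,d(x,y)^{\alpha_2}\,\delta^{\alpha_1-\alpha_2}\ \leq\ C\,d(x,y)^{\min(\alpha_1,\alpha_2)}\ ,
\]
since $d(x,y)<\delta\leq diam_d(\overline D)$. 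Finally, for $d(x,y)$ bounded below, $|u(x)-u(y)|\leq 2\|\phi\|_{L^\infty(\p D)}$ is trivially dominated by a multiple of $d(x,y)^{\alpha}$. Collecting the cases yields $\sup_{x\neq y}|u(x)-u(y)|/d(x,y)^{\alpha}<\infty$ with $\alpha=\alpha(D,X,\beta)$, as claimed.
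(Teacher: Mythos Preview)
The paper does not supply a proof of this theorem; it simply records it as a result established in \cite{D}. So there is no ``paper's own proof'' to compare against, and your proposal must be judged on its own merits.

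Your overall architecture is the standard and correct one for results of this type: extract a power-type boundary oscillation decay from the Wiener estimate of Theorem \ref{T:Wiener}, combine it with the interior H\"older estimate coming from the scale-invariant Harnack inequality \eqref{harnack-inv}, and patch the two regimes according to whether $d(x,y)$ is small or large relative to $d(x,\partial D)$. The balancing $R=r^{s}$ with $s=C\gamma/(\beta+C\gamma)$ and the case split $\delta\lessgtr 4\,d(x,y)$ are carried out correctly, and the final exponent $\alpha=\min(\alpha_1,\alpha_2)$ is the right one.

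The one genuine soft spot is exactly the point you flag yourself: passing from the \emph{pointwise} thinness hypothesis \eqref{positive} to the \emph{uniform} lower bound
\[
\int_r^{R_1}\frac{cap_X(D^c\cap \overline B_d(x_o,t),B_d(x_o,2t))}{cap_X(\overline B_d(x_o,t),B_d(x_o,2t))}\,\frac{dt}{t}\ \geq\ \gamma\,\log\frac{R_1}{r},\qquad x_o\in\partial D,\ 0<r<R_1.
\]
Your suggested route---compactness of $\partial D$ plus ``stability of the subelliptic capacity under small perturbations of the pole''---is not a complete argument as written: a $\liminf>0$ at each point gives, for each $x_o$, a threshold radius and a lower bound that a priori depend on $x_o$, and compactness alone does not force these to be uniform without some lower-semicontinuity of the capacity-density ratio in $(x_o,t)$. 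You are right that in every situation used later in the paper (NTA$_X$ domains, uniform outer $X$-ball, uniform positive density of $D^c$) the uniform bound is immediate via the computation in Proposition \ref{P:posden}; but for the theorem \emph{as stated}, with only pointwise thinness, this step needs either a sharper statement of the hypothesis (uniform capacity density, as is indeed the working assumption in \cite{D}) or an honest semicontinuity argument for the ratio. Apart from this point, the proof is sound.
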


Given a bounded open set $D\subset \Rn$, consider the positive Green function $G(x,y) = G(y,x)$ for $\mathcal{L}$ and $D$, constructed in \cite{B}. For every fixed $x\in D$, one can represent $G(x,\cdot)$ as follows
\begin{equation}\label{represent}
G(x,\cdot)\ =\ \Gamma(x,\cdot)\ -\ h_x\ ,\quad\quad\text{where}\quad h_x\ =\ H^D_{\Gamma(x,\cdot)}\ .
\end{equation}

Since, by H\"ormander's hypo-ellipticity theorem, $\Gamma(x,\cdot)\in C^\infty(\Rn\setminus\{x\})$, we conclude that, if $D$ is thin at every $x_o\in \partial D$, then there exists $\alpha\in (0,1)$
such that, for every $\epsilon >0$, one has
\begin{equation}\label{alpha}
G(x,\cdot)\ \in\ \Gamma^{0,\alpha}_d(\bar  D \setminus B(x_o,\epsilon))\ .
\end{equation}

We close this section with recalling an important consequence of the
results  of Kohn and Nirenberg \cite{KN1} (see Theorem 4), and of
Derridj \cite{De1}, \cite{De2}, about smoothness in the Dirichlet
problem at non-characteristic points. We recall the following
definition.

\begin{dfn}\label{D:char}
Given a $C^1$ domain $D\subset \Rn$, a point $x_o\in \p D$ is called
characteristic for the system $X = \{X_1,...,X_m\}$ if for
$j=1,...,m$ one has \[ <X_j(x_o),\bN(x_o)>\ =\ 0\ , \] where
$\bN(x_o)$ indicates a normal vector to $\p D$ at $x_o$. We indicate
with $\Sigma = \Sigma_{D,X}$ the collection of all characteristic
points. The set $\Sigma$ is a closed subset of $\p D$.
\end{dfn}

\begin{thrm}\label{T:KN}
Let $D\subset \Rn$ be a $C^\infty$ domain which is regular for
\eqref{DP}. Consider the harmonic function $H^D_\phi$, with  $\phi
\in C^\infty(\partial{D})$. If $x_o\in \partial{D}$ is a
non-characteristic point for $\mathcal{L}$, then there exists an
open neighborhood $V$ of $x_o$ such that $H^D_\phi \in
C^\infty(\overline{D}\cap V)$.
\end{thrm}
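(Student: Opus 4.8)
The plan is to localize near $x_o$ and reduce the statement to the boundary regularity theory for subelliptic operators at non-characteristic points, the only genuine content being the translation of Definition \ref{D:char} into the PDE notion of a non-characteristic boundary. First I would record the geometric consequence of $x_o \notin \Sigma$: by Definition \ref{D:char} there is an index $j_0$ with $\langle X_{j_0}(x_o), \bN(x_o)\rangle \neq 0$, and since $D$ is $C^\infty$ the unit normal $\bN$ is smooth near $x_o$ while the $X_j$ have smooth coefficients, so $\langle X_{j_0}(\cdot),\bN(\cdot)\rangle$ is nonzero on a neighborhood $V_0$ of $x_o$ in $\overline{D}$. Shrinking $V_0$, $\partial D\cap V_0$ is a connected piece of the smooth hypersurface $\partial D$ across which $X_{j_0}$ is everywhere transverse. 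Since the principal symbol of $\mathcal{L}=\sum_j X_j^*X_j$ at $(x,\xi)$ is $\sum_{j=1}^m \langle X_j(x),\xi\rangle^2$, which vanishes on $\xi\neq 0$ precisely when $\xi \perp \mathrm{span}\{X_1(x),\dots,X_m(x)\}$, transversality of $X_{j_0}$ says that the conormal $\bN(x)$ to $\partial D\cap V_0$ is never a characteristic direction of $\mathcal{L}$; i.e. $\partial D\cap V_0$ is a \emph{non-characteristic} boundary portion for $\mathcal{L}$.

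Next I would put the problem into variational form near $x_o$. Extend $\phi$ to some $\Phi\in C^\infty(\overline{D})$ (possible since $\partial D$ is a smooth compact hypersurface); then $\Phi\in \mathcal{L}^{1,2}(D)$ and $\mathcal{L}\Phi\in L^\infty(D)\subset L^2(D)$. Because $D$ is regular, the PWB solution $H^D_\phi$ coincides with the variational solution of \eqref{DP2} with datum $\Phi$, so $u:=H^D_\phi-\Phi\in \mathcal{L}^{1,2}_0(D)$ solves $\mathcal{L}u=-\mathcal{L}\Phi$ weakly in $D$ with $u=0$ on $\partial D$. Restricting to $D\cap V_0$, this is exactly a Dirichlet problem for the Hörmander operator $\mathcal{L}$ on a domain whose relevant boundary portion $\partial D\cap V_0$ is smooth and non-characteristic, with smooth right-hand side and smooth (zero) boundary data. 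Interior smoothness of $H^D_\phi$ in $D$ itself is immediate from Hörmander's hypoellipticity theorem \cite{H}, so only the behavior up to $\partial D\cap V_0$ remains.

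I would then invoke the regularity results of Kohn–Nirenberg \cite{KN1} (Theorem 4) and Derridj \cite{De1}, \cite{De2}: for a sum-of-squares operator satisfying the finite rank condition \eqref{frc}, a solution of the Dirichlet problem on a smooth domain with smooth data is $C^\infty$ up to the boundary in a neighborhood of every non-characteristic boundary point. Applied to $u$ on $D\cap V_0$ this produces a possibly smaller neighborhood $V$ of $x_o$ with $u\in C^\infty(\overline{D}\cap V)$, hence $H^D_\phi = u+\Phi\in C^\infty(\overline{D}\cap V)$, which is the claim.

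The step I expect to be the main obstacle is the second one: checking carefully that the hypotheses of \cite{KN1}, \cite{De1}, \cite{De2} are met, namely that the geometric condition ``$x_o\notin\Sigma$'' really is the non-characteristic-boundary hypothesis those authors require (the symbol computation above), and that $H^D_\phi$ is a bona fide element of $\mathcal{L}^{1,2}(D)$ attaining the boundary data in the variational sense, so that their interior–boundary a priori estimates and tangential subelliptic difference-quotient arguments apply on $D\cap V_0$. Once that matching is in place, the remainder is pure localization together with the cited theorems; no new estimates beyond the interior ones already recalled in the paper are needed.
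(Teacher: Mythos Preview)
The paper does not give its own proof of this theorem: it is stated as a recalled consequence of the results of Kohn--Nirenberg \cite{KN1} (Theorem 4) and Derridj \cite{De1}, \cite{De2}, with no further argument. Your proposal correctly fills in the reduction to those references---the principal-symbol computation identifying $x_o\notin\Sigma$ with non-characteristicity of $\partial D$ for $\mathcal{L}$, the subtraction of a smooth extension $\Phi$ to reduce to zero Dirichlet data with smooth right-hand side, and the identification of the PWB and variational solutions via regularity of $D$---and this is exactly the intended route.
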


\begin{rmrk}\label{R:J}
We stress that, as we indicated in the introduction, the conclusion
of Theorem \ref{T:KN} fails in general at characteristic points. In
fact, it fails so completely that even if the domain $D$ and the
boundary datum $\phi$ are real analytic, in general the solution of
the Dirichlet problem $H^D_\phi$ may be not better that H\"older
continuous up to the boundary, see Theorem \ref{T:holder}. An
example of such negative phenomenon in the Heisenberg group $\Hn$
was constructed by Jerison in \cite{J1}. The next section is
dedicated to it. For a related example concerning the heat equation
see \cite{KN2}.
\end{rmrk}

\section{\textbf{The example of D.
Jerison}}\label{S:Jerison}

Consider the Heisenberg group (discussed in the introduction) with
its left-invariant generators \eqref{vf} of its Lie algebra. Recall
that $\Hn$ is equipped with the non-isotropic dilations
\[
\delta_\lambda(z,t)\ =\ (\lambda z,\lambda^2 t)\ ,
\]
whose infinitesimal generator is given by the vector field
\[
\mathcal Z\ =\ \sum_{i=1}^n \bigg(x_i \frac{\p}{\p x_i} +  y_i
\frac{\p}{\p y_i}\bigg)\ +\ 2 \frac{\p}{\p t}\ .
\]

We say that a function $u:\Hn \to \R$ is \emph{homogeneous of
degree} $\alpha\in \R$ if for every $(z,t)\in \Hn$ and every
$\lambda>0$ one has
\[
u(\delta_\lambda(z,t))\ =\ \lambda^\alpha\ u(z,t)\ .
\]

One easily checks that if $u\in C^1(\Hn)$ then $u$ is homogeneous of
degree $\alpha$ if and only if \[ \mathcal Zu\ =\ \alpha\ u\ . \]

We also consider the vector field \begin{equation}\label{theta}
\Theta\ =\ \sum_{i=1}^n \bigg(x_i \frac{\p}{\p y_i} -  y_i
\frac{\p}{\p x_i}\bigg)\ , \end{equation} which is the infinitesimal
generator of the one-parameter group of transformations
$R_\theta:\Hn\to \Hn$, $\theta\in \R$, given by
\[
R_\theta(z,t)\ =\ (e^{i\theta}z,t),\ \ \ z = x + i y\in \mathbb C^n\
.
\]

Notice that when $n=1$, then in the $z$-plane $R_\theta$ is simply a
counterclockwise rotation of angle $\theta$, and in such case in the
standard polar coordinates $(r,\theta)$ in $\mathbb C$ we have
\[
\Theta\ =\ \frac{\p}{\p \theta}\ .
\]

In the sequel we will tacitly identify $z= x+iy \simeq(x,y)\in
\R^{2n}$, and so $|z| = \sqrt{|x|^2 + |y|^2}$. We note explicitly
that in the real coordinates $(x,y,t)$ the real part of the
Kohn-Spencer sub-Laplacian \eqref{kohn} on $\Hn$ is given by
\[
\mathcal L_o\ =\ \sum_{i=1}^{2n} X_i^2\ =\ \Delta_z +
\frac{|z|^2}{4} \frac{\p^2}{\p t^2}\ +\ \frac{\p}{\p t} \Theta\ .
\]

It is easy to see that if $u$ has \emph{cylindrical symmetry}, i.e.,
if
\[
u(z,t)\ =\ f(|z|,t)\ ,
\]
then
\[
\Theta u\ \equiv\ 0\ .
\]

Consider the gauge in $\Hn$
\[
N\ =\ N(z,t)\ =\ (|z|^4 + 16 t^2)^{1/4}\ .
\]

The following formula follows from an explicit calculation
\begin{equation}\label{hgN}
\psi\ \overset{def}{=}\ |\nh N|^2\ =\ \frac{|z|^2}{N^2}\ ,\ \ \
\Delta_H N\ =\ \frac{Q-1}{N}\ ,
\end{equation}
where
\[
Q = 2n + 2
\]
is the so-called \emph{homogeneous dimension} associated with the
non-isotropic dilations $\{\delta_\lambda\}_{\lambda>0}$. As a
consequence of \eqref{hgN}, if $u = f \circ N$ for some function
$f:[0,\infty)\to \R$, then one has the beautiful formula
\begin{equation}\label{slr}
\mathcal L_o u\ =\ \psi\ \bigg[f''(N)\ +\ \frac{Q-1}{N} f'(N)\bigg]\
.
\end{equation}

Since $f(t) = t^{2-Q}$ satisfies the ode in the right-hand side of
\eqref{slr} one can show that a fundamental solution of $- \mathcal
L_o$ with pole at the group identity $e = (0,0)\in \Hn$ is given by
\begin{equation}\label{ffs}
\Gamma(z,t)\ =\ \frac{C_Q}{N(z,t)^{Q-2}}\ ,\ \ (z,t)\not= e\ ,
\end{equation}
where $C_Q>0$ needs to be appropriately chosen.

The following example due to D. Jerison \cite{J1} shows that, even
when the domain and the boundary data are real analytic, in general
the solution to the subelliptic Dirichlet problem \eqref{DP} may not
be any better than $\Gamma^{0,\alpha}$ near a characteristic
boundary point. Consider the domain
\[
\Om_M\ =\ \{(z,t)\in \Hn\mid t> M |z|^2\}\ ,\quad\quad\quad M\in \R\
.
\]

Since $\Om_M$ is scale invariant with respect to
$\{\delta_\lambda\}_{\lambda>0}$ we might think of $\Om_M$ as the
analogue of a \emph{convex cone} ($M\geq 0$), or a \emph{concave
cone} ($M<0$). Introduce the variable
\[
\tau\ =\ \tau(z,t)\ =\ \frac{4t}{N^2}\ ,\ \ \ (z,t)\not= e\ .
\]

It is clear that $\tau$ is homogeneous of degree zero and therefore
\begin{equation*}\label{ztau}
\mathcal Z \tau\ =\ 0\ .
\end{equation*}

Moreover, with $\Theta$ as in \eqref{theta} , one easily checks that
\begin{equation*}\label{ttau}
\Theta \tau\ =\ 0\ .
\end{equation*}

It is important to observe the level sets $\{\tau = \gamma\}$ are
constituted by the $t$-axis when $\gamma = 1$, and by the
paraboloids
\[ t\ =\ \frac{\gamma}{4 \sqrt{1 - \gamma^2}} |z|^2\ ,
\]
if $|\gamma|<1$. Furthermore, the function $\tau$ takes the constant
value
\begin{equation*}\label{cv}
\tau\ =\ \frac{4M}{\sqrt{1 + 16 M^2}}\ ,
\end{equation*}
on $\p \Om_M$. We now consider a function of the form
\begin{equation}\label{vJ}
v\ =\ v(z,t)\ =\ N^\alpha \ u(\tau)\ ,
\end{equation}
where the number $\alpha>0$ will be appropriately chosen later on.
One has the following result whose verification we leave to the
reader.

\begin{prop}\label{P:jerison}
For any $\alpha>0$ one has
\begin{align*}
\mathcal L_o v\ & =\ 4 \psi N^{\alpha - 2} \bigg\{(1 - \tau^2)
u''(\tau) - \frac{Q}{2} \tau u'(\tau) + \frac{\alpha(\alpha + Q -
2)}{4} u(\tau)\bigg\}
\\
& =\ 4 \psi N^{\alpha - 2} \bigg\{(1 - \tau^2) u''(\tau) - (n+1)
\tau u'(\tau) + \frac{\alpha(\alpha + 2n)}{4} u(\tau)\bigg\} .
\end{align*}
\end{prop}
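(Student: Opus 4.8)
The plan is to verify Proposition \ref{P:jerison} by a direct, if somewhat lengthy, computation, exploiting the special structure of the coordinates $N$ and $\tau$ and the formula \eqref{slr}. The key point is that the ansatz $v = N^\alpha u(\tau)$ is adapted to the geometry: $N$ is homogeneous of degree $1$ with $\mathcal L_o$ acting nicely on radial functions via \eqref{slr}, and $\tau$ is homogeneous of degree $0$ with $\mathcal Z\tau = \Theta\tau = 0$. Because $\Theta\tau = 0$ and $\Theta N = 0$ (both $N$ and $\tau$ are cylindrically symmetric, as $N = (|z|^4+16t^2)^{1/4}$ depends only on $|z|$ and $t$), the term $\frac{\p}{\p t}\Theta$ in $\mathcal L_o = \Delta_z + \frac{|z|^2}{4}\frac{\p^2}{\p t^2} + \frac{\p}{\p t}\Theta$ annihilates $v$, so effectively one only needs to compute the action of the rotationally invariant operator $\Delta_z + \frac{|z|^2}{4}\p_{tt}$ on $v$.

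First I would record the basic first-order identities. Using $\nh N$ and the already-known facts $|\nh N|^2 = |z|^2/N^2 = \psi$ and $\Delta_H N = (Q-1)/N$ from \eqref{hgN}, and computing $\nh \tau$ explicitly (here $\tau = 4t/N^2$, so $\nh\tau = \frac{4}{N^2}\nh t - \frac{8t}{N^3}\nh N$), I would next compute the horizontal gradient inner product $<\nh N, \nh\tau>$ and the quantity $|\nh\tau|^2$, as well as $\Delta_H\tau$. The homogeneity relations $\mathcal Z N = N$, $\mathcal Z\tau = 0$ translate, via the Euler-type identities relating $\mathcal Z$ to $\nh$ and $\p_t$, into clean algebraic expressions; in particular one expects something like $|\nh\tau|^2 = \frac{4\psi}{N^2}(1-\tau^2)$ and $<\nh N,\nh\tau> = -\frac{\psi}{N}\,(\text{const})\,\tau$, with the constants forced by homogeneity. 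The factor $(1-\tau^2)$ that appears in the final formula comes precisely from $|\nh \tau|^2$; the factor $\tau$ multiplying $u'$ comes from a combination of $<\nh N,\nh\tau>$ and $\Delta_H\tau$; and the coefficient $\tfrac{\alpha(\alpha+Q-2)}{4}$ multiplying $u$ is exactly $N^2$ times the radial operator $f\mapsto f'' + \frac{Q-1}{N}f'$ applied to $f(N) = N^\alpha$ (indeed $\alpha(\alpha-1) + (Q-1)\alpha = \alpha(\alpha+Q-2)$), matching \eqref{slr}.

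Then I would assemble everything via the chain/Leibniz rule for the second-order operator. Writing $\mathcal L_o v = u(\tau)\,\mathcal L_o(N^\alpha) + N^\alpha\,(\mathcal L_o \text{ acting through }\tau) + \text{cross terms}$, the cross terms are $2\alpha N^{\alpha-1}u'(\tau)<\nh N,\nh\tau>$, the pure-$\tau$ second order term is $N^\alpha\big(u''(\tau)|\nh\tau|^2 + u'(\tau)\,\mathcal L_o\tau\big)$, and $\mathcal L_o(N^\alpha) = \psi\,\alpha(\alpha+Q-2)N^{\alpha-2}$ by \eqref{slr}. (Here one must be slightly careful: \eqref{slr} is stated for $\mathcal L_o(f\circ N)$ but the same chain-rule bookkeeping shows the cross and $\tau$-Laplacian terms only involve $\Delta_z + \frac{|z|^2}{4}\p_{tt}$, since $\p_t\Theta$ kills $v$.) Factoring out $4\psi N^{\alpha-2}$ and substituting the expressions for $|\nh\tau|^2$, $<\nh N,\nh\tau>$, and $\mathcal L_o\tau$ should yield the bracketed expression $(1-\tau^2)u'' - \frac{Q}{2}\tau u' + \frac{\alpha(\alpha+Q-2)}{4}u$, and the second displayed form is just the substitution $Q = 2n+2$. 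The main obstacle is purely computational bookkeeping: getting the constants in $<\nh N,\nh\tau>$ and especially in $\Delta_H\tau$ exactly right, since a sign or factor-of-two slip there would corrupt the coefficient of $u'$. I would guard against this by cross-checking with homogeneity (both sides of the claimed identity must be homogeneous of degree $\alpha - 2$ and annihilated by $\Theta$) and, as a sanity check, by testing the formula against the known fundamental solution case: when $\alpha = 2 - Q$ and $u \equiv \text{const}$ the bracket must vanish, which forces the $u$-coefficient to be $\frac{\alpha(\alpha+Q-2)}{4}$, consistent with what we derived.
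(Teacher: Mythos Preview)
Your approach is correct and is exactly the direct computation the paper has in mind; the paper itself gives no proof, stating only that the verification is left to the reader. Your decomposition via the product rule $\mathcal L_o(N^\alpha u(\tau)) = u(\tau)\mathcal L_o(N^\alpha) + N^\alpha\big(u''(\tau)|\nh\tau|^2 + u'(\tau)\mathcal L_o\tau\big) + 2\alpha N^{\alpha-1}u'(\tau)\langle\nh N,\nh\tau\rangle$, together with \eqref{slr} for the first term and the observation that $\Theta v = 0$, is precisely the intended route.
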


Using Proposition \ref{P:jerison} we can now construct a positive
harmonic function in $\Om_M$ which vanishes on the boundary (this
function is a Green function with pole at an interior point).

\begin{prop}\label{P:example}
For any $\alpha\in (0,1]$ there exists a number $M = M(\alpha)<0$
such that the nonconvex cone $\Om_M$ admits a positive solution of
$\mathcal L_o v = 0$ of the form \eqref{vJ} which vanishes on $\p
\Om_M$.
\end{prop}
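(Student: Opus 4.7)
By Proposition~\ref{P:jerison}, the ansatz $v = N^\alpha u(\tau)$ satisfies $\mathcal{L}_o v = 0$ away from the identity $e$ if and only if $u$ solves the Gegenbauer-type ODE
\[
(1 - \tau^2) u''(\tau) - (n+1)\tau u'(\tau) + \lambda u(\tau) = 0, \qquad \lambda := \frac{\alpha(\alpha + 2n)}{4}.
\]
Using the identity $t = |z|^2 \tau/(4\sqrt{1-\tau^2})$, one verifies that $\Omega_M$ corresponds to $\tau \in (\tau_M,1]$ (with $\tau = 1$ on the positive $t$-axis), $\partial \Omega_M \setminus \{e\}$ corresponds to $\tau = \tau_M$, and $\tau_M = 4M/\sqrt{1+16M^2}$ has the sign of $M$. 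Hence it suffices to exhibit $\tau_M \in (-1,0)$ and a solution $u$ of the ODE on $[\tau_M,1]$ which is regular at $\tau = 1$, strictly positive on $(\tau_M,1]$, and vanishes at $\tau_M$. Setting $v(z,t) = N(z,t)^\alpha u(\tau(z,t))$ then yields the desired solution, with the continuous extension $v(e) = 0$ automatic since $N^\alpha \to 0$ while $u$ stays bounded.

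I would work in the angular variable $\tau = \cos\theta$, which rewrites the ODE in Sturm-Liouville form
\[
\bigl(\sin^n\theta\, u'(\theta)\bigr)' + \lambda \sin^n\theta\, u(\theta) = 0, \qquad \theta \in (0,\pi),
\]
with $\theta = 0 \leftrightarrow \tau = 1$ a regular singular endpoint (indicial roots $0$ and $1-n$) at which regularity is imposed. With $\theta_M := \arccos\tau_M$, the regularity-at-$0$/Dirichlet-at-$\theta_M$ problem is a classical singular Sturm-Liouville problem, with a simple smallest eigenvalue $\lambda_1(\theta_M) > 0$ and positive first eigenfunction. Standard variational and monotonicity-in-domain arguments give that $\theta_M \mapsto \lambda_1(\theta_M)$ is continuous and strictly decreasing on $(0,\pi)$, with $\lambda_1(\theta_M) \to \infty$ as $\theta_M \to 0^+$ and $\lambda_1(\theta_M) \to 0$ as $\theta_M \to \pi^-$. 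A direct substitution shows $w(\theta) = \cos\theta$ is a positive first eigenfunction on $(0,\pi/2)$ with eigenvalue $n+1$, whence $\lambda_1(\pi/2) = n+1$.

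For $\alpha \in (0,1]$ we have $\lambda \leq (2n+1)/4 < n+1 = \lambda_1(\pi/2)$, so by the Intermediate Value Theorem there exists a unique $\theta_M \in (\pi/2,\pi)$ with $\lambda_1(\theta_M) = \lambda$. Taking $u$ to be the associated positive first eigenfunction and setting $\tau_M = \cos\theta_M \in (-1,0)$ and $M = \tau_M/(4\sqrt{1-\tau_M^2}) < 0$ completes the construction: $v(z,t) = N(z,t)^\alpha u(\tau(z,t))$ is positive and $\mathcal{L}_o$-harmonic on $\Omega_M \setminus \{e\}$ and vanishes identically on $\partial \Omega_M$.

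The main technical point is the asymptotic $\lambda_1(\theta_M) \to 0$ as $\theta_M \to \pi^-$: the weight $\sin^n\theta$ also degenerates at $\theta = \pi$, so a naive cut-off near the Dirichlet endpoint does not immediately give a small Rayleigh quotient. For $n \geq 2$, choosing the transition region of length comparable to $\pi - \theta_M$ and exploiting $\sin\theta \sim \pi - \theta$ near $\theta = \pi$ produces test functions with Rayleigh quotient of order $(\pi - \theta_M)^{n-1} \to 0$. For $n = 1$, one either works with logarithmic profile test functions or invokes the explicit characterization $\lambda_1(\theta_M) = \nu(\nu+1)$, where $\nu > 0$ is the smallest parameter with $P_\nu(\cos\theta_M) = 0$, noting that $\nu \to 0$ as $\theta_M \to \pi^-$ because $P_0 \equiv 1$. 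All remaining steps are routine Sturm-Liouville analysis combined with the reduction supplied by Proposition~\ref{P:jerison}.
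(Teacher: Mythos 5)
Your argument is correct but follows a genuinely different route from the paper's. After the reduction supplied by Proposition~\ref{P:jerison}, the paper works with the explicit hypergeometric solution $g_\alpha(\tau) = F(-\tfrac{\alpha}{2}, n+\tfrac{\alpha}{2}; \tfrac{n+1}{2}; \tfrac{1-\tau}{2})$ of \eqref{je}, deduces existence of a zero $\tau_\alpha$ from $g_\alpha(1)=1$ together with $g_\alpha(\tau)\to -\infty$ as $\tau\to -1^+$, and then appeals to classical special-function tables (Erd\'elyi, Magnus, Oberhettinger, Tricomi) for the asymptotic $\tau_\alpha \to -1^+$ as $\alpha\to 0^+$ to conclude $\tau_\alpha < 0$. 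You instead recast the Jacobi equation in the angular variable $\tau=\cos\theta$ as a singular Sturm--Liouville eigenvalue problem and invoke the variational package: $\lambda_1(\theta_M)$ is continuous and strictly decreasing in the Dirichlet endpoint, with $\lambda_1(\theta_M)\to\infty$ as $\theta_M\to 0^+$ and $\lambda_1(\theta_M)\to 0$ as $\theta_M\to\pi^-$, and the explicit eigenfunction $w(\theta)=\cos\theta$ on $(0,\pi/2)$ gives $\lambda_1(\pi/2)=n+1$. Since $\alpha(\alpha+2n)/4 \le (2n+1)/4 < n+1$ for all $\alpha\in(0,1]$, the intermediate value theorem locates $\theta_M\in(\pi/2,\pi)$, hence $\tau_M=\cos\theta_M<0$ and $M<0$. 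Your approach trades the appeal to special-function tables for routine Sturm--Liouville analysis; its advantages are being self-contained and delivering $\tau_M<0$ uniformly across the whole range $\alpha\in(0,1]$ rather than only for $\alpha$ close to $0$. The one delicate step is the limit $\lambda_1(\theta_M)\to 0$ as $\theta_M\to\pi^-$ (where the weight $\sin^n\theta$ also degenerates); you are right to flag it, and your test-function argument for $n\ge 2$ and Legendre-function argument for $n=1$ each close that gap.
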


\begin{proof}[\textbf{Proof}]
From Proposition \ref{P:jerison} we see that if $v$ of the form
\eqref{vJ} has to solve the equation $\mathcal L_o v = 0$, then the
function $u$ must be a solution of the Jacobi equation
\begin{equation}\label{je}
(1 - \tau^2) u''(\tau) - (n+1) \tau u'(\tau) + \frac{\alpha(\alpha +
2n)}{4} u(\tau)\ =\ 0\ .
\end{equation}

As we have observed the level $\{\tau = 1\}$  is degenerate and
corresponds to the $t$-axis $\{z = 0\}$. One solution of \eqref{je}
which is smooth as $\tau \to 1$ (remember, the $t$-axis is inside
$\Om_M$ and thus we want our function $v$ to be smooth around the
$t$-axis since by hypoellipticity $v$ has to be in
$C^\infty(\Om_M)$) is the hypergeometric function
\[
g_\alpha(\tau)\ =\ F\left(- \frac{\alpha}{2},n +
\frac{\alpha}{2};\frac{n+1}{2};\frac{1-\tau}{2}\right)\ .
\]

When $0<\alpha<2$ one can varify that
\[
g_\alpha(1) = 1\ ,\ \ \ \text{and that}\ \ g_\alpha(\tau) \to -
\infty\ \text{as}\ \tau \to - 1^+\ .
\]

Therefore, $g_\alpha$ has a zero $\tau_\alpha$. One can check (see
Erdelyi, Magnus, Oberhettinger and Tricomi, vol.1, p.110 (14)), that
as $\alpha \to 0^+$, then $\tau_\alpha \to -1^+$. We infer that for
$\alpha>0$ sufficiently close to $0$ there exists $-1<\tau_\alpha<0$
such that
\[
g_\alpha(\tau_\alpha)\ =\ 0\ .
\]

If we choose
\[
M\ =\ M(\alpha)\ =\ \frac{\tau_\alpha}{\sqrt{1 - \tau_\alpha^2}}\ <\
0\ ,
\]
then it is clear that on $\p \Om_M$ we have $\tau \equiv
\tau_\alpha$, and therefore the function $v$ of the form \eqref{v},
with $u(\tau) = g_\alpha(\tau)$, has the property of being harmonic
and nonnegative in $\Om$, and furthermore on $\p \Om_M$ we have that
$v = N^\alpha g_\alpha(\tau_\alpha) \equiv 0$. This completes the
proof.

\end{proof}

Since $\alpha$ belongs the interval $(0,1)$, then it is clear that
$v = N^\alpha(z,t) g_\alpha(\tau)$ belongs at most to the
Folland-Stein H\"older class $\Gamma^{0,\alpha}(\overline \Om_M)$,
but is not any better than metrically H\"older in any neighborhood
of $e = (0,0)$. What produces this negative phenomenon is the fact
that the point $e\in \p \Om_M$ is characteristic for $\Om_M$.

\section{\textbf{Subelliptic interior Schauder estimates}}\label{S:SE}

In this section we establish some basic interior Schauder type
estimates that, besides from playing an important role in the
sequel, also have an obvious independent interest. Such estimates
are tailored on the intrinsic geometry of the system $X =
\{X_1,...,X_m\}$, and are obtained by means of a family of
sub-elliptic mollifiers which were introduced in \cite{CDG1}, see
also \cite{CDG2}. For convenience, we state the relevant results in
terms of the $X$-balls $B(x,r)$ introduced in Definition
\ref{D:lballs}, but we stress that, thanks to \eqref{equivi}, we
could have as well employed the metric balls $B_d(x,r)$. Since in
this paper our focus is on $\mathcal L$-harmonic functions, we do
not explicitly treat the non-homogeneous equation $\mathcal L u = f$
with a non-zero right-hand side. Estimates for solutions of the
latter equation can, however, be obtained by relatively simple
modifications of the arguments in the homogeneous case.

The following is the main result in this section.

\begin{thrm}\label{T:Harmonic}
Let $D\subset \Rn$ be a bounded open set and suppose that $u$ is
harmonic in $D$. There exists $R_o>0$, depending on $D$ and $X$,
such that for every $x\in D$ and $0<r\leq R_o$ for which
$\overline{B}(x,r)\subset D$, one has for any $s\in \mathbb N$
\[
|X_{j_1}X_{j_2}...X_{j_s}u(x)|\ \leq\ \frac{C}{r^s}\ \underset{\overline{B}(x,r)}{max}\ |u|,
\]
for some constant $C=C(D,X,s)>0$. In the above estimate, for every $i = 1,...,s,$ the index $j_i$ runs in the set $\{1,...,m\}$ .
\end{thrm}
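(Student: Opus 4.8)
The plan is to represent $u(x)$ through a subelliptic mean-value identity built from the mollifiers of \cite{CDG1} (see also \cite{CDG2}) and then to differentiate this identity under the integral sign, estimating the $X$-derivatives of the kernel by the bounds of Theorem \ref{T:NSW}. Fix $x\in D$ and $0<r\le R_o$ with $\overline B(x,r)\subset D$, set $T=1/E(x,r)$, and let $\zeta\in C^\infty(0,\infty)$ be a fixed profile with $\zeta(t)=t$ for $t\le T$ and $\zeta(t)=0$ for $t\ge 2T$. For $x'$ in a small neighborhood of $x$ define
\[
\Gamma^{(r)}(x',y)\ =\ \zeta\big(\Gamma(x',y)\big),\qquad \varphi_r(x',y)\ =\ \mathcal L_y\Gamma^{(r)}(x',y).
\]
By H\"ormander's hypoellipticity theorem \cite{H}, $\Gamma$ is jointly $C^\infty$ off the diagonal; since $\Gamma^{(r)}(x',\cdot)$ vanishes near $x'$ (where $\Gamma$ is large), $(x',y)\mapsto\Gamma^{(r)}(x',y)$ and $\varphi_r(x',y)$ are $C^\infty$, and $\varphi_r(x',\cdot)$ is supported in the closed shell $\{y:T\le\Gamma(x',y)\le 2T\}$, which for $x'=x$ is contained in $\{\Gamma(x,\cdot)\ge T\}=\overline B(x,r)\subset D$ and bounded away from $x$. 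The key point is that the truncation level $T$ is frozen at the single point $x$ where the estimate is sought: this is what makes $\Gamma^{(r)}(\cdot,y)$ genuinely $C^\infty$ near $x$ (note that $E(\cdot,r)$, hence the ``natural'' level $1/E(\cdot,r)$, is merely Lipschitz in the base point).

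The reproducing property $u(x')=\int_{\Rn}\varphi_r(x',y)\,u(y)\,dy$ follows from Green's identity for $\mathcal L$ on $\Omega'=\{\Gamma(x',\cdot)>T\}$: since $\Gamma^{(r)}(x',\cdot)$ coincides with $\Gamma(x',\cdot)$ to infinite order on $\partial\Omega'$ and outside $\Omega'$, and $\mathcal Lu=0$ on $\overline{\Omega'}\subset D$ (true for $x'$ near $x$ by continuity), one has $\int_{\Rn}\varphi_r(x',y)u(y)\,dy=\int_{\Omega'}u\,\mathcal L_y\Gamma^{(r)}=\int_{\partial\Omega'}\!\big(u\,\partial_\nu^{\mathcal L}\Gamma-\Gamma\,\partial_\nu^{\mathcal L}u\big)=u(x')$, the last equality by a second application of Green's identity using $\mathcal L_y\Gamma(x',\cdot)=\delta_{x'}$. (If needed, $T$ may be perturbed so that $\partial\Omega'$ is a smooth hypersurface, or one may use the solid average over a range of levels, as in \cite{CDG1}.) Applying this to our harmonic $u$ and differentiating in $x'$ at $x'=x$ — legitimate, since the integrand is $C^\infty$ in $x'$ near $x$ and its $y$-support stays in a fixed compact set — gives, for all $j_1,\dots,j_s\in\{1,\dots,m\}$,
\[
X_{j_1}X_{j_2}\cdots X_{j_s}u(x)\ =\ \int_{\overline B(x,r)}\big(X^{(x)}_{j_1}\cdots X^{(x)}_{j_s}\varphi_r\big)(x,y)\,u(y)\,dy,
\]
where $X^{(x)}$ denotes the vector field acting in the first variable.

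It remains to bound the kernel. Commuting $\mathcal L_y$ past the $X^{(x)}$'s and expanding by the Fa\`a di Bruno and Leibniz rules, $X^{(x)}_{j_1}\cdots X^{(x)}_{j_s}\varphi_r(x,y)$ is a finite sum of terms of the form $\zeta^{(k)}\!\big(\Gamma(x,y)\big)\,\Gamma_1\cdots\Gamma_k$, where each $\Gamma_i$ is an $X$-derivative of $\Gamma(x,y)$ of some order $\ell_i\ge 1$ taken in either variable, with $\sum_i\ell_i\le s+2$, plus lower-order terms with bounded coefficients and strictly fewer derivatives. On the support one has $\Gamma(x,y)\approx T\approx r^2/|B_d(x,r)|$, hence $d(x,y)\approx r$ by \eqref{nsw2} and \eqref{F}; moreover $|\zeta^{(k)}(\Gamma(x,y))|\le C\,T^{1-k}$ and, by Theorem \ref{T:NSW} together with the doubling property \eqref{dc}, $|X_{i_1}\cdots X_{i_\ell}\Gamma(x,y)|\le C\,d(x,y)^{2-\ell}/|B_d(x,d(x,y))|\le C\,r^{2-\ell}/|B_d(x,r)|$. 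Multiplying, the powers of $|B_d(x,r)|$ and the factors $r^{2k}$ cancel against $T^{1-k}$, leaving
\[
\big|X^{(x)}_{j_1}\cdots X^{(x)}_{j_s}\varphi_r(x,y)\big|\ \le\ \frac{C}{r^{s}}\,\frac{1}{|B_d(x,r)|},\qquad y\in\overline B(x,r),
\]
with $C=C(D,X,s)$ (lower-order terms produce powers $r^{2-\sum\ell_i}$ with $\sum\ell_i\le s+1$, hence are even smaller once $R_o<1$). Since $\overline B(x,r)\subset B_d(x,ar)$ by \eqref{equivi}, \eqref{dc} gives $|\overline B(x,r)|\le C\,|B_d(x,r)|$, and therefore
\[
|X_{j_1}\cdots X_{j_s}u(x)|\ \le\ \frac{C}{r^{s}|B_d(x,r)|}\int_{\overline B(x,r)}|u|\,dy\ \le\ \frac{C}{r^{s}}\,\max_{\overline B(x,r)}|u|,
\]
which is the assertion, $R_o$ being taken small enough (as the local parameter of $D$, cf. Proposition \ref{P:compact} and Theorems \ref{T:db}, \ref{T:NSW}) that the compactness of $\overline B(x,r)$, the gradient bounds, and the doubling inequality are all available. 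The one genuinely delicate point is the smoothness in $x'$ of the reproducing kernel, handled by freezing the truncation level as above; after that, the only ingredients are Green's identity, the pointwise bounds of Theorem \ref{T:NSW}, and doubling, and no boundary term arises because $\varphi_r(x,\cdot)$ is compactly supported strictly inside $D$ — which is precisely why one uses the mollifier rather than a Green-function representation on the non-smooth level set $\Omega(x,E(x,r))$.
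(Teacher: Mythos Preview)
Your proof is correct and follows essentially the same route as the paper: represent $u(x)$ by a reproducing kernel built from a cutoff of the fundamental solution $\Gamma$, with support in an annular level set, freeze the truncation parameter at the base point, differentiate under the integral, and estimate via Theorem~\ref{T:NSW} and the doubling property. Your kernel $\varphi_r=\mathcal L_y\zeta(\Gamma)$ reduces (since $\mathcal L_y\Gamma=0$ off the pole) to $-\zeta''(\Gamma)|X_y\Gamma|^2$, which is exactly the paper's $K_R(x,y)=f_R(1/\Gamma)\,|X_y\Gamma|^2/\Gamma^2$ after the change of profile $-\zeta''(t)=t^{-2}f_R(1/t)$; the only presentational difference is that you obtain the reproducing identity by a direct Green's-formula argument, while the paper derives it as Theorem~\ref{T:MeanValue} from the surface mean-value formula \eqref{meanvalue} via coarea, before invoking the a~priori estimate of Theorem~\ref{T:AprioriE}.
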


\begin{rmrk}
We emphasize that Theorem \ref{T:Harmonic} cannot be established
similarly to its classical ancestor for harmonic functions, where
one uses the mean-value theorem coupled with the trivial observation
that any derivative of a harmonic function is harmonic. In the
present non-commutative setting, derivatives of harmonic functions
are no longer harmonic!
\end{rmrk}

A useful consequence of Theorem \ref{T:Harmonic} is the following.

\begin{cor}\label{C:Harnack}
Let $D\subset \Rn$ be a bounded, open set and suppose that $u$ is  a
non-negative harmonic function in $D$. There exists $R_o>0$,
depending on $D$ and $X$, such that for any $x\in D$ and $0<r\leq
R_o$ for which $\overline{B}(x,2r)\subset D$, one has for any given
$s\in \mathbb N$
\[
|X_{j_1}X_{j_2}...X_{j_s}u(x)|\ \leq\ \frac{C}{r^s}\ u(x),
\]
for some $C=C(D,X,s)>0$.
\end{cor}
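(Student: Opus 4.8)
The plan is to combine the interior Schauder estimate of Theorem \ref{T:Harmonic} with the scale-invariant Harnack inequality \eqref{harnack-inv}. First I would fix $x \in D$ and $0 < r \leq R_o$ with $\overline{B}(x,2r) \subset D$, the constant $R_o$ being chosen small enough to accommodate both Theorem \ref{T:Harmonic} (applied on balls of radius $r$) and Proposition \ref{P:compact}; I would also shrink $r$ if necessary so that the ball inclusion $B(x,ar) \subset \overline B(x,2r)$ coming from \eqref{equivi} makes sense. Applying Theorem \ref{T:Harmonic} to $u$ on the ball $\overline{B}(x,r) \subset D$ gives
\[
|X_{j_1}X_{j_2}\cdots X_{j_s}u(x)|\ \leq\ \frac{C}{r^s}\ \underset{\overline{B}(x,r)}{\max}\ |u|\ =\ \frac{C}{r^s}\ \underset{\overline{B}(x,r)}{\max}\ u,
\]
where the last equality uses that $u\geq 0$ in $D$. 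So the only remaining task is to bound $\max_{\overline B(x,r)} u$ by a dimensional multiple of $u(x)$.

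This is exactly where the Harnack principle enters. Since $u$ is nonnegative and $\mathcal L$-harmonic in $D$, and since $\overline B(x,2r) \subset D$, I would cover $\overline B(x,r)$ by finitely many $X$-balls (or equivalently, via \eqref{equivi}, metric balls $B_d$) of small radius, chained together inside $B(x,2r)$, and iterate \eqref{harnack-inv} a bounded number of times to conclude
\[
\underset{\overline{B}(x,r)}{\max}\ u\ \leq\ C'\ u(x),
\]
with $C'$ depending only on $D$ and $X$ (through the doubling constant $C_1$ of Theorem \ref{T:db} and the Harnack constants $C,a$ in \eqref{harnack-inv}), but not on $x$ or $r$. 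The number of balls in the chain is controlled uniformly because of the equivalence \eqref{equivi} between $X$-balls and Carnot--Carath\'eodory balls and the doubling property \eqref{dc}; alternatively, one could invoke the fact that a single application of \eqref{harnack-inv} on a suitable ball $B(x,\tilde a r)\subset \overline B(x,2r)$ already dominates $\max_{\overline B(x,r)}u$ up to adjusting the constant $\tilde a$. Combining the two displayed estimates yields
\[
|X_{j_1}X_{j_2}\cdots X_{j_s}u(x)|\ \leq\ \frac{C C'}{r^s}\ u(x),
\]
which is the claim with the constant $CC'$ renamed $C$.

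The main obstacle, though it is a mild one, is making the chaining argument genuinely scale-invariant: one must ensure that the number of Harnack steps needed to pass from $u(x)$ to $\sup_{\overline B(x,r)} u$ is bounded by a constant independent of both the center $x\in D$ and the radius $r\in(0,R_o]$. This is guaranteed by the uniform doubling estimate \eqref{dc} together with the ball equivalence \eqref{equivi}, so the bookkeeping reduces to a standard covering lemma rather than anything substantive. No new analytic input beyond Theorem \ref{T:Harmonic} and \eqref{harnack-inv} is required.
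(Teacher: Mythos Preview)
Your proposal is correct and follows essentially the same approach as the paper's own proof, which simply invokes Theorem \ref{T:Harmonic} together with the Harnack inequality \eqref{harnack-inv}. The additional chaining discussion you include is a reasonable elaboration but not strictly necessary, since a single application of the scale-invariant Harnack inequality on a ball comparable to $B(x,r)$ (using \eqref{equivi}) already suffices.
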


\begin{proof}[\textbf{Proof}]
Since $u\geq 0$, we immediately obtain the result from Theorem \ref{T:Harmonic} and from the Harnack inequality \eqref{harnack-inv}.
\end{proof}

To prove Theorem \ref{T:Harmonic}, we use the family of sub-elliptic
mollifiers introduced in \cite{CDG1}, see also \cite{CDG2}. Choose a
nonnegative function $f\in C_o^{\infty}(\R)$, with $supp\ f\subset
[1,2]$, and such that $\int_\R f(s)ds=1$, and let
$f_R(s)=R^{-1}f(R^{-1}s)$. We define the kernel

\begin{equation*}
K_R(x,y)\ =\ f_R\left(\frac{1}{\Gamma(x,y)}\right)\ \frac{|X_y\Gamma(x,y)|^2}{\Gamma(x,y)^2} .
\end{equation*}

Given a function $u\in L_{loc}^1(\Rn)$, following \cite{CDG1} we
define the \emph{subelliptic mollifier} of $u$  by
\begin{equation}\label{JR}
J_R\ u(x)\ =\ \int_{\Rn}\ u(y)\ K_R(x,y)\ dy ,    \quad\quad  \quad \quad R>0.
\end{equation}

We note that for any fixed $x\in \Rn$,
\begin{equation}\label{supp}
supp\ K_R(x,\cdot)\ \subset\  \Om(x,2R)\ \setminus\ \Om(x,R) .
\end{equation}

One of the important features of $J_R\ u$ is expressed by the
following theorem.

\begin{thrm}\label{T:MeanValue}
Let $D\subset \Rn$ be open and suppose that $u$ is harmonic in $D$.
There exists $R_o>0$, depending on $D$ and $X$, such that for any
$x\in D$, and every $0<R\leq R_o$ for which
$\overline{\Om}(x,2R)\subset D$, one has
\[
u(x)\ =\ J_R\ u(x).
\]
\end{thrm}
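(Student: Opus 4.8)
The strategy is to disintegrate the integral \eqref{JR} defining $J_R u(x)$ along the level sets of $y\mapsto\Gamma(x,y)$ by means of the coarea formula, and then to recognize each of the resulting slice integrals as the classical surface mean value of the $\mathcal L$-harmonic function $u$ over that level set, which reproduces $u(x)$.

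Fix $x\in D$ and choose $R_o$ small enough (Proposition \ref{P:compact}) that $\overline{\Om}(x,2R_o)\subset D$ is compact, and let $0<R\le R_o$. By \eqref{supp} the integrand in \eqref{JR} is bounded and supported in the compact set $\overline{\Om}(x,2R)\setminus\Om(x,R)$, on which $\Gamma(x,\cdot)$ is smooth and bounded below, so $J_R u(x)$ is finite. Put $g(y)=1/\Gamma(x,y)$, so that $|\nabla g|=|\nabla_y\Gamma|/\Gamma^2$ and $\{g=s\}=\partial\Om(x,s)$; since the integrand vanishes wherever $\nabla_y\Gamma=0$ (there also $X_y\Gamma=0$), the coarea formula gives
\[
J_R u(x)\ =\ \int_R^{2R} f_R(s)\left(\int_{\partial\Om(x,s)} u(y)\,\frac{|X_y\Gamma(x,y)|^2}{|\nabla_y\Gamma(x,y)|}\,d\sigma(y)\right) ds ,
\]
with $d\sigma$ the $(n-1)$-dimensional surface measure. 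By Sard's theorem almost every $s\in[R,2R]$ is a regular value of $g$, hence for a.e.\ such $s$ the set $\partial\Om(x,s)$ is a $C^\infty$ hypersurface and the inner integral has classical meaning.

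It thus suffices to establish the surface mean value identity $\int_{\partial\Om(x,s)} u\,|X_y\Gamma|^2|\nabla_y\Gamma|^{-1}\,d\sigma = u(x)$ for every small regular value $s$; granting this, $\int_\R f_R=1$ yields $J_R u(x)=u(x)$. To prove the identity, write $\mathcal L w=-\operatorname{div}\vec F_w$ where $(\vec F_w)_k=\sum_j a_{jk}X_j w$ for $X_j=\sum_k a_{jk}\partial_k$, so that $\vec F_w\cdot\nabla h=Xw\cdot Xh$ and Green's identity on a bounded smooth open set $V$ reads $\int_V(w\mathcal L h-h\mathcal L w)\,dy=\int_{\partial V}(h\vec F_w-w\vec F_h)\cdot\n\,d\sigma$, $\n$ the outer unit normal. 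Apply it with $w=u$, $h=\Gamma(x,\cdot)$ on the annulus $\Om(x,s)\setminus\overline{\Om}(x,\eps)$ ($s>\eps$ regular values), where $\mathcal L u=\mathcal L\Gamma(x,\cdot)=0$. On each level surface $\partial\Om(x,t)$ one has $\Gamma(x,\cdot)\equiv 1/t$, $\int_{\partial\Om(x,t)}\vec F_u\cdot\n\,d\sigma=-\int_{\Om(x,t)}\mathcal L u\,dy=0$, and $\vec F_{\Gamma(x,\cdot)}\cdot\n=-|X_y\Gamma|^2/|\nabla_y\Gamma|$ since the outer normal of $\Om(x,t)=\{\Gamma(x,\cdot)>1/t\}$ is $\n=-\nabla_y\Gamma/|\nabla_y\Gamma|$. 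Feeding these in, the two boundary contributions collapse to the surface means over $\partial\Om(x,s)$ and $\partial\Om(x,\eps)$, and Green's identity forces them to be equal. Taking $u\equiv 1$ shows these surface means are independent of $\eps$, and a standard cutoff argument identifies the common value with $1$ via the normalization $\mathcal L_y\Gamma(x,\cdot)=\delta_x$ of the fundamental solution. Since the measures $|X_y\Gamma|^2|\nabla_y\Gamma|^{-1}\,d\sigma$ on $\partial\Om(x,\eps)$ are nonnegative, of total mass $1$, and carried by the shrinking surfaces $\partial\Om(x,\eps)\to\{x\}$, and $u$ is continuous at $x$, letting $\eps\to 0$ along regular values gives $\int_{\partial\Om(x,\eps)} u\,|X_y\Gamma|^2|\nabla_y\Gamma|^{-1}\,d\sigma\to u(x)$, which is the asserted identity.

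I expect the main obstacle to be the possible non-smoothness of the level sets of $\Gamma$: one cannot integrate by parts directly on $\Om(x,s)$ and $\Om(x,\eps)$, but must restrict to regular values, which is harmless precisely because the coarea step only ever sees almost every level. The second delicate point is that the slice integral is identified with $u(x)$ not by an exact computation on a single surface (as for the Euclidean mean value property) but through the limit $\eps\to0$, i.e.\ an approximate-identity argument resting on the positivity, unit mass, and concentration near $x$ of the measures $|X\Gamma|^2|\nabla\Gamma|^{-1}d\sigma$.
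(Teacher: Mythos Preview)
Your proposal is correct and follows essentially the same route as the paper: disintegrate $J_R u(x)$ via the coarea formula along the level sets $\partial\Om(x,s)$, then use the surface mean-value identity $\int_{\partial\Om(x,s)} u\,|X_y\Gamma|^2|\nabla_y\Gamma|^{-1}\,d\sigma = u(x)$ together with $\int f_R=1$. The only difference is that the paper simply quotes this surface mean-value identity (its formula \eqref{psi}, specialized to $\mathcal L u=0$) from \cite{CGL}, whereas you re-derive it via Green's identity on the annulus and a concentration argument as $\eps\to 0$; your derivation is sound and makes the argument self-contained, and your care with Sard's theorem and regular values is a welcome precision the paper glosses over.
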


\begin{proof}[\textbf{Proof}]
Let $u$ and $\Om(x,R)$ be as in the statement of the theorem. We obtain for $\psi \in C^{\infty}(D)$ and $0<t\leq R$, see \cite{CGL},
\begin{equation}\label{psi}
\psi(x)\ =\ \int_{\partial \Om(x,t)}\ \psi(y)\ \frac{|X_y\Gamma(x,y)|^2}{|D\Gamma(x,y)|}\ dH_{n-1}(y)\ +\ \int_{\Om(x,t)}\ \mathcal{L}\psi(y)\ \big[\Gamma(x,y)-\frac{1}{t}\big]\ dy.
\end{equation}

Taking $\psi=u$ in \eqref{psi}, we find
\begin{equation}\label{meanvalue}
u(x)\ =\ \int_{\partial \Om(x,t)}\ u(y)\ \frac{|X_y\Gamma(x,y)|^2}{|D\Gamma(x,y)|}\ dH_{n-1}(y).
\end{equation}

We are now going to use \eqref{meanvalue} to complete the proof. The idea is to start from the definition of $J_R\ u(x)$, and then use Federer co-area formula \cite{Fe}. One finds
\[
J_R\ u(x)\ =\ \int_0^{\infty}\ f_R(t)\ \left[\int_{\partial \Om(x,t)}\ u(y)\ \frac{|X_y\Gamma(x,y)|^2}{|D\Gamma(x,y)|}\ dH_{n-1}(y)\right]\ dt .
\]

The previous equality, \eqref{meanvalue}, and the fact that  $\int_\R f_R(s)ds=1$, imply the conclusion.

\end{proof}

The essence of our main a priori estimate is contained in the
following theorem.

\begin{thrm}\label{T:AprioriE}
Fix a bounded set $U\subset \Rn$. There exists a  constant $R_o>0$,
depending only on $U$ and on the system $X$, such that for any $u\in
L_{loc}^1(\Rn)$, $x\in U, 0<R\leq R_o$, and $s\in \mathbb N$ one has
for some $C=C(U,X,s)>0$,
\[
|X_{j_1}X_{j_2}...X_{j_s}\ J_R\ u(x)|\ \leq\  \frac{C}{R}\ \frac{1}{F(x,R)^{2+s}}\ \int_{\Om(x,R)}\ |u(y)|\ dy .
\]
\end{thrm}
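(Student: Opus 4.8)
The plan is to differentiate under the integral sign in \eqref{JR} and to reduce the statement to the pointwise estimate
\[
|X_{j_1}X_{j_2}\cdots X_{j_s}\,K_R(x,y)|\ \leq\ \frac{C}{R}\ \frac{1}{F(x,R)^{2+s}},
\]
valid uniformly for $x\in U$, $0<R\le R_o$ and $y\in supp\,K_R(x,\cdot)$, the vector fields acting on the $x$-variable. By \eqref{supp} the support of $K_R(x,\cdot)$ lies in $\Omega(x,2R)\setminus\Omega(x,R)$, and for $x'$ in a small neighborhood of $x$ these supports stay in a fixed compact subset of $\Rn\setminus\{x\}$ on which $u\in L^1$ and the kernel and its $x$-derivatives are continuous and bounded; hence dominated convergence justifies passing the derivatives inside the integral, and, granting the displayed bound, integrating in $y$ over the support yields the theorem.

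The first step is to record the geometry on $supp\,K_R(x,\cdot)$. Since $\Omega(x,r)=\{\Gamma(x,\cdot)>1/r\}$ and $f_R$ is supported in $[R,2R]$, one has $1/\Gamma(x,y)\in[R,2R]$ there, so $\Gamma(x,y)$ is comparable to $1/R$, and by \eqref{F} the distance $d(x,y)$ is comparable to $\rho:=F(x,R)$, with constants depending only on $U$ and $X$. By Theorem \ref{T:db} and the doubling property \eqref{dc}, $|B_d(x,d(x,y))|$ is then comparable to $\Lambda(x,\rho)$, and unwinding the definition \eqref{E} of $E$ together with the relation $E(x,F(x,R))=R$ gives the basic identity $\Lambda(x,\rho)=\rho^{2}E(x,\rho)=R\,\rho^{2}$. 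Consequently, on the support $\Gamma(x,y)^{-1}\asymp R$, and by Theorem \ref{T:NSW} every derivative of order $m$ in $x$ and/or $y$ of $\Gamma(x,y)$ is bounded by a constant multiple of $d(x,y)^{2-m}/|B_d(x,d(x,y))|\asymp R^{-1}\rho^{-m}$; in particular $|X_y\Gamma(x,y)|^{2}/\Gamma(x,y)^{2}\asymp\rho^{-2}$, which already settles the case $s=0$.

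For the combinatorial step I would write $K_R(x,y)=f_R(1/\Gamma(x,y))\,G(x,y)$ with $G=|X_y\Gamma|^{2}\,\Gamma^{-2}$ and apply the Leibniz rule: $X_{j_1}\cdots X_{j_s}(f_R(1/\Gamma)\,G)$ is a finite sum of products $(X_{S}[f_R(1/\Gamma)])(X_{S^{c}}G)$ over subsets $S\subset\{1,\dots,s\}$, where $X_{S}$ denotes the ordered composition of the vector fields indexed by $S$. For the factor $X_{S}[f_R(1/\Gamma)]$, iterating the chain rule (Fa\`a di Bruno) expresses it as a finite linear combination of terms $f_R^{(l)}(1/\Gamma)\prod_{i=1}^{l}X^{B_{i}}(1/\Gamma)$, where $B_1,\dots,B_l$ partition $S$; using $|f_R^{(l)}|\le C R^{-1-l}$ and the expansion of each $X^{B}(1/\Gamma)$ into terms $\Gamma^{-1-p}\prod X^{C_j}\Gamma$, the estimates of the previous paragraph (including $\Gamma^{-1}\asymp R$) bound each such term by $C R^{-1-l}\,R^{l}\rho^{-|S|}=C R^{-1}\rho^{-|S|}$, hence $|X_{S}[f_R(1/\Gamma)]|\le C R^{-1}F(x,R)^{-|S|}$. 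The same scheme applied to $X_{S^{c}}G=X_{S^{c}}(|X_y\Gamma|^{2}\Gamma^{-2})$ gives $|X_{S^{c}}G|\le C\,F(x,R)^{-2-|S^{c}|}$. Multiplying these two bounds and summing over the finitely many $S$ gives $C R^{-1}F(x,R)^{-|S|-|S^{c}|-2}=C R^{-1}F(x,R)^{-2-s}$, which is the displayed pointwise estimate; integrating in $y$ over the support then completes the argument.

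Two points deserve attention. For the uniformity over $x\in U$: since $U$ is bounded, the coefficients $a_I$ in \eqref{pol} and their derivatives are bounded near $\overline U$, so one may fix $R_o>0$, depending only on $U$ and $X$, small enough that $F(x,R)$ stays below the Nagel-Stein-Wainger parameter for all $x\in U$ and $0<R\le R_o$; this ensures that Theorem \ref{T:db}, \eqref{nsw2}, \eqref{dc} and Theorem \ref{T:NSW} apply on $supp\,K_R(x,\cdot)$, with compactness of the relevant closed balls guaranteed by Proposition \ref{P:compact}. The main obstacle is the bookkeeping in the third paragraph: one must verify that every monomial produced by the iterated Leibniz and chain rules carries \emph{exactly} the homogeneity $R^{-1}F(x,R)^{-2-s}$, with no residual powers of $|B_d(x,d(x,y))|$ or of $d(x,y)$ left over. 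This is precisely where the identity $\Lambda(x,F(x,R))=R\,F(x,R)^{2}$ is indispensable: it converts the powers of $|B_d(x,d(x,y))|$ accumulated by an $l$-fold product into the compensating powers of $R$ that cancel the $R^{-1-l}$ coming from $f_R^{(l)}$. Once this is tracked carefully, everything else is the routine calculus of higher-order derivatives of a product and a composition, controlled term by term by \eqref{gradgamma}.
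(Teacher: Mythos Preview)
Your proof is correct and follows the same strategy as the paper: differentiate under the integral in \eqref{JR}, use the support property \eqref{supp} together with \eqref{F} and \eqref{gradgamma} to get $\Gamma\asymp R^{-1}$, $d(x,y)\asymp F(x,R)$ and hence $|X^{(m)}\Gamma|\lesssim R^{-1}F(x,R)^{-m}$ on the support, and then bound the $x$-derivatives of $K_R$ pointwise. The paper carries out the case $s=1$ explicitly by splitting $X_jK_R$ into three terms $I_R^1,I_R^2,I_R^3$ and leaves $s\ge 2$ to the reader via \eqref{gammaF}, whereas you handle general $s$ directly by a systematic Leibniz/Fa\`a di Bruno bookkeeping; the substance is identical.
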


\begin{proof}[\textbf{Proof}]

We first consider the case $s=1$. From \eqref{gradgamma}, and from the support property \eqref{supp} of $K_R(x,\cdot)$, we can differentiate under the integral sign in \eqref{JR}, to obtain
\[
|X\ J_R\ u(x)|\ \leq\ \int_{B(x,2R)}\ |u(y)|\ |X_xK_R(x,y)|\ dy.
\]

By the definition of $K_R(x,y)$ it is easy to recognize that the components of its sub-gradient $X_xK_R(x,y)$ are estimated as follows
\begin{align}
|X_j\ K_R(x,y)|\ &\leq\   C\ R^{-2}\ |X\Gamma(x,y)|^3\ \Gamma(x,y)^{-4}\
\notag\\
&+\  C\ R^{-1}\ \Gamma(x,y)^{-2}\ \sum_{k=1}^m|X_jX_k\Gamma(x,y)|\ |X_k\Gamma(x,y)|\
\notag\\
&+\ C\ R^{-1}\ |X\Gamma(x,y)|^3\ \Gamma(x,y)^{-3}\
\notag\\
&=\  I_R^1(x,y)\ +\ I_R^2(x,y)\ +\ I_R^3(x,y) .
\notag
\end{align}

To control the three terms in the right-hand side of the above inequality, we use the size estimates \eqref{gradgamma}, along with the observation that, due to the fact that on the support of $K_R(x,\cdot)$ one has
\[
\frac{1}{2R}\ <\ \Gamma(x,y)\ \leq\ \frac{1}{R} ,
\]
then Theorem \ref{T:NSW}, and \eqref{F}, give for all $x\in U, 0<R\leq R_o$, and $y\in \Om(x,2R)\setminus \Om(x,R)$
\begin{equation}\label{dF}
C\ \leq\ \frac{d(x,y)}{F(x,R)}\ \leq\ C^{-1} .
\end{equation}

Using \eqref{gradgamma}, \eqref{dF},  one obtains that for $i =1, 2, 3$
\[
\underset{y\in \Om(x,2R)\setminus \Om(x,R)}{sup}|I_R^i(x,y)|\leq \frac{C}{RF(x,R)^3}
\]
for any $x\in U$, provided that $0<R\leq R_o$. This completes the proof in the case $s=1$. The case $s\geq 2$ is handled recursively by similar considerations based on Theorem \ref{T:NSW}, and we omit details. It may be helpful for the interested reader to note that Theorem \ref{T:NSW} implies
\[
|X_{j_1}X_{j_2}...X_{j_s}\ \Gamma(x,y)|\ \leq\ C\ d(x,y)^{-s}\ \Gamma(x,y),
\]
so that by \eqref{dF} one obtains
\begin{equation}\label{gammaF}
\underset{y\in \Om(x,2R)\setminus \Om(x,R)}{sup}\ |X_{j_1}X_{j_2}...X_{j_s}\ \Gamma(x,y)|\ \leq \ \frac{C}{RF(x,R)^s} .
\end{equation}

\end{proof}

We are finally in a position to prove Theorem \ref{T:Harmonic}.

\begin{proof}[\textbf{Proof of Theorem \ref{T:Harmonic}}]
We observe explicitly that the assumption states that with $R=E(x,r)/2$, then $\overline{\Om}(x,2R)=\overline{B}(x,r)\subset D$. By Theorem \ref{T:MeanValue}, and by \eqref{gammaF}, we find
\begin{align*}
& |X_{j_1}X_{j_2}...X_{j_s}\ u(x)|\ =\ |X_{j_1}X_{j_2}...X_{j_s}(J_R\ u)(x)|
\notag\\
& \leq\ \frac{C}{R F(x,R)^{2+s}}\  \int_{\Om(x,R)}\ |u(y)|\ dy\ \leq\  C\ \frac{|\Om(x,R)|}{R F(x,R)^{2+s}}\ \  \underset{\overline{\Om}(x,R)}{max}\ |u| .
\notag
\end{align*}

To complete the proof we only need to observe that $\Om(x,R) = B(x,r)$, and that, thanks to Theorem \ref{T:NSW}, \eqref{F}, one has
\[
\frac{C}{r^s}\ \leq\ \frac{|B(x,r)|}{R F(x,R)^{2+s}}\ \leq\  \frac{C^{-1}}{r^s} .
\]

\end{proof}

\begin{rmrk}\label{R:groups}
We observe explicitly that when $\bG$ is a Carnot group with
$X_1,...,X_m$ being a fixed basis of the horizontal layer of its Lie
algebra, then the constant $C$ in Theorem \ref{T:Harmonic} and
Corollary \ref{C:Harnack} can be taken independent of the open set
$D$.
\end{rmrk}

\section{\textbf{Lipschitz boundary estimates for the Green function}}\label{S:bdry}

In this section we establish some basic estimates at the boundary
for the Green function associated to a sub-Laplacian, when the
relevant domain possesses an appropriate analogue of the outer
tangent sphere condition introduced by Poincar\'e in his famous
paper \cite{P}. Analyzing the domain $\Om_M$ in Remark \ref{R:J} one
recognizes that Jerison's negative example fails to possess a
tangent outer gauge sphere at its characteristic point. We thus
conjectured that by imposing such condition one should be able to
establish the boundedness near the boundary of the horizontal
gradient of the Green function (see for instance \cite{G} for the
classical case of elliptic or parabolic operators). This intuition
has proved correct. In their paper \cite{LU} Lanconelli and Uguzzoni
have proved the boundedness of the Poisson kernel for a domain
satisfying the outer sphere condition in the Heisenberg group,
whereas in \cite{CGN2} a similar result was successfully combined
with those in \cite{CG} to obtain a complete solution of the
Dirichlet problem for a large class of domains in groups of
Heisenberg type.

The objective of this section is to generalize the cited results in
\cite{LU} and \cite{CGN2} to the Poisson kernel associated with an
operator of H\"ormander type. Namely,  {\it if $D\subset \Rn$ is a bounded domain
satisfying an intrinsic uniform outer sphere condition with respect
to a system $X = \{X_1,...,X_m\}$ satisfying \eqref{frc}, and having
Green function $G(x,y) = G_D(x,y)$, if we fix the singularity at an
interior point $x_1\in D$, then the function $x\to |XG(x_1,x)|$,
which is well defined for $x\in D\setminus \{x_1\}$, belongs to
$L^\infty$ in a neighborhood of $\partial D$}. The exact statements
are contained in Corollaries \ref{C:Lipschitz} and
\ref{C:LipschitzSigma}.

 We emphasize that, in
view of Theorem \ref{T:KN}, the main novelty of this result lies in
that we do allow the boundary point to be characteristic. As it will
be clear from the analysis below, the passage from the group setting
to the case of general sub-Laplacians involves overcoming various
non-trivial obstacles.

Our first task is to obtain a growth estimate at the boundary for
harmonic functions which vanish on a distinguished portion of the
latter. We show that any such function grows at most linearly with
respect to the Carnot-Carath\'eodory distance associated to the
system $X$. The proof of this result ultimately relies on delicate
estimates of a suitable barrier whose construction  is inspired to
that given by Poincar\'e  \cite{P}, see also \cite{G}. We begin with
a lemma which plays a crucial role in the sequel. The function
$\Gamma(x,y)=\Gamma(y,x)$ denotes the positive fundamental solution
of the sub-Laplacian associated with the system $X$, see Section
\ref{S:Prelim}.

\begin{lemma}\label{L:Gamma}
 For any bounded set $U\subset\Rn$, there
exist $R_o, C>0$, depending on $U$ and $X$,  such that for every
$x_o\in U$, and $x, y\in \Rn\setminus B_d(x_o,r)$, one has
\[
 |\Gamma(x_o,x)-\Gamma(x_o,y)|\ \leq\ C\ \frac{r}{|B_d(x_o,r)|} \ d(x,y).
\]
\end{lemma}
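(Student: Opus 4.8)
The statement is a quantitative Lipschitz-type estimate for the fundamental solution $\Gamma(x_o,\cdot)$ away from a ball around its pole, with the sharp scaling constant $r/|B_d(x_o,r)|$. The plan is to deduce it from the gradient bound in Theorem \ref{T:NSW} together with the metric mean-value / Rademacher--Stepanov theorem (Theorem \ref{T:meanvalue}(i)). First I would fix a bounded set $U$, take $x_o\in U$, and work in the exterior region $\Omega_r = \Rn\setminus B_d(x_o,r)$; by Proposition \ref{P:compact} and the standard choice of local parameter $R_o$ we may assume $0<r<R_o$ so that the relevant balls are compact and Theorem \ref{T:NSW} applies. Since $\Gamma(x_o,\cdot)\in C^\infty(\Rn\setminus\{x_o\})$ by hypoellipticity, $y\mapsto\Gamma(x_o,y)$ is smooth on a neighborhood of $\overline{\Omega_r}$, so it suffices to bound its horizontal gradient there and then integrate along a sub-unit curve.

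The key step is the pointwise bound: for $z$ with $d(x_o,z)\ge r$ (and $d(x_o,z)\le R_o$, the far regime being even easier by monotonicity/decay of $\Gamma$), the first inequality in \eqref{gradgamma} with $s=1$ gives
\[
|X\Gamma(x_o,z)|\ \le\ C^{-1}\ \frac{d(x_o,z)^{-1}}{|B_d(x_o,d(x_o,z))|}\ \le\ C'\ \frac{r^{-1}}{|B_d(x_o,r)|}\ \cdot\ \frac{|B_d(x_o,r)|\,r}{|B_d(x_o,d(x_o,z))|\,d(x_o,z)}.
\]
The correction factor is $\le 1$: by the doubling property \eqref{dc} and the Nagel--Stein--Wainger estimate \eqref{nsw2}, the function $\rho\mapsto \rho\,|B_d(x_o,\rho)|$ is (up to a fixed multiplicative constant) non-decreasing in $\rho$ on $(0,R_o]$, so for $d(x_o,z)\ge r$ one gets $|B_d(x_o,r)|\,r \le C\, |B_d(x_o,d(x_o,z))|\,d(x_o,z)$. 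Hence
\[
\sup_{z\in\Omega_r,\ d(x_o,z)\le R_o}\ |X\Gamma(x_o,z)|\ \le\ C\ \frac{r}{r^2\,|B_d(x_o,r)|}\ \cdot\ r\ =\ C\ \frac{1}{r\,|B_d(x_o,r)|}\,r\ =\ C\ \frac{r}{|B_d(x_o,r)|}\ \cdot\ \frac1{r}\,;
\]
more cleanly, $|X\Gamma(x_o,z)|\le C\,\dfrac{1}{r\,|B_d(x_o,r)|}$, which combined with a length-$d(x,y)$ sub-unit path gives the total increment $\le C\,\dfrac{d(x,y)}{r\,|B_d(x_o,r)|}$ — but that is off by a factor $r^2$ from the claim, so in fact the correct reading of \eqref{gradgamma} for $s=1$ must be used as written, $|X\Gamma(x_o,z)|\le C^{-1} d(x_o,z)/|B_d(x_o,d(x_o,z))|$ scaled against $r/|B_d(x_o,r)|$; I will be careful here, as this dimensional bookkeeping is exactly where an error is easy to make.

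For the final assembly I would argue as follows. Apply Theorem \ref{T:meanvalue}(i) to $f=\Gamma(x_o,\cdot)$ on balls $B_d(w,3\rho)\subset\Omega_{r/2}$ to convert the sup of $|Xf|$ into a genuine metric-Lipschitz estimate on the slightly smaller exterior region, with constant controlled by $\sup_{\Omega_{r/2}}|X\Gamma(x_o,\cdot)|$, which by the previous step is $\le C\, \tfrac{r}{|B_d(x_o,r)|}$ after absorbing the doubling constants relating scale $r/2$ to scale $r$; then for arbitrary $x,y\in\Omega_r$ chain the estimate along a minimizing (or near-minimizing) sub-unit curve from $x$ to $y$ — which stays in $\Omega_{r/2}$ up to shrinking $R_o$, or else can be rerouted using the cone/corkscrew-type connectivity of exteriors of metric balls — to obtain $|\Gamma(x_o,x)-\Gamma(x_o,y)|\le C\,\tfrac{r}{|B_d(x_o,r)|}\,d(x,y)$. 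The main obstacle I anticipate is precisely this last routing issue: a sub-unit geodesic between two points in $\Rn\setminus B_d(x_o,r)$ need not itself avoid $B_d(x_o,r/2)$, so one must either (i) observe that on the set $\{d(x_o,\cdot)<r/2\}$ the bound $|X\Gamma(x_o,\cdot)|\le C/(r|B_d(x_o,r)|)$ still holds by the same monotonicity argument pushed down to scale $d(x_o,z)$ (the gradient is largest near the pole, so restricting to $\Omega_r$ only helps), or (ii) handle the two cases $d(x,y)\le r/4$ (path stays in $\Omega_{r/2}$, direct chaining) and $d(x,y)>r/4$ (use the crude bound $|\Gamma(x_o,x)|+|\Gamma(x_o,y)|\le 2\sup_{\Omega_r}\Gamma(x_o,\cdot)\le C\,r^2/|B_d(x_o,r)| \le C\,\tfrac{r}{|B_d(x_o,r)|}\,d(x,y)$, using $d(x,y)>r/4$ and the lower bound in \eqref{gradgamma} for the sup of $\Gamma$) separately. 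Either way the constant depends only on $U$ and $X$ through the doubling constant $C_1$ and $R_o$, as required.
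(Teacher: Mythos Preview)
Your proposal, once you commit to fix (ii), is precisely the paper's argument: the paper splits into the cases $d(x,y)>\theta r$ and $d(x,y)\le\theta r$ (with $\theta=1/6$; your $r/4$ works equally well), handling the first by the crude bound $\Gamma(x_o,x)+\Gamma(x_o,y)\le C/E(x_o,r)\le Cr^2/|B_d(x_o,r)|$ together with $d(x,y)>\theta r$, and the second by locating a ball $B_d(P,\tfrac94\rho)$ around a near-midpoint $P$ that sits inside $\Rn\setminus B_d(x_o,r/2)$ and applying Theorem~\ref{T:meanvalue}(i) with the gradient estimate $|X\Gamma(x_o,\xi)|\le C/(rE(x_o,r))$ on that ball.

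Two corrections to your write-up. First, discard fix (i) entirely: the horizontal gradient of $\Gamma(x_o,\cdot)$ blows up as one approaches the pole, so the bound genuinely cannot be pushed into $B_d(x_o,r/2)$, and the case split is not optional but the heart of the proof. Second, your monotonicity claim is stated for the wrong function. What you need (and what the paper invokes via the increasingness of $E(x_o,\cdot)$) is that $\rho\mapsto |B_d(x_o,\rho)|/\rho$ is essentially increasing, not $\rho\mapsto \rho|B_d(x_o,\rho)|$; combined with the correct $s=1$ reading of \eqref{gradgamma}, namely $|X\Gamma(x_o,z)|\le C\,d(x_o,z)/|B_d(x_o,d(x_o,z))|$, this yields $|X\Gamma(x_o,z)|\le C\,r/|B_d(x_o,r)|$ for $d(x_o,z)\ge r/2$, which is exactly the scaling the lemma asserts.
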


\begin{proof}[\textbf{Proof}]
We distinguish two cases: (i) $d(x,y)> \theta r$; (ii) $d(x,y)\leq
\theta r$. Here, $\theta\in (0,1)$ is to be suitably chosen. Case
(i) is easy. Using \eqref{gradgamma} we find
\begin{align}
& |\Gamma(x_o,x)\ -\ \Gamma(x_o,y)|\  \leq\ \Gamma(x_o,x)\ +\ \Gamma(x_o,y)
\notag \\
& \leq\ C\ \left[\frac{d(x_o,x)^2}{|B_d(x_o,d(x_o,x))|}\  +\
\frac{d(x_o,y)^2}{|B_d(x_o,d(x_o,y))|}\right]
\notag \\
& \leq\ C\ \left\{\frac{1}{E(x_o,d(x_o,x))} +
\frac{1}{E(x_o,d(x_o,y))}\right\}\ \leq\ C\ \frac{1}{E(x_o,r)}\ <\
\frac{C}{\theta}\ \frac{r}{|B_d(x_o,r)|} \ d(x,y)\ . \notag
\end{align}

We next consider case (ii), and let $\rho = d(x,y)\leq \theta r$.
Let $\gamma$ be a sub-unitary curve joining $x$ to $y$ with length
$l_s(\gamma)\leq\rho + \rho/16$. The existence of such a curve is
guaranteed by the definition of $d(x,y)$. Consider the function
$g(P)\overset{def}{=} d(x,P)-d(y,P)$. By the continuity of
$g:\{\gamma\}\to\mathbb{R}$, and by the intermediate value theorem,
we can find $P\in\{\gamma\}$ such that $d(x,P)=d(y,P)$. For such
point $P$, we must have
\begin{equation}\label{P}
d(x,P)\ =\ d(y,P)\ \leq\ \frac{3}{4}\ \rho .
\end{equation}

If \eqref{P} were not true, we would in fact have
\[
\frac{3}{4}\rho\ +\ \frac{3}{4}\rho\ <\ d(x,P)\ +\ d(y,P)\ \leq\ l_s(\gamma)\ \leq\ \rho\ +\ \frac{\rho}{16},
\]
which is a contradiction. From \eqref{P} we conclude that $x,y\in B_d(P,3\rho/4)$.
Moreover,
\[
d(P,x_o)\ \geq\ d(x,x_o)\ -\ d(x,P)\ \geq\  r\ -\ \frac{3}{4}\rho\ \geq\ \left(1 - \frac{3}{4}\theta\right)\ r .
\]

We claim that
\begin{equation}\label{case2}
B_d(P,\frac{9}{4}\rho)\ \subset\ \Rn\ \setminus\ B_d(x_o,r/2),
\end{equation}
 provided that we take $\theta = \frac{1}{6}$. In fact, let $z\in B_d(P,\frac{9}{4} \rho)$, then
\[
d(z,x_o)\ \geq\ d(P,x_o) - d(z,P)\ \geq\ \left(1 - \frac{3}{4}\theta \right)\ r\ -\ \frac{9}{4}\ \theta\ r\ =\ (1 - \frac{3}{4}\theta - \frac{9}{4}\theta)\ r\ =\ \frac{r}{2} .
\]

This proves \eqref{case2}. The above considerations allow to apply
Theorem \ref{T:meanvalue}, which, keeping in mind that
$\Gamma(x_o,\cdot)\in C^\infty(B_d(P,\frac{9}{4}\rho))$, presently
gives
\begin{equation}\label{43}
|\Gamma(x_o,x)\ -\ \Gamma(x_o,y)|\ \leq\  C\ \rho\ \underset{\xi\in
B_d(P,\frac{9}{4}\rho)}{sup}|X\Gamma(x_o,\xi)|\ .
\end{equation}

Using \eqref{gradgamma} we obtain for $\xi\in B_d(P,\frac{9}{4}\rho)$
\[
|X\Gamma(x_o,\xi)|\ \leq\ C\ \frac{1}{d(x_o,\xi)\
E(x_o,d(x_o,\xi))}\ ,
\]
where $t\to E(x_o,t)$ is the function introduced in \eqref{E}.
Since by \eqref{case2} we have $d(x_o,\xi)\geq r/2$, the latter estimate, combined with the increasingness of $E(x_o,\cdot)$, leads to the conclusion
\[
\underset{\xi\in B_d(P,\frac{9}{4}\rho)}{sup}|X\Gamma(x_o,\xi)|\ \leq\  C\ \frac{1}{rE(x_o,r)}.
\]

Inserting this inequality in \eqref{43}, and observing that $\frac{1}{rE(x_o,r)} \leq  C  \frac{r}{|B_d(x_o,r)|}$, we find
\[
|\Gamma(x_o,x)\ -\ \Gamma(x_o,y)|\ \leq\ C\ \frac{r}{|B_d(x_o,r)|}\ d(x,y) .
\]

This completes the proof of the lemma.

\end{proof}

The following definition plays a crucial role in the subsequent
development.

\begin{dfn}\label{D:OB}
A domain $ D\subset \Rn$ is said to possess an \emph{outer $X$-ball
tangent} at $x_o\in \partial{ D}$ if for some $r>0$ there exists a
$X$-ball $B(x_1,r)$ such that:
\begin{equation}\label{ob}
x_o\ \in\ \partial{B}(x_1,r), \quad\quad \quad  B(x_1,r)\ \cap\ D\ =\ \varnothing.
\end{equation}
We  say that  $D$ possesses the \emph{uniform outer $X$-ball} if one
can find $R_o>0$  such that   for every $x_o\in \partial{ D}$, and
any $0<r<R_o$, there exists a $X$-ball $B(x_1,r)$ for which
\eqref{ob} holds.
\end{dfn}

Some comments are in order. First, it should be clear from
\eqref{equivi} that the existence of an outer $X$-ball tangent at
$x_o\in
\partial D$ implies that $D$ is thin at $x_o$ (the reverse
implication is not necessarily true). Therefore, thanks to Theorem
\ref{T:thin}, $x_o$ is regular for the Dirichlet problem. Secondly,
when $X =
\{\frac{\partial}{\partial{x_1}},...,\frac{\partial}{\partial{x_n}}\}$,
then the distance $d(x,y)$ is just the ordinary Euclidean distance
$|x-y|$. In such case, Definition \ref{D:OB} coincides with the
notion introduced by Poincar\'e in his classical paper \cite{P}. In
this setting a $X$-ball is just a standard Euclidean ball, then
every $C^{1,1}$ domain and every convex domain possess the uniform
outer $X$-ball condition. When we abandon the Euclidean setting, the
construction of examples is technically much more involved and we
discuss them in the last section of this paper.

We are now ready to state the first key boundary estimate.

\begin{thrm}\label{T:Growth}
Let $ D\subset \Rn$ be a connected open set, and suppose that for
some $r>0$, $D$ has an outer $X$-ball  $B(x_1,r)$ tangent at $x_o\in
\partial{ D}$. There exists $C>0$, depending only on $D$ and on
$X$, such that if $\phi\in C(\partial{ D})$,  $\phi\equiv 0 \quad
in\quad B(x_1,2r)\cap \partial{ D}$, then we have for every $x\in
D$
\[
|H_{\phi}^{ D}(x)|\ \leq\ C\ \frac{d(x,x_o)}{r}\ \underset{\partial{ D}}{max}\ |\phi|.
\]
\end{thrm}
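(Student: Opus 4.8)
The plan is to run the classical Poincar\'e barrier argument, with the Euclidean fundamental solution replaced by $\Gamma(x_1,\cdot)$ and the outer tangent sphere replaced by the $X$-ball $B(x_1,r)$ (this is exactly why $X$-balls, and not metric balls, are the right notion of tangency: the barrier below is then explicit), controlling the barrier near $x_o$ by Lemma~\ref{L:Gamma}. After dividing $\phi$ by $\max_{\partial D}|\phi|$ we may assume $\max_{\partial D}|\phi|=1$, and we may assume $r$ is below a threshold depending on $D$ and $X$ (the range in which the theorem is applied), so that Theorems~\ref{T:db}, \ref{T:NSW} and Lemma~\ref{L:Gamma} are available with a fixed bounded set $U$ containing $\overline D$ and the centre $x_1$. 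Since $x_1\in B(x_1,r)$ and $B(x_1,r)$ is open and disjoint from $D$, one has $\overline D\cap B(x_1,r)=\varnothing$; in particular $x_1\notin\overline D$, so $\Gamma(x_1,\cdot)\in C^\infty(\overline D)$, and $\Gamma(x_1,\cdot)\le 1/E(x_1,r)$ on $\overline D$. As $x_o\in\partial B(x_1,r)$ forces $\Gamma(x_1,x_o)=1/E(x_1,r)$, the \emph{barrier}
\[
w(x)\ =\ \Gamma(x_1,x_o)-\Gamma(x_1,x)\ =\ \frac{1}{E(x_1,r)}-\Gamma(x_1,x)
\]
is $\mathcal L$-harmonic in $D$, continuous on $\overline D$, and nonnegative there.

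The first step is the comparison on $\partial D$. From $E(x_1,2r)=\sum_I|a_I(x_1)|\,2^{d(I)-2}r^{d(I)-2}\ge 2^{n-2}E(x_1,r)\ge 2E(x_1,r)$ (every $d(I)$ with $a_I(x_1)\ne0$ satisfies $d(I)\ge Q(x_1)\ge n\ge3$), we get for $x\in\partial D\setminus B(x_1,2r)$, where $\Gamma(x_1,x)\le 1/E(x_1,2r)$,
\[
w(x)\ \ge\ \frac{1}{E(x_1,r)}-\frac{1}{E(x_1,2r)}\ \ge\ \frac{1}{2E(x_1,r)}\ .
\]
Since $\phi\equiv0$ on $\partial D\cap B(x_1,2r)$, while $|\phi|\le1$ and $w\ge0$ on all of $\partial D$, it follows that $|\phi|\le 2E(x_1,r)\,w$ on $\partial D$. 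The function $2E(x_1,r)\,w$ is $\mathcal L$-harmonic in $D$, nonnegative and continuous on $\overline D$, so applying Bony's maximum principle (Theorem~\ref{T:MP}) to $\pm\phi$ yields $|H^D_\phi(x)|\le 2E(x_1,r)\,w(x)$ for every $x\in D$.

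The second step turns this into linear growth in $d(\cdot,x_o)$. By \eqref{equivi}, $B_d(x_1,a^{-1}r)\subset B(x_1,r)$, so $x,x_o\in\Rn\setminus B_d(x_1,a^{-1}r)$, and Lemma~\ref{L:Gamma} applied with pole $x_1$ and radius $a^{-1}r$ gives
\[
w(x)\ =\ |\Gamma(x_1,x_o)-\Gamma(x_1,x)|\ \le\ C\,\frac{a^{-1}r}{|B_d(x_1,a^{-1}r)|}\,d(x,x_o)\ .
\]
By Theorem~\ref{T:db} and doubling, $|B_d(x_1,a^{-1}r)|\cong|B_d(x_1,r)|\cong\Lambda(x_1,r)=r^2E(x_1,r)$, so the right-hand side is at most $C\,d(x,x_o)/(r\,E(x_1,r))$. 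Inserting this into the estimate of the first step gives $|H^D_\phi(x)|\le C\,d(x,x_o)/r$, and undoing the normalization proves the theorem; all constants depend only on $U$ (hence on $D$) and on $X$.

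I expect the real difficulty to sit inside the second step, i.e. in Lemma~\ref{L:Gamma} itself: in the Euclidean case the barrier is a concrete power of $|x-x_1|$ whose Lipschitz behaviour near $x_o$ is transparent, whereas here $\Gamma(x_1,\cdot)$ is only accessible through the size and gradient bounds of Theorem~\ref{T:NSW}, and promoting these to the \emph{uniform metric-Lipschitz} estimate $|\Gamma(x_1,x)-\Gamma(x_1,x_o)|\lesssim \frac{r}{|B_d(x_1,r)|}\,d(x,x_o)$ for $x,x_o$ outside $B_d(x_1,a^{-1}r)$ requires the Rademacher--Stepanov-type mean value inequality (Theorem~\ref{T:meanvalue}) together with a careful case split on whether $d(x,y)$ is large or small relative to $r$. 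A secondary point needing attention is to ensure that every comparison constant ($a$, the doubling constant, and the constant of Lemma~\ref{L:Gamma}) is uniform in $x_1$ and $r$, so that the final $C$ depends genuinely only on $D$ and $X$ and not on the particular tangent $X$-ball.
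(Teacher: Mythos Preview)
Your proof is correct and follows essentially the same Poincar\'e-barrier route as the paper: the paper's barrier $f(x)=\dfrac{E(x_1,r)^{-1}-\Gamma(x_1,x)}{E(x_1,r)^{-1}-E(x_1,2r)^{-1}}$ is just your $w$ multiplied by a constant, and both proofs close with Lemma~\ref{L:Gamma}. Your direct inequality $E(x_1,2r)\ge 2^{\,n-2}E(x_1,r)$ from the explicit polynomial form of $\Lambda$ is a slight streamlining of the paper's argument, which instead bounds $E(x_1,r)^{-1}-E(x_1,2r)^{-1}$ via the derivative estimate $C\le rE'(x_1,r)/E(x_1,r)\le C^{-1}$ together with the doubling of $E(x_1,\cdot)$.
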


\begin{proof}[\textbf{Proof}]
Without loss of generality we assume $\underset{\partial{D}}{max}|\phi|=1$. Following the idea in \cite{P} we introduce the function
\begin{equation}\label{deff}
f(x)\ =\ \frac{E(x_1,r)^{-1}\ -\ \Gamma(x_1,x)}{E(x_1,r)^{-1}\ -\ E(x_1,2r)^{-1}} ,\quad\quad\quad x\in D ,
\end{equation}
where $x\to \Gamma(x_1,x)$ denotes the positive fundamental solution of $\mathcal L$, with singularity at $x_1$, and $t \to E(x_1,t)$ is defined as in \eqref{E}.
Clearly, $f$ is $\mathcal{L}$-harmonic in $\Rn\setminus \{x_1\}$. Since $\Gamma(x_1,\cdot)\leq E(x_1,r)^{-1}$ outside $B(x_1,r)$, we see that $f\geq 0$ in $\Rn\setminus B(x_1,r)$, hence in particular in $D$. Moreover, $f\equiv 1$ on $\partial{B}(x_1,2r)\cap D$, whereas $f\geq 1$ in $(\Rn\setminus B(x_1,2r))\cap \overline{D}$ . By Theorem \ref{T:MP} we infer
\[
|H_{\phi}^{ D}(x)|\ \leq\ f(x) \quad\quad\quad\text{ for every}\quad x\ \in\ D .
\]

The proof will be completed if we show that
\begin{equation}\label{f}
f(x)\ \leq\ C\ \frac{d(x,x_o)}{r} ,\quad\quad\quad \text{for every}\quad  x\ \in\ D .
\end{equation}

Consider the function $h(t)=E(x_1,t)^{-1}$. We have for $0<s<t<R_o$,
\[
h(s)\ -\ h(t)\ =\ (t-s)\ \frac{E'(x_1,\tau)}{E(x_1,\tau)^2} ,
\]
for some $s<\tau <t$ . Using the increasingness of the function $r\to rE(x_1,r)$, which follows from that of $E(x_1,\cdot)$, and the crucial estimate
\[
C\ \leq\ \frac{rE'(x_1,r)}{E(x_1,r)}\ \leq\ C^{-1},
\]
which is readily obtained from the definition of $\Lambda(x_1,r)$ in \eqref{pol},
we find
\begin{equation}\label{h}
C\ \frac{t-s}{t E(x_1,t)}\ \leq\  h(s)\ -\ h(t)\ \leq\ C^{-1}\ \frac{t-s}{s E(x_1,s)}.
\end{equation}

Keeping in mind the definition \eqref{deff} of $f$, from \eqref{h}, and from the fact that $E(x_1,\cdot)$ is doubling, we obtain
\[
f(x)\ \leq\ C\ E(x_1,r)\ \{\Gamma(x_1,x_o)\ -\ \Gamma(x_1,x)\},
\]
where we have used the hypothesis that $x_o\in \partial{B}(x_1,r)$. The proof of \eqref{f} will be achieved if we show that for $x\in \Rn\setminus B(x_1,r)$
\[
\Gamma(x_1,x_o)\ -\ \Gamma(x_1,x)\ \leq\ C\ d(x,x_o)\ \frac{1}{r E(x_1,r)}.
\]

In view of \eqref{equivi}, the latter inequality follows immediately
from Lemma \ref{L:Gamma}. This completes the proof.

\end{proof}

Let $ D\subset \Rn$ be a domain. Consider the positive Green function $G(x,y)$ associated to $\mathcal {L}$ and $ D$.  From Theorem \ref{T:MP} and from the estimates \eqref{gradgamma} one easily sees that there exists a positive constant $C_{ D}$ such that for every $x, y\in  D$
\begin{equation}\label{green}
0\ \leq\ G(x,y)\ \le\ C_{ D}\ \frac{d(x,y)^2}{|B_d(x,d(x,y))|}\ ,
\end{equation}
for each $x,y\in  D$. Our next task is to obtain more refined estimates for $G$.

\begin{thrm}\label{T:Green}
Suppose that $D\subset \Rn$ satisfy the uniform outer $X$-ball
condition. There exists a constant $C=C(X, D)>0$ such that
\[
G(x,y) \le C \frac{d(x,y)}{|B_d(x,d(x,y))|}\ d(y,\partial{ D})
\]
for each $x,y\in  D$, with $x\neq y$.
\end{thrm}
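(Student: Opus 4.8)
The plan is to view $y\mapsto G(x,y)$ as a nonnegative $\mathcal L$-harmonic function (away from the pole $x$) which vanishes on $\partial D$, and to show it decays linearly in $d(y,\partial D)$ near the boundary by comparison with the very Poincar\'e-type barrier already constructed in the proof of Theorem~\ref{T:Growth}. Fix $x\ne y$ in $D$, write $\rho=d(x,y)$ and $\delta=d(y,\partial D)$, and pick $y_o\in\partial D$ realizing the distance, $d(y,y_o)=\delta$ (legitimate by Proposition~\ref{P:compact}). If $\delta\ge c_*\min(\rho,R_o)$ for a small structural constant $c_*$ to be fixed below, there is nothing to do: \eqref{green} gives $G(x,y)\le C\,d(x,y)^2/|B_d(x,d(x,y))|=C\,(\rho/|B_d(x,\rho)|)\,\rho$, and $\rho$ is controlled by $\delta$ up to a constant depending on $c_*$, $R_o$ and $\operatorname{diam}_d(D)$ (using $\rho\le\operatorname{diam}_d(D)$ when $\rho>R_o$). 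So the whole issue is the regime $\delta<c_*\min(\rho,R_o)$, where $y$ is far closer to $\partial D$ than to $x$.

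In that regime, invoke the uniform outer $X$-ball condition to get an $X$-ball $B(x_1,r)$ with $y_o\in\partial B(x_1,r)$ and $B(x_1,r)\cap D=\varnothing$, where I will take $r\asymp\min(\rho,R_o)$. The key bookkeeping step is to choose a \emph{fixed} integer $N=N(a)$, with $a>1$ the comparison constant of \eqref{equivi}, large enough that the equivalence of $X$-balls and metric balls together with the triangle inequality yields simultaneously (i) $y\in B(x_1,Nr)$ and (ii) $x\notin\overline{B(x_1,Nr)}$; requirement (i) forces $r\gtrsim\delta$ (this is exactly where the smallness of $\delta$ enters), while (ii) is where $r\lesssim\rho$ is used, and both are compatible precisely when $\delta<c_*\min(\rho,R_o)$ with $c_*$ small enough. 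On $\Omega=D\cap B(x_1,Nr)$ the function $G(x,\cdot)$ is $\mathcal L$-harmonic, continuous up to $\overline\Omega$ (the outer ball gives thinness, hence regularity of $\partial D$, via Theorem~\ref{T:thin}), and vanishes on $\partial D\cap\overline\Omega$.

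Now repeat the barrier construction in the proof of Theorem~\ref{T:Growth}, with $2r$ replaced by $Nr$: set
\[
f\ =\ \frac{E(x_1,r)^{-1}-\Gamma(x_1,\cdot)}{E(x_1,r)^{-1}-E(x_1,Nr)^{-1}}\ .
\]
Then $f$ is $\mathcal L$-harmonic and nonnegative off $B(x_1,r)$, $f\equiv 1$ on $\partial B(x_1,Nr)$, $f\ge 1$ on $\overline D\setminus B(x_1,Nr)$, and, combining \eqref{h}, the doubling of $E(x_1,\cdot)$ and Lemma~\ref{L:Gamma} (with the doubling now applied $\sim\log_2 N$ times, so the resulting constant depends only on the fixed $N$), one still gets the linear estimate $f(z)\le C\,d(z,y_o)/r$. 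Put $M=\sup_{D\cap\partial B(x_1,Nr)}G(x,\cdot)$. On $\partial\Omega$ one has $G(x,\cdot)\le Mf$ (it equals $0\le Mf$ on $\partial D$ and is $\le M=M\cdot1$ on $\partial B(x_1,Nr)$), so applying the maximum principle (cf. \eqref{pwb}) to $Mf-G(x,\cdot)$ on $\Omega$ gives $G(x,\cdot)\le Mf$ in $\Omega$; evaluating at $y\in\Omega$ yields $G(x,y)\le Mf(y)\le CM\,\delta/r$.

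It remains to bound $M$. For $z\in\partial B(x_1,Nr)$ the triangle inequality and the choice of $r$ give $d(x,z)\asymp\rho$, so \eqref{green} together with the doubling property \eqref{dc} gives $M\le C\,\rho^2/|B_d(x,\rho)|$. Since $r\asymp\min(\rho,R_o)$ — and $\rho\le\operatorname{diam}_d(D)$ absorbs the case $\rho>R_o$ — we conclude $G(x,y)\le C\,\rho\,\delta/|B_d(x,\rho)|$, which is the assertion. The main obstacle is the scale bookkeeping of the delicate regime: choosing $N$ and $r$ so that $y$ falls inside $B(x_1,Nr)$ while $x$ stays outside it and $r$ stays below $R_o$. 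The crude factor-of-$2$ buffer built into Definition~\ref{D:OB} is insufficient when the structural constant $a$ in \eqref{equivi} is large, which is why the argument must be run with a large fixed dilation $N=N(a)$ rather than with $2$.
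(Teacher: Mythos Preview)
Your argument is correct and is essentially the paper's proof. The paper makes the identical reduction to the regime $\delta\ll\min(\rho,R_o)$ via \eqref{green}, chooses the outer $X$-ball at the nearest boundary point, and works in the enlarged $X$-ball $B(x_1,(a+3)r)$ with $r=\min\bigl(\rho/(2a(a+3)),\,aR_o/2\bigr)$---your $N$ is their $a+3$ and your $r\asymp\min(\rho,R_o)$ is their explicit choice. The only cosmetic difference is that the paper inserts an auxiliary PWB solution $v$ on $D\cap B(x_1,(a+3)r)$ (with datum $1$ on the spherical part, $0$ near $\partial D$) and applies Theorem~\ref{T:Growth} to $v$ as a black box, then compares a normalized $G(x,\cdot)$ to $v$; you skip $v$ and compare $G(x,\cdot)$ directly to $M f$, redoing the barrier estimate of Theorem~\ref{T:Growth} with dilation $N$ in place of $2$. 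Since that estimate only uses Lemma~\ref{L:Gamma} and the doubling of $E(x_1,\cdot)$, your observation that the constants merely pick up a harmless $N$-dependence is exactly right, and the two routes are equivalent.
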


\begin{proof}[\textbf{Proof}]
Consider $a>1$ as in \eqref{equivi}, and let $R_0$ be the constant
in Definition \ref{D:OB} of uniform outer $X$-ball condition. The
estimate that we want to prove  is immediate if one of the points is
away from the boundary. In fact, if
 either $d(y,\partial  D)\geq \frac{d(x,y)}{a^2(3+a)}$, or $d(y,\partial  D)\geq R_o$,
then the conclusion follows from \eqref{green}. We may thus assume
that
\begin{equation}\label{a}
a\  d(y,\partial{ D})< \frac{d(x,y)}{a(a+3)}, \quad \mbox{and} \quad d(y,\partial{ D})< R_o.
\end{equation}

We now choose
\[
r\ =\ min\left(\frac{d(x,y)}{2a(a+3)}, \frac{aR_o}{2}\right).
\]

One easily verifies from \eqref{a} that $a d(y,\partial{ D}) < 2r$.
Let $x_o$ be the point  in $\partial{ D}$ such that  $d(y,\partial
D) = d(y,x_o)$ and consider the outer $X$-ball $B(x_1,r/a)$ tangent
to the boundary of $D$ in $x_o$. We claim that
\[
y \in  D \cap B(x_1,(a+3)r).
\]

To see this observe that by \eqref{equivi}   $x_o\in
\overline{B}(x_1,\frac{r}{a}) \subset \overline{B}_d(x_1,r)$, and
therefore
\[
d(y,x_1)\  \le\  d(y,x_o)+d(x_o,x_1)\ =\
d(y,\partial  D)+d(x_o, x_1)\ \le\  \frac{a+2}{a}r\  <\  \frac{a+3}{a}r.
\]

This shows $y\in B_d(x_1,a^{-1}(a+3)r)$. Another application of
\eqref{equivi} implies the claim. Next,  the triangle inequality
gives
\[
d(x,x_1)\ \ge\ d(x,y)-d(x_1,y)\ \ge\ d(x,y) - \frac{a+3}{a}r\ \geq\ d(x,y)(1 - \frac{1}{2a^2}),
\]
 and consequently
\[
x\in \Rn \setminus B_d(x_1,(1-\frac{1}{2a^2})d(x,y)).
\]
On the other hand  \eqref{equivi} implies
\[
 \Rn \setminus B_d(x_1,(1-\frac{1}{2a^2})d(x,y))\ \subset\
 \Rn \setminus B(x_1,\frac{1}{a}(1-\frac{1}{2a^2})d(x,y))\ \subset\ \Rn \setminus B(x_1,(a+3)r),
\]
the last inclusion being true since $a>1$.

We now consider  the Perron-Wiener-Brelot solution $v$  to the
Dirichlet problem $\mathcal{L}v=0$ in $B(x_1,(a+3)r)\cap  D$, with
boundary datum a function $\phi\in C(\partial{(B(x_1,(a+3)r)\cap
D)})$, such that $0\leq \phi \leq 1$, $\phi=1$ on
$\partial{B}(x_1,(3+a)r)\cap  D$, and  $\phi=0$ on $\partial  D\cap
B(x_1,(1+a)r)$. We observe in passing that, thanks to the
assumptions on $D$, we can only say that $v$ is continuous up to the
boundary in that portion of $\partial{(B(x_1,(a+3)r)\cap  D)}$ that
is common to $\partial  D$. However such continuity is not needed to
implement Theorem \ref{T:MP} and deduce that  $0\leq v\leq 1$. We
observe that  $ D \cap B(x_1,(a+3)r)$ satisfies the outer
$\mathcal{L}$-ball condition at the point $x_o\in \partial  D$.
Applying Theorem \ref{T:Growth}  one infers  for every $y\in  D\cap
B(x_1,(a+3)r)$
\begin{equation}\label{v}
|v(y)|\ \le\  C\ \frac{d(y,\partial{ D})}{r}\ .
\end{equation}

Let $C_{ D}$ be as in \eqref{green}  and define $w(z)=C_{
D}^{-1}E(x,\beta d(x,y))G(x,z)$, where $\beta= (1-\frac{1}{2a^2} -
\frac{1}{2a})$. Since $x\notin B(x_1,(a+3)r)$, then
$\mathcal{L}w=0$ in $B(x_1,(a+3) r)\cap  D$. Observe that if $z\in
\partial B(x_1,(a+3)r)$, then
\[
d(x,z)\ \ge\  d(x,x_1) - d(z,x_1)\ \geq\ (1-\frac{1}{2a^2}) - (a+3)r\ \ge\ \beta d(x,y),
\]
from our choice of $r$ and $\beta$.  Consequently, in view of the
monotonicity of $r\to E(x,r)$ and \eqref{green},   we have that
$w\leq C_D^{-1} E(x,d(x,z)) G(x,z)\leq 1$ on
$\partial{(B(x_1,(a+3)r)\cap D)}$. By Theorem \ref{T:MP} one
concludes that
 $w(y)\leq v(y)$ in
$ D \cap B(x_1,(a+3)r)$. The estimate of $v$ established above, along with (2.1), completes the proof.

\end{proof}

It was observed in \cite[Theorem 50]{LU2} that in a  Carnot group,
by exploiting the symmetry of the Green function $G(y,x) = G(x,y)$,
one can actually improve the estimate in Theorem \ref{T:Green} as
follows
\[
G(x,y)\ \leq\ C\ d(x,y)^{-Q} d(x,\p D) d(y,\p D)\ ,\ \ x,y\in D\ ,\
x\not= y\ ,
\]
where $Q$ represents the homogeneous dimension of the group. An
analogous improvement can be obtained in the more general setting of
this paper. To see this, note that the symmetry of $G$ and the
estimate in Theorem \ref{T:Green} give for every $x,y\in D$
\begin{equation}\label{step3}
G(y,x) = \ G(x,y) \le C \frac{d(x,y)}{|B_d(x,d(x,y))|}\
d(y,\partial{ D})\ ,
\end{equation}
where $C>0$ is the constant in the statement of Theorem
\ref{T:Green}. We now argue exactly as in the case in which
\eqref{a} holds in the proof of Theorem \ref{T:Green}, except that
we now define
\[ w(z) = C^{-1} d(x,\p D)^{-1} \frac{|B_d(x,d(x,y))|}{d(x,y)} G(z,x)\
,\ \ z\in \overline{B(x_1,(a+3) r)\cap  D}\ .
\]

Using \eqref{step3} instead of \eqref{green} we reach the conclusion
that \[ w(z)\leq 1 \ , \ \ \text{for every}\ z\in
\partial{(B(x_1,(a+3)r)\cap D)}\ .
\]
Since $\mathcal L w = 0$ in $B(x_1,(a+3)r)\cap D$, by Theorem
\ref{T:MP} we conclude as before that
 $w(y)\leq v(y)$ in
$ D \cap B(x_1,(a+3)r)$. Combining this estimate with \eqref{v} we
have proved the following result.

\begin{cor}\label{C:lu}
Suppose that $D\subset \Rn$ satisfy the uniform outer $X$-ball
condition. There exists a constant $C=C(X, D)>0$ such that
\[
G(x,y)\ \le\ C\ \frac{d(x,\partial{ D}) d(y,\partial{
D})}{|B_d(x,d(x,y))|}\ ,
\]
for each $x,y\in  D$, with $x\neq y$.
\end{cor}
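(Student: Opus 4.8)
The plan is to make precise the argument already outlined in the discussion immediately preceding the statement. The only new ingredient relative to the proof of Theorem \ref{T:Green} is that the crude bound \eqref{green} is replaced throughout by the sharper estimate \eqref{step3}, which is nothing but Theorem \ref{T:Green} read through the symmetry $G(x,y)=G(y,x)$; this single substitution converts one factor $d(x,y)$ into a factor $d(x,\partial D)$.

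First I would dispose of the cases in which \eqref{a} fails, i.e.\ $d(y,\partial D)\ge R_o$ or $d(y,\partial D)\ge \frac{d(x,y)}{a^2(a+3)}$. Since $D$ is bounded, its $d$-diameter $D_0$ is finite, so in either case $d(x,y)\le C'\,d(y,\partial D)$ with $C'=C'(X,D)$. Applying Theorem \ref{T:Green} with the roles of $x$ and $y$ exchanged, and using the doubling property \eqref{dc} to pass from $|B_d(y,d(x,y))|$ to $|B_d(x,d(x,y))|$ up to a fixed constant, one gets $G(x,y)=G(y,x)\le C\,\frac{d(x,y)}{|B_d(x,d(x,y))|}\,d(x,\partial D)\le CC'\,\frac{d(x,\partial D)\,d(y,\partial D)}{|B_d(x,d(x,y))|}$, which is the desired inequality.

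For the main case I would assume \eqref{a} and keep verbatim the constant $a$ of \eqref{equivi}, the radius $r=\min\left(\frac{d(x,y)}{2a(a+3)},\frac{aR_o}{2}\right)$, the point $x_o\in\partial D$ with $d(y,x_o)=d(y,\partial D)$, the tangent outer $X$-ball $B(x_1,r/a)$ at $x_o$, the localization region $B(x_1,(a+3)r)\cap D$, and the comparison function $v$, all exactly as in the proof of Theorem \ref{T:Green}; in particular $x\notin B(x_1,(a+3)r)$, $d(x,z)$ is comparable with $d(x,y)$ for every $z\in\partial B(x_1,(a+3)r)$ with constants depending only on $a$, and Theorem \ref{T:Growth} yields $|v(y)|\le C\,d(y,\partial D)/r$, which is \eqref{v}. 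In place of the barrier used there I would take
\[
w(z)\ =\ C^{-1}\,d(x,\partial D)^{-1}\,\frac{|B_d(x,d(x,y))|}{d(x,y)}\,G(z,x)\ ,\qquad z\in\overline{B(x_1,(a+3)r)\cap D}\ ,
\]
which by the symmetry of $G$ is $\mathcal L$-harmonic in $B(x_1,(a+3)r)\cap D$ and vanishes on the part of $\partial(B(x_1,(a+3)r)\cap D)$ lying on $\partial D$. On the remaining part, $z\in\partial B(x_1,(a+3)r)\cap D$, I would estimate $G(z,x)=G(x,z)$ by \eqref{step3} and then use \eqref{dc} to move the center of the ball from $z$ to $x$ and to change its radius from $d(x,z)$ to $d(x,y)$; since $d(x,z)\simeq d(x,y)$ there, this gives $w(z)\le 1$ provided the absolute constant $C$ is chosen large enough to absorb the accumulated doubling constants. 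Theorem \ref{T:MP} then yields $w\le v$ in $B(x_1,(a+3)r)\cap D$, and evaluating at $z=y$ and inserting \eqref{v} produces $C^{-1}d(x,\partial D)^{-1}\frac{|B_d(x,d(x,y))|}{d(x,y)}G(y,x)\le C\,d(y,\partial D)/r$. Rearranging, and noting that $d(x,y)/r$ is bounded by a constant depending only on $a$, $R_o$ and $D_0$ (it equals $2a(a+3)$ unless $r=aR_o/2$, in which case it is at most $2D_0/(aR_o)$), gives the claim.

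The step that I expect to require the most care is showing $w\le 1$ on $\partial B(x_1,(a+3)r)\cap D$: one must keep the chain of comparisons $d(x,z)\simeq d(x,y)$ and $|B_d(z,d(x,z))|\simeq|B_d(x,d(x,y))|$ uniform in $x$ and $y$, and choose the single normalizing constant $C$ in the definition of $w$ so that it simultaneously makes $w\le 1$ on the boundary and appears in the final bound; the rest is a transcription of the proof of Theorem \ref{T:Green} with \eqref{step3} in the role formerly played by \eqref{green}.
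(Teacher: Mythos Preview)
Your proposal is correct and follows essentially the same approach as the paper: define the new barrier $w$ with the extra factor $d(x,\partial D)^{-1}$, use \eqref{step3} (i.e., Theorem \ref{T:Green} read through $G(z,x)=G(x,z)$) in place of \eqref{green} to get $w\le 1$ on $\partial B(x_1,(a+3)r)\cap D$, and conclude via the maximum principle and \eqref{v}. Your write-up is in fact more complete than the paper's sketch, since you explicitly dispose of the case where \eqref{a} fails and make the doubling comparisons $|B_d(z,d(x,z))|\simeq|B_d(x,d(x,y))|$ explicit, whereas the paper leaves both of these to the reader.
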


We now turn to estimating the horizontal gradient of the Green
function up to the boundary. The next result plays a central role in
the rest of the paper.

\begin{thrm}\label{T:XG}
Assume the uniform outer $X$-ball condition for $D\subset \Rn$.
There exists a constant $C=C(X, D)>0$ such that
\[
|XG(x,y)| \le  C\ \frac{d(x,y)}{|B_d(x,d(x,y))|},
\]
for each $x,y\in  D$, with $x\neq y$.
\end{thrm}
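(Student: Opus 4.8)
The plan is to combine the interior Schauder-type estimate of Theorem \ref{T:Harmonic} (in the case $s=1$) with the sharp boundary decay of the Green function furnished by Theorem \ref{T:Green}. Fix $x,y\in D$ with $x\neq y$ and recall (cf.\ Definition \ref{D:poisson}) that $XG(x,y)$ denotes the horizontal gradient of $G$ in the \emph{second} variable; the relevant fact is that $z\mapsto G(x,z)$ is $\mathcal L$-harmonic on the open set $D\setminus\{x\}$, and the constants in Theorem \ref{T:Harmonic} may be taken uniform in the pole $x$ since they only depend on a bounded set containing $\overline D$. Write $\rho=d(x,y)$ and $\delta=d(y,\partial D)$, and let $a>1$ be the constant from \eqref{equivi}. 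The strategy is to apply Theorem \ref{T:Harmonic} to $G(x,\cdot)$ at the point $y$ on an $X$-ball $B(y,r)$ whose radius $r$ is a small structural multiple of $\min(\rho,R_o)$ or of $\delta$ — small enough, via $\overline B(y,r)\subset\overline B_d(y,ar)$, to guarantee $\overline B(y,r)\subset D\setminus\{x\}$ — and then to bound $\max_{\overline B(y,r)}G(x,\cdot)$ by a pointwise Green estimate.

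\emph{Interior case.} Suppose $\delta\ge c\min(\rho,R_o)$ for a fixed $c$ (say $c=\tfrac12$); set $r=\tfrac{1}{3a}\min(\rho,R_o)$. Then $ar<\delta$, so $\overline B(y,r)\subset\overline B_d(y,ar)\subset B_d(y,\delta)\subset D$, and $ar<\rho$ forces $x\notin\overline B(y,r)$. For $z\in\overline B(y,r)$ the triangle inequality gives $\tfrac23\rho\le d(x,z)\le\tfrac43\rho$, hence $|B_d(x,d(x,z))|\asymp|B_d(x,\rho)|$ by the doubling property \eqref{dc}, and the crude bound \eqref{green} yields $G(x,z)\le C\rho^2/|B_d(x,\rho)|$. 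Theorem \ref{T:Harmonic} with $s=1$ then gives $|XG(x,y)|\le (C/r)\,\rho^2/|B_d(x,\rho)|$; since $r\asymp\min(\rho,R_o)$ and $\rho\le\operatorname{diam}_d(D)$ this is $\le C'\rho/|B_d(x,\rho)|$, with $C'=C'(X,D)$.

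\emph{Boundary case.} Suppose instead $\delta< \tfrac12\min(\rho,R_o)$ (this is the geometrically delicate range, where $y$ may approach a characteristic point); set $r=\delta/(2a)$, so again $\overline B(y,r)\subset\overline B_d(y,\delta/2)\subset D$, $x\notin\overline B(y,r)$, and $r<R_o$. For $z\in\overline B(y,r)$ one still has $d(x,z)\asymp\rho$, but now also $d(z,\partial D)\le d(z,y)+\delta\le\tfrac32\delta$; here I invoke the \emph{sharp} estimate of Theorem \ref{T:Green} — available precisely because $D$ satisfies the uniform outer $X$-ball condition — to get $G(x,z)\le C\,d(x,z)\,d(z,\partial D)/|B_d(x,d(x,z))|\le C'\rho\,\delta/|B_d(x,\rho)|$. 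Applying Theorem \ref{T:Harmonic} once more, $|XG(x,y)|\le (C/r)\,\rho\,\delta/|B_d(x,\rho)|=C''\rho/|B_d(x,\rho)|$ because $r\asymp\delta$, which is the asserted bound. The two cases being exhaustive, the theorem follows.

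The proof is short because the two genuinely hard ingredients are already established: the non-commutative interior estimate of Theorem \ref{T:Harmonic} (derived from the mollifier machinery) and the linear vanishing of $G$ up to the boundary in Theorem \ref{T:Green} (resting on the barrier construction of Theorem \ref{T:Growth}). The only real subtlety — and the point that makes the argument work at all near $\Sigma$ — is the exact cancellation in the boundary case: the factor $d(z,\partial D)\asymp\delta$ gained from Theorem \ref{T:Green} compensates the factor $1/r\asymp1/\delta$ lost in the gradient estimate. Everything else is bookkeeping with the dilation constant $a$ of \eqref{equivi}, which one must keep track of so that the $X$-ball $\overline B(y,r)$ is contained in $D$ and avoids $x$ while $r$ stays comparable to $\min(\rho,\delta)$.
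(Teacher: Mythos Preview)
Your proof is correct and follows the same two-case strategy as the paper: when $d(y,\partial D)$ is small relative to $d(x,y)$ you apply the interior gradient estimate on a ball of radius $\sim d(y,\partial D)$ and use Theorem \ref{T:Green} so that the factor $d(y,\partial D)$ cancels against the $1/r$, exactly as in the paper's first case. In the complementary ``interior'' case the paper instead decomposes $G=\Gamma-h_x$ (since its ball, of radius $\sim\delta$, may contain the pole $x$) and handles $\Gamma$ via \eqref{gradgamma} and $h_x$ via Corollary \ref{C:Harnack} together with the maximum principle; your choice of a smaller ball of radius $\sim\min(\rho,R_o)$ that excludes $x$ lets you apply Theorem \ref{T:Harmonic} directly to $G(x,\cdot)$ with only the crude bound \eqref{green}, which is a mild streamlining of the same idea.
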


\begin{proof}[\textbf{Proof}]
Let $R_o$ be as in Definition \ref{D:OB}. Fix $x,y\in D$ and choose
$0<r<R_o$ such that $x\notin B_d(y,ar)\subset \overline{D}$.
Applying  Corollary \ref{C:Harnack} and \eqref{equivi} to
$G(x,\cdot)$ we obtain for every $z \in B(y,r)$
\[|XG(x,z)|\leq\ \frac{C}{r}\ G(x,z).
\]
If $d(y,\partial  D)\le 2a d(x,y)$,  we  choose $r = min\left(\frac{d(y,\partial  D)}{2a}, \frac{R_o}{2}\right)$
and then the latter inequality implies the conclusion via Theorem \ref{T:Green}.
If $d(y,\partial  D)> 2a d(x,y)$, then  keeping in mind that $G(x,\cdot)=\Gamma(x,\cdot) - h_x$, we use \eqref{gradgamma} to bound $|X\Gamma|$, and, with $r = min\left(\frac{d(y,\partial  D)}{2a}, \frac{R_o}{2}\right)$, we apply Corollary \ref{C:Harnack} and the maximum principle to obtain
\[
|Xh_x(y)|\ \leq\ \frac{C}{r}\ h_x(y)\ =\ \frac{C}{r}\ h_y(x)\ \leq\ \frac{C}{r}\ \underset{w\in \partial D}{sup}\ \Gamma(y,w)\ =\ \frac{C}{r}\ \Gamma(y,z)
\]
for some $z\in \partial{ D}$. On the other hand, one has
\[
d(x,y)\ <\ \frac{d(y,\partial{ D})}{2a}\ \leq\ \frac{d(y,z)}{2a}
\]
so that using \eqref{gradgamma} one more time
\[
\Gamma(y,z)\ \leq\ C\ \frac{1}{E(y, d(y,z))}\ \leq\ C\ \frac{1}{E(y,
2a d(x,y))} \leq\ C\ \Gamma(y,x) \leq\ C\
\frac{d(x,y)^2}{|B_d(x,d(x,y))|}\ .
\]

Replacing this inequality in the estimate for $|Xh_x(y)|$ we reach
the desired conclusion.

\end{proof}

\begin{cor}\label{C:Lipschitz}
If $D\subset \Rn$ satisfies  the uniform outer $X$-ball condition,
then for any $x_o \in D$ and every open neighborhood $U$ of
$\partial{D}$, such that $x_o\notin \overline{U}$, one has
$G(x_o,\cdot)\in \mathcal{L}^{1,\infty}(U)$. Moreover, its
$\mathcal{L}^{1,\infty}(U)$ norm depends on $D,X$ and $U$ but it is
independent of $x_o$.
\end{cor}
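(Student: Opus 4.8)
The plan is to read the conclusion off the boundary estimates for $G$ already proved. Since $G(x_o,\cdot)\in\mathcal L^{1,\infty}(U)$ means precisely that $G(x_o,\cdot)$ and each $X_jG(x_o,\cdot)$, $j=1,\dots,m$, belong to $L^\infty$, it suffices to bound these functions on $U\cap D$ — which is what $\mathcal L^{1,\infty}(U)$ refers to here (note that, by the remarks after Definition \ref{D:OB}, the uniform outer $X$-ball condition forces $D$ to be thin at every boundary point, hence regular, so that $G(x_o,\cdot)=\Gamma(x_o,\cdot)-h_{x_o}$ is continuous up to the boundary away from the pole and vanishes on $\partial D$, cf. \eqref{represent} and \eqref{alpha}). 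Since $x_o\notin\overline U$ and $\overline U$ is closed, the number $\delta:=d(x_o,\overline U)=\inf_{y\in\overline U}d(x_o,y)$ is strictly positive, and $d(x_o,y)\ge\delta$ for every $y\in U$.

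The next step is to upgrade the pointwise estimates on $G$ and $XG$ to uniform bounds by exploiting the monotonicity built into the Nagel--Stein--Wainger polynomial. As observed after \eqref{E}, $r\mapsto E(x_o,r)=\Lambda(x_o,r)/r^2$ is strictly increasing; hence so are $r\mapsto rE(x_o,r)$ and $r\mapsto r^2E(x_o,r)=\Lambda(x_o,r)$, and therefore, by \eqref{nsw2}, for every $r\ge\delta$ one has
\[
\frac{r^2}{|B_d(x_o,r)|}\ \le\ \frac{C}{E(x_o,\delta)}\ ,\qquad
\frac{r}{|B_d(x_o,r)|}\ \le\ \frac{C}{\delta\,E(x_o,\delta)}\ ,
\]
with $C=C(D,X)$. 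The first of these together with \eqref{green} gives $G(x_o,y)\le C/E(x_o,\delta)$ for all $y\in U$, so $G(x_o,\cdot)\in L^\infty(U)$.

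For the horizontal gradient I would invoke Theorem \ref{T:XG}, whose only hypothesis is exactly the uniform outer $X$-ball condition assumed in the corollary: for $y\in U$ and $j=1,\dots,m$,
\[
|X_jG(x_o,y)|\ \le\ |XG(x_o,y)|\ \le\ C\ \frac{d(x_o,y)}{|B_d(x_o,d(x_o,y))|}\ \le\ \frac{C}{\delta\,E(x_o,\delta)}\ ,
\]
where we used the second bound of the previous display. Hence $X_jG(x_o,\cdot)\in L^\infty(U)$, and combined with the preceding paragraph this proves $G(x_o,\cdot)\in\mathcal L^{1,\infty}(U)$. (One could instead note that $G(x_o,\cdot)$ is $C^\infty$ on $U\cap D$ with bounded horizontal gradient and quote Theorem \ref{T:meanvalue} to place it in $\Gamma^{0,1}_d$ and then back in $\mathcal L^{1,\infty}$, but the direct argument above is shorter.)

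The one point that needs real care is the claim that the norm is independent of $x_o$. The constants $C_D$ in \eqref{green} and $C$ in Theorem \ref{T:XG} depend only on $D$ and $X$, so the entire $x_o$-dependence of the bounds above is concentrated in $\delta=d(x_o,\overline U)$ and $E(x_o,\delta)$. Once $U$ is fixed — and in every application of this corollary $x_o$ is a single prescribed interior pole, or at worst runs over a fixed compact subset of $D\setminus\overline U$ — both $\delta$ and $E(x_o,\delta)$ stay bounded below by positive constants determined by $D$, $X$ and $U$, which yields the asserted uniformity. I expect this tracking of the constants, rather than any analytic difficulty, to be the only delicate aspect, since all the substantive work has already been done in Theorems \ref{T:Green} and \ref{T:XG}.
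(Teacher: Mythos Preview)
Your argument is correct and is precisely the intended one: the paper states Corollary \ref{C:Lipschitz} with no proof, treating it as an immediate consequence of Theorem \ref{T:XG} together with \eqref{green}, and your write-up simply spells out those details (monotonicity of $r\mapsto E(x_o,r)$ and $r\mapsto rE(x_o,r)$ to pass from the pointwise bounds to uniform $L^\infty$ bounds on $U$). Your caveat about the phrase ``independent of $x_o$'' is well taken---the bound genuinely depends on $\delta=d(x_o,\overline U)$, so uniformity holds only once $x_o$ is confined to a fixed compact subset of $D\setminus\overline U$; the paper's formulation is slightly imprecise on this point, and your reading is the correct one for the later applications.
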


\paragraph{\bf Localizing the hypothesis.} It is interesting to note that one can still prove that
$G(x_o,\cdot)\in \mathcal{L}^{1,\infty}(U)$ under the weaker
hypothesis that the uniform outer $X$-ball condition be satisfied
only in a neighborhood of the characteristic set of $D$. In this
case, however, the uniform estimates in $x_o$ will be  lost. We
devote the last part of this section to the proof of this result.
Let $\Sigma = \Sigma_{D} \subset
 \partial{D}$ denote the compact set of all characteristic points.

\begin{dfn}\label{D:OBchar}
Let  $D$ be a $C^1$ domain. We say that  $D$ possesses the
\emph{uniform outer $X$-ball} in a neighborhood of $\Sigma$ if for a
given choice of an  open set $V$  containing $\Sigma$, one can find
$R_o>0$  such that   for every $Q\in V\cap
\partial D$ and  $0<r<R_o$ there exists a $X$-ball
$B(x_1,r)$ for which \eqref{ob} holds. More in general, we say that
$D$ possesses the \emph{uniform outer $X$-ball} along the set $V\cap
\partial D$ if  one can find $R_o>0$  such that   for every $x_o\in
V\cap \partial D$ and  $0<r<R_o$ there exists a $X$-ball $B(x_1,r)$
for which \eqref{ob} holds.
\end{dfn}

Our first step consists in proving "localized" versions of
Theorems \ref{T:Green}  and \ref{T:XG}.

\begin{thrm}\label{T:GreenLocal}
Let  $D\subset \Rn$ be a domain that is regular for the Dirichlet
problem. Let $P\in \partial D$ and assume that for some $\epsilon>0$
the set $D$ possesses the {uniform outer $X$-ball} along
$B_d(P,2\epsilon)\cap \partial D$. There exists a constant $C=C(X,
D)>0$ such that
\[
G(x,y) \le C \frac{d(x,y)}{|B_d(x,d(x,y))|}\ d(y,\partial{ D})
\]
for each $y\in B_d(P,\epsilon)\cap D$, and $x\in  D$, with $x\neq y$.

\end{thrm}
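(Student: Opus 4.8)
The plan is to reduce Theorem \ref{T:GreenLocal} to the already-proven global estimate of Theorem \ref{T:Green} by a localization argument, exploiting that the hypothesis is only needed near $P$ while away from $P$ the trivial bound \eqref{green} suffices. First I would observe that the estimate is immediate if $x$ is not too far from $y$ in the scale of the boundary distance. Precisely, if $d(y,\partial D)\geq c\,d(x,y)$ for a suitable dimensional constant $c>0$ (say $c$ tied to the constant $a$ in \eqref{equivi} and to the doubling constant), or if $d(y,\partial D)\geq \epsilon/K$ for a suitable $K$, then \eqref{green} already gives $G(x,y)\leq C_D \frac{d(x,y)^2}{|B_d(x,d(x,y))|}\leq C\frac{d(x,y)}{|B_d(x,d(x,y))|}d(y,\partial D)$, and there is nothing to prove. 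So we may assume $y\in B_d(P,\epsilon)\cap D$ is close to $\partial D$, say $d(y,\partial D)$ very small compared to both $d(x,y)$ and $\epsilon$.

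Next, let $x_o\in\partial D$ realize $d(y,\partial D)=d(y,x_o)$; since $d(y,P)<\epsilon$ and $d(y,x_o)$ is tiny, $x_o\in B_d(P,2\epsilon)\cap\partial D$, so by hypothesis $D$ possesses the uniform outer $X$-ball at $x_o$ for radii up to $R_o$. Now I would run verbatim the construction in the proof of Theorem \ref{T:Green}: choose $r=\min\big(\frac{d(x,y)}{2a(a+3)},\frac{aR_o}{2}\big)$, take the tangent outer $X$-ball $B(x_1,r/a)$ at $x_o$, and check as there that $y\in D\cap B(x_1,(a+3)r)$ while $x\in \Rn\setminus B(x_1,(a+3)r)$. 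Then introduce the PWB solution $v$ in $B(x_1,(a+3)r)\cap D$ with boundary data $1$ on $\partial B(x_1,(a+3)r)\cap D$ and $0$ on $\partial D\cap B(x_1,(1+a)r)$; the key point is that $D\cap B(x_1,(a+3)r)$ satisfies the outer $X$-ball condition at $x_o$ (this is exactly what the localized hypothesis grants, since $x_o$ is near $P$), so Theorem \ref{T:Growth} applies and yields $|v(y)|\leq C\,d(y,\partial D)/r$. Comparing $v$ with $w(z)=C_D^{-1}E(x,\beta d(x,y))G(x,z)$, $\beta=(1-\frac{1}{2a^2}-\frac{1}{2a})$, exactly as in Theorem \ref{T:Green}, the maximum principle Theorem \ref{T:MP} gives $w(y)\leq v(y)$, and unwinding produces the claimed bound with $C=C(X,D)$.

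The only genuinely new point compared to the global statement is bookkeeping: I must make sure that when $d(y,\partial D)$ is small enough (which is the only case that needs the argument), the chosen radius $r$ and the resulting ball $B(x_1,(a+3)r)$ stay within the region $B_d(P,2\epsilon)$ where the outer $X$-ball hypothesis is available, and that the tangent ball is taken at a point $x_o$ of $B_d(P,2\epsilon)\cap\partial D$. Since $r\leq d(x,y)/(2a(a+3))$ and in the nontrivial regime $d(x,y)$ is comparable to (or larger than, but then we also impose $d(y,\partial D)<\epsilon/K$) the quantities controlling $\epsilon$, one can choose the threshold $K$ (hence the split into trivial/nontrivial cases) large enough, depending only on $a$ and the doubling constant, so that all the balls appearing in the Growth-theorem construction lie inside $B_d(P,2\epsilon)$; in the complementary regime $d(y,\partial D)\geq \epsilon/K$, which is forced once $d(x,y)$ is large, we fall back on \eqref{green}. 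I expect this geometric containment check — i.e. calibrating the constant separating the two cases so the localized hypothesis genuinely covers the construction — to be the main (though entirely routine) obstacle; once it is arranged, the proof is a line-by-line repetition of Theorem \ref{T:Green}. As noted in the statement, the constant still only depends on $X$ and $D$, but the uniformity in $x$ (and the later uniformity in the singularity point) is what one must be prepared to lose in the fully localized versions that follow.
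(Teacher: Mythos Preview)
Your proposal is correct and follows the same route as the paper: show that the nearest boundary point $x_o$ to $y$ lies in $B_d(P,2\epsilon)$ (the paper does this in one line via $d(y,x_o)\le d(y,P)<\epsilon$, hence $d(P,x_o)<2\epsilon$), then repeat Theorem \ref{T:Green} verbatim. Your worry about keeping the balls $B(x_1,(a+3)r)$ inside $B_d(P,2\epsilon)$ and the extra threshold $\epsilon/K$ are unnecessary, since Theorem \ref{T:Growth} needs only the \emph{single} tangent outer $X$-ball at $x_o$, not the uniform condition on a neighborhood, so once $x_o\in B_d(P,2\epsilon)$ the argument runs with no further containment check.
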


\begin{proof}[\textbf{Proof}]
The proof follows closely the one of Theorem \ref{T:Green} and we
will adopt the same notation as in that proof. Let $x_o$ be the
point in $\partial D$ closest to $y$. In order to apply the
arguments in the proof of Theorem \ref{T:Green} we need to show that
the set $D$ has an  outer $\mathcal{L}$-ball $B(x_1,r/a)$  at $x_o$
for every $0<r<R_0$. Given our hypothesis it suffices to show that
$x_o\in B_d(P,2\epsilon)\cap \partial D$. Observe that $d(y,x_o)\le
d(y,P) <\epsilon$, and consequently $d(P,x_o)<2 \epsilon$. Since $D$
has an outer $X$-ball $B(x_1,r/a)$  at $x_o$ for every $0<r<R_0$,
then so does the subset $B(x_1,(a+3)r)\cap D$. The rest of the proof
is a word by word repetition of the one for  Theorem \ref{T:Green}.

\end{proof}

\begin{thrm}\label{T:XGLocal}
Let  $D\subset \Rn$ be a domain that is regular for the Dirichlet
problem. Let $P\in \partial D$ and assume that for some $\epsilon>0$
the set $D$ possesses the uniform outer $X$-ball along
$B_d(P,2\epsilon)\cap \partial D$. There exists a constant $C=C(X,
D)>0$ such that
\[
|XG(x,y)| \le  C\ \frac{d(x,y)}{|B_d(x,d(x,y))|},
\]
for each $y\in B_d(P,\frac{1}{2}\epsilon)\cap D$, and $x\in  D$, with $x\neq y$.
\end{thrm}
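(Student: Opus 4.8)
The plan is to mimic the proof of Theorem \ref{T:XG}, using the localized Green function estimate of Theorem \ref{T:GreenLocal} in place of Theorem \ref{T:Green}, and being careful that all the ball-conditions we invoke are available within the neighborhood $B_d(P,2\epsilon)$ where the uniform outer $X$-ball condition is assumed. Fix $y\in B_d(P,\tfrac12\epsilon)\cap D$ and $x\in D$ with $x\neq y$. As in the proof of Theorem \ref{T:XG}, the estimate is a purely interior statement away from $\partial D$ combined with a boundary argument, so the key case is when $y$ is close to $\partial D$; we split according to whether $d(y,\partial D)$ is large or small relative to $d(x,y)$.

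First I would treat the case $d(y,\partial D)\leq 2a\, d(x,y)$, where $a>1$ is the comparison constant from \eqref{equivi}. Choose $r=\min\!\left(\frac{d(y,\partial D)}{2a},\frac{R_o}{2}\right)$, so that $\overline{B}(y,r)\subset D$ (up to shrinking $R_o$, using Proposition \ref{P:compact}), and apply Corollary \ref{C:Harnack} to the nonnegative $\mathcal L$-harmonic function $z\mapsto G(x,z)$ on $B(y,r)$, together with \eqref{equivi}, to get $|XG(x,y)|\leq \frac{C}{r}\,G(x,y)$. Now invoke Theorem \ref{T:GreenLocal}: since $y\in B_d(P,\tfrac12\epsilon)\cap D$, the hypothesis that $D$ has the uniform outer $X$-ball along $B_d(P,2\epsilon)\cap\partial D$ is exactly what is needed there, and we obtain $G(x,y)\leq C\,\frac{d(x,y)}{|B_d(x,d(x,y))|}\,d(y,\partial D)$. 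Since $d(y,\partial D)\leq 2ar$ (in fact $d(y,\partial D)=2ar$ when the minimum is the first term; otherwise $d(y,\partial D)> R_o\geq \ldots$ forces the interior estimate \eqref{green} directly), the factor $\frac{d(y,\partial D)}{r}$ is bounded, and we conclude $|XG(x,y)|\leq C\,\frac{d(x,y)}{|B_d(x,d(x,y))|}$.

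Next, the case $d(y,\partial D)>2a\,d(x,y)$. Here $y$ is comfortably far from the boundary compared to its distance to the singularity, so we split $G(x,\cdot)=\Gamma(x,\cdot)-h_x$ as in \eqref{represent}. For the singular part, \eqref{gradgamma} gives $|X\Gamma(x,y)|\leq C\,\frac{d(x,y)}{|B_d(x,d(x,y))|}$ directly (using $|X\Gamma(x,y)|\leq C\,d(x,y)^{-1}\,\frac{d(x,y)^2}{|B_d(x,d(x,y))|}$). For the regular part $h_x=H^D_{\Gamma(x,\cdot)}$, which is $\mathcal L$-harmonic and nonnegative in $D$, we again choose $r=\min\!\left(\frac{d(y,\partial D)}{2a},\frac{R_o}{2}\right)$ so that $\overline{B}(y,ar)\subset D$, and apply Corollary \ref{C:Harnack} plus the maximum principle (Theorem \ref{T:MP}) and the symmetry $h_x(y)=h_y(x)$ to bound $|Xh_x(y)|\leq \frac{C}{r}\,h_x(y)\leq \frac{C}{r}\,\sup_{w\in\partial D}\Gamma(y,w)=\frac{C}{r}\,\Gamma(y,z)$ for some $z\in\partial D$. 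Since $d(y,z)\geq d(y,\partial D)>2a\,d(x,y)$, the growth estimate \eqref{gradgamma} for $\Gamma$ together with the monotonicity and doubling of $E(y,\cdot)$ yields $\Gamma(y,z)\leq C\,\Gamma(y,x)\leq C\,\frac{d(x,y)^2}{|B_d(x,d(x,y))|}$, and since $r\gtrsim d(x,y)$ in this regime we again get $|Xh_x(y)|\leq C\,\frac{d(x,y)}{|B_d(x,d(x,y))|}$. Combining the two pieces gives the claim.

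The main technical point to watch — and what I expect to be the only real obstacle — is the bookkeeping that ensures every ball used in the Harnack and barrier steps actually lies in the region where the hypothesis holds, i.e. that the auxiliary $X$-ball $B(x_1,r/a)$ tangent at the foot $x_o$ of $y$ is available. This is handled precisely as in Theorem \ref{T:GreenLocal}: from $y\in B_d(P,\tfrac12\epsilon)$ one has $d(y,x_o)\leq d(y,P)<\tfrac12\epsilon$, hence $d(P,x_o)<\epsilon<2\epsilon$, so $x_o\in B_d(P,2\epsilon)\cap\partial D$ and the localized outer $X$-ball condition applies at $x_o$, and therefore at the subset $B(x_1,(a+3)r)\cap D$ as well. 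The radius $\tfrac12\epsilon$ in the statement (as opposed to $\epsilon$ in Theorem \ref{T:GreenLocal}) is exactly the slack needed so that the comparison domains $B(x_1,(a+3)r)\cap D$ used in the barrier argument of Theorem \ref{T:Green}/\ref{T:GreenLocal} still have their characteristic portion inside $B_d(P,2\epsilon)$; one chooses $r$ small enough (absorbing constants into $R_o$) to guarantee this. Apart from this localization, the argument is a word-for-word transcription of the proof of Theorem \ref{T:XG} with Theorem \ref{T:Green} replaced by Theorem \ref{T:GreenLocal}, so I would simply indicate that and omit the repeated computations.
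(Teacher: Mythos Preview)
Your proposal is correct and follows essentially the same approach as the paper: you split into the cases $d(y,\partial D)\le 2a\,d(x,y)$ and $d(y,\partial D)>2a\,d(x,y)$, replace Theorem \ref{T:Green} by Theorem \ref{T:GreenLocal} in the first case (which is the only place the outer $X$-ball condition enters), and note that the second case is purely interior and unchanged. The paper's own proof is in fact terser than yours---it simply observes that the outer $X$-ball condition is used at just one point in the proof of Theorem \ref{T:XG}, and verifies that for $y\in B_d(P,\tfrac12\epsilon)$ the auxiliary points remain in $B_d(P,\epsilon)$ so that Theorem \ref{T:GreenLocal} applies; your final paragraph does the same bookkeeping somewhat more explicitly.
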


\begin{proof}[\textbf{Proof}]
In the proof of Theorem \ref{T:XG} there is only one point where the
outer $X$-ball condition is used. Consider $y\in
B_d(P,\frac{1}{2}\epsilon) \cap D$ and assume that $d(y,\partial
D)\le d(x,y)$.  Choose $2r=\frac{d(y,\partial  D)}{a}$ and observe
that if $z\in B(y,r)$ then $d(z,y)<ar\le \epsilon/2$. Consequently
$d(z,P) \le d(z,y)+d(y,P) \le \epsilon,$ and we can apply Theorem
\ref{T:GreenLocal} to the function $G(x,z)$ concluding the proof in
the same way as before.

\end{proof}

\begin{cor}\label{C:LipschitzSigma}
Let $D\subset \Rn$  be a $C^\infty$ domain. If $D$ satisfies  the
uniform outer $X$-ball condition in a neighborhood $V$ of $\Sigma$,
then for any $x_o \in D$ and every open neighborhood $U$ of
$\partial{D}$, such that $x_o\notin \overline {U}$, one has
$||G(x_o,\cdot)||_{ \mathcal{L}^{1,\infty}(U)}\le C(x_o, D,V,U,X)$.
\end{cor}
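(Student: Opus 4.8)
The plan is to cover the compact boundary $\partial D$ by finitely many open pieces, on each of which $|XG(x_o,\cdot)|$ is bounded: near the characteristic set $\Sigma$ this will come from the localized estimate of Theorem \ref{T:XGLocal}, while at the non-characteristic points of $\partial D\setminus\Sigma$ it will come from the Kohn--Nirenberg boundary regularity of Theorem \ref{T:KN}. The compact part of $U$ that remains away from $\partial D$ is then handled by interior smoothness of $G(x_o,\cdot)$, and the control of $G(x_o,\cdot)$ itself (as opposed to its horizontal gradient) is the easier bound coming from \eqref{green}. As a preliminary step I would check that $D$ is regular for the Dirichlet problem, which is needed for Theorems \ref{T:GreenLocal}, \ref{T:XGLocal} and \ref{T:KN} to apply: at a point $P\in\Sigma\subset V$ the uniform outer $X$-ball condition together with \eqref{equivi} makes $D$ thin at $P$, hence regular by Theorem \ref{T:thin}; at a non-characteristic $y_o\in\partial D\setminus\Sigma$ the $C^\infty$ boundary is transverse to some $X_j$, so a standard flow-box argument (cf. \cite{CG}) shows $D^c$ has positive density at $y_o$ and $D$ is thin there by Proposition \ref{P:posden}.

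Next, fix $x_o\in D$ with $x_o\notin\overline U$, so $\delta:=d(x_o,\overline U)>0$. Since $D$ is bounded, $\delta\le d(x_o,y)\le\mathrm{diam}_d(\overline D)$ for all $y\in\overline{U\cap D}$, and Theorem \ref{T:db} then bounds $d(x_o,y)/|B_d(x_o,d(x_o,y))|$ by a constant $A=A(x_o,D,X,U)$; hence, wherever Theorem \ref{T:XGLocal} is available it gives $|XG(x_o,\cdot)|\le CA$. Choose an open $V'$ with $\Sigma\subset V'\subset\overline{V'}\subset V$. For each $P\in\overline{V'}\cap\partial D$ pick $\epsilon_P>0$ with $B_d(P,2\epsilon_P)\cap\partial D\subset V\cap\partial D$; since $D$ has the uniform outer $X$-ball along $V\cap\partial D$, Theorem \ref{T:XGLocal} yields $|XG(x_o,\cdot)|\le CA$ on $B_d(P,\tfrac{1}{2}\epsilon_P)\cap D$. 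On the other hand $\partial D\setminus V'$ is compact and contains only non-characteristic points; for each such $y_o$, noting $\Gamma(x_o,\cdot)|_{\partial D}\in C^\infty(\partial D)$ because $x_o\in D$, Theorem \ref{T:KN} applied to $h_{x_o}=H^D_{\Gamma(x_o,\cdot)}$ combined with \eqref{represent} shows that $G(x_o,\cdot)=\Gamma(x_o,\cdot)-h_{x_o}$ is $C^\infty$ on $\overline D\cap W_{y_o}$ for some neighborhood $W_{y_o}$ of $y_o$ (here one uses that $\Gamma(x_o,\cdot)$ is smooth off $x_o$ and $x_o\notin\overline U$); in particular $|XG(x_o,\cdot)|$ is bounded on a slightly smaller neighborhood $W_{y_o}'$, by a constant depending on $x_o,D,X$.

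Finally, the open sets $\{B_d(P,\tfrac{1}{2}\epsilon_P)\}_{P\in\overline{V'}\cap\partial D}$ together with $\{W_{y_o}'\}_{y_o\in\partial D\setminus V'}$ cover $\partial D$; extracting a finite subcover produces an open neighborhood $W$ of $\partial D$ on which $|XG(x_o,\cdot)|$ is bounded by the maximum of the finitely many constants above. Replacing $U$ by $U\cap B_d(x_o,R)$ with $R$ large if needed, the set $\overline{U\cap D}\setminus W$ is a compact subset of $D\setminus\{x_o\}$, on which $G(x_o,\cdot)$ is $C^\infty$ and $|XG(x_o,\cdot)|$ is bounded; combining with the bound on $W\cap D$ and with the $L^\infty$ bound on $G(x_o,\cdot)$ from \eqref{green} gives $\|G(x_o,\cdot)\|_{\mathcal L^{1,\infty}(U)}\le C(x_o,D,V,U,X)$. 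I expect the main obstacle --- and the only genuinely new feature compared with Corollary \ref{C:Lipschitz} --- to be the loss of uniformity in $x_o$: both the constant furnished by Theorem \ref{T:XGLocal} and the size of the neighborhoods $W_{y_o}'$ now depend on $x_o$ (the estimates near the non-characteristic part of the boundary degenerate as $x_o$ approaches $\partial D$), so no $x_o$-independent bound can be expected; it is the compactness of $\partial D$ and the resulting finite subcover that keep the final constant finite.
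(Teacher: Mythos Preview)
Your proposal is correct and follows essentially the same strategy as the paper's proof: split $\partial D$ into a piece near $\Sigma$ (handled by Theorem \ref{T:XGLocal}) and a non-characteristic piece (handled by Theorem \ref{T:KN} applied to $h_{x_o}$ via \eqref{represent}), then use compactness and interior smoothness on the rest of $U$. The only organizational difference is that the paper, instead of your intermediate set $V'$ and point-dependent radii $\epsilon_P$ followed by a finite subcover, invokes compactness of $\Sigma$ once at the outset to choose a single $\epsilon>0$ with $\bigcup_{P\in\Sigma}B(P,2\epsilon)\subset V$ and then works with the uniform sets $A=\bigcup_{P\in\Sigma}B(P,\tfrac12\epsilon)$ and $U\setminus A$; this is the same compactness argument packaged slightly more efficiently.
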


\begin{proof}[\textbf{Proof}] Observe that $D$ is regular for the Dirichlet problem. The regularity away from the characteristic set follows by
 Theorem \ref{T:KN} and  the regularity in a neighborhood of $\Sigma$ is a consequence
of the uniform outer $X$-ball condition and of the cited results in
\cite{Citti},\cite{D}, \cite{NS} and \cite{CDG3}. Denote by $V$ the
neighborhood of $\Sigma$ where the uniform outer $X$-ball condition
holds. In view of the compactness of $\Sigma$, we have that
$W=\bigcup_{P\in \Sigma} B(P,2\epsilon) \subset V$, for some
$\epsilon >0$. We will consider also the set $A=\bigcup_{P\in
\Sigma} B(P,\frac{1}{2}\epsilon)\subset W$. In view of Theorem
\ref{T:KN}, we have that $G(x_o,\cdot)\in C^{\infty}
(\bar{D}\setminus \{A\cup \{x_0\}\})$. In particular, $G(x_o,\cdot)$
is smooth in $U\setminus A$. This implies the estimate
$||G(x_o,\cdot)||_{ \mathcal{L}^{1,\infty}(U\setminus A)} \le
C_0=C_0(x_o, D,V,X)$. To complete the proof of the corollary we
consider $y\in A$ and observe that there must be a $P\in \Sigma$
such that $y\in B(P, \frac{1}{2}\epsilon)$. Denote by $Q$ the
homogeneous dimension associated to the system $X$ in a neighborhood
of $D$. In view of Theorem \ref{T:XGLocal} we have that $|XG(x_o,y)|
\le C d(y,x_o)^{1-Q} \le C_1$, with $C_1$ depending only on $X,D$
and $U$. At this point we choose $ C(x_o, D,V,U,
X)=\min\{C_0,C_1\}$, and the proof is concluded.
\end{proof}

\section{\textbf{The subelliptic Poisson kernel and a representation formula for $\mathcal L$-harmonic functions}}\label{S:PK}

In this section we establish  a basic Poisson type representation
formula for smooth domains that satisfy the outer $X$-ball condition
in a neighborhood of the characteristic set. This results
generalizes an analogous representation formula for the Heisenberg
group $\Hn$ obtained by Lanconelli and Uguzzoni in \cite{LU} and
extended in \cite{CGN2} to groups of Heisenberg type. Consider a
domain $D$ which is regular for the Dirichlet problem. For a fixed
point $x_o\in  D$ we respectively denote by
$\Gamma(x)=\Gamma(x,x_o)$ and $G(x)=G(x,x_o)$,  the fundamental
solution of $\mathcal{L}$, and the Green function for  $ D$ and
$\mathcal{L}$ with pole at $x_o$. Recall that $G(x)=\Gamma(x)-h(x)$,
where $h$ is the unique $\mathcal{L}$-harmonic function with
boundary values $\Gamma$. We also note that due to the assumption
that $ D$ be regular,  $G, h$ are continuous in any relatively
compact subdomain  of $ \overline D\setminus \{x_o\}$. We next
consider a $C^{\infty}$ domain $\Om \subset \overline{\Om} \subset
\overline D $ containing the point $x_o$. For any $u,v\in
C^{\infty}(\overline D)$ we obtain from the divergence theorem

\begin{equation}\label{i1}
\int_\Om [u\ \mathcal{L}v - v\ \mathcal{L}u]\ dx\ =\
\sum_{j=1}^m\int_{\partial{\Om}}[v\ X_ju - u\ X_jv]<X_j,\n>\
d\sigma\ ,
\end{equation}
where $\n$ denotes the outer unit normal and $d\sigma$ the surface
measure on $\partial{\Om}$. By H\"ormander's hypoellipticity theorem
\cite{H} the function $x\to \Gamma(x_o,x)$ is in
$C^\infty(D\setminus \{x_o\})$. By Sard's theorem there exists a
sequence $s_k \nearrow \infty$ such that the sets $\{x\in \Rn\mid
\Gamma(x_o,x) = s_k\}$ are $C^\infty$ manifolds. Since by
\eqref{gradgamma} the fundamental solution has a singularity at
$x_o$, we can assume without restriction that such manifolds are
strictly contained in $\Om$. Set $\epsilon_k = F(x_o, s_k^{-1})$,
where $F(x_o, \cdot)$ is the inverse function of $E(x_o,\cdot)$
introduced in section two. The sets
$B(\epsilon_k)=B(x_o,\epsilon_k)\subset
\overline{B}(x_o,\epsilon_k)\subset \Om$ are a sequence of smooth
$X$-balls shrinking to the point $x_o$. We note explicitly that the
outer unit normal on $\partial{B(\epsilon_k)}$  is $\n
=-\frac{D\Gamma(\cdot,x_o)}{|D\Gamma(\cdot,x_o)|}$.

Applying \eqref{i1} with $v(x) = G(x)$, and $\Om$ replaced by
$\Om_{\epsilon_k}=\Om\setminus \overline B(\epsilon_k)$, where one
has $\mathcal{L}G=0$, we find
\begin{align}
\int_{\Om_{\epsilon_k}} G\ \mathcal{L}u\ dx\ &=\ \sum_{j=1}^m\int_{\partial{\Om}}[u\ X_jG -G\ X_ju]<X_j,\n>d\sigma\notag\\
&+\ \sum_{j=1}^m\int_{\partial{B}(\epsilon_k)}[G\ X_ju - u\
X_jG]<X_j,\n>d\sigma\ .\notag
\end{align}

Again the divergence theorem gives
\begin{equation}\label{i2}
\int_{B(\epsilon_k)} \mathcal{L}u\ dx\ =\ -\
\sum_{j=1}^m\int_{\partial{B(\epsilon_k)}}X_ju<X_j,\n>\ d\sigma\ .
\end{equation}

Using \eqref{i2}, and the fact that $G=\Gamma - h$, we find
\begin{align}\label{i3}
&\sum_{j=1}^m\int_{\partial{B(\epsilon_k)}}[G\ X_ju - u\ X_jG]<X_j,\n>\ d\sigma\\
&=\ \frac{1}{E(x_o,\epsilon_k)}\sum_{j=1}^m\int_{\partial{B(\epsilon_k)}}X_ju<X_j,\n>d\sigma\ -\
\sum_{j=1}^m\int_{\partial{B(\epsilon_k)}}h\ X_ju<X_j,\n>\ d\sigma\notag\\
& -\ \sum_{j=1}^m\int_{\partial{B(\epsilon_k)}}u\ X_j\Gamma<X_j,\n>\ d\sigma \ +
\ \sum_{j=1}^m\int_{\partial{B(\epsilon_k)}}u\ X_jh <X_j,\n>\ d\sigma\notag\\
&=\ -\ \frac{1}{E(x_o,\epsilon_k)}\int_{B(\epsilon_k)}\mathcal{L}u\ dx\ +\ \int_{\partial{B(\epsilon_k)}}u\ \frac{|X\Gamma|^2}{|D\Gamma|}\ d\sigma\notag\\
&+\ \sum_{j=1}^m\int_{\partial{B(\epsilon_k)}}u\ X_jh <X_j,\n>\
d\sigma\ -\ \sum_{j=1}^m\int_{\partial{B(\epsilon_k)}}h\
X_ju<X_j,\n>\ d\sigma\ .\notag
\end{align}

Using \eqref{psi} we find
\[
\int_{\partial{B(\epsilon_k)}}u\ \frac{|X\Gamma|^2}{|D\Gamma|}\
d\sigma\ =\ u(x_o)\ -\ \int_{B(\epsilon_k)}\mathcal{L}u\
\left[\Gamma -\frac{1}{E(x_o,\epsilon_k)}\right]\ dx\ .
\]

Keeping in mind that $u, h\in C^{\infty}(\overline{\Om})$, from the estimates \eqref{gradgamma} and the fact that
\[
\frac{|B(\epsilon_k)|}{E(x_o,\epsilon_k)}\ \leq\ C\ \epsilon_k^2,
\]
letting $k\to \infty$, so that $\epsilon_k \to 0$, we conclude from
\eqref{i2}, \eqref{i3},
\begin{equation}\label{rappresentazione}
u(x_o)\ =\ \sum_{j=1}^m\int_{\partial{\Om}}[G\ X_ju - u\ X_jG]\
<X_j,\n>\ d\sigma\ +\ \int_\Om G\ \mathcal{L}u\ dx\ .
\end{equation}

To summarize what we have found we introduce the following
definition.

\begin{dfn}\label{D:Xnormal}
Given a bounded open set $\Om \subset \overline \Om \subset \Rn$ of
class $C^1$, at every point $y\in \p \Om$ we let
\[
\bN^X(y) \ =\ (<\n(y),X_1(y)>,...,<\n(y),X_m(y)>)\ ,
\]
where $\n(y)$ is the outer unit normal to $\Om$ in $y$. We also set
\[
W(y)\ =\ |\bN^X(y)|\ =\ \sqrt{\sum_{j=1}^m <\n(y),X_j(y)>^2}\ .
\]
If $y\in \p \Om \setminus \Sigma$, we set
\[
\n^X(y)\ =\ \frac{\bN^X(y)}{|\bN^X(y)|}\ .
\]
One has $|\nuX(y)| = 1$ for every $y\in \p D \setminus \Sigma$.
\end{dfn}

We note explicitly from Definitions \ref{D:char} and \ref{D:Xnormal}
that one has for the characteristic set $\Sigma$ of $\Om$
\[
\Sigma\ =\ \{y\in \p \Om\mid W(y) = 0\}\ .
\]

Using the quantities introduced in this definition we can express
\eqref{rappresentazione} in the following suggestive way.

\begin{prop}\label{P:smoothP}
Let $ D\subset \Rn$ be a bounded open set with (positive) Green
function $G$ of the sub-Laplacian \eqref{sublap} and consider a
$C^2$ domain $\Om\subset \overline{\Om}\subset D$. For any $u\in
C^{\infty}( D)$ and every $x\in \Om$ one has
\begin{align*}
u(x)\ & =\  \int_{\partial{\Om}} G(x,y)<Xu(y),\bN^X(y)> d\sigma(y)\
- \int_{\p \Om} u(y)<XG(x,y),\bN^X(y)> d\sigma(y)
\\
& +\ \int_\Om G(x,y)\ \mathcal{L}u(y)\ dy\ .
\end{align*}
If moreover  $\mathcal{L}u$=$0$ in $ D$, then
\[
u(x)\ =\  \int_{\partial{\Om}} G(x,y)<Xu(y),\bN^X(y)> d\sigma(y)\ -
\int_{\p \Om} u(y)<XG(x,y),\bN^X(y)> d\sigma(y)\ .
\]
In particular, the latter equality gives for every $x\in \Om$
\[
\int_{\p \Om} <XG(x,y),\bN^X(y)> d\sigma(y) \ =\ -\ 1\ .
\]
\end{prop}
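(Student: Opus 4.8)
The plan is to derive all three identities from the representation formula \eqref{rappresentazione}, which was obtained by applying the second Green identity \eqref{i1} on $\Om_{\epsilon_k} = \Om \setminus \overline{B}(\epsilon_k)$, isolating the singularity of $G$ at the pole $x_o$, and letting $\epsilon_k \to 0$. First I would observe that \eqref{rappresentazione} was established for the specific pole $x_o$, but since the point $x_o \in \Om$ was arbitrary, the formula holds with $x_o$ replaced by any $x \in \Om$; rewriting $<X_j u, \n> \cdot$-type sums using the vector $\bN^X(y) = (<\n(y),X_1(y)>,\dots,<\n(y),X_m(y)>)$ from Definition \ref{D:Xnormal}, the sums $\sum_{j=1}^m X_j u(y) <X_j(y),\n(y)>$ become exactly $<Xu(y),\bN^X(y)>$, and similarly for $XG$. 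This immediately gives the first displayed identity with the volume term $\int_\Om G(x,y)\mathcal{L}u(y)\,dy$.

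The second identity is then the special case $\mathcal{L}u = 0$ in $D$, for which the volume term vanishes identically (note $G(x,\cdot)$ is integrable near $x$ by the bound \eqref{green}, so there is no issue with the volume integral). For the third identity I would apply the second identity to the constant function $u \equiv 1$: then $Xu \equiv 0$, so the first boundary term drops out, and since $\mathcal{L}1 = 0$ trivially, $u \equiv 1$ is $\mathcal{L}$-harmonic in $D$, so the second identity applies and yields $1 = -\int_{\p\Om} <XG(x,y),\bN^X(y)>\,d\sigma(y)$, which is the claimed normalization.

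The only genuine subtlety — and the step I would treat most carefully — is that \eqref{rappresentazione} as derived in the excerpt assumes $u \in C^\infty(\overline D)$ (or at least enough smoothness up to $\p\Om$ to justify \eqref{i1} and the limiting argument on $\p B(\epsilon_k)$), whereas Proposition \ref{P:smoothP} is stated for $u \in C^\infty(D)$. This is not a real obstacle here: since $\Om \subset \overline{\Om} \subset D$ is compactly contained, $u$ is automatically smooth in a neighborhood of $\overline\Om$, the surface integrals over the $C^2$ hypersurface $\p\Om$ are perfectly well-defined and finite, and the derivation of \eqref{rappresentazione} goes through verbatim with $D$ in the role of the ambient domain and $\Om$ in the role of the inner region. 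The application to $u\equiv 1$ is legitimate because $1\in C^\infty(D)$ and $\mathcal{L}1=0$. Thus the proof is essentially a matter of rewriting \eqref{rappresentazione} in the notation of Definition \ref{D:Xnormal} and then specializing $u$.
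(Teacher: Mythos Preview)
Your proposal is correct and matches the paper's approach exactly: the paper derives \eqref{rappresentazione} via the excision argument on $\Om_{\epsilon_k}$ and then simply observes that Proposition~\ref{P:smoothP} is its restatement in the notation of Definition~\ref{D:Xnormal}, with the second and third identities obtained by specializing to $\mathcal{L}u=0$ and $u\equiv 1$ respectively. Your remark that $\overline{\Om}\subset D$ ensures the required smoothness of $u$ up to $\p\Om$ is the right way to handle the apparent discrepancy in hypotheses.
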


\begin{rmrk}\label{R:at}
If $u\in C^{\infty}(\overline{D})$, then we can weaken the
hypothesis on $\Omega$ and require only  $\overline{\Om}\subset
\overline{D}$ rather than $\overline{\Om}\subset  D$.
\end{rmrk}

We consider next a $C^\infty$ domain $D\subset \Rn$ satisfying the
uniform outer $X$-ball condition in a neighborhood of $\Sigma$. Our
purpose is to pass from the interior representation formula in
Proposition \ref{P:smoothP} to one on the boundary of $\partial{D}$.
The presence of characteristic points becomes important now. The
following result due to Derridj \cite[Theorem 1 ]{De1} will be
important in the sequel.

\begin{thrm}\label{T:FW}  Let $D\subset \Rn$ be a $C^\infty$ domain.
If  $\Sigma$ denotes  its characteristic set, then  $\sigma(\Sigma)=0$.
\end{thrm}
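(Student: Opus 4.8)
The plan is to argue by contradiction, combining H\"ormander's condition \eqref{frc} with the Lebesgue density theorem on the hypersurface $\p D$. Fix, locally near each boundary point, a $C^\infty$ defining function $\rho$ for $D$, so that $\p D = \{\rho = 0\}$ with $\nabla\rho \neq 0$ on $\p D$; since $\bN$ is parallel to $\nabla\rho$, one has $\Sigma = \{y\in \p D : X_1\rho(y) = \dots = X_m\rho(y) = 0\}$. Suppose, for contradiction, that $\sigma(\Sigma) > 0$.

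Two elementary facts drive the argument. (a) If $E\subset \p D$ is $\sigma$-measurable of positive measure and $f\in C^1(\p D)$ vanishes on $E$, then the intrinsic gradient $\nabla_{\p D} f$ vanishes at every density point of $E$, hence $\sigma$-a.e.\ on $E$: at a density point where $\nabla_{\p D} f \neq 0$, the set $E$ would be forced into an arbitrarily thin cone about a hyperplane, contradicting density one. (b) If $Z$ is any iterated commutator of $X_1,\dots,X_m$ and $Z\rho(y_0) = 0$ at some $y_0\in \p D$, then $Z(y_0)$ is orthogonal to $\nabla\rho(y_0)$, i.e.\ tangent to $\p D$ at $y_0$; in particular every $X_j(y_0)$ is tangent to $\p D$ at each $y_0\in\Sigma$.

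The core is an induction on the length of the left-nested iterated commutators $Z$ of $X_1,\dots,X_m$, showing $Z\rho = 0$ $\sigma$-a.e.\ on $\Sigma$. Length one is the definition of $\Sigma$. Assume the claim for length $\le k$, and let $S_k\subset\Sigma$ be the full-$\sigma$-measure set where $Z\rho = 0$ for all such $Z$. Applying (a) to each of the finitely many $C^1$ functions $X_j\rho$ and $Z\rho$ — which vanish on $\Sigma$, resp.\ on $S_k$, both of positive measure — we pass to a further full-measure subset $S_k'\subset S_k$ of density points at which all the tangential gradients $\nabla_{\p D}(X_j\rho)$ and $\nabla_{\p D}(Z\rho)$ vanish. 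Fix $y_0\in S_k'$ and a length-$(k+1)$ commutator $Y = [X_i, Z]$, so $Y\rho = X_i(Z\rho) - Z(X_i\rho)$. Evaluated at $y_0$, the first term is the derivative of $Z\rho$ along the tangent vector $X_i(y_0)$, hence equals $\langle X_i(y_0), \nabla_{\p D}(Z\rho)(y_0)\rangle = 0$; the second term is the derivative of $X_i\rho$ along $Z(y_0)$, which is tangent to $\p D$ since $Z\rho(y_0) = 0$, hence equals $\langle Z(y_0), \nabla_{\p D}(X_i\rho)(y_0)\rangle = 0$. Thus $Y\rho(y_0) = 0$, completing the induction.

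Finally, \eqref{frc} together with the compactness of $\overline D$ furnishes a finite family of iterated commutators of bounded length spanning $\Rn$ at every point of a neighborhood of $\overline D$. By the induction, at $\sigma$-a.e.\ $y_0\in\Sigma$ each member $Z$ of this family satisfies $\langle Z(y_0),\nabla\rho(y_0)\rangle = Z\rho(y_0) = 0$; since these vectors span $\Rn$, this forces $\nabla\rho(y_0) = 0$, contradicting $\nabla\rho\neq 0$ on $\p D$. Hence $\sigma(\Sigma) = 0$. The one delicate point is the bookkeeping in the inductive step: one must verify that each bracket $Z$ occurring there is tangent at the chosen base point and that the function being differentiated ($X_i\rho$ or $Z\rho$) has already been shown to vanish on a set of positive measure, so that the density-point reduction of (a) legitimately applies; everything else is routine.
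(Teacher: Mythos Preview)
The paper does not prove this theorem; it merely records it as a result of Derridj \cite{De1} and cites it without argument. Your proposal, by contrast, supplies a complete and correct proof, and it is essentially the classical density-point argument that underlies Derridj's original result: one shows by induction on the length of (left-nested) commutators that $Z\rho$ vanishes $\sigma$-a.e.\ on $\Sigma$, the inductive step resting on the fact that a $C^1$ function vanishing on a set of positive surface measure has vanishing tangential gradient at density points of that set. The final contradiction with H\"ormander's condition is then immediate.

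Two minor remarks. First, your concluding step invokes ``compactness of $\overline D$'' to produce a single finite spanning family; this is unnecessary, since the argument is entirely local and H\"ormander's condition already gives, near each boundary point, a bounded commutator length at which the induction terminates. Second, you should make explicit (it is standard, via the Jacobi identity) that left-nested commutators span the full Lie algebra generated by $X_1,\dots,X_m$, so that restricting the induction to these suffices. Neither point affects the correctness of the argument.
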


We now define two functions on $ D\times (\partial{ D}\setminus
\Sigma)$ which play a central role in the results of this paper.
They constitutes subelliptic versions of the Poisson kernel from
classical potential theory. The former function $P(x,y)$ is the
Poisson kernel for $D$ and the sub-Laplacian \eqref{sublap} with
respect to surface measure $\sigma$. The latter $K(x,y)$ is instead
the Poisson kernel with respect to the perimeter measure $\sigma_X$.
This comment will be clear after we prove Theorem \ref{T:HM} below.

\begin{dfn}[Subelliptic Poisson kernels]\label{D:PKs}
With the notation of Definition \ref{D:Xnormal}, for every $(x,y)\in
D\times (\p D \setminus \Sigma)$ we let
\begin{equation}\label{PK} P(x,y)\ =\ - <XG(x,y),\bN^X(y)>\  .
\end{equation}
We also define
\begin{equation}\label{i4}
K(x,y)\ =\ \frac{P(x,y)}{W(y)}\ =\ -\ <XG(x,y),\nuX(y)>\ .
\end{equation}
We extend the definition of $P$ and $K$ to all $ D\times
\partial{ D}$ by letting $P(x,y)=K(x,y)=0$ for any $x\in  D$ and
$y\in \Sigma$. According to Theorem \ref{T:FW} the extended
functions coincide $\sigma$-a.e. with the kernels in \eqref{PK},
\eqref{i4}.
\end{dfn}

It is important to note that if we fix $x\in D$, then in view of
Theorem \ref{T:KN} the functions $y \to P(x,y)$ and $y \to K(x,y)$
are $C^\infty$ up to $\partial{D}\setminus \Sigma$. The following
estimates, which follow immediately from \eqref{PK} and \eqref{i4},
will play an important role in the sequel. For $(x,y)\in D\times
(\partial{ D}\setminus \Sigma)$ we have
\begin{equation}\label{i6}
P(x,y)\ \leq\ W(y)\ |XG(x,y)|\ ,\ \ \  \ K(x,y)\leq\ |XG(x,y)|\ .
\end{equation}

We now introduce a new measure on $\p D$ by letting
\begin{equation}\label{dmu}
d\sigma_X\ =\ W\ d\sigma\ .
\end{equation}

We observe that since we are assuming that $D\in C^\infty$ the
density $W$ is smooth and bounded on $\p D$ and therefore
\eqref{dmu} implies that $d\sigma_X \ll d\sigma$. In view of this
observation Theorem \ref{T:FW} implies that also $\sigma_X(\Sigma) =
0$.

\begin{rmrk}\label{R:per}
We mention explicitly that the measure $d\sigma_X$ in \eqref{dmu} is
the $X$-perimeter measure $P_X(D;\cdot)$ (following De Giorgi)
concentrated on $\p D$. To explain this point we recall that for any
open set $\Om\subset \Rn$
\begin{equation}\label{XP}
P_X(D;\Om)\ =\ Var_X(\chi_D;\Om)\ ,
\end{equation}
where $Var_X$ indicates the sub-Riemannian $X$-variation  introduced
in \cite{CDG2} and also developed in \cite{GN1}. Given a bounded
$C^2$ domain $D\subset \Rn$ one obtains from \cite{CDG2} that
\begin{equation}\label{pm}
P_X(D;\Om)\ =\ \int_{\p D \cap \Om} W\ d\sigma\ .
\end{equation}
From \eqref{pm} one concludes that for every $y\in \p D$ and every
$r>0$
\begin{equation}\label{equiv}
\sigma_X(\p D \cap  B_d(y,r))\ =\ P_X(D;B_d(y,r))\ ,
\end{equation}
which explains the remark. The measure $\sigma_X = P_X(D;\cdot)$ on
$\p D$ plays a pervasive role in the analysis and geometry of
sub-Riemannian spaces, and its intrinsic properties have many deep
implications both in subelliptic pde's and in geometric measure
theory. For an account of some of these aspects we refer the reader
to \cite{DGN2}.
\end{rmrk}

\begin{prop}\label{P:One}
Let $D\subset \Rn$  be a bounded $C^{\infty}$ domain satisfying the
uniform outer $X$-ball condition in a neighborhood of its
characteristic set $\Sigma$. For every $x\in D$ we have
\[
\int_{\partial{ D}}P(x,y)d\sigma(y)\ =\ 1\ =\ \int_{\partial{
D}}K(x,y)d\sigma_X(y)\ .
\]
\end{prop}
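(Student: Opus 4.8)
The plan is to obtain the identity $\int_{\p D} P(x,y)\,d\sigma(y) = 1$ by passing to the limit in the interior representation formula of Proposition \ref{P:smoothP}, applied to the constant function $u\equiv 1$ and to an exhausting family of smooth subdomains $\Om_j \nearrow D$, and then to deduce the second identity from the first by recalling $P(x,y) = W(y) K(x,y)$ and $d\sigma_X = W\,d\sigma$. First I would fix $x\in D$ and choose, via Sard's theorem applied to the Green function $z\mapsto G(x,z)$ (which by Theorem \ref{T:KN} is $C^\infty$ up to $\p D$ away from $\Sigma$, and is smooth in the interior by hypoellipticity), a sequence of regular values $t_j \searrow 0$ so that the super-level sets $\Om_j = \{z\in D : G(x,z) > t_j\}$ are $C^\infty$ domains with $x\in\Om_j$ and $\overline{\Om_j}\subset D$, increasing to $D$. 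Applying the second (harmonic) displayed equality of Proposition \ref{P:smoothP} with $u\equiv 1$, $\mathcal L u = 0$, $Xu = 0$, we get for every $j$
\[
1\ =\ -\int_{\p \Om_j}\langle XG(x,y),\bN^X(y)\rangle\, d\sigma(y)\ =\ \int_{\p \Om_j} P(x,y)\, d\sigma(y)\ ,
\]
where here $P(x,\cdot)$ is understood in the sense of Definition \ref{D:Xnormal}/\ref{D:PKs} computed on $\p\Om_j$; note the last equality in Proposition \ref{P:smoothP} already records exactly this. The task is then to show that the right-hand side converges to $\int_{\p D} P(x,y)\,d\sigma(y)$ as $j\to\infty$.

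The key analytic input for the passage to the limit is the boundedness of $|XG(x,\cdot)|$ near $\p D$, which is where the uniform outer $X$-ball condition in a neighborhood of $\Sigma$ enters: by Theorem \ref{T:XGLocal} together with Theorem \ref{T:KN} (applied away from $\Sigma$), the function $y\mapsto |XG(x,y)|$ is bounded on a neighborhood of $\p D$ in $\overline D$ — quantitatively $|XG(x,y)| \le C d(x,y)^{1-Q} \le C'$ for $y$ near $\p D$, with $C'$ depending on $x$. Hence $|P(x,y)| \le W(y)|XG(x,y)| \le C' W(y)$ is bounded, and moreover $P(x,\cdot)$ extends continuously to $\overline D$ minus $\Sigma$, vanishing on $\Sigma$ by the convention in Definition \ref{D:PKs} (which is consistent since $W\to 0$ there). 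With $\sigma(\Sigma) = 0$ from Theorem \ref{T:FW}, one then argues that $\int_{\p\Om_j} P(x,\cdot)\,d\sigma \to \int_{\p D} P(x,\cdot)\,d\sigma$: off a small $\sigma$-neighborhood of $\Sigma$ the boundaries $\p\Om_j$ converge to $\p D$ in $C^1$ (since $G(x,\cdot)$ is smooth there with nonvanishing gradient up to $\p D$, by the strong maximum principle / Hopf-type boundary behavior at noncharacteristic points), so the corresponding surface integrals converge by a standard change-of-variables/dominated-convergence argument; near $\Sigma$ the contribution is controlled by $C'\,\sigma\big(\{W < \delta\}\big)$, which can be made arbitrarily small by Theorem \ref{T:FW} and the continuity of $W$. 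This yields $\int_{\p D} P(x,y)\,d\sigma(y) = 1$.

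Finally, the second identity is immediate: since $K(x,y) = P(x,y)/W(y)$ on $\p D\setminus\Sigma$ and $d\sigma_X = W\,d\sigma$ by \eqref{dmu}, while both $P$ and $K$ are set to $0$ on $\Sigma$ and $\sigma_X(\Sigma) = \sigma(\Sigma) = 0$, we have
\[
\int_{\p D} K(x,y)\, d\sigma_X(y)\ =\ \int_{\p D\setminus\Sigma} \frac{P(x,y)}{W(y)}\, W(y)\, d\sigma(y)\ =\ \int_{\p D} P(x,y)\, d\sigma(y)\ =\ 1\ .
\]
I expect the main obstacle to be the rigorous justification of the convergence of the boundary integrals over $\p\Om_j$ to the boundary integral over $\p D$ near the characteristic set $\Sigma$: one must simultaneously handle the degeneration of the exhausting surfaces and the fact that $XG$ is only known to be bounded — not continuous — up to $\Sigma$. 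The safeguard is that the measure is being multiplied by the density $W$, which vanishes on $\Sigma$, so combining the uniform bound on $|XG(x,\cdot)|$ from Theorem \ref{T:XGLocal} with $\sigma(\Sigma)=0$ and the smoothness of $W$ suffices to kill this region uniformly in $j$.
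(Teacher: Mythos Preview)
Your overall strategy matches the paper's: exhaust $D$ by smooth subdomains, apply the last identity of Proposition \ref{P:smoothP}, and pass to the limit using the $L^\infty$ bound on $XG(x,\cdot)$ near $\partial D$ from Corollary \ref{C:LipschitzSigma}; the deduction of the $K$-identity from the $P$-identity via $d\sigma_X = W\,d\sigma$ is also correct.

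The gap is in your limit near $\Sigma$. You bound the contribution there by $C'\sigma(\{W<\delta\})$, but the integral in question runs over $\partial\Om_j$, not over $\partial D$: the surface measure and angle function on the level set $\{G(x,\cdot)=t_j\}$ are not those of $\partial D$, and near $\Sigma$ you have no control on them---only $|XG|$ is known to be bounded, not the Euclidean gradient, so neither $\sigma_j(\partial\Om_j\cap U_\delta)$ nor $P_X(\Om_j;U_\delta)$ is a priori small or even bounded uniformly in $j$. (The $C^1$ convergence of $\partial\Om_j$ to $\partial D$ away from $\Sigma$ also rests on a Hopf-type lemma at noncharacteristic points that you assert but do not establish.) The paper sidesteps both issues by a different exhaustion: since $\sigma(\Sigma)=0$ (Theorem \ref{T:FW}), one takes $\Om_k$ with $\overline{\Om_k}\subset\overline D$ and $\partial\Om_k=\Gamma^1_k\cup\Gamma^2_k$, where $\Gamma^1_k\subset\partial D\setminus\Sigma$ increases to $\partial D$ and $\Gamma^2_k$ is a small interior cap with $\sigma(\Gamma^2_k)\to 0$. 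Then the integral over $\Gamma^1_k$ is literally an integral over a subset of $\partial D$ and converges by dominated convergence, while the integral over $\Gamma^2_k$ dies because $|XG|\le C$ and $\sigma(\Gamma^2_k)\to 0$---no Hopf lemma and no control of approximate surfaces near $\Sigma$ is needed.
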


\begin{proof}[\textbf{Proof}]
We fix $x\in D$ and recall that $\Sigma$ is a compact set. In view
of Theorem \ref{T:FW} we can choose an exhaustion of $D$ with a
family of $C^\infty$ connected open sets $\Om_k\subset
\overline{\Om}_k \subset \overline{D}$, with $\Om_k \nearrow D $ as
$k \to \infty$, such that $\partial{\Om_k} = \Gamma^1_k \cup
\Gamma^2_k$, with $\Gamma^1_k \subset
\partial{D}\setminus \Sigma$, $\Gamma^1_k \nearrow
\partial{D}$, $\sigma(\Gamma^2_k)\to 0$.
By Proposition \ref{P:smoothP} (and the remark following it) we
obtain for every $k\in \mathbb{N}$
\begin{align}\label{uno}
 -\ 1& =\ \int_{\partial{\Om_k}}<XG(x,y),\bN^X(y)> d\sigma(y)
\\
& =\ \int_{\partial{\Gamma^1_k}}<XG(x,y),\bN^X(y)>d\sigma(y)\ +\
\int_{\partial{\Gamma^2_k}}<XG(x,y),\bN^X(y)>d\sigma(y)\ . \notag
\end{align}

We now pass to the limit as $k\to \infty$ in the above integrals.
Using Corollary \ref{C:LipschitzSigma} and $\sigma(\Gamma^2_k)\to
0$, we infer that
\[
\underset{k\to \infty}{\lim}\
\int_{\partial{\Gamma^2_k}}<XG(x,y),\bN^X(y)>d\sigma(y)\ =\  0\ .
\]

Theorem \ref{T:KN}, Corollary \ref{C:LipschitzSigma}, and the fact
that $\Gamma^1_k \nearrow \partial{D}$, allow to use dominated
convergence and obtain
\[
\underset{k\to \infty}{\lim}\
\int_{\partial{\Gamma^1_k}}<XG(x,y),\bN^X(y)>d\sigma(y)\ = \
\int_{\partial{D}}<XG(x,y),\bN^X(y)>d\sigma(y)\ .
\]

In conclusion, we have found
\[
-\ 1\ =\ \int_{\partial{D}}<XG(x,y),\bN^X(y)>d\sigma(y)\ ,
\]
which, in view of \eqref{PK}, proves the first identity. To
establish the second identity we return to \eqref{uno}, which in
view of \eqref{i4},  \eqref{dmu} we can rewrite as follows
\begin{align*}
1  & =\ - \int_{\partial{\Gamma^1_k}}<XG(x,y),\n^X(y)>d\sigma_X(y)\
-\ \int_{\partial{\Gamma^2_k}}<XG(x,y),\bN^X(y)>d\sigma(y)
\\
& =\ \int_{\partial{\Gamma^1_k}} K(x,y)\ d\sigma_X(y)\ -\
\int_{\partial{\Gamma^2_k}}<XG(x,y),\bN^X(y)>d\sigma(y)\ .
\end{align*}

Since as we have observed $d\sigma_X \ll d\sigma$, in view of the
second estimate $K(x,y)\leq |XG(x,y)|$ in \eqref{i6}, we can again
use Theorem \ref{T:KN}, Corollary \ref{C:LipschitzSigma} and
dominated convergence (with respect to $\sigma_X$) to conclude that
\[
\underset{k\to \infty}{\lim}\ \int_{\partial{\Gamma^1_k}} K(x,y)\
d\sigma_X(y)\ = \ \int_{\partial{D}}K(x,y)\ d\sigma_X(y)\ .
\]

This completes the proof.

\end{proof}

\begin{thrm}\label{T:PK}
Let $ D$ satisfy the assumptions in Proposition \ref{P:One}. If
$\phi\in C^\infty(\partial{D})$ assumes a single constant value in a
neighborhood of $\Sigma$,  then $H^{D}_{\phi} \in
\mathcal{L}^{1,\infty}( D)$. Furthermore, if for $\phi \in
C(\partial{D})$ we have $H^D_\phi \in \mathcal{L}^{1,\infty}(D)$,
then
\[
H^{ D}_{\phi}(x)\ =\ \int_{\partial{ D}}P(x,y)\ \phi(y)\ d\sigma(y)\
=\ \int_{\partial{ D}}K(x,y)\ \phi(y)\ d\sigma_X(y)\ , \quad \quad \
x\in D\ .
\]
\end{thrm}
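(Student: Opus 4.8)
The plan is to prove the two assertions in turn, the first feeding the second only through the standing hypothesis $H^D_\phi\in\mathcal L^{1,\infty}(D)$. For the first assertion, write $\phi=c+\psi$, where $c$ is the constant value of $\phi$ on a boundary neighbourhood $\{y\in\p D:\ d(y,\Sigma)<\delta\}$ of $\Sigma$ and $\psi=\phi-c\in C^\infty(\p D)$ vanishes there. Since constants are $\mathcal L$-harmonic, $H^D_\phi=c+H^D_\psi$, so it suffices to bound $|XH^D_\psi|$ on $D$. I would split $D$ into three regions. Where $y$ stays at a fixed positive distance from $\p D$, Theorem \ref{T:Harmonic} and the maximum principle (Theorem \ref{T:MP}) give $|XH^D_\psi(y)|\le C\|\psi\|_{L^\infty(\p D)}$. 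Where $y$ is near $\p D\setminus\Sigma$, Theorem \ref{T:KN} shows $H^D_\psi\in C^\infty$ up to the boundary, hence $|XH^D_\psi|$ is locally bounded. The delicate region is a neighbourhood of $\Sigma$: for such $y$ let $x_o\in\p D$ be a nearest point, fix once and for all a small radius $r_o$ with $r_o<R_o$ and $r_o<\delta/(6a)$ ($a$ as in \eqref{equivi}), and use the uniform outer $X$-ball condition near $\Sigma$ to get an outer $X$-ball $B(x_1,r_o)$ tangent at $x_o$. Since $y$ is near $\Sigma$, $\psi$ vanishes on $B(x_1,2r_o)\cap\p D$, so Theorem \ref{T:Growth} yields $|H^D_\psi(z)|\le C r_o^{-1}d(z,x_o)\|\psi\|_{L^\infty(\p D)}$ for every $z\in D$. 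Feeding this into the $s=1$ case of Theorem \ref{T:Harmonic} on the $X$-ball $\overline B(y,(2a)^{-1}d(y,\p D))\subset D$, on which $d(z,x_o)\le 2d(y,\p D)$, the two powers of $d(y,\p D)$ cancel and $|XH^D_\psi(y)|\le C\|\psi\|_{L^\infty(\p D)}$. (This is the mechanism of the proofs of Theorem \ref{T:XG} and Corollary \ref{C:LipschitzSigma}; it is essential to take $r_o$ fixed rather than comparable to $d(y,\p D)$.) Hence $XH^D_\psi\in L^\infty(D)$ and $H^D_\phi\in\mathcal L^{1,\infty}(D)$.

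For the representation formula, fix $\phi\in C(\p D)$ with $u:=H^D_\phi\in\mathcal L^{1,\infty}(D)$; as noted in the proof of Corollary \ref{C:LipschitzSigma}, $D$ is regular, so $u\in C(\overline D)\cap C^\infty(D)$ and $u|_{\p D}=\phi$. Fix $x\in D$ and choose $C^\infty$ domains $\Om_k\nearrow D$ with $\overline{\Om_k}\subset D$ and $x\in\Om_k$, whose boundaries approach $\p D$ (so that $\sup_{\p\Om_k}d(\cdot,\p D)\to 0$) with uniformly bounded surface measure and with outer normals and area elements converging to those of $\p D$ --- for instance the inner parallel sets, using $\p D\in C^\infty$. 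Proposition \ref{P:smoothP} (the case $\mathcal Lu=0$) applied on $\Om_k$ gives
\[
u(x)=\int_{\p\Om_k}G(x,y)<Xu(y),\bN^X_k(y)>d\sigma_k(y)-\int_{\p\Om_k}u(y)<XG(x,y),\bN^X_k(y)>d\sigma_k(y),
\]
with $\bN^X_k,\sigma_k$ relative to $\p\Om_k$. I would then let $k\to\infty$. The first integral tends to $0$: on $\p\Om_k$ one has $\sup|G(x,\cdot)|\to 0$, since $G(x,\cdot)$ vanishes on $\p D$ and --- by Theorem \ref{T:GreenLocal} (equivalently Corollary \ref{C:lu}) near $\Sigma$, and by Theorem \ref{T:KN} applied to $h_x=H^D_{\Gamma(x,\cdot)}$ elsewhere --- vanishes at least linearly in the distance to $\p D$; meanwhile $|Xu|\le\|Xu\|_{L^\infty(D)}<\infty$ by hypothesis, $|\bN^X_k|\le 1$, and $\sigma_k(\p\Om_k)$ is bounded. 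The second integral converges by dominated convergence: along the parametrization by $\p D$ one has $u\to\phi$, $\bN^X_k\to\bN^X$ and the area elements converge (all uniformly), while $XG(x,\cdot)$ converges pointwise $\sigma$-a.e.\ on $\p D$ because $G(x,\cdot)\in C^\infty$ up to $\p D\setminus\Sigma$ and $\sigma(\Sigma)=0$ by Theorem \ref{T:FW}; a dominating function is $\|\phi\|_{L^\infty(\p D)}\sup_U|XG(x,\cdot)|$, finite on a boundary neighbourhood $U$ by Corollary \ref{C:LipschitzSigma} together with Theorem \ref{T:KN}, and $\sigma$-integrable since $\sigma(\p D)<\infty$. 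Hence the second integral tends to $\int_{\p D}\phi(y)<XG(x,y),\bN^X(y)>d\sigma(y)$, and therefore $u(x)=-\int_{\p D}\phi(y)<XG(x,y),\bN^X(y)>d\sigma(y)=\int_{\p D}P(x,y)\phi(y)\,d\sigma(y)$ by \eqref{PK}. Finally, using $\bN^X=W\nuX$, $P=WK$ on $\p D\setminus\Sigma$, $d\sigma_X=W\,d\sigma$ and $\sigma(\Sigma)=\sigma_X(\Sigma)=0$, the same quantity equals $\int_{\p D}K(x,y)\phi(y)\,d\sigma_X(y)$.

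I expect the main obstacle to be the vanishing of the first boundary integral: this is exactly the point at which the hypothesis $H^D_\phi\in\mathcal L^{1,\infty}(D)$ is indispensable (it bounds $Xu$ uniformly up to $\p D$), and it has to be paired with the sharp linear decay $G(x,\cdot)\lesssim d(\cdot,\p D)$ at the boundary --- which near the characteristic set rests on the outer $X$-ball estimates of Section \ref{S:bdry} and away from it on Theorem \ref{T:KN}. Choosing the exhaustion $\Om_k$ so that normals and surface measures converge, and verifying the uniform decay of $G(x,\cdot)$ across all of $\p D$ by patching the near-$\Sigma$ estimate to the smooth estimate of Theorem \ref{T:KN}, are the routine but non-vacuous technical points.
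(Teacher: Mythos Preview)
Your argument is correct. The first part follows the paper's approach closely; your use of Theorem~\ref{T:Harmonic} in place of the paper's Corollary~\ref{C:Harnack} is in fact the cleaner choice since $H^D_\psi$ need not be nonnegative. For the representation formula your exhaustion differs from the paper's: the paper reuses the domains $\Omega_k$ from the proof of Proposition~\ref{P:One}, whose boundaries decompose as $\partial\Omega_k=\Gamma^1_k\cup\Gamma^2_k$ with $\Gamma^1_k\subset\partial D\setminus\Sigma$ and $\sigma(\Gamma^2_k)\to 0$. Since $G(x,\cdot)\equiv 0$ on $\Gamma^1_k\subset\partial D$, the first boundary integral in \eqref{i9} is supported only on the shrinking cap $\Gamma^2_k$, where the hypothesis $|XH^D_\phi|\in L^\infty$ together with the boundedness of $G$ dispatches it; the second integral is then handled exactly as in Proposition~\ref{P:One}. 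Your purely interior exhaustion by inner parallel sets instead relies on the uniform linear decay $G(x,y)\lesssim d(y,\partial D)$ across all of $\partial D$, which you correctly assemble from Theorem~\ref{T:GreenLocal} near $\Sigma$ and Theorem~\ref{T:KN} elsewhere. The paper's route is shorter once \eqref{i9} is granted; your route has the advantage of sidestepping the mild issue of justifying Proposition~\ref{P:smoothP} on domains with $\partial\Omega_k\cap\partial D\ne\emptyset$ when $H^D_\phi$ is merely in $\mathcal L^{1,\infty}(D)\cap C(\overline D)$ rather than $C^\infty(\overline D)$ as Remark~\ref{R:at} requires.
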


\begin{proof}[\textbf{Proof}]
We start with the proof of the regularity result. Let $\phi$ be as
in the first part of the statement. We mention explicitly that, by
definition, $\phi$ is $C^\infty$ in a neighborhood of $\p D$.
 Denote by $U$  a neighborhood of $\Sigma$ in which the function $\phi$ is constant
and along which the domain $D$ satisfies the uniform outer $X$-ball
condition. As in the proof of Corollary \ref{C:LipschitzSigma}, we
can assume that $U=\bigcup_{P\in \Sigma} B_d(P,\epsilon)$, for some
$\epsilon=\epsilon(U,X)>0$. If we denote by $R_0$ the constant
involved in the definition of the uniform outer $X$-ball (see
Definition \ref{D:OBchar}), then we can always select a smaller
constant so that $\epsilon=2aR_0$ (here $a>1$ is the constant from
\eqref{equivi}).
  In view of Proposition \ref{P:One} we can assume without loss of generality that $\phi$ vanishes in a neighborhood of $\Sigma$ and $\underset{\partial{D}}{max}|\phi|=1$.
We want to show that the horizontal gradient of $H^D_{\phi}$ is in
$L^{\infty}$ in such neighborhood. By Theorem \ref{T:KN} the
conclusion $H^D_{\phi} \in \mathcal{L}^{1,\infty}(D)$ will follow.
Fix $x_o\in \Sigma$, and $0<r<R_o$, where $R_o$ is as in Definition
\ref{D:OB}. Theorem \ref{T:Growth} implies
\begin{equation}\label{i7}
|H^D_{\phi}(y)|\leq\ C\ \frac{d(y,x_o)}{r}
\end{equation}
for every $y\in  D$. Let now $x\in B(x_o,r/2)\cap  D$ and consider
the metric ball $B_d(x,a^{-1}\tau)\subset B(x,\tau)$, see
\eqref{equivi}, where $\tau=\frac{d(x,\partial{ D})}{4}$. Corollary
\ref{C:Harnack} implies
\begin{equation}\label{i8}
|XH^D_{\phi}(x)|\leq\ \frac{C}{d(x,\partial{ D})}\ H^D_{\phi}(x).
\end{equation}

Pick $P\in \partial{ D}$ such that $d(x,P)=d(x,\partial{ D})$.
Observe that $d(P,\Sigma)\le d(P,x_o)\le d(P,x)+d(x,x_o) \le
2d(x,x_o) \le aR_0 =\epsilon/2$. In particular we can apply once
more Theorem \ref{T:Growth}, and obtain \eqref{i7} with $P$ in place
of $x_o$. Arguing in this way we find
\[
H^D_{\phi}(x)\leq\ C\ \frac{d(x,P)}{r}\ =\ C\ \frac{d(x,\partial{ D})}{r}.
\]

The latter inequality and \eqref{i8} imply
\[
|XH^D_{\phi}(x)|\ \leq \ \frac{C}{r}.
\]

This proves that $|XH^D_{\phi}|\in L^{\infty}(B(x_o,r/2)\cap  D)$.
To establish the second part of the theorem, we take a function
$\phi \in C(\partial{D})$ for which $H^D_\phi \in
\mathcal{L}^{1,\infty}(D)$. We fix $x\in  D$ and consider the
sequence of $C^{\infty}$ domains $\Om_k$ as in the proof of
Proposition \ref{P:One}.  Proposition \ref{P:smoothP} gives
\begin{equation}\label{i9}
H^D_\phi(x)\ =\  \int_{\partial{\Om}_k} G(x,y)
<X(H^D_\phi)(y),\bN^X(y)>  d\sigma(y)\ -\
\int_{\partial{\Om}_k}H^D_\phi(y) <XG(x,y),\bN^X(y)> d\sigma(y)\ .
\end{equation}

At this point the conclusion follows along the lines of the proof of Proposition \ref{P:One}.

\end{proof}

\begin{prop}\label{P:Positivity}
Let $ D$ be a $C^{\infty}$ domain. i) If $D$  satisfies  the uniform
outer $X$-ball condition in a neighborhood of $\Sigma$, then
$P(x,y)\geq 0$ and $K(x,y) \ge 0$ for each $(x,y)\in
D\times\partial{D}$; ii) If $D$ satisfies  the uniform outer
$X$-ball condition, then there exists a constant $C_{D}>0$ such that
for $(x,y)\in  D\times \partial{D}$
\[
0\ \leq\  P(x,y)\ \leq\ C_{D}\ W(y)\ \frac{d(x,y)}{|B_d(x,d(x,y))|},
\quad \ 0 \ \leq\ K(x,y)\ \leq \ C_{D}\
\frac{d(x,y)}{|B_d(x,d(x,y))|}\ .
\]
In particular, if we fix $x\in D$, then for any open set $U$
containing $\p D$, such that $x\notin \overline U$, one has
$K(x,\cdot)\in L^\infty(\overline D \cap U)$.
\end{prop}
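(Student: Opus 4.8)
The plan is to derive the positivity in part (i) from a short sign computation at a non-characteristic boundary point, and to obtain part (ii) by feeding the gradient bound of Theorem \ref{T:XG} into the elementary inequalities \eqref{i6}. For part (i), I would first observe that under the stated hypothesis $D$ is regular for the Dirichlet problem, exactly as at the start of the proof of Corollary \ref{C:LipschitzSigma}. Fixing $x\in D$ and writing $G(x,\cdot)=\Gamma(x,\cdot)-h_x$ with $h_x=H^D_{\Gamma(x,\cdot)}$ as in \eqref{represent}, we then have $G(x,\cdot)\ge 0$ in $D$, $G(x,\cdot)\to 0$ at $\p D$, and --- since $\Gamma(x,\cdot)$ is smooth near $\p D$ --- Theorem \ref{T:KN} gives that $h_x$, hence $G(x,\cdot)$, is $C^\infty$ up to $\p D$ in a neighborhood of any $y_o\in\p D\setminus\Sigma$. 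For $y\in\Sigma$ one has $P(x,y)=K(x,y)=0$ by Definition \ref{D:PKs}, so it remains to treat $y_o\in\p D\setminus\Sigma$.

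Near such $y_o$ the boundary is a smooth hypersurface on which $G(x,\cdot)$ vanishes identically and up to which it is $C^1$, so all its tangential derivatives vanish at $y_o$ and $DG(x,y_o)=\lambda\,\n(y_o)$ for some $\lambda\in\R$. Because $G(x,\cdot)\ge 0=G(x,y_o)$ on $\overline D$ while $-\n(y_o)$ points into $D$, the one-sided directional derivative of $G(x,\cdot)$ at $y_o$ in the direction $-\n(y_o)$ is $\ge 0$, which forces $\lambda\le 0$. Then, using $X_jG(x,y_o)=<DG(x,y_o),X_j(y_o)>=\lambda<\n(y_o),X_j(y_o)>$, I would compute
\[
<XG(x,y_o),\bN^X(y_o)>\ =\ \sum_{j=1}^m X_jG(x,y_o)\,<X_j(y_o),\n(y_o)>\ =\ \lambda\sum_{j=1}^m<X_j(y_o),\n(y_o)>^2\ =\ \lambda\,W(y_o)^2\ \le\ 0 ,
\]
whence $P(x,y_o)=-\lambda\,W(y_o)^2\ge 0$ and $K(x,y_o)=P(x,y_o)/W(y_o)=-\lambda\,W(y_o)\ge 0$.

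For part (ii), the uniform outer $X$-ball condition in particular holds near $\Sigma$, so part (i) already gives $P,K\ge 0$. Theorem \ref{T:XG} supplies $C_D=C(X,D)>0$ with $|XG(x,y)|\le C_D\,d(x,y)/|B_d(x,d(x,y))|$ for $x\ne y$ in $D$; as $XG(x,\cdot)$ extends continuously up to $\p D\setminus\Sigma$ by Theorem \ref{T:KN} and $y\mapsto d(x,y)/|B_d(x,d(x,y))|$ is continuous, this bound passes to $y\in\p D\setminus\Sigma$ and is trivial on $\Sigma$, where $P=K=0$. The two claimed estimates then follow at once from $P(x,y)\le W(y)|XG(x,y)|$ and $K(x,y)\le|XG(x,y)|$ in \eqref{i6}. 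For the last assertion, fix $x\in D$ and an open $U\supset\p D$ with $x\notin\overline U$: then $\overline D\cap\overline U$ is compact and misses $x$, so $\delta:=\inf\{d(x,y):y\in\overline D\cap\overline U\}>0$; since $r\mapsto|B_d(x,r)|$ is nondecreasing and $\overline D$ has finite $d$-diameter, every $y\in\p D$ satisfies $d(x,y)/|B_d(x,d(x,y))|\le\operatorname{diam}_d(\overline D)/|B_d(x,\delta)|<\infty$, and combining this with the bound on $K$ gives $K(x,\cdot)\in L^\infty(\overline D\cap U)$.

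The step I expect to be the main obstacle is the sign computation in part (i). It depends crucially on the $C^1$ regularity of $G(x,\cdot)$ up to $\p D\setminus\Sigma$ from Theorem \ref{T:KN} (so that tangential derivatives vanish and $DG$ is purely normal at $y_o$), and it uses essentially that only $P\ge 0$, not $P>0$, is asserted: this lets us avoid any Hopf-type boundary lemma for the degenerate operator $\mathcal L$ and extract the sign solely from the minimum principle for $G(x,\cdot)$ on $\overline D$ together with the nonnegativity $\sum_j<X_j(y_o),\n(y_o)>^2=W(y_o)^2$.
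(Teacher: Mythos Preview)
Your proof is correct, and for part (ii) it coincides with the paper's argument, which likewise just combines \eqref{i6} with Theorem \ref{T:XG}. For part (i), however, you take a genuinely different route. The paper argues by contradiction: assuming $P(x,x_o)<0$ at some non-characteristic $x_o$, it builds a bump $\phi\in C^\infty(\p D)$ supported near $x_o$ and vanishing near $\Sigma$, invokes the representation formula of Theorem \ref{T:PK} to get $H^D_\phi(x)\le 0$, and contradicts $H^D_\phi(x)>0$ from the maximum principle and Harnack inequality. Your argument is instead a direct local computation: using only that $G(x,\cdot)\ge 0$ in $D$, vanishes on $\p D$, and is $C^1$ up to $\p D\setminus\Sigma$ by Theorem \ref{T:KN}, you observe that $DG(x,y_o)$ is a nonpositive multiple of $\n(y_o)$, and then the identity $\langle XG(x,y_o),\bN^X(y_o)\rangle=\lambda W(y_o)^2$ gives the sign immediately.

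Your approach is more elementary in that it avoids the representation machinery of Theorem \ref{T:PK} (and hence the delicate exhaustion argument behind Proposition \ref{P:One}); the outer $X$-ball hypothesis near $\Sigma$ enters only to ensure $D$ is regular, so that $G(x,\cdot)$ vanishes continuously at $\p D$ and Theorem \ref{T:KN} applies. The paper's approach, by contrast, ties the positivity of $P$ to the positivity of harmonic measure via the identification $d\omega^x=P(x,\cdot)\,d\sigma$, which is conceptually attractive but logically heavier at this point in the development.
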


\begin{proof}[\textbf{Proof}]
We start with the proof of part (i). We argue by contradiction.  If
for some $x\in  D$ and $x_o\in \partial{ D}$ we had $P(x,x_o)=\
\alpha\ <0$, then $x_o\notin \Sigma$. By Theorem \ref{T:KN} there
exists a sufficiently small $r>0$  such that $P(x,x')\leq \alpha/2$
for every $x'\in B(x_o,2r)\cap \partial{D}$. We can also assume that
$d(x_o,\Sigma) > 2r$.  We now choose $\phi\in C^\infty(\partial{D})$
such that $0\leq \phi \leq 1$, $\phi\equiv 1$ on $B(x_o,r)\cap
\partial{D}$ and $\phi\equiv 0$ outside $B(x_o,3r/2)\cap \partial{D}$.
Theorem \ref{T:MP} implies $H^{ D}_{\phi}\geq 0$ in $D$. By the
Harnack inequality we must have $H^{D}_{\phi}(x)>0$. On the other
hand, Theorem \ref{T:PK} gives
\[
H^{D}_{\phi}(x)\ \leq \ \frac{\alpha}{2}\ \int_{B(x_o,3r/2)\cap
\partial{D}}\ \phi(y)\ d\sigma(y)\ \leq\ 0\ ,
\]
which gives a contradiction. The proof of part (ii) is an immediate
consequence of \eqref{i6} and of Theorem \ref{T:XG}.  The estimate
for $K(x,y)$ follows from \eqref{i4} and from the one for $P(x,y)$.

\end{proof}

We now fix $x\in D$. For every $\sigma$-measurable $E\subset
\partial{ D}$ we set
\[
\nu^x(E)\ =\ \int_{E}\ K(x,y)\ d\sigma_X(y)\ .
\]

According to Proposition \ref{P:Positivity}, $d\nu^x$ defines a
Borel measure on $\partial{ D}$. Using Theorems \ref{T:FW} and
\ref{T:PK} we can now establish the main result of this section.

\begin{thrm}\label{T:HM}
Let $D\subset \Rn$ be a $C^{\infty}$ domain possessing the uniform
outer $X$-ball condition in a neighborhood of  the characteristic
set $\Sigma$. For every $x\in D$, we have $\omega^x\ =\ \nu^x$,
i.e., for every $\phi\in C(\partial{D})$ one has
\[
H^{D}_{\phi}(x)\ =\ \int_{\partial{D}}\ \phi(y)\ K(x,y)\
d\sigma_X(y) \ =\ \int_{\partial{D}}\ \phi(y)\ P(x,y)\ d\sigma(y)\ ,
\quad   x\in D\ .
\]
In particular, $d\omega^x$ is absolutely continuous with respect to
$d\sigma_X$ and $d\sigma$, and for every $(x,y)\in D \times
\partial{D}$ one has
\begin{equation}\label{kernels}
\frac{d\omega^x}{d\sigma_X}(y) \ =\ K(x,y),\ \quad \quad
\frac{d\omega^x}{d\sigma}(y) \ =\ P(x,y) \ .
\end{equation}
\end{thrm}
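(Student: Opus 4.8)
The plan is to show that the measure $d\nu^x = K(x,\cdot)\,d\sigma_X$ represents the Perron--Wiener--Brelot solution $H^D_\phi$ for every $\phi\in C(\partial D)$, and then conclude the remaining statements. The natural strategy is a density/approximation argument in two stages, relying on Theorem \ref{T:PK} (which already gives the representation formula, but only for $\phi$ that are $C^\infty$ and \emph{locally constant near $\Sigma$}, subject to the side condition $H^D_\phi\in\mathcal L^{1,\infty}(D)$), together with Proposition \ref{P:One} (which gives $\int_{\partial D}K(x,y)\,d\sigma_X(y)=1$) and Proposition \ref{P:Positivity} (which gives $K(x,\cdot)\geq 0$, so $d\nu^x$ is a \emph{probability} measure on $\partial D$).

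\emph{Step 1: the case of $\phi\in C^\infty(\partial D)$ constant near $\Sigma$.} For such $\phi$, Theorem \ref{T:PK} applies directly: its first assertion yields $H^D_\phi\in\mathcal L^{1,\infty}(D)$, hence its second assertion gives
\[
H^D_\phi(x)\ =\ \int_{\partial D} \phi(y)\,K(x,y)\,d\sigma_X(y)\ =\ \int_{\partial D}\phi(y)\,P(x,y)\,d\sigma(y)\ ,
\]
i.e. $H^D_\phi(x) = \int \phi\,d\nu^x$ and also $H^D_\phi(x)=\int\phi\,P(x,\cdot)\,d\sigma$. On the other hand, by Theorem \ref{T:MP} and the definition of harmonic measure, $H^D_\phi(x)=\int_{\partial D}\phi\,d\omega^x$. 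So $\int\phi\,d\nu^x = \int\phi\,d\omega^x$ for this class of test functions.

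\emph{Step 2: density.} The key point is that the class $\mathcal A$ of functions $\phi\in C^\infty(\partial D)$ which are constant in some neighborhood of $\Sigma$ is dense in $C(\partial D)$ in the uniform norm. This is where Theorem \ref{T:FW} ($\sigma(\Sigma)=0$, and more importantly $\Sigma$ is a \emph{compact} set with empty interior in $\partial D$) enters: given $\psi\in C(\partial D)$ and $\eta>0$, one picks a small neighborhood $U$ of the compact set $\Sigma$ on which $\psi$ oscillates by less than $\eta$, chooses a constant $c$ equal to a value of $\psi$ on $\Sigma$, interpolates smoothly between $c$ on a smaller neighborhood of $\Sigma$ and $\psi$ outside $U$ using a cutoff, and then mollifies on $\partial D\setminus\Sigma$ (where $\partial D$ is a smooth manifold) to land in $\mathcal A$, all within $O(\eta)$ in sup norm. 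Once density is in hand, both sides of $\int\phi\,d\nu^x=\int\phi\,d\omega^x$ are continuous linear functionals on $C(\partial D)$ of norm $1$ (the total masses being $\nu^x(\partial D)=1$ by Proposition \ref{P:One} and $\omega^x(\partial D)=1$ since $\omega^x$ is a probability measure), so the identity extends to all $\phi\in C(\partial D)$. Hence $\omega^x=\nu^x$ as Borel measures on $\partial D$, which is the main assertion; the representation with $P(x,\cdot)\,d\sigma$ follows from the same limiting argument (using dominated convergence justified by $P(x,\cdot)\leq W|XG(x,\cdot)|\in L^\infty$ near $\partial D$ away from $x$, cf. \eqref{i6} and Corollary \ref{C:LipschitzSigma}), or simply from $d\sigma_X=W\,d\sigma$ and $K=P/W$. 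Finally, $d\omega^x=K(x,\cdot)\,d\sigma_X\ll d\sigma_X\ll d\sigma$ gives absolute continuity, and \eqref{kernels} identifies the Radon--Nikodym derivatives.

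\emph{Main obstacle.} The delicate point is Step 2 --- specifically verifying that one may approximate an arbitrary continuous boundary datum by smooth data that are \emph{constant near the characteristic set} while controlling the PWB solutions. The uniform approximation in $C(\partial D)$ is elementary given that $\Sigma$ is compact, but one must be slightly careful that the approximation is by functions genuinely in the class to which Theorem \ref{T:PK} applies (smooth on all of $\partial D$, exactly constant on a full neighborhood of $\Sigma$), and that the convergence $H^D_{\phi_k}\to H^D_\psi$ passes to the limit --- this last is immediate from the maximum principle \eqref{pwb}, since $\|H^D_{\phi_k}-H^D_\psi\|_{L^\infty(D)}\le\|\phi_k-\psi\|_{L^\infty(\partial D)}\to 0$, so in particular $H^D_{\phi_k}(x)\to H^D_\psi(x)$. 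With these observations the proof is complete.
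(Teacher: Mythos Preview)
Your Step 2 contains a genuine gap: the class $\mathcal A$ of $C^\infty$ functions that assume \emph{a single constant value} in a neighborhood of $\Sigma$ is \emph{not} dense in $C(\partial D)$ in the uniform norm whenever $\Sigma$ contains more than one point. Indeed, every $\phi\in\mathcal A$ satisfies $\phi(p)=\phi(q)$ for all $p,q\in\Sigma$, so if $\psi\in C(\partial D)$ has $\psi(p)\ne\psi(q)$ for some $p,q\in\Sigma$ then $\|\phi-\psi\|_\infty\geq\tfrac12|\psi(p)-\psi(q)|>0$ for every $\phi\in\mathcal A$. Your sentence ``one picks a small neighborhood $U$ of the compact set $\Sigma$ on which $\psi$ oscillates by less than $\eta$'' is exactly where this fails: shrinking $U$ does not force the oscillation of $\psi$ on $U$ to be small unless $\psi$ is already constant on $\Sigma$. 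The gauge ball in $\mathbb H^n$ discussed in the introduction, with its two isolated characteristic points $P^\pm$, already exhibits this obstruction.

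Your overall strategy is salvageable, but the repair requires an additional step and is closer in spirit to what the paper actually does. One clean fix: from Step~1 you do get $\int\phi\,d\omega^x=\int\phi\,d\nu^x$ for all $\phi\in C^\infty_c(\partial D\setminus\Sigma)$ (constant $=0$ near $\Sigma$) and for $\phi\equiv 1$. The first family forces the signed measure $\mu=\omega^x-\nu^x$ to be supported on $\Sigma$; the second gives $\mu(\Sigma)=\mu(\partial D)=0$. Since $\nu^x(\Sigma)=0$ one has $\omega^x|_\Sigma=\mu|_\Sigma$, and this is a nonnegative measure of total mass $0$, hence identically zero; thus $\mu=0$. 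The paper itself proceeds differently: it proves $\omega^x(F)=\nu^x(F)$ for every Borel $F\subset\partial D$ directly, by sandwiching $\mathbf 1_F$ between smooth cutoffs $\psi_0\psi_1$ that vanish near $\Sigma$ and controlling the error on a small neighborhood $U_\varepsilon\supset\Sigma$ via a bump $\phi$ that equals $1$ near $\Sigma$ (so Theorem~\ref{T:PK} applies to $\phi$ and yields $\omega^x(U_\varepsilon)\leq\nu^x(\operatorname{supp}\phi)<\varepsilon$). Either route works, but neither is the uniform-density argument you wrote.
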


\begin{proof}[\textbf{Proof.}]

We begin with proving \eqref{kernels}. Let $F\subset\partial{D}$ be
a Borel set. If $F=\partial{D}$ then the result follows from
Proposition \ref{P:One}.  We now consider the case when the
inclusion $F\subset
\partial{D}$ is strict.  Choose $\eps >0$. Since both $K(x,y)$ and
$W(y)$ are bounded,  there exists open sets $E_\eps,
F_\eps\subset\partial D$ such that $F\subset F_\eps\subset
\overline{F}_\eps \subset E_\eps$, and $\nu^x(E_\eps\setminus F) <
\eps/2$.   Theorem \ref{T:FW} guarantees the existence of open sets
$\Sigma_\eps,U_\eps$ such that $\Sigma\subset
\Sigma_\eps\subset\overline{\Sigma_\eps}\subset U_\eps$ and
$\nu^x(U_\eps) < \eps/2$.  We now choose a function $\phi\in
C^\infty_o(\partial D)$ and $0\leq \phi \leq 1$ with $\phi\equiv 1$
on $U_\eps$ and $\nu^x(supp\,\phi) < \frac{3}{4}\eps$.  We have
\begin{align}\label{4.16?}
\omega^x(U_\eps) &= \int_{U_\eps}  \,d\omega^x(y) \leq \int_{\partial D}\phi(y)\,d\omega^x(y) = H^{D}_{\phi}(x) \\
\text{(by Theorem \ref{T:PK}) } & = \int_{\partial
D}\phi(y)\,K(x,y)d\sigma_X(y) \leq \nu^x(supp\,\phi) <
\frac{3}{4}\eps\ .\notag
\end{align}

Let now $\psi_o,\psi_1 \in C^\infty_o(\partial D)$ such that $0 \leq \psi_o,\psi_1 \leq 1$ and
\begin{align*}
\psi_o \equiv 1 \,\text{in } &\partial D \setminus U_\eps, \quad \psi_o \equiv 0 \,\text{in } \Sigma_\eps, \\
\psi_1 \equiv 1 \,\text{in } &F, \quad \psi_1 \equiv 0 \,\text{in }
\partial D\setminus E_\eps\ .
\end{align*}

One has
\begin{align*}
\omega^x(F) &\ \leq\ \omega^x(U_\eps) + \omega^x(F\setminus U_\eps) \quad \text{(by \eqref{4.16?} )}\\
&\ \leq\ \frac{3}{4}\eps + \int_{\partial D} \psi_o(y)\psi_1(y) \,d\omega^x(y) \quad \\
&\ =\ \frac{3}{4}\eps + H^{D}_{\psi_o\psi_1}(x) \quad \text{(by Theorem \ref{T:PK})} \\
&\ =\ \frac{3}{4}\eps + \int_{\partial D} \psi_o(y)\psi_1(y) K(x,y)\,d\sigma_X(y) \leq \frac{3}{4}\eps + \nu^x(E_\eps) \\
&\ =\ \frac{3}{4}\eps + \nu^x(F) + \nu^x(E_\eps\setminus F) <
\nu^x(F) + \frac{7}{4}\eps\ .
\end{align*}

Since $\eps>0$ is arbitrary, we infer that $\omega^x(F) \leq
\nu^x(F)$.  If we repeat the same argument with $E_\eps\setminus F$
playing the role of the set $F$, we can prove
$\omega^x(E_\eps\setminus F) \leq \nu^x(E_\eps\setminus F)$. This
allows to exchange the role of $\omega^x$ and $\nu^x$ in the
computations above and conclude $\nu^x(F) \leq \omega^x(F)$.

To complete the proof of the theorem we now use \eqref{kernels}.
From the definition of harmonic measure we know that for each
$\phi\in C(\partial{D})$ and $x\in D$ we have
\begin{equation}\label{i11}
H^D_\phi(x) = \int_{\partial{D}} \phi(y) d\omega^x(y).
\end{equation}

On the other hand \eqref{kernels} yields $d\omega^x(y) = K(x,y)
d\sigma_X(y)$. If we substitute the latter in \eqref{i11} we reach
the conclusion.

\end{proof}

\section{\textbf{Reverse H\"older inequalities for the Poisson kernel}}\label{S:RHI}

This section is devoted to proving the main results of this paper,
namely  Theorems \ref{T:RHGeneral}, \ref{T:Dirichlet},
\ref{T:RHGeneral-P} and \ref{T:Dirichletsigma}. In the course of the
proofs we will need some basic results about $NTA_X$ domains from
the paper \cite{CG}. We begin by recalling the relevant definitions.

\begin{dfn}\label{D:NTA}
We say that a connected, bounded open set $ D\subset \Rn$ is a
non-tangentially accessible domain with respect to the system $X =
\{X_1,...,X_m\}$ ($NTA_X$ domain, hereafter) if there exist $M$,
$r_o>0$ for which:
\begin{itemize}
\item[(i)] (Interior corkscrew condition) For any $x_o\in\partial D$ and $r\leq r_o$
there exists $A_r(x_o)\in D$ such that
$\frac{r}M<d(A_r(x_o),x_o)\leq r$ and $d(A_r(x_o),\partial
D)>\frac{r}M$. (This implies that $B_d(A_r(x_o),\frac{r}{2M})$ is
$(3M,X)$-nontangential.)
\item[(ii)] (Exterior corkscrew condition) $ D^c=\Rn\setminus  D$ satisfies
property (i).
\item[(iii)] (Harnack chain condition) There exists $C(M)>0$ such that for any $\epsilon>0$ and $x,y\in D$ such
that $d(x,\partial D)>\epsilon$, $d(y,\partial D)>\epsilon$, and $d(x,y)<C\epsilon$, there exists a Harnack
chain joining $x$ to $y$ whose length depends on $C$ but not on $\epsilon$.
\end{itemize}
\end{dfn}

We note the following lemma which will prove useful in the sequel
and which follows directly from Definition \ref{D:NTA}.

\begin{lemma}\label{L:cork}
Let $D\subset \Rn$ be $NTA_X$ domain, then there exist constants $C,
R_1$ depending on the $NTA_X$ parameters of $D$ such that for every
$y\in \p D$ and every $0<r<R_1$ one has
\[
C \ |B_d(y,r)|\ \leq\ \min \{|D\cap B_d(y,r)|,|D^c \cap B_d(y,r)|\}\
\leq\ C^{-1}\ |B_d(y,r)|\ .
\]
In particular, every $NTA_X$ domain has positive density at every
boundary point and therefore it is regular for the Dirichlet problem
(see Definition \ref{D:posden}, Proposition \ref{P:posden}, and
Theorem \ref{T:thin}).
\end{lemma}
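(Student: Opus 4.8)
The plan is to extract from the corkscrew conditions a $d$-ball of radius comparable to $r$ sitting inside $D\cap B_d(y,r)$, and symmetrically one sitting inside $D^c\cap B_d(y,r)$, and then to compare the volume of each such ball with $|B_d(y,r)|$ by means of the doubling property of Theorem \ref{T:db}. The upper bounds in the statement are immediate, since $D\cap B_d(y,r)$ and $D^c\cap B_d(y,r)$ are contained in $B_d(y,r)$, so all the content is in the two lower bounds, and by symmetry it suffices to describe the one for $D$.

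First I would fix the $NTA_X$ parameters $M,r_o$ of $D$ together with the doubling radius $R_o$ of Theorem \ref{T:db} attached to some bounded open set containing $\overline D$, and pick $R_1>0$ small enough in terms of $M$, $r_o$, $R_o$. Given $y\in \p D$ and $0<r<R_1$, apply the interior corkscrew condition (i) of Definition \ref{D:NTA} \emph{at the scale $r/2$}: this produces $A=A_{r/2}(y)\in D$ with $d(A,y)\leq r/2$ and $d(A,\p D)>r/(2M)$. Since the Carnot--Carath\'eodory metric is a length metric and $A\in D$, the ball $B_d(A,r/(2M))$ is contained in $D$; and since $M\geq 1$ and $d(A,y)\leq r/2$, one gets $B_d(A,r/(2M))\subset B_d(y,\,r/2+r/(2M))\subset B_d(y,r)$, whence $|D\cap B_d(y,r)|\geq|B_d(A,r/(2M))|$. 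On the other hand $d(A,y)\leq r/2$ forces $B_d(y,r)\subset B_d(A,3r/2)$, and iterating the doubling inequality \eqref{dc} roughly $\log_2(3M)$ times --- all radii staying below $R_o$ by the choice of $R_1$ --- gives $|B_d(A,3r/2)|\leq C|B_d(A,r/(2M))|$ with $C=C(C_1,M)$. Combining, $|B_d(y,r)|\leq C|D\cap B_d(y,r)|$, which is the required lower bound.

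Running the identical argument with the exterior corkscrew condition (ii) --- now the corkscrew point lies in the interior of $D^c$, and the small ball it anchors lies in $\Rn\setminus\overline D\subset D^c$ --- yields $C^{-1}|B_d(y,r)|\leq|D^c\cap B_d(y,r)|$. Taking the smaller of the two constants and combining with the trivial upper bounds proves the displayed two-sided inequality. For the final assertion, the bound $C^{-1}|B_d(y,r)|\leq|D^c\cap B_d(y,r)|$ shows that $\liminf_{r\to0}|D^c\cap B_d(y,r)|/|B_d(y,r)|>0$, i.e.\ $D^c$ has positive density at every $y\in\p D$ in the sense of Definition \ref{D:posden}; Proposition \ref{P:posden} then gives that $D$ is thin at $y$, and Theorem \ref{T:thin} gives that $y$ is regular for the Dirichlet problem. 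I expect the only point needing care to be the scale bookkeeping --- working at scale $r/2$ so the corkscrew ball falls inside $B_d(y,r)$, and choosing $R_1$ so the finite chain of doublings stays inside $(0,R_o]$ --- which is routine rather than a real obstacle.
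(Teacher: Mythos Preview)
Your argument is correct and is exactly what the paper has in mind: the paper does not give a proof but simply records that the lemma ``follows directly from Definition~\ref{D:NTA}'', and your write-up is the standard unpacking of that remark---interior/exterior corkscrew to plant a CC ball of radius $\sim r/M$ inside $D\cap B_d(y,r)$ (resp.\ $D^c\cap B_d(y,r)$), followed by Theorem~\ref{T:db} to compare volumes. The only cosmetic point is that the lemma's phrasing ``every $NTA_X$ domain has positive density'' really means both $D$ and $D^c$ do, and you correctly single out the $D^c$ part as the input to Proposition~\ref{P:posden} and Theorem~\ref{T:thin}.
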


In the sequel, for $y\in \partial{D}$ and $r>0$ we denote by
\[
\Delta(y,r)\ =\ \p D\ \cap\ B_d(y,r) \] the surface metric ball
centered at $y$ with radius $r$. We next prove a basic
non-degeneracy property of the horizontal perimeter measure
$d\sigma_X$ in \eqref{dmu}.

\begin{thrm}\label{T:ndeg}
Let $D\subset \Rn$ be a $NTA_X$ domain of class $C^2$, then there
exist $C^*, R_1>0$ depending on $D$, $X$ and on the $NTA_X$
parameters of $D$ such that for every $y\in \p D$ and every
$0<r<R_1$
\[
\sigma_X(\Delta(y,r))\ \geq \ C^*\ \frac{|B_d(y,r)|}{r}\ .
\]
In particular, $\sigma_X$ is lower $1$-Ahlfors according to
\cite{DGN2} and $\sigma_X(\Delta(y,r))>0$.
\end{thrm}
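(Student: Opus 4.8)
The plan is to establish the lower bound $\sigma_X(\Delta(y,r))\geq C^*|B_d(y,r)|/r$ by a contradiction/compactness argument combined with the relative isoperimetric inequality for the $X$-perimeter. First I would recall that, since $D$ is of class $C^2$, Remark \ref{R:per} gives $\sigma_X(\Delta(y,r)) = P_X(D;B_d(y,r))$, the horizontal perimeter of $D$ measured inside the metric ball. The key analytic input is the relative isoperimetric inequality proved in \cite{GN1}: there exist constants $C_I>0$ and $R_1>0$ (depending only on $X$ and on a bounded containing set) such that for every measurable set $E\subset\Rn$, every $z\in\Rn$ and every $0<\rho<R_1$,
\[
\min\{|E\cap B_d(z,\rho)|,\ |E^c\cap B_d(z,\rho)|\}^{\frac{Q-1}{Q}}\ \leq\ C_I\ \frac{\rho}{|B_d(z,\rho)|^{1/Q}}\ P_X(E;B_d(z,\rho)),
\]
or in the more convenient ``linear'' form one derives from it together with the doubling property \eqref{dc},
\[
\frac{1}{|B_d(z,\rho)|}\min\{|E\cap B_d(z,\rho)|,\ |E^c\cap B_d(z,\rho)|\}\ \leq\ C\ \rho\ \frac{P_X(E;B_d(z,\rho))}{|B_d(z,\rho)|}.
\]

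Next I would apply this with $E=D$, $z=y\in\p D$, $\rho=r$. The left-hand side is exactly the quantity controlled from below in Lemma \ref{L:cork}: because $D$ is $NTA_X$, there is $c>0$ depending only on the $NTA_X$ parameters such that
\[
\min\{|D\cap B_d(y,r)|,\ |D^c\cap B_d(y,r)|\}\ \geq\ c\ |B_d(y,r)|
\]
for all $y\in\p D$ and $0<r<R_1$ (shrinking $R_1$ if necessary so that both the isoperimetric inequality and Lemma \ref{L:cork} apply). Combining the two displays,
\[
c\ \leq\ C\ r\ \frac{P_X(D;B_d(y,r))}{|B_d(y,r)|}\ =\ C\ r\ \frac{\sigma_X(\Delta(y,r))}{|B_d(y,r)|},
\]
and rearranging yields $\sigma_X(\Delta(y,r))\geq (c/C)\,|B_d(y,r)|/r$, which is the claim with $C^*=c/C$. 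The final assertion that $\sigma_X$ is lower $1$-Ahlfors in the sense of \cite{DGN2} is then just the definition, and $\sigma_X(\Delta(y,r))>0$ is immediate since $|B_d(y,r)|>0$.

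The main obstacle I anticipate is purely bookkeeping: one must make sure the \emph{relative} isoperimetric inequality from \cite{GN1} is quoted in a form valid on metric balls $B_d(y,r)$ centered on the boundary (as opposed to on a fixed large ball), and that the radius threshold $R_1$ and the constants are uniform in $y\in\p D$ — this uses compactness of $\p D$ together with Theorem \ref{T:db} and the local nature of the constants in \S\ref{S:Prelim}. A secondary point is to verify that the perimeter $P_X(D;\cdot)$ appearing in the isoperimetric inequality is the same measure as $\sigma_X$ restricted to $\p D$; this is exactly the content of \eqref{pm}–\eqref{equiv} in Remark \ref{R:per}, valid because $D\in C^2$. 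No delicate estimate beyond these citations is needed; the heart of the matter is the interplay between the exterior corkscrew condition (which forces a definite proportion of $B_d(y,r)$ to lie outside $D$) and the relative isoperimetric inequality (which then forces a definite amount of perimeter).
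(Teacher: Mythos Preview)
Your proposal is correct and follows essentially the same route as the paper: apply the relative isoperimetric inequality from \cite{GN1} on the metric ball $B_d(y,r)$, use Lemma \ref{L:cork} (the two-sided corkscrew condition) to bound the minimum of $|D\cap B_d(y,r)|$ and $|D^c\cap B_d(y,r)|$ from below by a fixed fraction of $|B_d(y,r)|$, and identify $P_X(D;B_d(y,r))$ with $\sigma_X(\Delta(y,r))$ via \eqref{equiv}. The mention of a ``contradiction/compactness argument'' in your opening sentence is a red herring---your actual argument (and the paper's) is direct.
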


\begin{proof}[\textbf{Proof}]
According to (I) in Theorem 1.15 in \cite{GN1} every metric ball
$B_d(y,r)$ is a $PS_X$ (Poincar\'e-Sobolev) domain with respect to
the system $X$. We can thus apply the isoperimetric inequality
Theorem 1.18 in \cite{GN1} to infer the existence of $R_1>0$ such
that for every $y\in \p D$ and every $0<r<R_1$
\[
\min \{|D \cap B_d(y,r)|, |D^c \cap B_d(y,r)|\}^{\frac{Q-1}{Q}}
\leq\ C_{iso}\ \frac{diam\ B_d(y,r)}{|B_d(y,r)|^{\frac{1}{Q}}}\
P_X(D;B_d(y,r))\ ,
\]
where $Q$ is the homogeneous dimension of a fixed bounded set $U$
containing $\overline D$. On the other hand, every $NTA_X$ domain is
a $PS_X$ domain. We can thus combine the latter inequality with
\eqref{equiv} and Lemma \ref{L:cork} to finally obtain
\[
\sigma_X(\Delta(y,r))\ \geq\ C^*\ \frac{|B_d(y,r)|}{r}\ .
\]

This proves the theorem.

\end{proof}

\begin{cor}\label{C:ahlfors}
Let $D\subset \Rn$ be a $NTA_X$ domain of class $C^2$ satisfying the
upper $1$-Ahlfors assumption in $iv)$ of Definition \ref{D:ADPX}.
Then the measure $\sigma_X$ is $1$-Ahlfors, in the sense that there
exist $\tilde{A}, R_1>0$ depending on the $NTA_X$ parameters of $D$
and on $A>0$ in $iv)$, such that for every $y\in \p D$, and every
$0<r<R_1$, one has
\begin{equation}\label{ahlfors}
\tilde A\ \frac{|B_d(y,r)|}{r}\ \leq\ \sigma_X(\Delta(y,r))\ \leq\
\tilde{A}^{-1}\ \frac{|B_d(y,r)|}{r}\ .
\end{equation}
In particular, the measure $\sigma_X$ is doubling, i.e., there
exists $C>0$ depending on $\tilde A$ and on the constant $C_1$ in
\eqref{dc}, such that
\begin{equation}\label{deltadoubling}
\sigma_X(\Delta(y,2r))\ \leq\ C\ \sigma_X(\Delta(y,r))\ .
\end{equation}
for every $y\in \p D$ and $0<r<R_1$.
\end{cor}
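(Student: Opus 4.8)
The plan is to obtain \eqref{ahlfors} by simply combining the lower bound of Theorem \ref{T:ndeg} with the upper bound postulated in $iv)$ of Definition \ref{D:ADPX}, and then to deduce the doubling property \eqref{deltadoubling} from \eqref{ahlfors} together with the Lebesgue--measure doubling property \eqref{dc} of the metric balls. There is no genuine difficulty here; the only point requiring care is the bookkeeping of the various constants and radii, so that at a given $y\in\p D$ and radius $r$ all three estimates (Theorem \ref{T:ndeg}, assumption $iv)$, and \eqref{dc}) may be invoked simultaneously at both scales $r$ and $2r$.

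First I would fix a bounded open set $U$ with $\overline D\subset U$, and let $C_1, R_o$ be its characteristic local parameters, so that by Theorem \ref{T:db} one has $|B_d(y,2\rho)|\le C_1\,|B_d(y,\rho)|$ for all $y\in\overline D$ and $0<\rho\le R_o$. Let $C^*$ and a radius $R_1'$ be those provided by Theorem \ref{T:ndeg} (applicable since $D$ is $NTA_X$ of class $C^2$), and let $A, R_o'$ be the constants of assumption $iv)$. Set
\[
R_1\ =\ \tfrac{1}{2}\min\{R_o, R_1', R_o'\}\ ,\qquad \tilde A\ =\ \min\{C^*, A^{-1}\}\ .
\]
Then for every $y\in\p D$ and $0<r<R_1$ both Theorem \ref{T:ndeg} and assumption $iv)$ apply at radius $r$ (and at radius $2r$), giving
\[
\tilde A\ \frac{|B_d(y,r)|}{r}\ \le\ C^*\ \frac{|B_d(y,r)|}{r}\ \le\ \sigma_X(\Delta(y,r))\ \le\ A\ \frac{|B_d(y,r)|}{r}\ \le\ \tilde A^{-1}\ \frac{|B_d(y,r)|}{r}\ ,
\]
which is exactly \eqref{ahlfors}.

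For \eqref{deltadoubling} I would apply the upper bound in \eqref{ahlfors} at radius $2r$, then the doubling of Lebesgue measure \eqref{dc}, and finally the lower bound in \eqref{ahlfors} at radius $r$: for $y\in\p D$ and $0<r<R_1$,
\[
\sigma_X(\Delta(y,2r))\ \le\ \tilde A^{-1}\ \frac{|B_d(y,2r)|}{2r}\ \le\ \frac{C_1}{2}\ \tilde A^{-1}\ \frac{|B_d(y,r)|}{r}\ \le\ \frac{C_1}{2\tilde A^{2}}\ \sigma_X(\Delta(y,r))\ ,
\]
so \eqref{deltadoubling} holds with $C = C_1/(2\tilde A^{2})$, which depends only on $\tilde A$ and on $C_1$ as claimed. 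As noted, the main (and only) obstacle is purely organizational: shrinking $R_1$ as above so that the scale $2r$ still falls within the admissible range of all the cited estimates.
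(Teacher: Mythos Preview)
Your argument is correct and follows exactly the paper's approach: combine the lower $1$-Ahlfors bound from Theorem \ref{T:ndeg} with the assumed upper $1$-Ahlfors bound $iv)$ to get \eqref{ahlfors}, and then deduce \eqref{deltadoubling} from \eqref{ahlfors} together with the doubling property \eqref{dc} of the metric balls. You have simply made the bookkeeping of radii and constants explicit where the paper leaves it implicit.
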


\begin{proof}[\textbf{Proof}]
According to Theorem \ref{T:ndeg} the measure $\sigma_X$ is lower
$1$-Ahlfors. Since by $iv)$ of Definition \ref{D:ADPX} it is also
upper $1$-Ahlfors, the conclusion \eqref{ahlfors} follows. From the
latter and the doubling condition \eqref{dc} for the metric balls,
we reach the desired conclusion \eqref{deltadoubling}.

\end{proof}

The following results from \cite{CG} play a fundamental role in this
paper.

\begin{thrm}\label{T:BdryE}
Let $ D\subset \Rn$ be a $NTA_X$ domain with relative parameters $M,
r_o$. There exists a constant $C>0$, depending only on $X$ and on
the $NTA_X$ parameters of $D$, $M$ and $r_o$, such that for every
$x_o\in \partial{D}$ one has
\[
\omega^{A_r(x_o)}(\Delta(x_o,r))\ \geq\ C\ .
\]
\end{thrm}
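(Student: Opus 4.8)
The plan is to establish the lower bound $\omega^{A_r(x_o)}(\Delta(x_o,r)) \geq C$ by combining the maximum principle with the corkscrew geometry of $NTA_X$ domains. First I would fix $x_o \in \partial D$ and $0 < r < r_o$, and set $A = A_r(x_o)$. The idea is standard in the Jerison--Kenig theory: estimate $\omega^{A}(\partial D \setminus \Delta(x_o,r))$ from above and subtract from $1 = \omega^A(\partial D)$. To do this, choose a cutoff-type comparison: let $\phi \in C(\partial D)$ with $0 \le \phi \le 1$, $\phi \equiv 1$ on $\partial D \setminus \Delta(x_o,r)$ and $\phi \equiv 0$ on $\Delta(x_o, r/2)$. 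Then $H^D_\phi(A) = \int_{\partial D} \phi \, d\omega^A \ge \omega^A(\partial D \setminus \Delta(x_o,r))$, so it suffices to show $H^D_\phi(A) \le 1 - C$, i.e.\ $H^D_\phi(A)$ is bounded away from $1$.

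The mechanism for the upper bound on $H^D_\phi(A)$ is the exterior corkscrew condition together with a Wiener/thinness-type estimate. Since $\phi$ vanishes on $\Delta(x_o, r/2)$, the oscillation estimate of Theorem \ref{T:Wiener} applied on the ball $B_d(x_o, r/2)$ gives
\[
\mathrm{osc}\,\{H^D_\phi, D \cap B_d(x_o, \rho)\} \ \le\ \mathrm{osc}\,\{\phi, \Delta(x_o, r/2)\} + \mathrm{osc}(\phi, \partial D)\, \exp\!\left\{-C \int_\rho^{r/2} \Big[\tfrac{cap_X(D^c \cap \overline B_d(x_o,t), B_d(x_o,2t))}{cap_X(\overline B_d(x_o,t), B_d(x_o,2t))}\Big] \tfrac{dt}{t}\right\}.
\]
The first term on the right vanishes. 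For the capacitary ratio, the exterior corkscrew condition of Definition \ref{D:NTA}(ii) forces $D^c$ to have positive density at $x_o$ uniformly in scale: by Lemma \ref{L:cork}, $|D^c \cap B_d(x_o, t)| \ge C |B_d(x_o,t)|$ for all $0 < t < R_1$. As in the proof of Proposition \ref{P:posden}, this yields a uniform lower bound on the capacitary ratio by a positive constant $c_0 = c_0(M)$ for all $t$ in the relevant range. Hence the integral in the exponent is bounded below by $c_0 \log(r/(2\rho))$, and taking $\rho$ comparable to $d(A, \partial D) \ge r/M$ — so that $A \in D \cap B_d(x_o, \rho)$ with $\rho \approx r$ — gives $\mathrm{osc}\,\{H^D_\phi, D \cap B_d(x_o,\rho)\} \le 1 - \delta$ for some $\delta = \delta(M, X) > 0$.

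Finally, to convert an oscillation bound into an upper bound on $H^D_\phi(A)$ itself, I would use that $H^D_\phi \to 0$ at the regular boundary point $x_o$ along the portion $\Delta(x_o, r/2)$ where the datum is $0$ — more precisely, $\liminf$ of $H^D_\phi$ on $D \cap B_d(x_o, \rho)$ is $0$ by continuity at some (regular) point of $\Delta(x_o, r/2)$, so the oscillation bound gives $H^D_\phi \le 1 - \delta$ throughout $D \cap B_d(x_o, \rho)$, and in particular at $A$. Therefore $\omega^A(\partial D \setminus \Delta(x_o,r)) \le H^D_\phi(A) \le 1 - \delta$, which yields $\omega^{A_r(x_o)}(\Delta(x_o,r)) \ge \delta =: C$. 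The main obstacle is the uniformity of all constants: one must check that the capacitary lower bound $c_0$, and hence $\delta$, depends only on $X$ and on the $NTA_X$ parameters $M, r_o$ (and not on $x_o$ or $r$), which is exactly what the scale-invariant form of the exterior corkscrew condition and the doubling property of $|B_d(\cdot,\cdot)|$ in Theorem \ref{T:db} are designed to deliver; a careful bookkeeping of the scales (ensuring $\rho \approx r$ is admissible in Theorem \ref{T:Wiener}, i.e.\ $r < R_o/3$) completes the argument.
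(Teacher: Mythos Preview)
The paper does not give its own proof of Theorem \ref{T:BdryE}; it is imported from \cite{CG} as one of the foundational results on $NTA_X$ domains, so there is no in-paper argument to compare against. Your overall strategy --- bound $\omega^{A}(\partial D \setminus \Delta(x_o,r))$ from above via the Wiener-type oscillation estimate of Theorem \ref{T:Wiener}, feeding in the uniform capacitary density supplied by the exterior corkscrew --- is a legitimate route to such a Bourgain-type lower bound.

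There is, however, a real scale mismatch in the execution. For the first oscillation term on the right of Theorem \ref{T:Wiener} to vanish you need $\partial D \cap \overline{B_d}(x_o,2R) \subset \Delta(x_o,r/2)$, hence $R \le r/4$; the inner radius $\rho$ must then satisfy $\rho < R \le r/4$. But you also demand $A = A_r(x_o) \in B_d(x_o,\rho)$, and the interior corkscrew only gives $r/M < d(A,x_o) \le r$: in general $d(A,x_o)$ may be close to $r$, so no admissible $\rho < r/4$ exists. Equivalently, your claimed lower bound $c_0\log(r/(2\rho))$ becomes nonpositive for $\rho \approx r$, and the exponential is not bounded away from $1$. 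The repair is straightforward and stays within your framework: apply the Wiener estimate at a \emph{smaller} corkscrew point, say $A' = A_{s}(x_o)$ with $s$ a fixed small multiple of $r$ (e.g.\ $s=r/8$), so that $A' \in B_d(x_o,\rho)$ for some $\rho < R \le r/4$ and the Wiener integral is bounded below by a positive constant depending only on $M$; this yields $\omega^{A'}(\Delta(x_o,r)) \ge \delta'$. Then invoke the Harnack chain condition (iii) of Definition \ref{D:NTA}, applied to the nonnegative $\mathcal L$-harmonic function $x \mapsto \omega^x(\Delta(x_o,r))$, to pass from $A'$ to $A_r(x_o)$: since both points lie at distance $\gtrsim r/M$ from $\partial D$ and within $\lesssim r$ of each other, the chain length --- and hence the final constant --- depends only on $X$ and the $NTA_X$ parameters.
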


\begin{thrm}[Doubling condition for $\mathcal{L}$-harmonic measure]\label{T:Doubling Condition}
Consider a $NTA_X$ domain $ D\subset \Rn$  with relative parameters
$M, r_o$. Let $x_o\in\partial{D}$ and $r\leq r_o$. There exist
$C>0$, depending on $X, M$ and $r_o$, such that
$$
\omega^x(\Delta(x_o,2r))\leq C\omega^x(\Delta(x_o,r))
$$
for any $x\in D\setminus B_d(x_o,Mr)$.
\end{thrm}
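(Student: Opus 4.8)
The plan is to adapt the classical argument of Jerison and Kenig \cite{JK} for the doubling of harmonic measure in non-tangentially accessible domains, as carried out for operators of H\"ormander type in \cite{CG}. Throughout fix $x_o\in\partial D$ and $r\le r_o$ (we may assume $2r\le r_o$; the remaining range $r_o/2<r\le r_o$ is comparable and the argument only simplifies), write $A_s$ for an interior corkscrew point of $D$ at $x_o$ and scale $s$ (Definition \ref{D:NTA}), and set $u(x)=\omega^x(\Delta(x_o,2r))$ and $v(x)=\omega^x(\Delta(x_o,r))$. Both are $\mathcal L$-harmonic in $D$ and lie in $[0,1]$, and since $\Delta(x_o,r)\subset\Delta(x_o,2r)$ one has $v\le u$; hence it suffices to produce a constant $C=C(X,M,r_o)$ with $u(x)\le C\,v(x)$ for every $x\in D\setminus B_d(x_o,Mr)$. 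The tools are the non-degeneracy estimate Theorem \ref{T:BdryE} (which gives $v(A_r)=\omega^{A_r}(\Delta(x_o,r))\ge C_0$), the uniform Harnack inequality \eqref{harnack-inv}, the corkscrew and Harnack chain axioms of Definition \ref{D:NTA}, the maximum principle Theorem \ref{T:MP}, the fact that $NTA_X$ domains are regular (Lemma \ref{L:cork}), and the size estimates \eqref{gradgamma}, \eqref{green} for the Green function $G=G_D$ of $\mathcal L$.

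The heart of the matter is the two-sided comparison between harmonic measure and the Green function with pole at a corkscrew point: there is $C>0$ such that for all $x\in D\setminus B_d(x_o,Mr)$
\[
C^{-1}\,\frac{r^2}{|B_d(x_o,r)|}\,G(x,A_r)\ \le\ \omega^x(\Delta(x_o,r))\ \le\ C\,\frac{r^2}{|B_d(x_o,r)|}\,G(x,A_r)\ ,
\]
where the normalizing factor is the one dictated by Theorem \ref{T:NSW} (it is comparable to $\Gamma(x_o,\cdot)$ at distance $r$ from $x_o$). For the upper bound I would run the maximum principle in $D\setminus\overline{B_d(x_o,2r)}$ for the difference of the $\mathcal L$-harmonic functions $\omega^{\cdot}(\Delta(x_o,2r))$ and a large multiple of $G(\cdot,A_r)$: on $\partial D\cap\{d(\cdot,x_o)>2r\}$ both vanish (the first by regularity of $D$), while on the sphere $\partial B_d(x_o,2r)\cap D$ one uses $G(\cdot,A_r)\le C\,r^2/|B_d(x_o,r)|$ (from \eqref{green} and the local doubling \eqref{dc}, since $A_r\in B_d(x_o,2r)$) and bounds $\omega^{\cdot}(\Delta(x_o,2r))$ from below there by the corkscrew condition together with Carleson's estimate from \cite{CG}. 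The lower bound is the mirror image, comparing $G(\cdot,A_r)$ (normalized, and truncated away from $A_r$) with $\omega^{\cdot}(\Delta(x_o,r))$ on the same region, reducing the inequality on $\partial B_d(x_o,2r)\cap D$ once more to the corkscrew/Harnack-chain machinery of \cite{CG}.

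Granting the comparison, the doubling follows immediately. For $x\in D\setminus B_d(x_o,Mr)$ apply the upper bound at scale $2r$ and the lower bound at scale $r$:
\[
\omega^x(\Delta(x_o,2r))\ \le\ C\,\frac{(2r)^2}{|B_d(x_o,2r)|}\,G(x,A_{2r})\ \le\ C'\,\frac{r^2}{|B_d(x_o,r)|}\,G(x,A_{2r})\ ,
\]
using $(2r)^2=4r^2$ and \eqref{dc}. Next $G(x,A_{2r})\le C\,G(x,A_r)$: the corkscrew points $A_r$ and $A_{2r}$ both lie at distance $\gtrsim r/M$ from $\partial D$ and within $3r$ of each other, so by the Harnack chain condition they are joined by a chain of length bounded in terms of $M$ avoiding $x$ (which is far from $x_o$), and $y\mapsto G(x,y)=G(y,x)$, being positive and $\mathcal L$-harmonic off $x$, obeys \eqref{harnack-inv} along that chain. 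Finally $\tfrac{r^2}{|B_d(x_o,r)|}G(x,A_r)\le C\,\omega^x(\Delta(x_o,r))$ by the lower bound, and combining the three displays gives $\omega^x(\Delta(x_o,2r))\le C''\,\omega^x(\Delta(x_o,r))$. (One bookkeeping remark: the corkscrew scales enter the estimates at a radius comparable to, but a fixed multiple of, $Mr$; since a domain that is $NTA_X$ with parameter $M$ is $NTA_X$ with any larger parameter, one may enlarge $M$ at the outset so that the conclusion holds for $x\in D\setminus B_d(x_o,Mr)$ as stated.)

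The main obstacle is the two-sided $\omega$--$G$ comparison, and within it the estimate on the sphere $\partial B_d(x_o,2r)\cap D$: points of that sphere lying very close to $\partial D$ sit near boundary points $\tilde z$ with $d(\tilde z,x_o)$ merely comparable to $r$, which may fail to be interior to $\overline{\Delta(x_o,2r)}$, so that there $\omega^{\cdot}(\Delta(x_o,2r))$ and $G(\cdot,A_r)$ are \emph{both} small and no naive pointwise comparison is available. This is exactly where the full $NTA_X$ structure --- the two corkscrew conditions, the Harnack chain condition, and Carleson's estimate, all from \cite{CG} --- must be brought to bear, and one must invoke only those boundary estimates of \cite{CG} that are logically prior to the doubling property itself, so that the reasoning is not circular.
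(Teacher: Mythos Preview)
The paper does not give a proof of this theorem; it is quoted verbatim from \cite{CG} (see the sentence ``The following results from \cite{CG} play a fundamental role in this paper'' preceding Theorems \ref{T:BdryE}--\ref{T:Representation}). Your outline is exactly the argument carried out in \cite{CG}, which in turn adapts Jerison--Kenig \cite{JK}: one proves the two-sided comparison
\[
\omega^x(\Delta(x_o,r))\ \approx\ \frac{r^2}{|B_d(x_o,r)|}\,G(x,A_r(x_o)),\qquad x\in D\setminus B_d(x_o,Mr),
\]
and then the doubling follows by combining the upper bound at scale $2r$, a Harnack chain between $A_r$ and $A_{2r}$, and the lower bound at scale $r$, precisely as you write.

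One small correction to your middle paragraph: the argument you describe on $\partial B_d(x_o,2r)\cap D$ (the \emph{upper} bound $G(\cdot,A_r)\le C\,r^2/|B_d(x_o,r)|$ together with a \emph{lower} bound on $\omega^{\cdot}(\Delta)$) yields, via the maximum principle, the inequality $G\lesssim\omega$, i.e.\ the \emph{left} (lower) half of the two-sided comparison, not the right half as you label it. The genuinely delicate direction is $\omega\lesssim G$, since on the sphere near $\partial D$ both functions are small and one cannot bound either by a constant; this is where Carleson's estimate from \cite{CG} (which in that paper is proved using only the Harnack inequality, the maximum principle, and the $NTA_X$ axioms, hence is available prior to doubling) must be invoked. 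You correctly flag this in your final paragraph, and your remark about avoiding circularity is on point.
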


\begin{thrm}[Comparison theorem]\label{T:GlobalComparison}
Let $ D\subset \Rn$ be a $X-NTA$ domain with relative parameters $M,
r_o$. Let $x_o\in \partial  D$ and $0<r<\frac{r_o}{M}$. If $u, v$
are $\mathcal{L}$-harmonic functions in $ D$, which vanish
continuously on $\partial{ D}\setminus \Delta(x_o,2r)$, then for
every $x\in D\setminus B_d(x_o,Mr)$ one has
\[
C\ \frac{u(A_r(x_o))}{v(A_r(x_o))}\ \leq\ \frac{u(x)}{v(x)}\ \leq\
C^{-1}\ \frac{u(A_r(x_o))}{v(A_r(x_o))}
\]
for some constant $C>0$ depending only on $X, M$ and $r_o$.
\end{thrm}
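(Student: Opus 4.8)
This is the subelliptic boundary Harnack principle (``comparison theorem''). The plan is to follow the scheme of Jerison and Kenig \cite{JK}, whose only genuinely non-Euclidean ingredients are the scale-invariant interior Harnack inequality \eqref{harnack-inv}, the corkscrew and Harnack chain conditions of Definition \ref{D:NTA}, and the two boundary estimates already isolated above, Theorem \ref{T:BdryE} and Theorem \ref{T:Doubling Condition}. Throughout one reduces to $u,v\geq 0$ (implicit in the statement, since only the ratio is meaningful).

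First I would establish a Carleson-type estimate: there is $C=C(X,M,r_o)$ such that whenever $w\geq 0$ is $\mathcal L$-harmonic in $D$ and vanishes continuously on $\partial D\setminus\Delta(y_1,2s)$, with $y_1\in\partial D$ and $s<r_o/M$, then
\[
w(\cdot)\ \leq\ C\, w(A_s(y_1))\qquad\text{on } D\setminus B_d(y_1,s/4).
\]
Dually phrased: an $\mathcal L$-harmonic function vanishing on a boundary surface ball is, near that ball, controlled by its value at the associated corkscrew point. The proof is the standard iteration: were it to fail, one would produce points escaping to $\partial D$ along which $w$ grows geometrically, which the interior Harnack inequality \eqref{harnack-inv} propagated along Harnack chains running parallel to $\partial D$, together with the exterior corkscrew condition (which via Lemma \ref{L:cork} and Theorem \ref{T:Wiener} yields a uniform oscillation estimate at each boundary point) prevents.

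Next I would prove the upper half of a comparison with harmonic measure: for $u$ as in the statement,
\[
u(x)\ \leq\ C\, u(A_r(x_o))\,\omega^x(\Delta(x_o,2r))\qquad\text{for }x\in D\setminus B_d(x_o,Mr).
\]
This is a maximum-principle argument (Theorem \ref{T:MP}): $u$ vanishes on $\partial D\setminus\Delta(x_o,4r)$, while on $\Delta(x_o,4r)$ it is $\leq C\,u(A_{4r}(x_o))\leq C'\,u(A_r(x_o))$ by the Carleson estimate applied at $x_o$ and the Harnack chain condition; hence $u\leq C'\,u(A_r(x_o))\,\omega^{\cdot}(\Delta(x_o,4r))$ in $D$, and one passes from $\Delta(x_o,4r)$ to $\Delta(x_o,2r)$ by the doubling of $\mathcal L$-harmonic measure, Theorem \ref{T:Doubling Condition}.

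The matching lower bound $v(x)\geq c\,v(A_r(x_o))\,\omega^x(\Delta(x_o,2r))$ is the crux, and I expect it to be the main obstacle: the naive barrier argument fails because a nonnegative $\mathcal L$-harmonic function vanishing on a piece of $\partial D$ need not be bounded below on any interior sphere around $x_o$ — exactly the pathology that the characteristic set makes severe, cf.\ Jerison's example in Section \ref{S:Jerison}. I would circumvent this by an induction on scales: using Theorem \ref{T:BdryE}, the doubling of $\omega^x$ (Theorem \ref{T:Doubling Condition}), the Carleson estimate, and a Harnack chain argument carried out inside a ``sawtooth'' subdomain that stays nontangentially away from $\partial D$, one shows that $\omega^x$ restricted to $\Delta(x_o,2r)$ is comparable, up to the normalizing factor $\omega^x(\Delta(x_o,2r))$, with $\omega^{A_r(x_o)}$ restricted to the same surface ball; since $v(z)=\int_{\Delta(x_o,2r)} v\,d\omega^z$ (as $v$ vanishes elsewhere on $\partial D$) and $v(A_r(x_o))=\int_{\Delta(x_o,2r)} v\,d\omega^{A_r(x_o)}$, integrating against these comparable measures gives the lower bound. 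Combining the upper estimate for $u$ with the lower estimate for $v$ yields $u(x)/v(x)\leq C\,u(A_r(x_o))/v(A_r(x_o))$ for $x\in D\setminus B_d(x_o,Mr)$, and interchanging $u$ and $v$ gives the reverse inequality, completing the proof.
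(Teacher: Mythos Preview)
The paper does not supply its own proof of this theorem: it is quoted as one of the ``results from \cite{CG}'' that are used as black boxes in Section~\ref{S:RHI}. Your outline is precisely the Jerison--Kenig scheme adapted to the Carnot--Carath\'eodory setting, which is what \cite{CG} carries out, so your approach matches the source. One small caveat: in your lower-bound sketch you write $v(z)=\int_{\Delta(x_o,2r)} v\,d\omega^z$, but $v$ is only assumed $\mathcal L$-harmonic in $D$ with vanishing boundary values on $\partial D\setminus\Delta(x_o,2r)$, not a PWB solution of some continuous boundary datum, so this representation is not immediately available; the argument in \cite{CG} (following \cite{JK}, \cite{CFMS}) handles the lower bound instead by a dyadic iteration comparing $\omega^x(\Delta(x_o,2^{-k}r))$ with $v$ at the successive corkscrew points $A_{2^{-k}r}(x_o)$, which your phrase ``induction on scales'' correctly anticipates.
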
\

For any $y\in\partial\Om$ and $\alpha > 0$ a nontangential region at
$y$ is defined by
\[
\Gamma_\alpha(y) = \{x\in\Om\,|\, d(x,y) \leq
(1+\alpha)d(x,\partial\Om)\}\ .
\]

Given a function $u$ the $\alpha$-nontangential maximal function of
$u$ at $y\in \p D$ is defined by
\[
N_\alpha(u)(y) = \underset{x\in\Gamma_\alpha(y)}{sup}\,|u(x)|\ .
\]

\begin{thrm}\label{T:Representation}
Let $D\subset \Rn$ be a $NTA_X$ domain. Given a point $x_1\in D$,
let $f\in L^1(\partial{D},d\omega^{x_1})$ and define
$$
u(x)=\int_{\partial{D}} f(y)d\omega^x(y),\qquad x\in D\ .
$$
Then, $u$ is $\mathcal{L}$-harmonic in $D$, and:
\begin{itemize}
\item[(i)] $N_\alpha(u)(y)\leq CM_{\omega^{x_1}}(f)(y)$, $y\in\partial D$;
\item[(ii)] $u$ converges non-tangentially a.e.\ $(d\omega^{x_1})$ to $f$.
\end{itemize}
\end{thrm}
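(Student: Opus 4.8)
Let me think about how to prove this. This is a classical-style result: given an NTA domain and an $L^1$ function $f$ with respect to harmonic measure, the "harmonic extension" $u(x) = \int f\,d\omega^x$ is harmonic, its nontangential maximal function is controlled by a Hardy–Littlewood maximal function, and it has nontangential limits a.e.

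The harmonicity of $u$ is immediate: $u$ is an integral average of the $\mathcal L$-harmonic functions $x \mapsto \omega^x(E)$ (or by linearity, $u = H^D_\phi$ when $f = \phi \in C(\partial D)$, and the general case follows by $L^1$-density of $C(\partial D)$ together with the maximum principle Theorem \ref{T:MP} and Brelot's theorem Theorem \ref{T:Brelot}). So the content is in (i) and (ii). The key tools available are: the doubling condition for harmonic measure (Theorem \ref{T:Doubling Condition}), the boundary estimate $\omega^{A_r(x_o)}(\Delta(x_o,r)) \geq C$ (Theorem \ref{T:BdryE}), the comparison theorem (Theorem \ref{T:GlobalComparison}), the Harnack chain condition and corkscrew points from the $NTA_X$ definition, and Lemma \ref{L:cork}.

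**Plan for (i).** Here $M_{\omega^{x_1}}(f)(y) = \sup_r \frac{1}{\omega^{x_1}(\Delta(y,r))}\int_{\Delta(y,r)} |f|\,d\omega^{x_1}$ is the Hardy–Littlewood maximal operator with respect to the doubling measure $\omega^{x_1}$. The plan is: fix $y \in \partial D$ and $x \in \Gamma_\alpha(y)$, and set $r = d(x, \partial D)$, so $d(x,y) \leq (1+\alpha) r$. I would first show the pointwise kernel estimate
\[
\frac{d\omega^x}{d\omega^{x_1}}(z) \leq \frac{C}{\omega^{x_1}(\Delta(y, Cr))}, \qquad z \in \Delta(y, Cr),
\]
and, more importantly, that $\omega^x(\partial D \setminus \Delta(y, 2^{j+1}r)) \leq C 2^{-\delta j}$ decays geometrically in $j$ for some $\delta > 0$ — this is the standard consequence of the doubling property and the comparison theorem, proved by iterating: apply Theorem \ref{T:GlobalComparison} to compare $\omega^{(\cdot)}(\Delta(y,2^j r))$ with $\omega^{(\cdot)}(\Delta(y, 2^{j+1} r))$ at the corkscrew point $A_{2^j r}(y)$, and use Theorem \ref{T:BdryE} plus a Harnack chain from $x$ to $A_{Cr}(y)$ (valid since $x$ and $A_{Cr}(y)$ are both at distance $\gtrsim r$ from $\partial D$ and at mutual distance $\lesssim r$, using $x \in \Gamma_\alpha(y)$). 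Then I decompose $|u(x)| \leq \int_{\Delta(y,2r)} |f| \, d\omega^x + \sum_{j \geq 1} \int_{\Delta(y, 2^{j+1}r) \setminus \Delta(y, 2^j r)} |f|\, d\omega^x$, estimate each piece by $\omega^x$-mass times average, convert $\omega^x$ to $\omega^{x_1}$ on each annulus via the kernel bound, and sum the geometric series against $M_{\omega^{x_1}}(f)(y)$.

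**Plan for (ii).** With (i) in hand, (ii) follows the standard Calderón–Zygmund-style argument: first establish nontangential convergence for $f \in C(\partial D)$ — here $u = H^D_f$ and convergence at every point is just the regularity of $D$ for the Dirichlet problem (each boundary point is regular by Lemma \ref{L:cork}), upgraded to nontangential convergence using Harnack chains to move from a nontangential approach point to the corkscrew point $A_r(y)$ and then a barrier/oscillation estimate. Then for general $f \in L^1(d\omega^{x_1})$, pick $g \in C(\partial D)$ with $\|f - g\|_{L^1(d\omega^{x_1})} < \varepsilon$, write $f = g + (f-g)$, and control the nontangential $\limsup$ of the oscillation of $u_{f-g}$ by $2 N_\alpha(u_{f-g})$, whose $\omega^{x_1}$-distribution function is estimated by (i) together with the weak-type $(1,1)$ bound for $M_{\omega^{x_1}}$ (valid since $\omega^{x_1}$ is doubling by Theorem \ref{T:Doubling Condition}). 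A standard $\varepsilon$-argument then kills the bad set.

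**Main obstacle.** The technical heart is the geometric-decay estimate $\omega^x(\partial D \setminus \Delta(y, 2^j r)) \lesssim 2^{-\delta j}$ and the comparison of $\omega^x$ with $\omega^{x_1}$ on far-away annuli — this requires carefully chaining the doubling theorem \ref{T:Doubling Condition} and the comparison theorem \ref{T:GlobalComparison}, making sure the corkscrew points and Harnack chains line up with the constant $M$ in the $NTA_X$ definition, and that the radii stay below the threshold $r_o$. I expect this bookkeeping (rather than any deep new idea) to be where the real work lies; everything else is a transcription of the Euclidean NTA theory of Jerison–Kenig to the Carnot–Carathéodory setting, which the results quoted from \cite{CG} are designed to support.
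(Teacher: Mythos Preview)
The paper does not prove this theorem at all: Theorem \ref{T:Representation} is one of the four results (together with Theorems \ref{T:BdryE}, \ref{T:Doubling Condition}, \ref{T:GlobalComparison}) that the paper explicitly imports from \cite{CG} without proof, under the heading ``The following results from \cite{CG} play a fundamental role in this paper.'' So there is no proof in the paper to compare your proposal against.

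That said, your sketch is the correct strategy and is essentially what is carried out in \cite{CG} (and, ultimately, in the Euclidean prototype \cite{JK}). The key technical point you flag --- the geometric decay $\omega^x(\partial D \setminus \Delta(y,2^j r)) \lesssim 2^{-\delta j}$, obtained by iterating the comparison theorem and the doubling/lower-bound estimates at successive corkscrew points and linking them by Harnack chains --- is indeed the heart of the matter, and your plan for (ii) via density of $C(\partial D)$ and the weak-$(1,1)$ bound for $M_{\omega^{x_1}}$ is the standard route. One small addendum: to control $\int_{\Delta(y,2^{j+1}r)\setminus \Delta(y,2^j r)} |f|\,d\omega^x$ in terms of $M_{\omega^{x_1}}(f)(y)$ you need not just the decay of $\omega^x$ on the far annuli but also the comparison $\frac{d\omega^x}{d\omega^{x_1}} \approx \frac{\omega^x(\Delta(y,2^j r))}{\omega^{x_1}(\Delta(y,2^j r))}$ on each annulus, which is another application of Theorem \ref{T:GlobalComparison}; you allude to this under ``kernel estimate'' but it is worth making explicit that this step also needs the comparison theorem at scale $2^j r$, not just at scale $r$.
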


Theorem \ref{T:GlobalComparison} has the following important consequence.

\begin{thrm}\label{T:Komega}
Let $ D\subset \Rn$ be a $ADP_X$ domain, and let $K(\cdot,\cdot)$ be
the Poisson Kernel defined in \eqref{i4}. There exists $r_1>0$,
depending on $M$ and $r_o$ , and a constant $C=C(X,M,r_o,R_o)>0$,
such that given $x_o\in\partial{D}$, for every $x\in D\setminus
B_d(x_o,Mr)$ and every $0<r<r_1$ one can find $E_{x_o,x,r}\subset
\Delta(x_o,r)$, with $\sigma_X( E_{x_o,x,r}) = 0$, for which
\[
K(x,y)\ \leq\ C\ K(A_r(x_o),y)\ \omega^x(\Delta(x_o,r))
\]
for every $y\in \Delta(x_o,r)\setminus E_{x_o,x,r}$.
\end{thrm}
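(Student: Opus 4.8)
The plan is to deduce this pointwise comparison of Poisson kernels from the comparison theorem for $\mathcal L$-harmonic functions (Theorem \ref{T:GlobalComparison}) by the standard device of comparing harmonic measures of small surface balls and then differentiating. First I would fix $x_o\in\partial D$ and $0<r<r_1$, where $r_1$ will be chosen below in terms of $M,r_o$ and the $ADP_X$ parameters. For a small $s>0$ and a point $z\in\Delta(x_o,r)$ consider the two $\mathcal L$-harmonic functions $u(x)=\omega^x(\Delta(z,s))$ and $v(x)=\omega^x(\Delta(x_o,r))$. Both vanish continuously on $\partial D\setminus\Delta(z,2s)\supset$ (resp.\ $\partial D\setminus\Delta(x_o,2r)$) — here one uses that $NTA_X$ domains are regular for the Dirichlet problem (Lemma \ref{L:cork}) so that harmonic measure of a closed set with empty interior relative to $\partial D$ is continuous up to the complementary part of the boundary. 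Applying Theorem \ref{T:GlobalComparison} on the ball $\Delta(z,2s)$ (legitimate once $2s$ is small enough and $x\in D\setminus B_d(x_o,Mr)\subset D\setminus B_d(z,Ms)$, which holds after shrinking and using the triangle inequality together with $z\in\Delta(x_o,r)$) gives, for $x\in D\setminus B_d(x_o,Mr)$,
\[
\frac{\omega^x(\Delta(z,s))}{\omega^x(\Delta(x_o,r))}\ \leq\ C\ \frac{\omega^{A_s(z)}(\Delta(z,s))}{\omega^{A_s(z)}(\Delta(x_o,r))}\ .
\]

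Next I would control the right-hand side. By Theorem \ref{T:BdryE} the numerator $\omega^{A_s(z)}(\Delta(z,s))$ is bounded below by a positive constant depending only on the $NTA_X$ parameters. For the denominator, since $A_s(z)$ is comparable in distance to $z\in\Delta(x_o,r)$ and $A_r(x_o)$ both lie in $D\setminus B_d(x_o,Mr/2)$ (after adjusting constants), the Harnack chain condition (iii) in Definition \ref{D:NTA}, together with the uniform Harnack inequality \eqref{harnack-inv}, yields $\omega^{A_s(z)}(\Delta(x_o,r))\geq c\,\omega^{A_r(x_o)}(\Delta(x_o,r))\geq c'$, the last step again by Theorem \ref{T:BdryE}. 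Hence
\[
\omega^x(\Delta(z,s))\ \leq\ C\ \omega^{A_r(x_o)}(\Delta(z,s))\ \omega^x(\Delta(x_o,r))
\]
for all $z\in\Delta(x_o,r)$ and all sufficiently small $s$, with $C$ depending only on $X,M,r_o,R_o$. Finally, by Theorem \ref{T:HM} we have $d\omega^x=K(x,\cdot)\,d\sigma_X$ and $d\omega^{A_r(x_o)}=K(A_r(x_o),\cdot)\,d\sigma_X$; since $\sigma_X$ is doubling (Corollary \ref{C:ahlfors}), the Lebesgue differentiation theorem for the doubling metric measure space $(\partial D,d,\sigma_X)$ applies, and dividing by $\sigma_X(\Delta(z,s))$ and letting $s\to0$ recovers, for $\sigma_X$-a.e.\ $y\in\Delta(x_o,r)$,
\[
K(x,y)\ \leq\ C\ K(A_r(x_o),y)\ \omega^x(\Delta(x_o,r))\ ,
\]
which is the assertion with $E_{x_o,x,r}$ the $\sigma_X$-null exceptional set coming from differentiation.

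The main obstacle I anticipate is the uniformity and legitimacy of the geometric bookkeeping when invoking the comparison theorem: one must verify that for $z$ ranging over all of $\Delta(x_o,r)$ and $x\in D\setminus B_d(x_o,Mr)$, the hypotheses "$0<2s<r_o/M$" and "$x\in D\setminus B_d(z,Ms)$" of Theorem \ref{T:GlobalComparison} can be met simultaneously, and that the corkscrew points $A_s(z)$ and $A_r(x_o)$ can be connected by Harnack chains of length bounded independently of $s$, $z$, and $r$ — this is where Definition \ref{D:NTA}(iii) and the doubling of $|B_d(\cdot,\cdot)|$ in \eqref{dc} must be used carefully to absorb the scale change from $r$ to $s$. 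A secondary technical point is justifying the differentiation step $\sigma_X$-a.e.\ uniformly in $x$: since the exceptional set is allowed to depend on $x$ (and $x_o,r$), this causes no real difficulty once $\sigma_X$ is known to be doubling, which is exactly the content of Corollary \ref{C:ahlfors}.
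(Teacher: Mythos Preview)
Your overall strategy---compare harmonic measures of small surface balls via Theorem~\ref{T:GlobalComparison}, then differentiate with respect to $\sigma_X$ using Corollary~\ref{C:ahlfors} and Theorem~\ref{T:HM}---is exactly the paper's approach, and the target inequality $\omega^x(\Delta(z,s))\leq C\,\omega^{A_r(x_o)}(\Delta(z,s))\,\omega^x(\Delta(x_o,r))$ is precisely what one needs. But you apply the comparison theorem at the wrong scale, and this creates the very obstacle you later flag.

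You invoke Theorem~\ref{T:GlobalComparison} ``on the ball $\Delta(z,2s)$'', i.e.\ at scale $s$ centered at $z$. That theorem requires \emph{both} $u$ and $v$ to vanish continuously on $\partial D\setminus\Delta(z,2s)$. Your $u(x)=\omega^x(\Delta(z,s))$ does, but $v(x)=\omega^x(\Delta(x_o,r))$ does not: its boundary support is $\overline{\Delta(x_o,r)}$, which for small $s$ is certainly not contained in $\Delta(z,2s)$. So the displayed inequality with evaluation point $A_s(z)$ is not justified. Even if one grants it, the subsequent Harnack step fails: $d(A_s(z),\partial D)\sim s/M$ while $d(A_s(z),A_r(x_o))\lesssim 3r$, so the Harnack chain joining them has length of order $\log(r/s)$, unbounded as $s\to 0$. (Your claim that $A_s(z)\in D\setminus B_d(x_o,Mr/2)$ is false: $d(A_s(z),x_o)\leq s+r<2r$.) And the ``Hence'' producing $\omega^{A_r(x_o)}(\Delta(z,s))$ on the right is a non sequitur: lower bounds on $\omega^{A_s(z)}(\Delta(z,s))$ and $\omega^{A_s(z)}(\Delta(x_o,r))$ cannot manufacture that factor.

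The fix is to apply Theorem~\ref{T:GlobalComparison} at scale $r$ centered at $x_o$, as the paper does (taking $v(x)=\omega^x(\Delta(x_o,r/2))$ for safety). Since $z\in\Delta(x_o,r)$ and $s<r/2$ give $\Delta(z,s)\subset\Delta(x_o,2r)$, both $u$ and $v$ vanish on $\partial D\setminus\Delta(x_o,2r)$, and the comparison theorem yields directly, for $x\in D\setminus B_d(x_o,Mr)$,
\[
\frac{\omega^x(\Delta(z,s))}{\omega^x(\Delta(x_o,r/2))}\ \leq\ C\,\frac{\omega^{A_r(x_o)}(\Delta(z,s))}{\omega^{A_r(x_o)}(\Delta(x_o,r/2))}\ .
\]
Theorem~\ref{T:BdryE} bounds the denominator on the right from below, and $\omega^x(\Delta(x_o,r/2))\leq\omega^x(\Delta(x_o,r))$; this gives the desired inequality with the correct evaluation point $A_r(x_o)$, with no Harnack chains and constants independent of $s$. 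Your differentiation step from there is fine.
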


\begin{proof}[\textbf{Proof}]
Let $x_o\in
\partial{ D}$. For each $y\in \Delta(x_o,r)$ and $0<s<r/2$ set
\[
u(x)\ =\ \omega^x(\Delta(y,s)), \ \ \ \ v(x)\ =\
\omega^x(\Delta(x_o,r/2))\ .
\]

The functions $u$ and $v$ are $\mathcal{L}$-harmonic in $D$ and
vanish continuously on $\partial{D}\setminus \Delta(x_o,2r)$.
Theorem \ref{T:GlobalComparison} gives
\begin{equation}\label{pk1}
\frac{\omega^x(\Delta(y,s))}{\omega^x(\Delta(x_o,r/2))}\ \leq\  C\
\frac{\omega^{A_r(x_o)}(\Delta(y,s))}{\omega^{A_r(x_o)}(\Delta(x_o,r/2))}
\end{equation}
for every $x\in  D\setminus B(x_o,Mr)$. Applying \eqref{pk1} we thus
find
\begin{equation}\label{pk2}
\frac{\omega^x(\Delta(y,s))}{\omega^x(\Delta(x_o,r/2))}\ \leq\  C\
\frac{\omega^{A_r(x_o)}(\Delta(y,s))}{\omega^{A_r(x_o)}(\Delta(x_o,r/2))}\
.
\end{equation}

Upon dividing by $\sigma_X(\Delta(y,s))$ in \eqref{pk2} (observe
that in view of Theorem \ref{T:ndeg} the $\sigma_X$ measure of any
surface ball $\Delta(y,s)$ is strictly positive), one concludes
\begin{equation}\label{pk3}
\frac{\omega^x(\Delta(y,s))}{\sigma_X(\Delta(y,s))}\ \leq\  C\
\frac{\omega^{A_r(x_o)}(\Delta(y,s))}{\sigma_X(\Delta(y,s))}\
\frac{\omega^x(\Delta(x_o,r/2))}{\omega^{A_r(x_o)}(\Delta(x_o,r/2))}\
.
\end{equation}

Using Theorem \ref{T:BdryE} in the right-hand side of \eqref{pk3} we
conclude
\begin{equation}\label{pk4}
\frac{\omega^x(\Delta(y,s))}{\sigma_X(\Delta(y,s))}\ \leq\  C\
\frac{\omega^{A_r(x_o)}(\Delta(y,s))}{\sigma_X(\Delta(y,s))}\
\omega^x(\Delta(x_o,r))\ .
\end{equation}

We now observe that \eqref{deltadoubling} in Corollary
\ref{C:ahlfors} allows to obtain a Vitali covering theorem and
differentiate the measure $\omega^x$ with respect to the horizontal
perimeter measure $\sigma_X$. This means that for $\sigma_X$-a.e.
$y\in \Delta(x_o,r)$ the limit $\underset{s\to 0}{\lim}
\frac{\omega^x(\Delta(y,s))}{\sigma_X(\Delta(y,s))}$ exists and
equals $\frac{d\omega^x}{d\sigma_X}(y)$. This being said, passing to
the limit as $s\to 0^+$ in \eqref{pk4} we obtain for $\sigma_X$-a.e.
$y\in \Delta(x_o,r)$
\[ \frac{d\omega^x}{d\sigma_X}(y)\ \leq\ C\
\frac{d\omega^{A_r(x_o)}}{d\sigma_X}(y)\ \omega^x(\Delta(x_o,r))\ .
\]

Since by \eqref{kernels} in Theorem \ref{T:HM} we know that
$\frac{d\omega^x}{d\sigma_X}(y) = K(x,y)$,
$\frac{d\omega^{A_r(x_o)}}{d\sigma_X}(y) = K(A_r(x_o),y)$, we have
reached the desired conclusion. We observe in passing that the
exceptional set here depends on $x$ and on $A_r(x_o)$, but this fact
will be inconsequential to us since we plan to integrate with
respect to $\sigma_X$ the above inequality on the surface ball
$\Delta(x_o,r)$.

\end{proof}

We now turn to the

\begin{proof}[\textbf{Proof of Theorem \ref{T:RHGeneral}}]
We fix $p>1$, $x_o\in\partial D$ and $x_1\in D$.  Let $R_1$ be the
minimum of the constants appearing in Definitions \ref{D:OB},
\ref{D:NTA}, and in Theorem \ref{T:Komega}. Moreover, the constant
$R_1$ should be chosen so small that $d(x_1,x_o) > MR_1$. Let $0< r
< R_1$. If $A_r(x_o)$ is a corkscrew for $x_o$, then by the
definition of a corkscrew, the triangle inequality and (2.3) it is
easy to see that we have for all $y\in\Delta(x_o,r)$
\begin{equation}\label{6.5}
d(A_r(x_o),y) \sim Cr \quad\text{and}\quad |B_d(x_o,r)| \leq
C|B_d(A_r(x_o),d(A_r(x_o),y))|.
\end{equation}

Now we have
\begin{align*}
&\left(\frac{1}{\sigma_X(\Delta(x_o,r))} \ \int_{\Delta(x_o,r)}\
K(x_1,y)^p\ d\sigma_X(y)\right)^\frac{1}{p}
\quad\quad\text{(by \eqref{kernels})} \\
&\quad =\left(\frac{1}{\sigma_X(\Delta(x_o,r))}\
\int_{\Delta(x_o,r)}\ K(x_1,y)^{p-1}\
d\omega^{x_1}(y)\right)^\frac{1}{p}
\quad\text{(by Theorem \ref{T:Komega})} \\
&\quad \leq\ C\
\left(\frac{\omega^{x_1}(\Delta(x_o,r))^{p-1}}{\sigma_X(\Delta(x_o,r))}\
\int_{\Delta(x_o,r)}\ K(A_r(x_o),y)^{p-1}\ d\omega^{x_1}(y)\right)^\frac{1}{p} \quad\text{(by \eqref{i6})} \\
&\quad \leq\ \ C\
\left(\frac{\omega^{x_1}(\Delta(x_o,r))^{p-1}}{\sigma_X(\Delta(x_o,r))}\
\int_{\Delta(x_o,r)}\ |XG(A_r(x_o),y)|^{p-1}\
 d\omega^{x_1}(y) \right)^\frac{1}{p}\quad\text{(by Theorem \ref{T:XG})} \\
&\quad \leq  C\
\left(\frac{\omega^{x_1}(\Delta(x_o,r))^{p-1}}{\sigma_X(\Delta(x_o,r))}\
\int_{\Delta(x_o,r)} \
\left(\frac{d(A_r(x_o),y)}{|B_d(A_r(x_o),d(A_r(x_o),y))|}\right)^{p-1}\ d\omega^{x_1}(y)\right)^\frac{1}{p}\quad\text{(by \eqref{6.5})} \\
&\quad \leq C\
\left(\frac{\omega^{x_1}(\Delta(x_o,r))^{p-1}}{\sigma_X(\Delta(x_o,r))}\
\left(\frac{r}{|B_d(x_o,r)|}\right)^{p-1}
\int_{\Delta(x_o,r)}\ \ d\omega^{x_1}(y)\right)^\frac{1}{p}\quad\text{(by $iv)$ in Definition \ref{D:ADPX})} \\
&\quad
\leq \frac{C}{\sigma_X(\Delta(x_o,r))}\int_{\Delta(x_o,r)} \ d\omega^{x_1}(y) \quad\text{(by \eqref{kernels})}\\
&\quad = \frac{C}{\sigma_X(\Delta(x_o,r))}\int_{\Delta(x_o,r)}
K(x_1,y) d\sigma_X(y)\ .
\end{align*}

This concludes the proof of the reverse H\"older inequality.
Regarding absolute continuity, we already know from \eqref{kernels}
that $d\omega^{x_1}$ is absolutely continuous with respect to
$d\sigma_X$. To prove that $d\sigma_X$ is absolutely continuous with
respect to $d\omega^{x_1}$ we only need to observe that the reverse
H\"older inequality for $K$ established above and the doubling
property for $d\sigma_X$ from \eqref{deltadoubling} in Corollary
\ref{C:ahlfors} allow us to invoke Lemma 5 from \cite{CF}.

\end{proof}

We next establish a reverse H\"older inequality for the kernel
$P(x,y)$ defined in \eqref{PK}. The main trust of this result is
that, under a certain balanced-degeneracy assumption on the surface
measure $\sigma$ of $\p D$, it implies the mutual absolute
continuity of $\mathcal L$-harmonic measure and surface measure.
Given the fact that, as we have explained in the introduction,
surface measure is not the natural measure in the subelliptic
Dirichlet problem, being able to isolate a condition which
guarantees such mutual absolute continuity has some evident
important consequences. To state the main result we modify the class
of $ADP_X$ domains in Definition \ref{D:ADPX}. Specifically, we pose
the following

\begin{dfn}\label{D:sADPX}
Given a system $X = \{X_1,...,X_m\}$ of smooth vector fields
satisfying \eqref{frc}, we say that a connected bounded open set $
D\subset \Rn$ is \emph{$\sigma$-admissible for the Dirichlet problem
\eqref{DP}}, or simply $\sigma-ADP_X$, if:

i) $D$ is of class $C^\infty$;

ii) $D$ is non-tangentially accessible ($NTA_X$) with respect to the
Carnot-Caratheodory metric associated to the system
$\{X_1,...,X_m\}$ (see Definition \ref{D:NTA});

iii) $D$ satisfies a uniform tangent outer $X$-ball condition (see
Definition \ref{D:OB});

iv) There exist $B, R_o>0$ depending on $X$ and $D$ such that for
every $x_o\in \p D$ and $0<r<R_o$ one has
\[
\left(\underset{y\in\Delta(x_o,r)}{max}\;W(y)\right)\,\sigma(\Delta(x_o,r))\
\leq\ B\ \frac{|B_d(x_o,r)|}{r}\ .
\]
\end{dfn}

We note that Definitions \ref{D:ADPX} and \ref{D:sADPX} differ only
in part $iv)$. Also, \eqref{dmu} gives \[ \sigma_X(\Delta(x_o,r))\
=\ \int_{\Delta(x_o,r)} W(y) d\sigma(y)\ \leq\
\left(\underset{y\in\Delta(x_o,r)}{max}\;W(y)\right)\,\sigma(\Delta(x_o,r))\
. \]

This observation shows that  \[ \sigma-ADP_X\ \subset ADP_X\ .
\]

The reason for which we have referred to the new assumption on
$\sigma$ as a balanced-degeneracy condition is that, as we have seen
in the introduction the measure $\sigma$ badly degenerates on the
characteristic set $\Sigma$. On the other hand, the angle function
$W$ vanishes on $\Sigma$, thus balancing such degeneracy.

\begin{proof}[\textbf{Proof of Theorem \ref{T:RHGeneral-P}}]
The relevant reverse H\"older inequality for $P(x_1,\cdot)$ is
proved starting from the second identity $d\omega^{x_1} =
P(x_1,\cdot) d\sigma$ in \eqref{kernels} and then arguing in a
similar fashion as in the proof of Theorem \ref{T:RHGeneral} but
using the non-degeneracy estimate in $iv)$ of Definition
\ref{D:sADPX} instead of the upper $1$-Ahlfors assumption in
Definition \ref{D:ADPX}. We leave the details to the interested
reader.

\end{proof}

A consequence of Theorem \ref{T:RHGeneral-P} and of Theorem
\ref{T:Doubling Condition} is the following result. We stress that
such result would be trivial if the surface balls would just be the
ordinary Euclidean ones, but this is not the case here. Our surface
balls $\Delta(y,r)$ are the metric ones. Another comment is that
away from the characteristic set the next result would be already
contained in those in \cite{MM}.

\begin{thrm}\label{T:Doubling-s}
Let $D\subset \Rn$ be a $\sigma-ADP_X$ domain. There exist $C,\ R_1
> 0$ depending on the $\sigma-ADP_X$ parameters of $D$ such that for every $y\in\partial  D$ and $0< r < R_1$,
\[
\sigma(\Delta(y,2r))\ \leq\ C\ \sigma(\Delta(y,r))\ .
\]
\end{thrm}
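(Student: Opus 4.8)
The plan is to transfer the doubling property of $\mathcal L$-harmonic measure — which is available on any $NTA_X$ domain by Theorem \ref{T:Doubling Condition} — to the surface measure $\sigma$ by means of the reverse H\"older inequality of Theorem \ref{T:RHGeneral-P}. Fix once and for all an interior point $x_1\in D$, and recall from Theorem \ref{T:HM} that $d\omega^{x_1}=P(x_1,\cdot)\,d\sigma$ on $\p D$. Applying Theorem \ref{T:RHGeneral-P} with exponent $p=2$, there exist $C_2,R_2>0$, depending only on the $\sigma-ADP_X$ parameters of $D$ and on $x_1$, such that for every $x_o\in\p D$ and $0<\rho<R_2$
\[
\left(\frac{1}{\sigma(\Delta(x_o,\rho))}\int_{\Delta(x_o,\rho)}P(x_1,y)^2\,d\sigma(y)\right)^{1/2}\ \le\ C_2\ \frac{\omega^{x_1}(\Delta(x_o,\rho))}{\sigma(\Delta(x_o,\rho))}\ .
\]

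Next I would fix $x_o\in\p D$ and $0<r<R_1$, where $R_1$ is chosen so that $2r<R_2$, $2r<r_o$ ($r_o$ the $NTA_X$ parameter), $Mr<d(x_1,\p D)$ ($M$ the $NTA_X$ constant), and so that Theorems \ref{T:ndeg} and \ref{T:BdryE} apply at scales $\le 2r$. Write $\Delta=\Delta(x_o,r)$, $\Delta'=\Delta(x_o,2r)$. Two preliminary observations are needed: first $\sigma(\Delta),\sigma(\Delta')>0$, because by Theorem \ref{T:ndeg} $\sigma_X(\Delta),\sigma_X(\Delta')>0$ and $d\sigma_X=W\,d\sigma$ forces $\sigma$ of any set of positive $\sigma_X$-measure to be positive; second $\omega^{x_1}(\Delta)>0$, because Theorem \ref{T:BdryE} gives $\omega^{A_r(x_o)}(\Delta)\ge C>0$ at a corkscrew point $A_r(x_o)\in D$ and harmonic measures based at different interior points are mutually absolutely continuous via the Harnack inequality \eqref{harnack-inv}.

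The key step combines H\"older's inequality on $\Delta$ with the reverse H\"older inequality on $\Delta'$. Since $\Delta\subset\Delta'$, $P(x_1,\cdot)\ge 0$, and $d\omega^{x_1}=P(x_1,\cdot)\,d\sigma$,
\begin{align*}
\omega^{x_1}(\Delta)\ &=\ \int_\Delta P(x_1,y)\,d\sigma(y)\ \le\ \left(\int_{\Delta'}P(x_1,y)^2\,d\sigma(y)\right)^{1/2}\sigma(\Delta)^{1/2}\\
&\le\ C_2\,\frac{\omega^{x_1}(\Delta')}{\sigma(\Delta')^{1/2}}\,\sigma(\Delta)^{1/2}\ =\ C_2\,\omega^{x_1}(\Delta')\left(\frac{\sigma(\Delta)}{\sigma(\Delta')}\right)^{1/2}\ ,
\end{align*}
so that $\sigma(\Delta')/\sigma(\Delta)\le C_2^2\,\big(\omega^{x_1}(\Delta')/\omega^{x_1}(\Delta)\big)^2$. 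Since $x_1\in D\setminus B_d(x_o,Mr)$ by the choice of $R_1$, Theorem \ref{T:Doubling Condition} gives $\omega^{x_1}(\Delta')=\omega^{x_1}(\Delta(x_o,2r))\le C'\,\omega^{x_1}(\Delta(x_o,r))=C'\,\omega^{x_1}(\Delta)$ with $C'=C'(X,M,r_o)$. Combining, and using $\omega^{x_1}(\Delta)>0$, one concludes $\sigma(\Delta(x_o,2r))\le C_2^2(C')^2\,\sigma(\Delta(x_o,r))$, which is the assertion with $C=C_2^2(C')^2$.

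The only genuinely delicate point is the bookkeeping of admissible radii: the four ingredients (reverse H\"older, the corkscrew boundary estimate, positivity of $\sigma_X$, harmonic-measure doubling) each come with a threshold, and $R_1$ must lie below all of them while also keeping $x_1$ outside $B_d(x_o,Mr)$ uniformly in $x_o\in\p D$ — the latter being why $R_1$ is permitted to depend on $d(x_1,\p D)$, which is itself controlled by the geometry of $D$. (One could also bypass harmonic measure altogether: since $\sigma-ADP_X\subset ADP_X$, Corollary \ref{C:ahlfors} makes $\sigma_X$ $1$-Ahlfors, hence doubling, and $iv)$ of Definition \ref{D:sADPX} together with $d\sigma_X=W\,d\sigma$ yields $(\sup_{\Delta(x_o,r)}W)\,\sigma(\Delta(x_o,r))\simeq\sigma_X(\Delta(x_o,r))$; monotonicity of $r\mapsto\sup_{\Delta(x_o,r)}W$ then passes the doubling of $\sigma_X$ to $\sigma$. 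I present the argument above because it is the one the statement points to.)
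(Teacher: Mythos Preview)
Your main argument is correct and is essentially the paper's own proof: both combine the $p=2$ reverse H\"older inequality for $P(x_1,\cdot)$ with Cauchy--Schwarz and the doubling of $\mathcal L$-harmonic measure from Theorem \ref{T:Doubling Condition}, the only difference being the order in which these three ingredients are applied. Your parenthetical alternative via the $1$-Ahlfors property of $\sigma_X$ and monotonicity of $\sup_{\Delta(x_o,r)}W$ is also valid and is a genuinely more elementary route that the paper does not take.
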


\begin{proof}[\textbf{Proof}]
Applying Theorem \ref{T:RHGeneral-P} with $p=2$, we find
\begin{align*}
\frac{1}{\sigma(\Delta(x_o,r))} \int_{\Delta(x_o,r)} P(x_1,y)^2\
d\sigma(y) &\ \leq\
\left(\frac{C}{\sigma(\Delta(x_o,r))}\int_{\Delta(x_o,r)} P(x_1,y)\
d\sigma(y)\right)^2
\\
&\ =\ C\
\left(\frac{\omega^{x_1}(\Delta(x_o,r))}{\sigma(\Delta(x_o,r))}\right)^2\
.
\end{align*}

This gives
\begin{align*}
\sigma(\Delta(x_o,2r))\ &  \leq \
C\frac{\omega^{x_1}(\Delta(x_o,2r))^2}{\int_{\Delta(x_o,2r)}P(x_1,y)^2\,d\sigma(y)}
\quad\text{(by Theorem \ref{T:Doubling Condition}) } \\
& \leq C
\frac{\omega^{x_1}(\Delta(x_o,r))^2}{\int_{\Delta(x_o,r)}P(x_1,y)^2\,d\sigma(y)}
\ \leq \ C \frac{\left(\int_{\Delta(x_o,r)}P(x_1,y)\,d\sigma(y)\right)^2}{\int_{\Delta(x_o,r)}P(x_1,y)^2\,d\sigma(y)} \\
&\leq \
C\frac{\left(\int_{\Delta(x_o,r)}P(x_1,y)^2\,d\sigma(y)\right)\left(\int_{\Delta(x_o,r)}d\sigma(y)\right)}{\int_{\Delta(x_o,r)}P(x_1,y)^2\,d\sigma(y)}
\ = \ C\ \sigma(\Delta(x_o,r))\ .
\end{align*}

\end{proof}

Our final goal in this section is to study the Dirichlet problem for
sub-Laplacians when the boundary data are in  $L^p$ with respect to
either the measure $\sigma_X$ or the surface measure $\sigma$. We
thus turn to the

\begin{proof}[\textbf{Proof of Theorem \ref{T:Dirichlet}}]

The first step in the proof consists of showing that functions $f\in
L^p(\partial D,d\sigma_X)$ are resolutive for the Dirichlet problem
\eqref{DP}.  In view of Theorem \ref{T:Brelot} it is enough to show
that $f\in L^1(\partial D,d\omega^{x_1})$ for some fixed $x_1\in D$.
This follows from \eqref{kernels} and Proposition
\ref{P:Positivity}, based on the following estimates
\begin{align*}
\int_{\partial  D} |f(y)| \ d\omega^{x_1}(y) &= \int_{\partial  D}
|f(y)| K(x_1,y)\ d\sigma_X(y)
\\
& \leq \left(\int_{\partial D} |f(y)|^p\,d\sigma_X(y)\right)^\frac{1}{p}\left(\int_{\partial D} K(x_1,y)^{p'}\,d\sigma_X(y)\right)^\frac{1}{p'} \\
&\leq C \left(\int_{\partial D}
|f(y)|^p\,d\sigma_X(y)\right)^\frac{1}{p}\ .
\end{align*}

This shows that $L^p(\partial D,d\sigma_X) \subset L^1(\p
D,d\omega^{x_1})$ and therefore, in view of Theorem \ref{T:Brelot},
for each $f\in L^p(\partial D,d\sigma_X)$ the generalized solution
solution $H^D_f$ exists and it is represented by
\begin{equation*}
H^D_f(x) = \int_{\partial D} f(y)\,d\omega^x(y)\ .
\end{equation*}

At this point we invoke Theorem \ref{T:Representation} and obtain
for every $y\in \p D$
\begin{equation}\label{6.7}
N_\alpha(H^D_f)(y)\ \leq\ C\ M_{\omega^{x_1}}(f)(y)\ .
\end{equation}

Moreover, $H^D_f$ converges non-tangentially $d\omega^{x_1}$-$a.e.$
to $f$.  By virtue of Theorems \ref{T:RHGeneral} and
\ref{T:RHGeneral-P}, we also have that $H^D_f$ converges
$d\sigma_X$-a.e. to $f$. To conclude the proof, we need to show that
there exists a constant $C$ depending on $1< p < \infty$, $D$ and
$X$ such that
\[
\|N_\alpha(H^D_f)\|_{L^p(\partial D,d\sigma_X)} \leq C
\|f\|_{L^p(\partial D,d\sigma_X)}\ ,
\]
for every $f\in L^p(\partial D,d\sigma_X)$. In order to accomplish
this we start by proving the following intermediate estimate
\begin{equation}\label{6.8}
\|M_{\omega^{x_1}}(f)\|_{L^p(\partial D,d\sigma_X)} \leq C
\|f\|_{L^p(\partial D,d\sigma_X)}, \quad 1<p\leq\infty\ .
\end{equation}

Since $p>1$, choose $\beta$ so that $0<\beta<p$ and fix $x_1\in D$
as in Theorem \ref{T:RHGeneral}. From \eqref{kernels} and the
reverse H\"older inequality in Theorem \ref{T:RHGeneral} we have
\begin{align*}
& \frac{1}{\omega^{x_1}(\Delta(x_o,r))} \int_{\Delta(x_o,r)} f(y)
d\omega^{x_1}(y)
\\
& \leq
\frac{1}{\omega^{x_1}(\Delta(x_o,r))}\left(\int_{\Delta(x_o,r)}|f(y)|^\beta\,d\sigma_X(y)\right)^\frac{1}{\beta}\left(\int_{\Delta(x_o,r)}
K(x_1,y)^{\beta'}
\,d\sigma_X(y)\right)^\frac{1}{\beta'} \\
& \leq C
\frac{\sigma_X(\Delta(x_o,r))^\frac{1}{\beta'}}{\omega^{x_1}(\Delta(x_o,r))}\left(\frac{1}{\sigma_X(\Delta(x_o,r))}
\int_{\Delta(x_o,r)} K(x_1,y)\,d\sigma_X(y)\right)\,
\|f\|_{L^\beta(\Delta(x_o,r), d\sigma_X)} \\
& = C \left(\frac{1}{\sigma_X(\Delta(x_o,r))} \int_{\Delta(x_o,r)}
|f(y)|^\beta\,d\sigma_X(y)\right)^\frac{1}{\beta}.
\end{align*}

If we now fix $y\in\partial D$ and take the supremum on both sides
of the latter inequality by integrating on every surface ball
$\Delta(x_o,r)$ containing $y$, we obtain
\begin{equation}\label{6.9}
M_{\omega^{x_1}} (f)(y)\ \leq\ C\
M_{\sigma_X}(|f|^\beta)(y)^\frac{1}{\beta}\ .
\end{equation}

By the doubling condition \eqref{deltadoubling} in Corollary
\ref{C:ahlfors} we know that the space $(\partial
D,d(x,y),d\sigma_X)$ is a space of homogeneous type. This allows us
to use the results in \cite{CW} and invoke the continuity in
$L^p(\partial D,d\sigma_X)$ of the Hardy-Littlewood maximal function
obtaining
\begin{align*}
\|M_{\omega^{x_1}}\,f\|^p_{L^p(\partial D,d\sigma_X)}\ & \leq\ C\
\|M_{\sigma_X}(|f|^\beta)^\frac{1}{\beta}\|^p_{L^p(\partial
D,d\sigma_X)}
\\
& =\ \int_{\partial D}
M_{\sigma_X}(|f|^\beta)^\frac{p}{\beta}\,d\sigma_X\ \leq\ C\
\int_{\partial D} |f|^p \,d\sigma_X\ =\ C\ \|f\|^p_{L^p(\partial
D,d\sigma_X)}\ ,
\end{align*}
which proves \eqref{6.8}.  The conclusion of the theorem follows at
once from \eqref{6.7} and \eqref{6.8}.

\end{proof}

Finally, we give the

\begin{proof}[\textbf{Proof of Theorem \ref{T:Dirichletsigma}}]
If the domain $D$ is a $\sigma-ADP_X$-domain, instead of a
$ADP_X$-domain, then using Theorem \ref{T:RHGeneral-P} instead of
Theorem \ref{T:RHGeneral} we can establish the solvability of the
Dirichlet problem for boundary data in $L^p$ with respect to the
standard surface measure. Since the proof of the following result is
similar to that of Theorem \ref{T:Dirichlet} (except that one needs
to use the second identity $d\omega^{x_1} = P(x_1,\cdot) d\sigma$ in
\eqref{kernels} and also Theorem \ref{T:Doubling-s}), we leave the
details to the interested reader.

\end{proof}

\section{\textbf{A survey of examples and some open problems
}}\label{S:ex}

In the study of boundary value problems for sub-Laplacians one faces two type of difficulties. On one side there is the elusive nature of the underlying sub-Riemannian geometry which makes most of the classical results hard to establish. On the other hand, any new result requires a detailed analysis of geometrically significant examples, without which the result itself would be devoid of meaning. This task is not easy, the difficulties being mostly related to the presence of characteristic points. In this perspective it becomes important to provide examples of $ADP_X$-domains. In this section we recall
some of the pertinent results from recent literature.

\medskip

\paragraph{\bf Examples of $NTA_X$ domains}

In the classical setting Lipschitz and even $BMO_1$ domains are
$NTA$ domains \cite{JK}. In a Carnot-Carath\'eodory space it is
considerably harder to produce examples of such domains, due to the
presence of characteristic points on the boundary. In \cite{CG} it
was proved that in a Carnot group of step two every $C^{1,1}$ domain
with cylindrical symmetry at characteristic points is $NTA_X$. In
particular, the pseudo-balls in the natural gauge of such groups are
$NTA_X$. This result was subsequently generalized by Monti and
Morbidelli \cite{MM}.

\begin{thrm}\label{T:MM}
In a Carnot group of step $r=2$ every bounded (Euclidean) $C^{1,1}$
domain is $NTA_X$ with respect to the Carnot-Carath\'eodory metric
associated to a system $X$ of generators of the Lie algebra.
\end{thrm}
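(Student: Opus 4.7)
The plan is to verify the three defining conditions of Definition \ref{D:NTA}, splitting the analysis at each boundary point according to whether the point is characteristic or not. The heart of the argument is the construction of interior and exterior corkscrew balls of proportional radius; once these are in place, the Harnack chain condition will follow from a standard concatenation of nontangential metric balls using the connectivity properties of the CC distance.

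First I would handle non-characteristic points. If $x_o \in \partial D \setminus \Sigma$, then by Definition \ref{D:char} some horizontal generator $X_j(x_o)$ has a nonzero component transverse to $\partial D$. Since $D$ is $C^{1,1}$, this transversality persists in a full Euclidean neighborhood of $x_o$ with uniform bounds, and the integral curve of $\pm X_j$ issued from $x_o$ is, on a short time interval, a sub-unit curve penetrating $D$ (resp. $D^c$) to a Euclidean depth comparable to its CC length. The inequality \eqref{CCeucl} together with the doubling estimate \eqref{dc} then gives interior and exterior corkscrew points $A_r^{\pm}(x_o)$ with the required parameters, all of them controlled uniformly on compact subsets of $\partial D \setminus \Sigma$.

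The serious work is at characteristic points $x_o \in \Sigma$, and this is where the step-two assumption and the $C^{1,1}$ regularity are both exploited. Using left translations and rotations in the horizontal layer, I would reduce to the case where $x_o = e$, where the defining function $\rho$ of $D$ vanishes to second (Euclidean) order along the horizontal layer, and where the tangent to $\partial D$ is spanned by the horizontal subspace together with some subset of the second layer. In a step-two Carnot group the group dilations $\delta_\lambda$ act with weights $1$ on $V_1$ and $2$ on $V_2$, and the $C^{1,1}$ regularity provides an exact osculating step-two polynomial $\rho_o$ at $x_o$: two-sided Taylor bounds $|\rho - \rho_o| \le C \|(z,t)\|_{G}^{2+\epsilon}$ in anisotropic gauge norm, with the key feature that the error is genuinely subleading because only first- and second-order horizontal derivatives enter and they are Lipschitz. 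This turns the corkscrew problem into one on the model half-space-like domain $\{\rho_o < 0\}$, whose boundary is a quadric scaling invariantly under $\{\delta_\lambda\}$ and which admits interior and exterior $\delta_\lambda$-invariant corkscrew points by an explicit construction. Transferring back via the osculation estimate yields corkscrew points for $D$ of the right size, uniformly in $x_o \in \Sigma$ by compactness of $\Sigma$ and the uniform $C^{1,1}$ character of $\partial D$.

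With both corkscrew conditions established, the Harnack chain condition follows by a relatively standard construction: given $x, y \in D$ with $d(x,\partial D), d(y,\partial D) > \epsilon$ and $d(x,y) < C\epsilon$, one connects $x$ to $y$ by a sub-unit curve whose distance to $\partial D$ is kept comparable to $\epsilon$ by routing it through a bounded number of corkscrew balls $B_d(A_\epsilon(z_i), \epsilon/M)$ associated to a chain of boundary base-points $z_i$ spaced at distance $\sim \epsilon$; the number of balls needed depends only on $C$ thanks to doubling. The main obstacle, as indicated, is the construction at characteristic points: the domain is intrinsically non-Lipschitz there (in fact cuspidal, as recalled in the introduction), so one cannot simply quote the classical Jerison--Kenig argument, and the passage from the osculating model to $D$ itself requires the precise two-sided quadratic control that is exactly what $C^{1,1}$ in step two provides, and which would fail in higher step without additional hypotheses.
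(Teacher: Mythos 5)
The paper does not contain a proof of Theorem~\ref{T:MM}; the result is quoted from Monti and Morbidelli \cite{MM}, generalizing the earlier special case in \cite{CG} of $C^{1,1}$ domains with cylindrical symmetry at characteristic points. There is therefore no in-paper argument to compare your proposal against, and it must be judged on its own terms.

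There is a genuine gap, and it sits exactly where you locate the ``serious work.'' Near a characteristic point $x_o$, writing the defining function $\rho$ in exponential coordinates adapted to the stratification $\bg = V_1 \oplus V_2$, the $C^{1,1}$ hypothesis gives only
\[
\rho(z,t)\ =\ \langle a,\,t\rangle\ +\ O(|z|^2 + |t|^2),\qquad a = \nabla_{V_2}\rho(x_o)\neq 0,
\]
since the horizontal part of $\nabla\rho$ vanishes on $\Sigma$. The leading term $\langle a,t\rangle$ has gauge degree $2$, but the $O(|z|^2)$ remainder coming from Lipschitzness of $\nabla\rho$ \emph{also} has gauge degree $2$. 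Your claimed bound $|\rho - \rho_o|\le C\|(z,t)\|_G^{2+\epsilon}$ is therefore false: $C^{1,1}$ yields no $\epsilon$-gain (even $C^2$ would only give $o(\|\cdot\|_G^2)$, not a power improvement), and one also does not get a well-defined second-order Taylor polynomial from $C^{1,1}$ alone, so there is no ``exact osculating step-two polynomial'' to reduce to. Consequently the proposed reduction to a $\delta_\lambda$-invariant model domain followed by a subleading-perturbation transfer cannot work. What does work is a more direct absorption argument: translate $x_o$ to $e$, move a Euclidean amount $s$ along $-a$ in the center $V_2$ (a point at Carnot--Carath\'eodory distance $\sim\sqrt{s}$ from $x_o$), and show that the $d$-ball of radius $c\sqrt{s}$ around it lies in $D$ (resp.\ $D^c$) by choosing the constant $c$ small relative to $|a|$ and the Lipschitz norm of $\nabla\rho$, so that $\langle a,t\rangle$ dominates the $O(|z|^2+|t+s|)$ error over that ball. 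The gain comes from the small constant $c$, not from a higher power of the gauge.

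A second, lesser issue: you describe the Harnack chain condition as a ``relatively standard concatenation'' once the corkscrews are in place. In a Carnot--Carath\'eodory space this is not automatic. The interior and exterior corkscrew conditions do not imply the Harnack chain condition, and near $\Sigma$, where the domain is intrinsically cuspidal, the construction of uniformly bounded chains of nontangential $d$-balls joining two comparably deep interior points is a substantial part of the work in \cite{MM} and \cite{CG}. A complete proof must show explicitly how horizontal moves and commutator (vertical) moves interact as one routes the chain past the characteristic set; merely invoking doubling and a generic concatenation does not settle it.
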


\medskip

\paragraph{\bf Examples of domains satisfying the uniform outer $X$-ball property.}
The following result provides a general class of domains satisfying
the uniform $X$-ball condition, see \cite{LU} and \cite{CGN2}. We
recall the following definition from \cite{CGN2}. Given a Carnot
group $\bG$, with Lie algebra $\bg$, a set $A\subset \bG$ is called
\emph{convex}, if $\exp^{-1}(A)$ is a convex subset of $\bg$.

\begin{thrm}\label{T:Convex}
Let $\bG$ be a step two Carnot group of Heisenberg type with a given
orthogonal set $X=\{X_1,...,X_m\}$ of generators of its Lie algebra,
and let $D\subset \bG$ be a convex set. For every $R>0$ and $x_o\in
\partial D$ there exists a $X$-ball $B(x_o,R)$ such that \eqref{ob}
is satisfied. From this it follows that every bounded convex subset
of $\bG$ satisfies the uniform outer $X$-ball condition. In
particular, this is true for the gauge balls.
\end{thrm}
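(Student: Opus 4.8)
The plan is to identify the $X$-balls with the gauge balls of the Kaplan gauge, to use the fact that in a step-two group left translations are affine maps in exponential coordinates, and then to produce the required tangent ball by a Brouwer fixed-point argument. First I would recall that in an $H$-type group the positive fundamental solution of $\mathcal L=\sum_j X_j^2$ is $\Gamma(x,y)=c_Q\,N(x^{-1}y)^{2-Q}$, where, in exponential coordinates adapted to the stratification $\bg=V_1\oplus V_2$, the Kaplan gauge satisfies $N(\xi)^4=|\xi_1|^4+\kappa|\xi_2|^2$ (with $\kappa>0$ a structural constant) and $Q=\dim V_1+2\dim V_2$. Since $\Lambda(x,r)=c\,r^Q$ for a Carnot group, one computes $E(x,r)=c'r^{Q-2}$, hence $B(x,r)=B_N(x,\rho)$ with $\rho=\rho(r)=c''r$, where $B_N(x,\rho)=\{y:N(x^{-1}y)<\rho\}$, and $r\mapsto\rho(r)$ is an increasing bijection of $(0,\infty)$. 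So it suffices to prove: for every $x_o\in\partial D$ and every $\rho>0$ there is a point $x_1$ with $N(x_1^{-1}x_o)=\rho$ (i.e.\ $x_o\in\partial B_N(x_1,\rho)$) and $B_N(x_1,\rho)\cap D=\varnothing$.

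Next I would record two structural facts. From the Baker--Campbell--Hausdorff formula, in exponential coordinates a left translation has the form $L_g(z)=(g_1+z_1,\ g_2+z_2+\tfrac12[g_1,z_1])$, which is \emph{affine} in $z$ because the only nonlinear term is bilinear, hence linear in the $V_1$-slot once $g$ is fixed; thus $L_g$ carries hyperplanes to hyperplanes, half-spaces to half-spaces and convex sets to convex sets, and the gauge balls form a left-invariant family, $L_gB_N(a,\rho)=B_N(ga,\rho)$. Secondly, $N^4$ is a polynomial whose level sphere $\{N=\rho\}$ contains no line segment — a segment in a $V_1$-direction would make $s\mapsto N(\xi_0+sv)^4$ a nonconstant polynomial of degree $4$, one in a $V_2$-direction a nonconstant polynomial of degree $2$ — so $\overline{B_N(e,\rho)}$ is a strictly convex compact body with $e$ in its interior; by the first fact $\overline{B_N(x_1,\rho)}$ is a convex body for every center $x_1$. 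It is essential here that the $X$-balls are the smooth level sets of $\Gamma$ and not the nonsmooth Carnot--Carath\'eodory balls.

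Then I would reduce to a half-space. Because $D$ is convex in exponential coordinates and open, it has a supporting hyperplane at $x_o$, so $D$ lies in an open half-space $H$ with $x_o\in\partial H$, and it suffices to place $B_N(x_1,\rho)$ tangent to $\partial H$ at $x_o$ on the complementary side. Applying the affine map $L_{x_o^{-1}}$, I may assume $x_o=e$ and $H=\{\ell<0\}$ for some nonzero linear functional $\ell$. For a center of the form $(\zeta^*)^{-1}$ with $\zeta^*\in\partial B_N(e,\rho)$ one has $e\in\partial B_N((\zeta^*)^{-1},\rho)$ automatically; moreover $\overline{B_N((\zeta^*)^{-1},\rho)}=L_{(\zeta^*)^{-1}}(\overline{B_N(e,\rho)})$ and $\ell\circ L_{(\zeta^*)^{-1}}$ is affine with value $\ell(e)=0$ at $p=\zeta^*$, so the inclusion $\overline{B_N((\zeta^*)^{-1},\rho)}\subset\{\ell\ge 0\}$ — which yields $B_N((\zeta^*)^{-1},\rho)\cap D=\varnothing$ — holds precisely when $\zeta^*$ minimizes $p\mapsto\ell(L_{(\zeta^*)^{-1}}(p))$ over $\overline{B_N(e,\rho)}$.

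The main obstacle is producing such a $\zeta^*$, and I would do it by a fixed-point argument: define a self-map $m$ of $\overline{B_N(e,\rho)}$ by letting $m(\zeta)$ be the minimizer over $\overline{B_N(e,\rho)}$ of $p\mapsto\ell(L_{\zeta^{-1}}(p))$. This minimizer is unique because $\ell\circ L_{\zeta^{-1}}$ is a nonconstant affine function while $\overline{B_N(e,\rho)}$ has no boundary segments, and $m$ is continuous because the coefficients of $\ell\circ L_{\zeta^{-1}}$ depend continuously on $\zeta$ and minimizers of nonconstant affine functions over a strictly convex body depend continuously on the function. Since $\overline{B_N(e,\rho)}$ is homeomorphic to a closed Euclidean ball, Brouwer's theorem yields a fixed point $\zeta^*=m(\zeta^*)\in\partial B_N(e,\rho)$, and then $\ell(L_{(\zeta^*)^{-1}}(p))\ge\ell(L_{(\zeta^*)^{-1}}(\zeta^*))=0$ for all $p\in\overline{B_N(e,\rho)}$, which is exactly the inclusion needed above; translating back by $L_{x_o}$ and setting $x_1=x_o(\zeta^*)^{-1}$ completes the construction. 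As $\rho>0$ and $x_o\in\partial D$ are arbitrary, one even obtains the uniform outer $X$-ball condition with $R_o=\infty$, and the gauge balls are covered since they are themselves convex by the second step. What makes this step genuinely nontrivial is that, unlike the Euclidean case where one simply translates a fixed ball, the admissible balls here are \emph{group}-translates of one body and group translations do not preserve Euclidean normals, so a ball cannot just be slid into tangent position — the center must be found, and it is precisely the affineness of left translations (a step-two phenomenon) that keeps $m$ well behaved enough for Brouwer to apply. An alternative to this argument is to solve for $\zeta^*$ explicitly using the $H$-type identities $J_z^2=-|z|^2\mathrm{Id}$, which reduces the problem to a scalar quadratic equation with a root in $(0,\rho^2)$.
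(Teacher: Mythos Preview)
Your argument is correct. The identification of $X$-balls with Kaplan gauge balls, the affineness of left translations in step two, the strict convexity of the gauge ball, the reduction to a half-space, and the Brouwer fixed-point step all go through as you describe; in particular the unique minimizer of a nonconstant affine functional over a strictly convex body depends continuously on the functional, so your map $m$ is continuous and Brouwer applies. One cosmetic point: you need not assume $D$ is open, since a supporting hyperplane exists at any boundary point of a convex set and your construction actually gives $B_N((\zeta^*)^{-1},\rho)\subset\{\ell>0\}$ (the minimum is attained only at the boundary point $\zeta^*$), which is disjoint from the closed half-space containing $D$.

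The route, however, differs from the one the paper points to. The paper does not prove the theorem in the text but refers to \cite{LU} and \cite{CGN2}, and it indicates explicitly that in an $H$-type group ``the exterior $X$-balls \dots\ can be constructed explicitly by finding the coordinates of their center through the solution of a linear system and a second order equation,'' i.e.\ exactly the alternative you mention in your last sentence using the $J_z^2=-|z|^2\,\mathrm{Id}$ identities. So the paper's intended proof is the explicit algebraic construction, while your main argument is topological. Your Brouwer approach buys robustness: it uses only that left translations are affine (step two) and that the $X$-ball at the identity is a strictly convex body, so it would apply verbatim in any step-two Carnot group whose $X$-balls are strictly convex, without any parity restriction on the dimension. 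By contrast, the paper's own topological extension beyond $H$-type (the ``odd-dimensional'' results later in the same section) relies on the non-existence of nonvanishing tangent fields on even-dimensional spheres, hence needs $\dim\bg$ odd; it only requires convexity, not strict convexity, of the $X$-balls. The explicit $H$-type computation, on the other hand, gives the center in closed form and makes the dependence on $x_o$ and $R$ transparent.
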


We mention explicitly that, thanks to the results in \cite{K}, in
every group of Heisenberg type with an orthogonal system $X$ of
generators of $\bg = V_1 \oplus V_2$, the fundamental solution of
the sub-Laplacian associated with $X$ is given by \[ \Gamma(x,y)\ =\
\frac{C(\bG)}{N(x^{-1} y)^{Q-2}}\ , \] where $Q = dim(V_1) + 2\
dim(V_2)$ is the homogeneous dimension of $\bG$, and \[ N(x,y)\ =\
(|x|^4 + 16 |y|^2)^{1/4}\ , \] is the non-isotropic Kaplan's gauge.
Kaplan's formula for the fundamental solution shows, in particular,
that in a group of Heisenberg type the $X$-balls coincide with the
gauge pseudo-balls (incidentally, in this setting the gauge defines
an actual distance, see \cite{Cy}). As a consequence of this fact,
the exterior $X$-balls in Theorem \ref{T:Convex} can be constructed
explicitly by finding the coordinates of their center through the
solution of a linear system and a second order equation.

\medskip

\paragraph{\bf Ahlfors type estimates  for the perimeter measure.}
Recall that if $D\subset \Rn$ is a standard $C^1$, or even a
Lipschitz domain, then there exist positive constants  $\alpha,
\beta$ and $R_o$ depending only on $n$ and on the Lipschitz
character of $D$, such that for every $x_o\in \p D$, and every
$0<r<R_o$ one has \begin{equation}\label{ld} \alpha\ r^{n-1}\ \leq\
\sigma(\p D \cap B(x_o,r))\ =\ P(D;B(x_o,r))\ \leq\ \beta\ r^{n-1}\
.
\end{equation}

Here, we have denoted by $P(D,B(x_o,r))$ the perimeter of $D$ in
$B(x_o,r)$ according to De Giorgi. Estimates such as \eqref{ld} are
referred to as the $1$-Ahlfors property of surface measure. They
play a pervasive role in Euclidean analysis especially in connection
with geometric measure theory and its applications to the study of
boundary value problems. In what follows we recall some basic
regularity results for the $X$-perimeter measure which generalize
\eqref{ld} and play a central role in the applications of our
results. We have mentioned in the introduction that from the
standpoint of the Carnot-Carath\'eodory geometry, Euclidean
smoothness of a domain is of no significance. Even for $C^{\infty}$
domains one should not, therefore, expect $1$-Ahlfors regularity in
general, see \cite{CG2} for various examples. For this reason we
introduce the notion of type of a boundary point, and recall a
result showing that if a domain possesses such property, then the
corresponding $X$-perimeter satisfies Ahlfors regularity properties
with respect to the metric balls.

Given a system of $C^\infty$ vector fields $X = \{X_1,...,X_m\}$
satisfying \eqref{frc}, consider a bounded $C^1$ domain $D \subset
\Rn$ with an outer normal $\bN$. We say that a point $x_o\in \p D$
is \emph{of type $\leq 2$} if either there exists $j_o\in
\{1,...,m\}$ such that $<X_{j_o}(x_o),\bN(x_o)> \not= 0$ (i.e.,
$x_o$ is non-characteristic, see Definition \ref{D:char}), or there
exist indices $i_o, j_o\in \{1,...,m\}$ such that
$<[X_{i_o},X_{j_o}](x_o),\bN(x_o)> \not= 0$. We say that $D$ is of
type $\leq 2$ if every point $x_o\in \p D$ is of type $\le 2$. We
stress that when the system has rank $r \le 2$, then every $C^1$
domain is automatically of type $\leq 2$. An important instance is
given by a Carnot group of step $r = 2$. In such a group, every
bounded $C^1$ domain is of type $\leq 2$. The following theorem is
 from \cite{CG}.

\begin{thrm}\label{T:MT-pointwise}
Consider a bounded $C^{1,1}$ domain $D \subset \Rn$. For every point
$x_o\in \p D$ of type $\leq 2$ there exist $A = A(D,x_o)>0$ and $R_o
= R_o(D, x_o)>0$, depending continuously on $x_o$, such that for any
$0<r<R_o$ one has
\begin{equation}\label{ea}
\sigma_X(\Delta(x_o,r))\ \leq\
\left(\underset{y\in\Delta(x_o,r)}{max}\;W(y)\right)\,\sigma(\Delta(x_o,r))\
\leq \ A\ \frac{|B_d(x_o,r)|}{r}\ .
\end{equation}
The same conclusion holds if $\p D$ is real analytic in a
neighborhood of $x_o$, regardless of the type of $x_o$.

If $D$ is a bounded $C^{2}$ domain, then for every point $x_o\in \p
D$ of type  $\leq 2$ there exist $A = A(D,x_o)>0$ and $R_o = R_o(D,
x_o)>0$, depending continuously on $x_o$, such that for any
$0<r<R_o$, one has
\begin{equation}\label{eb}
\sigma_X(\Delta(x_o,r))\ \geq\ A^{-1}\ \frac{|B_d(x_o,r)|}{r}\ .
\end{equation}
\end{thrm}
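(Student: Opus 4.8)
Although the estimates \eqref{ea}--\eqref{eb} are established in \cite{CG}, let me outline the line of argument one would follow. The statement is local, so the plan is to fix $x_o\in\p D$, rotate so that $\bN(x_o)$ points along $e_n$, and represent $D$ near $x_o=0$ as the subgraph $\{x_n<\varphi(x')\}$ of a $C^{1,1}$ (resp.\ $C^2$, resp.\ real-analytic) function $\varphi$ with $\varphi(0)=0$, $\nabla\varphi(0)=0$. Writing $X_j=\sum_k a_{jk}(x)\p_{x_k}$, one has on the orthogonal projection $\Delta'(x_o,r)$ of $\Delta(x_o,r)$ onto $\{x_n=0\}$
\[
d\sigma_X\ =\ \Psi(x')\,dx'\ ,\qquad \Psi(x')\ =\ \Big(\sum_{j=1}^m\Big(\sum_{k<n}a_{jk}\,\p_{x_k}\varphi\ -\ a_{jn}\Big)^2\Big)^{1/2},
\]
with $a_{jk}$ evaluated at $(x',\varphi(x'))$, and $W$ comparable to $\Psi$ for $r$ small. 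The problem is thereby reduced to estimating the Euclidean size of $\Delta'(x_o,r)$ in each coordinate direction, the quantity $\max_{\Delta'}\Psi$, and $|B_d(x_o,r)|\asymp\Lambda(x_o,r)$ from Theorem \ref{T:db}. The left inequality in \eqref{ea} is trivial since $\sigma_X(\Delta(x_o,r))=\int_{\Delta(x_o,r)}W\,d\sigma\le\big(\max_{\Delta(x_o,r)}W\big)\sigma(\Delta(x_o,r))$, so the real content is the majorization by $A\,|B_d(x_o,r)|/r$.

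If $x_o$ is non-characteristic, then $W(x_o)>0$, so $W$ is pinched between two positive constants on a small $\Delta(x_o,R_o)$; moreover some $X_{j_o}$ is transverse to $\p D$, the direction $\bN(x_o)$ is attained within distance $\asymp r$, and one checks directly that $\sigma(\Delta(x_o,r))\asymp\Lambda(x_o,r)/r\asymp|B_d(x_o,r)|/r$, which yields both \eqref{ea} and \eqref{eb}. The delicate case is a characteristic point of type $\le2$: here $W(x_o)=0$ and one must quantify how fast $W$ vanishes along $\p D$. This is precisely where the hypothesis enters: the condition $<[X_{i_o},X_{j_o}](x_o),\bN(x_o)>\neq0$ means that, in coordinates osculating the intrinsic geometry at $x_o$ (an approximating nilpotent/Grushin normal form, in the spirit of the constructions underlying \cite{NSW}), the anisotropically rescaled copies of $\p D$ near $x_o$ converge, as $r\to0^+$, to a fixed non-degenerate model hypersurface on which the corresponding model of $W$ is homogeneous of degree one and not identically zero. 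A compactness-scaling argument then gives $\big(\max_{\Delta(x_o,r)}W\big)\sigma(\Delta(x_o,r))\le A\,\Lambda(x_o,r)/r\asymp|B_d(x_o,r)|/r$, and the lower bound \eqref{eb} drops out of the same analysis because the model perimeter is strictly positive. The constants $A,R_o$ can be taken continuous in $x_o$ because the nondegeneracy ingredients ($\bN(x_o)$ and the brackets $[X_i,X_j](x_o)$) vary continuously while the remaining geometric quantities are controlled by a local $C^{1,1}$-bound on $\p D$ uniform near $x_o$, because $\Lambda(x_o,r)$ is smooth in $x_o$ for fixed $r$, and because type $\le2$ is an open condition. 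For a real-analytic boundary the type may exceed $2$; one then uses a {\L}ojasiewicz-type inequality for the real-analytic function $W|_{\p D}$ (which vanishes to finite order at $x_o$ since, by H\"ormander's condition, $\p D$ cannot be tangent to the horizontal distribution to infinite order) together with the stratified structure of the characteristic variety $\Sigma$ to obtain \eqref{ea}, and \eqref{eb} when $\p D\in C^2$.

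A slicker route to \eqref{eb} in the $C^2$ case is to write $\sigma_X(\Delta(x_o,r))=P_X(D;B_d(x_o,r))$ by \eqref{equiv} and invoke the relative isoperimetric inequality in Carnot-Carath\'eodory spaces from \cite{GN1}: since every metric ball is a Poincar\'e-Sobolev domain,
\[
\min\{|D\cap B_d(x_o,r)|,\,|D^c\cap B_d(x_o,r)|\}^{\frac{Q-1}{Q}}\ \le\ C_{iso}\ \frac{diam\,B_d(x_o,r)}{|B_d(x_o,r)|^{1/Q}}\ P_X(D;B_d(x_o,r)),
\]
so it is enough to know that a $C^2$ domain has two-sided volume density at $x_o$, namely $|D\cap B_d(x_o,r)|\asymp|D^c\cap B_d(x_o,r)|\asymp|B_d(x_o,r)|$ uniformly as $r\to0$, even at characteristic points. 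This one reads off the local graph description, and here the $C^2$ regularity is convenient in order to keep the density constant, hence $A$, continuous in $x_o$. Inserting this and $diam\,B_d(x_o,r)\le2r$ gives $P_X(D;B_d(x_o,r))\ge A^{-1}|B_d(x_o,r)|/r$.

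I expect the main obstacle to be the characteristic case of the upper bound \eqref{ea}: for a general H\"ormander system, rather than a Carnot group, one must build the correct osculating coordinates near $x_o$, control the error terms in the resulting normal form \emph{uniformly for $x_o$ in a neighborhood}, and verify that the blow-up limit of $\p D$ is non-degenerate exactly under the type-$\le2$ assumption; the real-analytic sub-case, where the clean homogeneity is no longer available, calls for a separate stratification argument.
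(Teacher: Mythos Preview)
The paper does not prove Theorem \ref{T:MT-pointwise}: it is quoted from the literature (the sentence immediately preceding the statement reads ``The following theorem is from \cite{CG}'', and the paragraph following the theorem attributes the various pieces to \cite{DGN}, \cite{DGN2}, \cite{MontiMorbidelli}). There is therefore no proof in the paper to compare your proposal against.

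That said, two remarks on your outline. Your ``slicker route'' to \eqref{eb} via the relative isoperimetric inequality is exactly the mechanism the paper itself uses in Theorem \ref{T:ndeg} to obtain the lower $1$-Ahlfors bound under the $NTA_X$ hypothesis; the only issue is the two-sided density $|D\cap B_d(x_o,r)|\asymp|D^c\cap B_d(x_o,r)|\asymp|B_d(x_o,r)|$, which in Theorem \ref{T:ndeg} comes from the corkscrew conditions, whereas you are asserting it for a general $C^2$ domain at a type-$\le2$ point, something that still has to be checked (it is true, but it is not a triviality at characteristic points and is part of what the cited references establish). For the upper bound \eqref{ea} your scaling/blow-up heuristic is in the right spirit, but as written it is a sketch rather than a proof: the construction of osculating coordinates adapted to a general H\"ormander system, the uniform control of the error terms, and the identification of the correct non-degenerate limit hypersurface are precisely the technical core of \cite{CG2}, and your proposal does not supply them. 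The real-analytic case via a \L ojasiewicz-type argument is plausible but again not carried out. Since the paper treats the theorem as a black box, your outline is acceptable as motivation, but it should not be presented as a self-contained proof.
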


We mention that in Carnot groups of step $r=2$ the upper $1$-Ahlfors
estimate \eqref{ea} was first proved in \cite{DGN}, whereas for
vector fields of rank $r=2$ the lower estimate \eqref{eb} was first
established in \cite{DGN2}. In the setting of H\"ormander vector
fields, upper Ahlfors estimates for the surface measure $\sigma$
away from the characteristic set were first established in
\cite{MontiMorbidelli}. As a consequence of Theorem
\ref{T:MT-pointwise} we obtain the following

\begin{cor}
Let $X= \{X_1,...,X_m\}$ be a set of $C^\infty$ vector fields in
$\Rn$ satisfying H\"ormander's condition with rank two, i.e.  such
that \[ span\{X_1,...,X_m,[X_1,X_2],....,[X_{m-1},X_m]\}\ =\ \Rn\ ,
\]
at every point. For every bounded $C^{1,1}$ domain $D \subset \Rn$
the horizontal perimeter measure $\sigma_X$ is a $1$-Ahlfors
measure. Moreover the stronger estimate \eqref{ea} holds.
\end{cor}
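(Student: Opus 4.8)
The plan is to deduce this corollary from the pointwise Ahlfors estimates of Theorem \ref{T:MT-pointwise} together with a compactness argument on $\partial D$. The essential point is the observation—already recorded in the paragraph preceding Theorem \ref{T:MT-pointwise}—that a rank-two system forces every $C^1$, hence every $C^{1,1}$, domain to be \emph{of type $\le 2$}: if $x_o\in\partial D$ is characteristic in the sense of Definition \ref{D:char}, so that $<X_j(x_o),\bN(x_o)>\ =\ 0$ for $j=1,\dots,m$, then, since the $X_j(x_o)$ and the first-order commutators $[X_i,X_j](x_o)$ span $\Rn$ by hypothesis, we cannot also have $<[X_i,X_j](x_o),\bN(x_o)>\ =\ 0$ for all $i,j$; thus some $<[X_{i_o},X_{j_o}](x_o),\bN(x_o)>\ \ne\ 0$, i.e.\ $x_o$ is of type $\le 2$. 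Consequently Theorem \ref{T:MT-pointwise} is applicable at \emph{every} boundary point.

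The second step globalizes the pointwise estimates. Since $D$ is bounded and $C^{1,1}$, the set $\partial D$ is compact. By the first part of Theorem \ref{T:MT-pointwise} each $x_o\in\partial D$ admits $A(D,x_o)>0$ and $R_o(D,x_o)>0$, depending continuously on $x_o$, for which \eqref{ea} holds whenever $0<r<R_o(D,x_o)$. Setting $A:=\sup_{x_o\in\partial D}A(D,x_o)$ and $R_o:=\inf_{x_o\in\partial D}R_o(D,x_o)$, continuity and compactness give $A<\infty$ and $R_o>0$, and with these uniform constants \eqref{ea} holds for all $x_o\in\partial D$ and $0<r<R_o$. Recalling that $d\sigma_X=W\,d\sigma$, so that $\sigma_X(\Delta(x_o,r))\le(\max_{y\in\Delta(x_o,r)}W(y))\,\sigma(\Delta(x_o,r))$, this is precisely the ``stronger estimate \eqref{ea}'' and it yields the upper $1$-Ahlfors bound $\sigma_X(\Delta(x_o,r))\le A\,|B_d(x_o,r)|/r$. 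The matching lower bound is obtained identically from the second part of Theorem \ref{T:MT-pointwise}: a uniform $\tilde A>0$ and $\tilde R_o>0$ are extracted by the same compactness argument, and shrinking the radius to $\min\{R_o,\tilde R_o\}$ one concludes that $\sigma_X$ is $1$-Ahlfors.

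The one genuinely delicate issue is the regularity mismatch in the lower bound: the second part of Theorem \ref{T:MT-pointwise} is stated for $C^2$ domains, whereas here only $C^{1,1}$ is assumed. I would bridge this exactly as in the proof of Theorem \ref{T:ndeg}: apply the relative isoperimetric inequality of \cite{GN1} on each metric ball $B_d(x_o,r)$ to reduce the lower bound for $P_X(D;B_d(x_o,r))=\sigma_X(\Delta(x_o,r))$ to the two-sided density estimate $\min\{|D\cap B_d(x_o,r)|,\,|D^c\cap B_d(x_o,r)|\}\ \ge\ c\,|B_d(x_o,r)|$, uniformly in $x_o\in\partial D$ and $0<r<R_o$. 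Establishing this density estimate for a merely $C^{1,1}$ boundary—particularly near characteristic points, where the metric ball is strongly anisotropic and one must still fit a definite fraction of it inside $D$ and inside $D^c$—is where I expect the real work to lie; away from $\Sigma$ it is elementary, while near $\Sigma$ one invokes, for a rank-two system, the box-structure of $B_d(x_o,r)$ together with the Euclidean $C^{1,1}$ regularity of $\partial D$ to position a fixed proportion of the box on either side of the boundary. Everything else is a routine combination of Theorem \ref{T:MT-pointwise} with compactness.
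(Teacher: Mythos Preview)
The paper gives no proof of this corollary; it is stated as an immediate consequence of Theorem~\ref{T:MT-pointwise}, with the preceding paragraph already recording that a rank-two system makes every $C^1$ domain of type $\le 2$. Your argument---observe that every boundary point is of type $\le 2$, then invoke Theorem~\ref{T:MT-pointwise} pointwise and pass to uniform constants by compactness of $\partial D$ and the continuous dependence of $A(D,x_o)$, $R_o(D,x_o)$ on $x_o$---is exactly the intended reading, just made explicit.

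You are also right to flag the $C^{1,1}$ versus $C^2$ discrepancy in the lower bound: as written, the second part of Theorem~\ref{T:MT-pointwise} asks for $C^2$, while the corollary claims the full $1$-Ahlfors property for $C^{1,1}$ domains. The paper does not address this; it simply says ``as a consequence of Theorem~\ref{T:MT-pointwise}.'' Your proposed repair via the relative isoperimetric inequality plus a two-sided density estimate is a sound strategy, but note that the density estimate you need is not free: in this paper it is obtained only under the $NTA_X$ hypothesis (Lemma~\ref{L:cork}), and the general $NTA_X$ property for $C^{1,1}$ domains is known for step-two Carnot groups (Theorem~\ref{T:MM}) rather than for arbitrary rank-two H\"ormander systems. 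So your fix, while plausible, imports an ingredient the paper does not supply in this generality. The honest reading is either that the paper tacitly intends $C^2$ for the lower half of the $1$-Ahlfors claim, or that it is relying on the cited source \cite{CG2} for a sharper version of \eqref{eb} than is quoted here; in either case your upper-bound argument and the stronger estimate \eqref{ea} go through exactly as you wrote.
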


As a consequence of the results listed above we obtain the following
theorem which provides a large class of domains satisfying the
$ADP_X$ or even the stronger $\sigma-ADP_X$ property.

\begin{thrm}\label{T:ADPXex}
Let $\bG$ be a Carnot group of Heisenberg type and denote by $X =
\{X_1,...,X_m\}$ a set of generators of its Lie algebra. Every
$C^\infty$ convex bounded domain $D\subset \bG$ is a $ADP_X$ and
also a $\sigma-ADP_X$ domain. In particular, the gauge balls in
$\bG$ are $ADP_X$ and also $\sigma-ADP_X$ domains. \end{thrm}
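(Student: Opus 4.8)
The plan is to verify directly that a $C^\infty$ convex bounded $D\subset \bG$ satisfies all four requirements in Definition \ref{D:sADPX}; since the excerpt already records the inclusion $\sigma\text{-}ADP_X\subset ADP_X$, this simultaneously yields the $ADP_X$ conclusion. Throughout I would fix $X=\{X_1,\dots,X_m\}$ to be an orthonormal basis of the horizontal layer $V_1$ of $\bg=V_1\oplus V_2$, which is the natural choice in a group of Heisenberg type and for which the cited results are stated; note that such a $\bG$ is in particular a Carnot group of step $r=2$, and that a $C^\infty$ domain is a fortiori $C^{1,1}$.

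For condition $i)$ there is nothing to do, it is the hypothesis. For condition $ii)$, since $D$ is bounded and $C^{1,1}$ and $\bG$ has step $r=2$, Theorem \ref{T:MM} (Monti--Morbidelli) gives that $D$ is $NTA_X$ with respect to the Carnot--Carath\'eodory metric of $X$. For condition $iii)$, I would invoke Theorem \ref{T:Convex}: in a step-two group of Heisenberg type with an orthogonal generating system, every bounded convex set satisfies the uniform outer $X$-ball condition of Definition \ref{D:OB}; applied to our convex bounded $D$ this is exactly $iii)$.

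Condition $iv)$ — the balanced-degeneracy estimate for $\sigma$ — is where Theorem \ref{T:MT-pointwise} enters. Because $\bG$ has step $r=2$, every $C^1$ (hence our $C^{1,1}$) domain is of type $\le 2$ at every boundary point. Thus Theorem \ref{T:MT-pointwise} produces, for each $x_o\in\partial D$, constants $A(D,x_o)>0$ and $R_o(D,x_o)>0$ depending continuously on $x_o$ for which $\big(\max_{y\in\Delta(x_o,r)}W(y)\big)\,\sigma(\Delta(x_o,r))\le A(D,x_o)\,|B_d(x_o,r)|/r$ holds for $0<r<R_o(D,x_o)$. Since $\partial D$ is compact, I would set $B=\sup_{x_o\in\partial D}A(D,x_o)<\infty$ and $R_o=\inf_{x_o\in\partial D}R_o(D,x_o)>0$, obtaining the uniform inequality required in part $iv)$ of Definition \ref{D:sADPX}. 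This establishes that $D$ is $\sigma\text{-}ADP_X$, and therefore also $ADP_X$.

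For the statement about gauge balls, I would argue that in a group of Heisenberg type Kaplan's formula realizes the fundamental solution of the sub-Laplacian as a multiple of $N(x,y)^{2-Q}$, so the $X$-balls are precisely the gauge pseudo-balls. The unit gauge ball equals, in exponential coordinates, the set $\{(x,y)\in V_1\times V_2:\ |x|^4+16|y|^2<1\}$, which is the sublevel set of the convex function $(x,y)\mapsto |x|^4+16|y|^2$ and hence convex in the sense of \cite{CGN2}; its boundary is a real-analytic hypersurface (the Euclidean gradient of $|x|^4+16|y|^2$ vanishes only at the identity), so the gauge ball is a $C^\infty$ convex bounded domain, to which the general statement applies; the same holds for any dilate or translate. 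I do not expect a serious obstacle here: the proof is essentially an assembly of Theorems \ref{T:MM}, \ref{T:Convex} and \ref{T:MT-pointwise}, and the only point requiring care is the passage from the pointwise, $x_o$-dependent estimates of Theorem \ref{T:MT-pointwise} to uniform constants, which is handled by the compactness of $\partial D$ together with the stated continuous dependence of $A$ and $R_o$ on the base point.
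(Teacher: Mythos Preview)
Your proposal is correct and follows exactly the route the paper intends: the paper does not give an explicit proof of Theorem~\ref{T:ADPXex} but presents it ``as a consequence of the results listed above,'' namely Theorems~\ref{T:MM}, \ref{T:Convex}, and \ref{T:MT-pointwise}, and you have assembled precisely these ingredients (together with the inclusion $\sigma\text{-}ADP_X\subset ADP_X$ and a compactness argument for the uniform constants in $iv)$). Your verification that the gauge ball is a $C^\infty$ convex bounded domain, via convexity of $(x,y)\mapsto |x|^4+16|y|^2$ and nonvanishing of its Euclidean gradient on the level set, is also sound and fills in the ``in particular'' clause.
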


To conclude our review of Ahlfors type estimates, we bring up an
interesting connection between $1$-Ahlfors regularity of the
$X$-perimeter $\sigma_X$ and the Dirichlet problem for the
sub-Laplacian, see \cite{CG2}:

\begin{thrm}\label{T:DP}
Let $D$ be a bounded domain in a Carnot group $\bG$. If the
perimeter measure $\sigma_X$ is $1$-Ahlfors regular, then every
$x_o\in \p D$ is regular for the Dirichlet problem.
\end{thrm}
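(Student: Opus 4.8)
The plan is to deduce regularity of $x_o$ from the thinness criterion, Theorem \ref{T:thin}: it suffices to show that $D$ is thin at $x_o$ in the sense of Definition \ref{D:thin}. In a Carnot group one has $|B_d(x,\rho)| = c_0\rho^Q$, with $Q$ the homogeneous dimension, so the hypothesis reads $\sigma_X(\Delta(y,r))\asymp r^{Q-1}$ uniformly for $y\in\p D$ and $0<r<R_o$, while the capacitary estimate recalled in the proof of Proposition \ref{P:posden} gives $cap_X(\overline B_d(x_o,r),B_d(x_o,2r))\asymp r^{Q-2}$. Hence it is enough to produce a constant $c>0$, independent of small $r$, with $cap_X\big(D^c\cap\overline B_d(x_o,r),B_d(x_o,2r)\big)\ \ge\ c\,r^{Q-2}$. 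I stress that the elementary route through the positive density condition (Proposition \ref{P:posden}) is \emph{not} available here: a boundary that near $x_o$ is modelled on an ever thinner slab keeps $\sigma_X$ $1$-Ahlfors regular while $D^c$ acquires vanishing volume density, so a genuinely potential-theoretic argument is needed.

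First I would fix $x_o$, a small $r>0$, and let $\nu$ be the restriction of the measure $\sigma_X$ to $\overline B_d(x_o,r)$. Since $D$ is open, $\p D\subset D^c$, so $\nu$ is carried by the compact set $K:=\p D\cap\overline B_d(x_o,r)\subset D^c\cap\overline B_d(x_o,r)$. Lower $1$-Ahlfors regularity gives $\nu(K)\ge\sigma_X(\Delta(x_o,r))\ge c_1 r^{Q-1}$; upper $1$-Ahlfors regularity (with the doubling of metric balls) gives $\|\nu\|\le c_2 r^{Q-1}$ and, for every $x\in\bG$ and $0<s\le 2r$, $\nu(B_d(x,s))\le\sigma_X(\p D\cap B_d(x,s))\le c_3 s^{Q-1}$ (moving to a nearest boundary point when $x\notin\p D$). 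Next I would estimate the potential $U^\nu(x)=\int\Gamma(x,y)\,d\nu(y)$, where $\Gamma$ is the positive fundamental solution of $\mathcal L$. By Theorem \ref{T:NSW} and the Carnot homogeneity one has $\Gamma(x,y)\asymp d(x,y)^{2-Q}$, and writing $d(x,y)^{2-Q}=(Q-2)\int_{d(x,y)}^\infty s^{1-Q}\,ds$ Fubini's theorem gives $U^\nu(x)\le C\int_0^\infty s^{1-Q}\,\nu(B_d(x,s))\,ds$. Splitting the integral at $s=2r$ and inserting $\nu(B_d(x,s))\le c_3 s^{Q-1}$ on $(0,2r]$ and $\nu(B_d(x,s))\le\|\nu\|\le c_2 r^{Q-1}$ on $(2r,\infty)$ (here $Q\ge 3$ makes $\int_{2r}^\infty s^{1-Q}\,ds\asymp r^{2-Q}$ finite) yields $\sup_{x}U^\nu(x)\le C_* r$.

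Finally I would conclude by the classical duality for capacity. Since the Green function $G_{B_d(x_o,2r)}$ of the ball is dominated by $\Gamma$ (recall $G=\Gamma-h$ with $h\ge 0$, cf.\ \eqref{represent}), the Green potential of $\nu$ relative to $B_d(x_o,2r)$ is also $\le C_* r$ on $K$; comparing it with the equilibrium potential of $K$, which equals $1$ quasi-everywhere on $K$ while $\nu$ charges no polar set (polar sets have vanishing $(Q-1)$-dimensional Hausdorff measure and $\nu$ is absolutely continuous with respect to it by the upper Ahlfors bound), one obtains $cap_X(K,B_d(x_o,2r))\ \ge\ \nu(K)/\sup_K U^\nu\ \ge\ c_1 r^{Q-1}/(C_* r)\ =\ c\,r^{Q-2}$. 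By monotonicity of capacity this bounds $cap_X(D^c\cap\overline B_d(x_o,r),B_d(x_o,2r))$ from below, so the ratio in Definition \ref{D:thin} stays $\ge c/C>0$ for all small $r$; hence $D$ is thin at $x_o$ and Theorem \ref{T:thin} gives the regularity. The main obstacle is precisely this last step: one has to make sure that the variational capacity $cap_X$ used in the excerpt is the one for which the duality inequality $cap_X(K,\Om)\ge\nu(K)/\sup_K U^\nu$ is valid, and that $\nu=\sigma_X$ restricted to a ball is an admissible competitor there (finite energy, no mass on null sets); both facts are available from the potential theory developed in \cite{B}, \cite{D}, \cite{CDG3}, \cite{CDG4}, but must be reconciled with the definitions used here.
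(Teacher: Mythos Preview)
The paper does not provide its own proof of Theorem~\ref{T:DP}; the result is quoted from \cite{CG2} as part of the survey in Section~\ref{S:ex}, so there is no in-paper argument to compare your attempt against.

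That said, your proposal is correct and follows the natural route (which is indeed the one underlying \cite{CG2}): the upper $1$-Ahlfors bound shows that the restriction $\nu$ of $\sigma_X$ to $\overline{\Delta(x_o,r)}$ has potential $U^\nu\le C_*r$; the lower $1$-Ahlfors bound gives $\nu(K)\ge c_1 r^{Q-1}$; and the dual (Frostman) characterization of the linear capacity then yields $cap_X(K,B_d(x_o,2r))\ge \nu(K)/\sup U^\nu\ge c\, r^{Q-2}$, whence thinness and regularity via Theorem~\ref{T:thin}. Your remark that the positive-density shortcut of Proposition~\ref{P:posden} is unavailable here is exactly right and is the reason a genuine potential-theoretic argument is required. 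One small simplification of the technical point you flag at the end: rather than invoking the Hausdorff dimension of polar sets, observe that your potential bound directly gives finite energy $I(\nu)=\int U^\nu\,d\nu\le C_*r\,\nu(K)<\infty$, and measures of finite energy charge no polar sets; this is part of the axiomatic potential theory in \cite{B} (see also \cite{CDG4}) and validates the duality inequality $cap_X(K,\Om)\ge\nu(K)/\sup_K U^\nu_\Om$ without further work.
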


This result, in conjunction with a class of examples for non-regular domain
constructed in \cite{HanHu} yields the following

\begin{cor}\label{tantiesempi}
If $r\ge 3$ and $m_1\ge 3$, or $m_1=2$ and $r\ge 4$, then there exist
 Carnot groups $\bG$ of step $r\in \N$,  with $\text{dim}\ V_1=m_1$,
and bounded, $C^\infty$ domains $D\subset \bG$, whose perimeter
measure $\sigma_X$ is not $1$-Ahlfors regular.
\end{cor}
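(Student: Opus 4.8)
The plan is to argue by contraposition from Theorem \ref{T:DP}. That theorem asserts that in a Carnot group $\bG$, if a bounded domain $D$ has a $1$-Ahlfors regular $X$-perimeter measure $\sigma_X$, then \emph{every} boundary point of $D$ is regular for the Dirichlet problem associated with a (hence any) sub-Laplacian on $\bG$. Consequently, to produce a bounded $C^\infty$ domain whose perimeter measure fails to be $1$-Ahlfors regular, it suffices to exhibit, inside a Carnot group $\bG$ of the prescribed step $r$ and first-layer dimension $\dim V_1 = m_1$, a bounded $C^\infty$ domain possessing at least one boundary point which is irregular for the Dirichlet problem. The existence of such domains in exactly the stated numerical range is precisely the content of the examples constructed in \cite{HanHu}.

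Concretely, I would first recall from \cite{HanHu} the construction of Carnot groups $\bG$ with $\dim V_1 = m_1$ and step $r$, together with bounded open sets $D\subset \bG$ with smooth boundary, for which a distinguished boundary point — sitting at a highly degenerate characteristic point — fails the Wiener-type criterion recorded in Theorem \ref{T:Wiener}, equivalently fails the thinness condition of Definition \ref{D:thin}, so that \eqref{cont} breaks down for suitable continuous boundary data. One should then check that the restrictions $r\ge 3$ and $m_1\ge 3$, or $m_1=2$ and $r\ge 4$, are exactly the range in which the construction of \cite{HanHu} yields genuinely irregular points; these thresholds are consistent with the fact that for $r\le 2$ every point of type $\le 2$ on the boundary of a $C^{1,1}$ domain is automatically regular, as follows from Proposition \ref{P:posden} together with the positive density estimates underlying Theorem \ref{T:MT-pointwise}.

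With such a domain $D\subset \bG$ in hand, the conclusion is immediate: if $\sigma_X = P_X(D;\cdot)$ restricted to $\p D$ were $1$-Ahlfors regular in the sense of \eqref{ahlfors} relative to the metric balls $B_d$, then Theorem \ref{T:DP} would force every $x_o\in \p D$ to be regular for the Dirichlet problem, contradicting the irregularity of the distinguished boundary point. Hence $\sigma_X$ is not $1$-Ahlfors regular, which is exactly the assertion of the corollary. The $C^\infty$ smoothness of $D$ is guaranteed by the corresponding property of the \cite{HanHu} examples, so nothing further is needed.

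The main obstacle I anticipate is not the logical skeleton, which is essentially a one-line contrapositive, but the bookkeeping needed to verify that the examples of \cite{HanHu} satisfy all the hypotheses invoked here: that the ambient group is genuinely a stratified group of step \emph{exactly} $r$ with $\dim V_1 = m_1$; that $D$ is bounded and of class $C^\infty$ rather than merely $C^{1,1}$ or Lipschitz; and that the notion of boundary irregularity established there coincides with the Perron--Wiener--Brelot notion used in Section \ref{S:DP}, for which one would appeal to the Wiener criterion of \cite{NS}, \cite{D} as recorded in Theorem \ref{T:Wiener}. A secondary point to confirm is that the numerical dichotomy $r\ge 3,\ m_1\ge 3$ versus $m_1=2,\ r\ge 4$ is sharp for the \cite{HanHu} construction, so that the corollary is stated in its strongest correct form.
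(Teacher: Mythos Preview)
Your proposal is correct and follows exactly the approach indicated in the paper: the corollary is stated as an immediate consequence of combining Theorem \ref{T:DP} with the examples of irregular bounded $C^\infty$ domains in Carnot groups constructed in \cite{HanHu}, via the contrapositive you describe. The paper gives no further details beyond this, so your outline matches it precisely.
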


\paragraph{\bf Beyond Heisenberg type groups.} The above overview shows that, so far, the known examples
of $ADP_X$ domains are relative to group of Heisenberg type. What
happens beyond such groups? For instance, what can be said even for
general Carnot groups of step two? One of the difficulties here is
to find examples of domains satisfying the outer tangent $X$-ball
condition. The explicit construction in Theorem \ref{T:Convex} above
rests on the special structure of a group of Heisenberg type, and an
extension to more general groups appears difficult due to the fact
that, in a general group, the $X$-balls are not explicitly known and
they may be quite different from the gauge balls. In this connection
it would be desirable to replace the uniform outer $X$-ball
condition with a {\sl uniform outer gauge pseudo-ball condition}
(clearly the two definitions agree for groups of Heisenberg type).
It would be quite interesting to know whether for general Carnot
groups a uniform outer gauge pseudo-ball condition would suffice to
establish the boundedness of the horizontal gradient of the Green
function near the characteristic set (this question is open even for
Carnot groups of step two which are not of Heisenberg type!).
Concerning the question of examples we have the following special
results.

\begin{dfn}
Let $\bG$ be  a Carnot group and  denote by $\bg$ its Lie algebra.
We say that a family $\mathcal F$of  smooth open subsets of $\bg$ is
a $T-$family if it satisfies

(i) For any $F\in \mathcal F$, the manifold $\p F$ is diffeomorphic to the
unit sphere in the Lie algebra.

(ii) The family $\mathcal F$ is left-invariant, i.e. for any $x\in \bG$ and
$F\in \mathcal F$ we have $\log(x\exp (F))\in \mathcal F$.
\end{dfn}

If $D\subset \bg$ is a smooth subset and $\mathcal F$ is a
$T-$family, then we say that $D$ is {\it tangent  to $\mathcal F$}
if for every $x\in \p D$ there exists $F\in \mathcal F$ such that
$x\in \p F$ and the tangent hyperplanes to $\p F$ and $\p D$ at $x$
are identical, i.e.  $T_x \p F=T_x \p D$.

\begin{thrm}
Let $\bg$ be the Lie algebra of a   Carnot group of odd dimension.
If $D\subset \bg$ is a smooth open set   and $\mathcal F$ is a $T-$family,
then  $D$ is {\it tangent  to $\mathcal F$}.
\end{thrm}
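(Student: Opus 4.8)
The plan is, given a point $x_o\in\p D$, to manufacture a member of $\mathcal F$ tangent to $\p D$ at $x_o$ by moving around a single fixed member of the family and running a degree argument on its boundary sphere; the parity of $\dim\bg$ will enter through Hopf's theorem on Gauss maps.

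First I would fix some $F_0\in\mathcal F$ and an auxiliary inner product on $\bg$, and work in exponential coordinates, identifying $\bG$ with $\bg$ and writing $L_g\colon y\mapsto g\cdot y$ for left translation; condition (ii) in the definition of a $T$-family says exactly that $L_g(F_0)\in\mathcal F$ for every $g\in\bg$. The boundary $\p F_0$ is a smoothly embedded copy of $S^{n-1}$ (where $n=\dim\bg$), so by Jordan--Brouwer it carries a continuous unit normal field $\nu_0\colon\p F_0\to S^{n-1}$. For $p\in\p F_0$ put $g(p)=x_o\cdot p^{-1}$, so that $g(p)\cdot p=x_o$ and hence $x_o\in\p\big(L_{g(p)}F_0\big)$; the tangent hyperplane of $L_{g(p)}F_0$ at $x_o$ is $d(L_{g(p)})_p\big(T_p\p F_0\big)$, and I let $\Phi(p)\in S^{n-1}$ be its unit normal, chosen continuously. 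The theorem then reduces to the claim that $\Phi\colon\p F_0\to S^{n-1}$ is surjective: granting this, pick $p$ with $\Phi(p)$ equal to the unit normal of $\p D$ at $x_o$ and set $F=L_{g(p)}F_0\in\mathcal F$; then $x_o\in\p F$ and $T_{x_o}\p F=T_{x_o}\p D$, which is tangency at $x_o$.

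To get surjectivity I would compute $\deg\Phi$. The key algebraic input is that in exponential coordinates on a Carnot group the matrix $M(y)$ expressing the left-invariant frame in terms of the coordinate frame is unipotent and block-upper-triangular with respect to the stratification of $\bg$; thus $\det M\equiv 1$, all of the matrices $M(y)$ lie in one fixed simply connected (hence contractible) unipotent subgroup $U\subset GL(\bg)$, and $d(L_g)_y=M(g\cdot y)\,M(y)^{-1}$. Since $g(p)\cdot p=x_o$ is \emph{independent} of $p$, this gives $d(L_{g(p)})_p=M(x_o)\,M(p)^{-1}$, and a short dualization shows that $\Phi(p)$ is the normalization of $C\,M(p)^{\mathsf T}\nu_0(p)$, where $C=(M(x_o)^{\mathsf T})^{-1}$ is a fixed matrix and $p\mapsto M(p)^{\mathsf T}$ takes values in the contractible group $U^{\mathsf T}$. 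Because $U^{\mathsf T}$ is contractible, $p\mapsto M(p)^{\mathsf T}$ is null-homotopic in $GL(\bg)$, so $\Phi$ is homotopic to $p\mapsto C\nu_0(p)/|C\nu_0(p)|$; and since $\det C=1>0$, this last map has the same degree as $\nu_0$. Finally, because $n$ is odd, $\p F_0\cong S^{n-1}$ has Euler characteristic $2$, so Hopf's theorem gives $\deg\nu_0=\pm\tfrac{1}{2}\chi(S^{n-1})=\pm1\neq0$; hence $\deg\Phi\neq0$ and $\Phi$ is onto.

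I expect the crux --- and the only genuinely non-routine step --- to be the identification of the ``moving'' Gauss map $\Phi$ of the translated spheres with the honest Gauss map $\nu_0$ of the single surface $\p F_0$ up to a contractible family of linear corrections: this is exactly where the group structure and the specific features of exponential coordinates are used, and one must check carefully that the correction matrices $M(p)^{\mathsf T}$ really remain inside a single contractible subgroup of $GL(\bg)$ as $p$ ranges over the sphere. Everything else is either bookkeeping or the invocation of Hopf's degree formula, and it is in that invocation that the oddness of $\dim\bg$ is essential: for $\dim\bg$ even the Gauss map of a closed hypersurface can have degree $0$, so $\Phi$ need not be onto and the statement genuinely requires the hypothesis.
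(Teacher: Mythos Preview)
Your argument is correct, but the paper's proof is considerably shorter and uses less structure. The paper runs the \emph{dual} construction: it fixes $F\in\mathcal F$ and, for each $x\in\partial F$, left-translates $D$ by $xx_o^{-1}$ so that $x_o$ lands at $x$; the outer unit normal to the translated $\partial D$ at $x$ gives a map $N:\partial F\to S^{n-1}$, and its tangential projection is a smooth section of $T\partial F$. Since $\partial F\cong S^{n-1}$ with $n$ odd, the hairy ball theorem forces this section to vanish somewhere, and at such a point the normals to $\partial F$ and to the translated $\partial D$ coincide. One then translates back. No computation of Jacobians, no degree theory, no Hopf formula --- and in fact nothing about Carnot groups beyond ``left translation is a diffeomorphism,'' so the argument goes through in any Lie group.

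Your route is sound but heavier: you move $F_0$ instead of $D$, reduce tangency to surjectivity of $\Phi:\partial F_0\to S^{n-1}$, and prove $\deg\Phi\neq 0$ by (a) exploiting the unipotent block-triangular form of $d(L_g)$ in exponential coordinates on a nilpotent group to homotope $\Phi$ to the honest Gauss map $\nu_0$, and (b) invoking Hopf's $\deg\nu_0=\tfrac12\chi(\partial F_0)=\pm1$. Step (a) genuinely uses the Carnot structure --- for a general Lie group the map $p\mapsto M(p)^{\mathsf T}$ lands in $GL^+$ rather than in a contractible subgroup, and $\pi_{n-1}(GL^+)$ can be nontrivial, so the homotopy would need a different justification. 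The payoff of your approach is that it actually computes the degree rather than merely producing a zero, which could be useful if one wanted quantitative or multiplicity information; but for the bare existence statement the paper's hairy-ball argument is the more economical one.
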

\begin{proof}[\textbf{Proof}]
In order to avoid using $\exp$ and $\log$ maps for all $x,y\in \bg$ we will denote by
$xy$ the algebra element $\log (\exp x \exp y)$. We will assume that $\bg$ is endowed
with a Euclidean metric, so that notions of orthogonality and projections can be used.
Fix $x_o\in
\partial D$ and choose any element $F\in \mathcal F$.  We will show that there
exists $z\in \bg$ such that the left-translation $zF$ is tangent to $\p D$ at $x_0$.

Let $n = dim(\bG) = dim(\bg)$ be
odd, and denote by $ S^{n-1}$ the unit (Euclidean) sphere of
dimension $n-1$. Define the map $N:\partial F \to S^{n-1}$ as follows:
For each point $x\in
\partial F$ set  $\tilde D=xx_o^{-1}D$ and observe that this is a smooth open set
with $x\in \p  \tilde D\cap \p F$.
Set $$N(x)=\text{ the outer unit normal to the boundary of the  translated set } \p \tilde D \text { at the point }x.$$
 This amounts to left-translating the point $x_o$ to the
point $x$ and considering the unit normal to the translated domain
at that point. The smoothness of $D$ and of the group structure of
$\bG$ implies that $N$ is a smooth vector field in $\p F$.
 In order to prove the theorem we
need to show that for some point $x\in
\partial F$ the vector $N(x)$ is orthogonal to $T_x\partial
F$. In fact in that case the set $F$ would be tangent to the
translated set $\tilde D$ at the point $x_o$, and its left
translation $x_ox^{-1}F$ could be chosen as the element of $\mathcal
F$ tangent to $D$   at the point $x_o$.  Recall that left
translation, being a diffeomorphism, preserves the property of being
tangent. The conclusion comes from the fact that there cannot be any
smooth tangent non vanishing vector field on $\partial F$ since it
is diffeomorphic to  $ S^{n-1}$. Consequently the vector fields
obtained by  projecting $N(x)$ on $T_x\partial F$ must vanish for
some point $x\in
\partial F$.

\end{proof}

\begin{cor}
Let $\bG$ be a Carnot group of step two with odd-dimensional Lie
algebra $\bg$ and  $D\subset \bg$ be a smooth convex  subset. If
$\mathcal F$ is a $T-$family, composed of convex subsets, and
invariant by the transformation $x\to x^{-1}$ then for any $x\in \p
D$ there exists $F\in \mathcal F$ such that $F\subset D^c$, and
$x\in \p F$.
\end{cor}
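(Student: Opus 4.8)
The plan is to combine the preceding theorem with a reflection trick that exploits the two extra hypotheses: convexity (of $D$ and of the members of $\mathcal{F}$) and invariance of $\mathcal{F}$ under inversion. Fix $x_o \in \p D$. Since $\bg$ has odd dimension and $\mathcal{F}$ is a $T$-family, the preceding theorem furnishes $F_0 \in \mathcal{F}$ tangent to $D$ at $x_o$, that is $x_o \in \p F_0$ and $T_{x_o}\p F_0 = T_{x_o}\p D$. Denote by $H$ the common affine tangent hyperplane at $x_o$. Because $D$ and $F_0$ are convex and $x_o$ is a smooth boundary point of each, $H$ is a supporting hyperplane for both of them, so each of $D$ and $F_0$ lies in one of the two closed half-spaces bounded by $H$.

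First I would dispose of the favourable case. If $F_0$ lies in the closed half-space of $H$ that does \emph{not} contain $D$, then --- since $F_0$ is open and contained in a closed half-space --- it actually lies in the corresponding \emph{open} half-space, which is disjoint from the closed half-space containing $D$. Hence $F_0 \cap D = \varnothing$, i.e. $F_0 \subset D^c$, and we take $F = F_0$.

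The real content is the remaining case, in which $F_0$ lies in the same closed half-space of $H$ as $D$. Here I would reflect $F_0$ through the point $x_o$: set $\rho_{x_o}(y) = x_o\, y^{-1}\, x_o$, with products in the group law of $\bg$. Two facts need to be checked. (a) $\rho_{x_o}(\mathcal{F}) = \mathcal{F}$: indeed $\mathcal{F}$ is invariant under left translations (axiom (ii) of a $T$-family) and under $y \mapsto y^{-1}$, and writing right translation as $Fw = \big((w^{-1})(F^{-1})\big)^{-1}$ shows $\mathcal{F}$ is right-translation invariant as well; since $\rho_{x_o}(F) = x_o\,\big(F^{-1}\, x_o\big)$ is obtained from $F$ by an inversion, a right translation and a left translation, it lies in $\mathcal{F}$. (b) Because $\bG$ has step two, a short Baker--Campbell--Hausdorff computation --- using that $[\bg,\bg]$ is central, so all iterated brackets vanish and $y^{-1} = -y$ --- gives $x_o\, y^{-1}\, x_o = 2x_o - y$. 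Thus $\rho_{x_o}$ is the Euclidean reflection of $\bg$ through $x_o$: an affine involution that maps convex sets to convex sets, fixes $x_o$, has differential $-\,\mathrm{id}$ at $x_o$ (so it fixes $H$ setwise and preserves $T_{x_o}\p D$), and interchanges the two closed half-spaces bounded by $H$. Consequently $F := \rho_{x_o}(F_0) \in \mathcal{F}$ is convex, tangent to $D$ at $x_o$, and lies in the closed half-space of $H$ opposite to $D$; the argument of the favourable case then yields $F \subset D^c$, which proves the corollary.

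I expect the delicate points to be the passage from ``tangent'' to ``on the correct side'': one must verify that the tangent hyperplane at a smooth boundary point of a convex open set is genuinely a supporting hyperplane (so that each of $D$, $F_0$ lies on one side of $H$), and one must check the identity $x_o\, y^{-1}\, x_o = 2x_o - y$ in a step-two group, which is exactly what makes $\rho_{x_o}$ affine --- hence convexity- and tangency-preserving --- and is where the step-two hypothesis is essential. The remaining manipulations are routine bookkeeping with the invariance axioms of a $T$-family.
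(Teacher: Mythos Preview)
Your proof is correct and follows essentially the same idea as the paper's: both use the preceding theorem to obtain a tangent $F_0\in\mathcal F$, observe that convexity forces $D$ and $F_0$ each to lie on one side of the common tangent hyperplane, and then---when they lie on the same side---apply an inversion-based reflection (the paper phrases this as ``translate $x_o$ to the origin and replace $F$ by $F^{-1}$,'' which after translating back is exactly your map $\rho_{x_o}(y)=x_o y^{-1} x_o=2x_o-y$). Your write-up is more explicit than the paper's---you spell out the BCH computation and the derivation of right-invariance of $\mathcal F$---but the underlying mechanism is the same.
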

\begin{proof}[\textbf{Proof}]
In a Carnot group of step two the left translation map is affine and hence preserves convexity. The same holds for the inverse map.
Consequently at any boundary  point $x_0\in \p D$ there will be a convex
 manifold $F\in \mathcal F$ tangent to $D$ at $x_0$. Being $D$ convex as well
 then $D$ and $F$ must either be on the same side or lay at different sides of the common tangent plane $T_{x_0}\p D$. By translating $x_0$ to the origin and  considering either $F$ or $F^{-1}$ we can pick the manifold lying on the opposite side
 of $D$ and hence   disjoint from it.
\end{proof}

Choosing appropriate $T-$families of convex sets we can now prove
our two main results concerning the uniform outer gauge pesudo-ball and $X-$ball
conditions.

\begin{cor}\label{T:Convex1}
Let $\bG$ be a Carnot group of step two with odd-dimensional Lie algebra $\bg$. Given
a convex set $D\subset \bG$, for every $x_o\in
\partial D$ and every $r>0$ there exists a gauge pseudo-ball $B(x_1,r)$ which is tangent to $\p D$ in $x_o$ from the outside, i.e., such that
\eqref{ob} is satisfied. Furthermore, every bounded convex set in
$\bG$ satisfies the uniform outer gauge pseudo-ball condition.
\end{cor}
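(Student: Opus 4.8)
The plan is to deduce Corollary \ref{T:Convex1} from the Corollary proved just above --- the one asserting that a convex $T$-family $\mathcal F$ in an odd-dimensional $\bg$ that is invariant under $x\mapsto x^{-1}$ supports an outer member at every boundary point of a smooth convex $D$ --- by exhibiting the gauge pseudo-balls of a fixed radius as (essentially) such a family. Fix $r>0$ and let $N$ be the homogeneous gauge on $\bG$. Working throughout in exponential coordinates on $\bg=V_1\oplus V_2$, write $B_0=\{v\in\bg:\ N(v)<r\}$ for the gauge pseudo-ball of radius $r$ centred at the identity, so that a general gauge pseudo-ball $B(x_1,r)$ is the left translate $x_1B_0$. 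The first thing I would record is that in a step-two group left translation $v\mapsto x\cdot v$, right translation $v\mapsto v\cdot x$, and the inversion $v\mapsto v^{-1}=-v$ are all \emph{affine} maps of $\bg$, since the Baker--Campbell--Hausdorff product reduces to $x\cdot y=x+y+\tfrac12[x,y]$.

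Next I would check the properties needed to invoke that Corollary. (a) \emph{Convexity of $B_0$}: in coordinates $v=v_1+v_2$ one has $N(v)^4=|v_1|^4+c|v_2|^2$, and since $|v_1|^4=(|v_1|^2)^2$ is a convex nondecreasing function of the convex function $|v_1|^2$, while $c|v_2|^2$ is convex, $N^4$ is convex; hence $B_0=\{N^4<r^4\}$ is a bounded convex body, and by the affineness just noted so is every group-translate of it. (b) \emph{Smoothness}: $N$ is $C^\infty$ on $\bg\setminus\{0\}$ and $\nabla(N^4)=(4|v_1|^2v_1,\,2c\,v_2)$ vanishes only at the origin, so each $\{N=r\}$, $r>0$, is a smooth compact hypersurface; being the boundary of a bounded convex body it is diffeomorphic to the unit sphere of $\bg$, and affine images inherit this. (c) \emph{Invariances}: $B_0$ is symmetric, $-B_0=B_0$, because $N^4$ depends only on $|v_1|$ and $|v_2|$. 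Here lies the one real subtlety: the inverse of a \emph{left} gauge ball $x_1B_0$ is the \emph{right} translate $B_0x_1^{-1}$, which in a non-commutative group is not again a left gauge ball. To get a family genuinely closed under both left translation and inversion I would therefore take $\mathcal F=\mathcal F_r:=\{aB_0b:\ a,b\in\bG\}$, the collection of all group-affine translates of $B_0$; then $\mathcal F_r$ is left-invariant, since $x\cdot(aB_0b)=(xa)B_0b$, inversion-invariant, since $(aB_0b)^{-1}=b^{-1}B_0a^{-1}$ (using $B_0^{-1}=B_0$), and by (a)--(b) it consists of smooth convex bodies with boundary diffeomorphic to the sphere, i.e.\ it is a convex, inversion-invariant $T$-family.

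Now, given a smooth convex $D\subset\bG$ (so that $\exp^{-1}(D)$ is smooth and convex) with $\bg$ odd-dimensional, the Corollary above applies to $D$ and $\mathcal F_r$ and yields, for each $x_o\in\partial D$, a member $F\in\mathcal F_r$ with $x_o\in\partial F$ and $F\subset D^{c}$ --- which is exactly condition \eqref{ob} with $F$ in the role of $B(x_1,r)$. Since the argument runs for every radius $r>0$, and since in a Carnot group there is no restriction on the size of these balls (metric balls of all radii being compact, cf.\ the Remark following Proposition \ref{P:compact}), every bounded smooth convex $D\subset\bG$ satisfies the uniform outer gauge pseudo-ball condition; a general convex $D$ can be handled by first exhausting it with smooth convex subsets.

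The step I expect to be the only genuine difficulty is property (c): because left and right group translations of $B_0$ really differ, the honest family of left gauge pseudo-balls is not invariant under $x\mapsto x^{-1}$, so one must either accept the affine translates $aB_0b$ as the admissible outer pseudo-balls in this generality, or trace through the \emph{same-side} branch in the proof of the Corollary above to identify precisely which set it outputs and verify that it, too, satisfies \eqref{ob}. Everything else --- convexity of $B_0$, regularity of $N$, and affineness of the step-two group law --- is routine.
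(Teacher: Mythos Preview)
Your overall strategy is the paper's: apply the preceding Corollary with the gauge pseudo-balls as the $T$-family. You are right that the only nontrivial hypothesis to check is inversion-invariance, and you are also right that the family of \emph{left} gauge balls $\{x_1B_0\}$ is \emph{not} closed under $v\mapsto v^{-1}$ in a noncommutative step-two group: $(x_1B_0)^{-1}=B_0x_1^{-1}$ is a right translate. The paper's proof simply does not address this point; it asserts that the gauge balls are convex with boundary diffeomorphic to $S^{n-1}$ (citing \cite{F2}) and invokes the previous Corollary without further comment. Your proposed fix --- passing to the two-sided family $\mathcal F_r=\{aB_0b\}$ --- does yield an inversion-invariant convex $T$-family, but, as you yourself flag, the outer tangent set produced is then only an affine image $aB_0b$ of the unit gauge ball, not a gauge pseudo-ball $B(x_1,r)=x_1B_0$. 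So the statement as written is not established by your argument either. This is a genuine gap in both proofs; you have located it more honestly than the paper does.

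Where your proposal should be changed is in the non-smooth case. Exhaustion by smooth convex subdomains is awkward: the given boundary point $x_o$ need not lie on the approximating boundaries, and passing to a limit of tangent balls requires a separate compactness argument you have not supplied. The paper's device is cleaner and you should adopt it: at $x_o\in\partial D$ choose a supporting hyperplane and let $\hat D$ be the closed half-space containing $D$ bounded by it. Then $\hat D$ is a \emph{smooth} convex set with $x_o\in\partial\hat D$, so the smooth case yields an outer tangent member of the family at $x_o$ for $\hat D$; since $D\subset\hat D$, this set is automatically disjoint from $D$ as well. This reduces the general convex case to the smooth one in a single step.
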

\begin{proof}[\textbf{Proof}]
If $D$ is smooth then the proof follows from the immediate
observation that the gauge balls are convex sets in the Lie algebra
and are diffeomorphic to $S^{n-1}$ (see for instance \cite{F2}). For
non-smooth convex domains $D$, we consider $x_o\in \partial D$ and a
new domain $\hat{D}$ obtained as the half space including $D$ and
with boundary $T_{x_o}\partial D$. Since $\hat D$ is a smooth convex
domain then we can apply to it the previous theorem and find an
outer tangent gauge ball  at the point $x_o$ with radius $r>0$.
Clearly this ball will also be tangent to the original  domain $D$
at $x_o$, and will be contained entirely in the complement of $D$.
\end{proof}

\begin{cor}
Let $\bG$ be a Carnot group of step two with odd-dimensional Lie algebra $\bg$. If for every $x\in \R^n$
and for $r$ sufficiently small the $X-$balls
$B(x,r)$  are convex, and $B(x^{-1},r)=B(x,r)^{-1}$ then  every bounded convex set in
$\bG$ satisfies the uniform outer $X$-ball condition.
\end{cor}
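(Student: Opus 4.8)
The plan is to deduce the statement from the two results immediately preceding it in this section, by exhibiting the small $X$-balls as a $T$-family of convex sets invariant under the inversion $x\mapsto x^{-1}$, and then running the argument of Corollary \ref{T:Convex1} verbatim with gauge pseudo-balls replaced by $X$-balls. Fix $\rho>0$ so small that, by hypothesis, every $X$-ball $B(x,\rho)$ is convex, and for $0<r\le\rho$ set $\mathcal F_r=\{B(x,r):x\in\bG\}$. First I would check that $\mathcal F_r$ is a $T$-family in the sense of the definition above. Recall that in a Carnot group $\Lambda(x,r)=c\,r^Q$, so $E(x,r)=c\,r^{Q-2}$ is independent of $x$, and that the fundamental solution of the left-invariant sub-Laplacian satisfies $\Gamma(x,y)=\Gamma(e,x^{-1}y)$; hence $B(x,r)=x\cdot B(e,r)$, so $\mathcal F_r$ is left-invariant. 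By hypothesis each $B(x,r)$ is convex, and $B(x,r)^{-1}=B(x^{-1},r)\in\mathcal F_r$, so $\mathcal F_r$ is also invariant under $x\mapsto x^{-1}$. Finally, since $x$ lies in the interior of $B(x,r)$ (there $\Gamma(x,\cdot)$ blows up) and $\Gamma$ is $C^\infty$ off its pole by H\"ormander's hypoellipticity theorem, the convex body $B(x,r)$ has smooth boundary, so $\partial B(x,r)$ is diffeomorphic to the unit sphere of $\bg$.

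With this in hand, let $D\subset\bG$ first be a \emph{smooth} bounded convex set. For each $0<r\le\rho$ I would apply the corollary above which, for a smooth convex $D$ and an inversion-invariant $T$-family $\mathcal F$ of convex sets, produces for every $x_o\in\partial D$ a member $F\in\mathcal F$ with $x_o\in\partial F$ and $F\subset D^c$: taking $\mathcal F=\mathcal F_r$ this yields $F=B(x_1,r)$ with $x_o\in\partial B(x_1,r)$ and $B(x_1,r)\cap D=\varnothing$, which is precisely condition \eqref{ob}. Since $r\in(0,\rho]$ and $x_o\in\partial D$ are arbitrary, $D$ satisfies the uniform outer $X$-ball condition with $R_o=\rho$.

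For a general bounded convex set $D$ I would argue exactly as in the proof of Corollary \ref{T:Convex1}: given $x_o\in\partial D$, let $\hat D$ be the closed supporting half-space of $D$ at $x_o$, whose boundary is the supporting hyperplane $T_{x_o}\partial D$. Then $\hat D$ is a smooth convex set containing $D$, so by the previous step there is, for every $0<r\le\rho$, an $X$-ball $B(x_1,r)$ tangent to $\partial\hat D$ at $x_o$ from the outside; since $D\subset\hat D$ this same ball satisfies $x_o\in\partial B(x_1,r)$ and $B(x_1,r)\cap D=\varnothing$, and $\rho$ is independent of $x_o$. This gives the uniform outer $X$-ball condition for $D$ and finishes the proof.

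The step I expect to require the most care is the verification that the convex $X$-balls $B(x,r)$ are bona fide members of a $T$-family, i.e.\ that $\partial B(x,r)$ is a smooth hypersurface diffeomorphic to $S^{n-1}$; this reduces to showing that the Euclidean gradient of $\Gamma(x,\cdot)$ does not vanish on the level set $\{\Gamma(x,\cdot)=1/E(r)\}$. For a $C^\infty$ function whose superlevel set is a bounded convex body with the pole in its interior this is automatic for $r$ small, but making it precise (or adjoining it as an extra mild hypothesis) is the only non-formal point; once it is granted, everything else is a direct invocation of the tangency theorem, its inversion-invariant corollary, and the supporting half-space device already used for Corollary \ref{T:Convex1}.
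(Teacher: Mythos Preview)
Your approach is essentially the paper's: verify that the family of $X$-balls is a $T$-family of convex sets closed under inversion, then invoke the preceding corollary (together with the supporting-half-space reduction for non-smooth $D$). The paper's proof is in fact terser than yours, omitting both the left-invariance check and the non-smooth case and focusing only on the one point you yourself flag as delicate: that $\partial B(x,r)$ is a smooth hypersurface diffeomorphic to $S^{n-1}$.

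On that point the paper does not attempt to extract smoothness from convexity, as you tentatively suggest. Instead it cites \cite{DG2} for the fact that in a Carnot group the $X$-balls are starlike with respect to the group dilations, in the quantitative form
\[
\langle \nabla_y \Gamma(\cdot,x),\, Z\rangle\ >\ 0\quad\text{on }\partial B(x,r),
\]
where $Z$ is the infinitesimal generator of the dilations. This forces $\nabla\Gamma\neq 0$ on the level set, so by hypoellipticity and the implicit function theorem $\partial B(x,r)$ is a smooth hypersurface; the starlike property then gives the diffeomorphism to the sphere by radial projection along dilation orbits. Your caution is justified: convexity of the superlevel set alone does not obviously yield $\nabla\Gamma\neq 0$ on the boundary, and the paper sidesteps this by appealing to the starlike estimate rather than to convexity.
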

\begin{proof}[\textbf{Proof}]
We need only to show that the family of balls $B(x,r)$ form a $T-$family. In \cite{DG2}
it is shown that $X-$balls are starlike with respect to the family of homogeneous dilations in the Carnot group. In particular, one has the estimate
 $$\langle \nabla \Gamma (\cdot, x), Z\rangle >0$$  on $\p B(x,r)$
where we have denoted by $Z$ the generator of the homogeneous dilations. This inequality, coupled
with H\"ormander's hypoellipticity result,
implies that $\p B(x,r)$ is a smooth manifold, while the starlike property immediately
implies that $\p B(x,r)$ is diffeomorphic to the unit ball. \end{proof}

We recall from the classical paper of Folland  \cite{F2} that in a
Carnot group the fundamental solution of the sub-Laplacian is a function
$\Gamma(x,y)=\Gamma(y^{-1}x)$ and $\Gamma(x^{-1})=\Gamma^t(x)$,
where $\Gamma^t$ is the fundamental solution of the transpose of the
sub-Laplacian $\mathcal L$. However, a sub-Laplacian on a Carnot
group is self-adjoint, hence $\mathcal L^*=-\mathcal L$ and
$\Gamma(x)=\Gamma(x^{-1})$.
  Let us denote by $|| \cdot ||$ the group gauge, if we assume that for all $x,y\in \bG$  one has
  $\Gamma(xy^{-1})=\Gamma(yx^{-1})$ (this happens for instance if  $\Gamma(x)=\Gamma(||x||)$), and set  $B(x,r)=\{y|\  \Gamma(y^{-1}x)>c\}$ then $$B(x,r)^{-1}=\{y^{-1}|\  \Gamma(x^{-1}y)>c\}=B(x^{-1},r).$$

We conclude by explicitly noting  that a serious obstruction to extending the previous
results to Carnot groups of higher step consists in the fact
that, unlike in the step two case, the group left-translation may
not preserve the convexity of the sets.

\paragraph{\bf Beyond linear equations.}
Another interesting direction of investigation for the subelliptic
Dirichlet problem is provided by the study of solutions to the
nonlinear equations which arise in connection with the case $p
\not=2$ of the Folland-Stein Sobolev embedding. In this direction a
first step has been recently taken in \cite{GNg} where, among other
results, Theorem \ref{T:Green} has been  extended to the Green
function of the nonlinear equation
\begin{equation}\label{pharm}
\mathcal L_p u = \sum_{j=1}^{2n} X_j(|Xu|^{p-2} X_j u)\ =\ 0\ ,
\end{equation}
in the Heisenberg group $\Hn$. Here is the relevant result.

\begin{thrm}\label{T:pimproved}
Let $D\subset \Hn$ be a bounded domain satisfying the uniform outer
$X$-ball condition. Given $1<p\leq Q$, let $G_{D,p}$ denote the
Green function associated with \eqref{pharm} and $D$. Denote by $g =
(z,t), g' = (z',t')\in \Hn$.
\begin{itemize}
\item[(i)] If $1<p<Q$ there exists a constant $C = C(\bG,D,p)>0$ such that
\[
G_{D,p}(g',g)\ \leq\ C\
\left(\frac{d(g,g')}{|B(g,d(g,g'))|}\right)^{1/(p-1)} d(g',\p D)\ ,\
\ g,g'\in D\ ,\ g'\not=g\ .
\]
\item[(ii)] If $p=Q$, then there exists $C = C(\bG,D)>0$ such that
\[
G_{D,p}(g',g)\ \leq\ C\ \log
\left(\frac{\text{diam}(D)}{d(g,g')}\right)\ \frac{d(g',\p
D)}{d(g,g')}\ ,\ \ g,g'\in D\ ,\ g'\not=g\ .
\]
\end{itemize}
\end{thrm}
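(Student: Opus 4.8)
The plan is to reproduce, step by step, the proof of Theorem \ref{T:Green} in the nonlinear setting, replacing each linear ingredient by its $\mathcal L_p$-counterpart. Three structural facts make this feasible in $\Hn$: first, $\mathcal L_p$ is homogeneous of degree $p-1$, so that $\lambda w$ is $\mathcal L_p$-harmonic whenever $w$ is and $\lambda>0$ is constant, and moreover $\mathcal L_p$ annihilates every affine function of $\Gamma_p$; second, $\mathcal L_p$ obeys a comparison principle on bounded domains together with a Perron-type solvability theory, which substitutes for Theorem \ref{T:MP}; third, since $\Hn$ is of Heisenberg type, a direct computation (cf.\ \eqref{hgN}, \eqref{slr}) gives the explicit Kaplan-type fundamental solution of $\mathcal L_p$ with pole at $e$, namely $\Gamma_p = c_p\, N^{(p-Q)/(p-1)}$ for $1<p<Q$ and $\Gamma_Q = -c_Q\log N$ for $p=Q$, a radial function of the gauge $N$. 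In particular the $X$-balls $B(x,r)$ appearing in the uniform outer ball hypothesis (Definition \ref{D:OB}), being the sublevel sets $\{N(x^{-1}\cdot)<\rho_r\}$ with $\rho_r\simeq r$, are also level sets of $\Gamma_p$, so that $\Gamma_p$ itself can be used to manufacture intrinsic barriers.

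First I would record the size estimates read off from the explicit formula, using $|B_d(g,s)|\simeq s^Q$ and $|\nh N|\le1$: for $1<p<Q$,
\[
\Gamma_p(g^{-1}g')\ \simeq\ \left(\frac{d(g,g')^p}{|B_d(g,d(g,g'))|}\right)^{1/(p-1)},\qquad |X\Gamma_p(g^{-1}g')|\ \lesssim\ \frac{1}{d(g,g')}\left(\frac{d(g,g')^p}{|B_d(g,d(g,g'))|}\right)^{1/(p-1)},
\]
with logarithmic analogues when $p=Q$. From these and the comparison principle — comparing $G_{D,p}(\cdot,g)$ with $\Gamma_p(g^{-1}\cdot)$ (resp.\ $\Gamma_Q(g^{-1}\cdot)$ shifted by a constant), which is $\mathcal L_p$-harmonic off $g$, nonnegative on $\partial D$, and singular of the right order at $g$ — I would deduce the global bound $0\le G_{D,p}(g',g)\lesssim\Gamma_p(g^{-1}g')$, the nonlinear counterpart of \eqref{green}. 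Next comes the $\mathcal L_p$-version of Lemma \ref{L:Gamma}: for $g,g'\notin B_d(x_o,r)$,
\[
|\Gamma_p(x_1^{-1}g)-\Gamma_p(x_1^{-1}g')|\ \lesssim\ \frac{r}{|B_d(x_o,r)|}\ d(g,g')
\]
(with the obvious modification for $p=Q$), proved by the very same dichotomy $d(g,g')>\theta r$ versus $d(g,g')\le\theta r$ as in Lemma \ref{L:Gamma}, feeding the gradient bound above into the metric mean-value inequality of Theorem \ref{T:meanvalue}.

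These ingredients yield the $\mathcal L_p$-analogue of Theorem \ref{T:Growth} exactly as in the linear argument: given an outer $X$-ball $B(x_1,r)$ tangent at $x_o\in\partial D$, the function $f=\big(a_r-\Gamma_p(x_1^{-1}\cdot)\big)/(a_r-a_{2r})$, with $a_\rho$ the value of $\Gamma_p$ on $\partial B(x_1,\rho)$, is $\mathcal L_p$-harmonic off $x_1$ by $(p-1)$-homogeneity, satisfies $f\ge0$ outside $B(x_1,r)$ and $f\ge1$ outside $B(x_1,2r)$, hence dominates $|H^D_\phi|$ by comparison when $\phi\equiv0$ on $B(x_1,2r)\cap\partial D$, while the Lipschitz estimate gives $f(g)\le C\,d(g,x_o)/r$. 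With the $p$-forms of Theorem \ref{T:MP}, \eqref{green}, and Theorem \ref{T:Growth} in hand, I would transcribe the proof of Theorem \ref{T:Green}: if $d(g',\partial D)\gtrsim d(g,g')$ or $d(g',\partial D)\ge R_o$ the bound is immediate from the global estimate; otherwise, taking $x_o$ nearest to $g'$, $r\simeq d(g,g')$, and the outer $X$-ball $B(x_1,r/a)$ at $x_o$, one builds an auxiliary $\mathcal L_p$-harmonic $v$ on $B(x_1,(a+3)r)\cap D$ with $v\le C\,d(g',\partial D)/r$ from the growth estimate, and sets $w=\mu\,G_{D,p}(\cdot,g)$, choosing $\mu$ from the global bound so that $w\le1=v$ on $\partial(B(x_1,(a+3)r)\cap D)$; since $\mathcal L_p w=\mu^{p-1}\mathcal L_p G_{D,p}=0$, comparison gives $w\le v$, and evaluating at $g'$ with $\mu^{-1}\simeq\Gamma_p$ at scale $d(g,g')$ and $r\simeq d(g,g')$ yields exactly the estimate in (i). The $p=Q$ computation, with $\Gamma_Q$ logarithmic, produces (ii) in the same way.

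The principal obstacle I anticipate is not the geometry but the nonlinear potential theory of $G_{D,p}$ itself: one must construct $G_{D,p}$ rigorously (say as a limit of solutions of $\mathcal L_p u=f_\varepsilon$, or as a renormalized solution with datum $c\,\delta_g$), prove its positivity and continuous vanishing on $\partial D$, establish the two-sided asymptotics $G_{D,p}(g',g)\sim\Gamma_p(g^{-1}g')$ as $g'\to g$ that legitimize the comparisons near the pole, and verify a comparison principle applicable in that singular regime — none of which is automatic as it was for the linear Green function of \cite{B}. Everything else is a mechanical adaptation of Sections \ref{S:Prelim}--\ref{S:bdry}, the point being that in $\Hn$ the explicit Kaplan-type formula for $\Gamma_p$ furnishes directly the size and Lipschitz control that in the linear case is quarried from \cite{NSW}; the absence of such a formula is precisely why the result is, for now, confined to $\Hn$ (and Heisenberg-type groups).
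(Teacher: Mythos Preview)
The paper does not actually prove Theorem \ref{T:pimproved}; it is quoted from \cite{GNg} as an illustration that the linear boundary estimates extend to the $p$-harmonic setting. So there is no ``paper's own proof'' to compare against here.

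That said, your outline is the correct and natural one, and it is almost certainly what \cite{GNg} does: transplant the barrier argument behind Theorems \ref{T:Growth} and \ref{T:Green} to $\mathcal L_p$, using that in $\Hn$ the $p$-fundamental solution is an explicit function of the gauge $N$ (so the $X$-balls are simultaneously level sets of $\Gamma$ and of $\Gamma_p$), and that $(p-1)$-homogeneity preserves $p$-harmonicity under scalar multiplication and affine shifts. Your identification of the real work --- building $G_{D,p}$ with the right singular asymptotics and a comparison principle valid near the pole --- is exactly right; once that is in place the rest is, as you say, a transcription of Section \ref{S:bdry}.

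Two small slips to fix. First, the displayed $\mathcal L_p$-Lipschitz estimate for $\Gamma_p$ is missing the exponent: since $|X\Gamma_p|\lesssim r^{(1-Q)/(p-1)}$ at distance $\simeq r$, the correct bound is
\[
|\Gamma_p(x_1^{-1}g)-\Gamma_p(x_1^{-1}g')|\ \lesssim\ \left(\frac{r}{|B_d(x_1,r)|}\right)^{1/(p-1)} d(g,g'),
\]
not $\frac{r}{|B_d(x_o,r)|}\,d(g,g')$; your own gradient estimate two lines earlier already gives this, so the error is in the display, not the argument. Second, watch the $x_o$/$x_1$ notation in that same line. Neither affects the strategy.
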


One might naturally wonder about results such as Theorem \ref{T:XG}
in this setting. However, before addressing this question one has to
understand the fundamental open question of the interior local
bounds of the horizontal gradient of a solution to \eqref{pharm}.
For recent progress in this direction see the paper \cite{MZZ}.

\end{document}